\documentclass[1 [leqno,11pt]{amsart}
\usepackage{amssymb, amsmath}
\usepackage{hyperref}
\usepackage{xcolor}
\allowdisplaybreaks[4]

\let\p=\partial

\let\f=\frac
\let\wt=\widetilde
\let\wh=\widehat
\let\D=\Delta

\def\Z{\mathop{\mathbb Z\kern 0pt}\nolimits}
\def\N{\mathop{\mathbb N\kern 0pt}\nolimits}
\def\Q{\mathop{\mathbb Q\kern 0pt}\nolimits}
\def\R{\mathop{\mathbb R\kern 0pt}\nolimits}

\def\l{{\rm l}}

\def\na{\nabla}

\def\l3{L^3(\R^3)}

\def\ep{\varepsilon}

\def\e{\varepsilon}

\newcommand{\Rmnum}[1]{\uppercase\expandafter{\romannumeral #1} }

\newcommand{\beq}{\begin{equation}}
\newcommand{\eeq}{\end{equation}}
\newcommand{\ben}{\begin{eqnarray}}
\newcommand{\een}{\end{eqnarray}}
\newcommand{\beno}{\begin{eqnarray*}}
\newcommand{\eeno}{\end{eqnarray*}}

 \numberwithin{equation}{section}
\newcommand{\andf}{\quad\hbox{and}\quad}

\newtheorem{thm}{Theorem}[section]
\newtheorem{lem}{Lemma}[section]
\newtheorem{rmk}{Remark}[section]

\newtheorem{prop}{Proposition}[section]

\begin{document}

\title[Inhomogeneous Navier-Stokes equations]
{Solutions of the 3D inhomogeneous incompressible Navier-Stokes  system with initial velocity in $\rm VMO^{-1}$}

\author[R. Hu]{Ruilin Hu}
\address[R. Hu]
 {University of Bath, Bath, BA2 7AY, UK.} \email{rh2488@bath.ac.uk}
\author[Q.-H. Nguyen]{Quoc-Hung Nguyen}
\address[Q.-H. Nguyen]
 {Academy of Mathematics $\&$ Systems Science, The Chinese Academy of
Sciences, Beijing 100190, CHINA. } 
\email{qhnguyen@amss.ac.cn}
\author[F. Shao]{Feng Shao}
\address[F. Shao]
 {School of Mathematical Science, Peking University, Beijing 100871, CHINA.} 
 \email{fshao@stu.pku.edu.cn}
\author[D. Wei]{Dongyi Wei}
\address[D. Wei]
 {School of Mathematical Science, Peking University, Beijing 100871, CHINA.} 
 \email{jnwdyi@pku.edu.cn}
\author[P. Zhang]{Ping Zhang}%
\address[P. Zhang]
 {State Key Laboratory of Mathematical Sciences,  Academy of Mathematics $\&$ Systems Science
and  Hua Loo-Keng Key Laboratory of Mathematics, The Chinese Academy of
Sciences, Beijing 100190, CHINA, and School of Mathematical Sciences, University of Chinese Academy of Sciences, Beijing 100049, CHINA. }
\email{zp@amss.ac.cn}
\author[Z. Zhang]{Zhifei Zhang}
\address[Z. Zhang]
 {School of Mathematical Science, Peking University, Beijing 100871, CHINA.}
 \email{zfzhang@math.pku.edu.cn}%
\date{\today}

\begin{abstract}
In this paper, we establish local existence of strong solutions for the three-dimensional inhomogeneous incompressible Navier–Stokes equations with initial data $(\rho_0, u_0)$ lying in $(C^1, L^2 \cap \mathrm{VMO}^{-1})$, where $\rho_0$ has a positive lower bound. Furthermore, if $\rho_0\in C^2$ and $\|\rho_0-1\|_{L^\infty}+\|u_0\|_{\mathrm{BMO}^{-1}}$ is sufficiently small, we prove  global existence of the solution. To achieve this, we employ an estimate for the transport equation to obtain regularity for the density and apply a new freezing coefficient method for the momentum equation, as in \cite{KHN24}.\end{abstract}
\maketitle

\vskip-0.3cm

\setcounter{equation}{0}
	\section{Introduction}
	
In this paper, we study the existence of solutions to the initial value problem for the following three-dimensional inhomogeneous incompressible Navier–Stokes system:
\begin{equation}\label{eqinns}
\begin{cases}
\partial_t \rho + u \cdot \nabla \rho = 0, \qquad (t,x) \in \mathbb{R}^+ \times \mathbb{R}^3,\\[4pt]
\partial_t (\rho u) + \operatorname{div}(\rho u \otimes u) - \Delta u + \nabla P = 0,\\[4pt]
\nabla \cdot u = 0,\\[4pt]
(\rho,u)|_{t=0} = (\rho_0, u_0).
\end{cases}
\end{equation}
Here $(\rho, u, P)$ denote the density, velocity, and pressure, respectively. This system can be used to describe a mixture of several immiscible incompressible fluids with varying densities; we refer to \cite{PL96} for a detailed derivation of \eqref{eqinns}.

A simplified case is $\rho \equiv 1$, which reduces the system to the classical Navier–Stokes equations. One  observes that, just as in the classical case, system \eqref{eqinns} also possesses a scaling invariance. More precisely, if $(\rho, u, P)$ is a solution to \eqref{eqinns} on $[0,T]$, then
\begin{equation}
\label{S1eq1} (\rho(t,x), u(t,x), P(t,x)) \mapsto \bigl(\rho(\lambda^2 t, \lambda x), \lambda u(\lambda^2 t, \lambda x), \lambda^2 P(\lambda^2 t, \lambda x)\bigr)
\end{equation}
is also a solution of \eqref{eqinns} on $[0, T/\lambda^2]$. A Banach space $X$ is called \textit{critical} if its norm is invariant under the scaling  transformation \eqref{S1eq1}. Some classical critical spaces in three dimensions include the Sobolev space $\dot{H}^{\frac12}$, the Lebesgue space $L^3$, the Besov space $\dot{B}_{p,q}^{-1+\frac{3}{p}}$, and the spaces $\mathrm{BMO}^{-1}$ and $\mathrm{VMO}^{-1}$, which will be recalled later.

Let us first survey some known results for the classical Navier–Stokes equations $(NS)$. The research dates back to \cite{J34}, where Leray established the existence of global weak solutions for $L^2$ data. In the critical setting, Fujita and Kato \cite{FK64} proved local well-posedness in $\dot{H}^{\frac12}$, and Kato \cite{K84} proved local well-posedness for $L^3$ data. For data with negative regularity,  the authors \cite{CMP94} established the global well-posedness of $(NS)$ with small initial data in the non-endpoint case $\dot{B}_{p,\infty}^{-1+\frac{3}{p}}$. Although the endpoint space $\dot{B}_{\infty,\infty}^{-1}$ is ill-posed \cite{BP08}, Koch and Tataru \cite{KT01} proved the global well-posedness of $(NS)$ with small initial data in the subspace $\mathrm{BMO}^{-1}$.

 It is natural to expect that if $\rho_0$ is close to a constant, then some of these well-posedness  results should also hold for the inhomogeneous system. Danchin \cite{D03} proved that if $\|u_0\|_{\dot{B}_{2,1}^{-1+\frac{d}{2}}}$ is sufficiently small and $\rho_0$ is close to a constant in the sense of $\dot{B}_{2,1}^{\frac{d}{2}}$, then \eqref{eqinns} admits a unique global solution. The existence result was extended to $\dot{B}_{p,1}^{-1+\frac{d}{p}}$ and $\dot{B}_{p,1}^{\frac{d}{p}}$ for $p \in (1, 2d)$, and the uniqueness result for $p \in (1, d]$ in \cite{A07}.  Corresponding results were extended to the half-space case in \cite{DM09}. The gap for the uniqueness with $p\in (d, 2d)$ was solved by Danchin and Mucha \cite{DM12} 
 in a more general functional framework.
The restriction on the initial density can in fact be relaxed and need not be confined to a small neighborhood of a constant. The study of such problems originated with Kazhikhov \cite{K74}, who in 1974 established global existence of strong solutions for densities bounded away from zero and initial velocity $u_0\in H^1$. Subsequently, \cite{AGZ12} proved local and global well-posedness for data $(u_0, \rho_0)$ with $u_0 \in \dot B_{2,1}^{\frac{1}{2}}$ and $\rho_0 \in \dot B_{2,1}^{\frac{3}{2}}$ possessing a positive lower bound. This result was later extended to the $\dot B_{p,1}^{-1+\frac{3}{p}}$ framework in \cite{AGZ13}; see also \cite{AGZ24,QZ21} for further developments. In \cite{PZZ13}, Paicu, Zhang, and Zhang obtained global existence in two dimensions for initial density in $L^\infty$ with a positive lower bound and initial velocity in $H^s(\mathbb{R}^2)$ for any $s>0$. In three dimensions, they proved global existence for $H^1(\mathbb{R}^3)$ velocity under the assumption that $\|u_0\|_{L^2}\|\nabla u_0\|_{L^2}$ is sufficiently small. The three-dimensional result was later improved to $H^{s}$ velocity for $s>\frac{1}{2}$ in \cite{CZZ16}. The first result allowing the initial density $\rho_0$ to be merely bounded (and bounded away from zero) while the velocity lies in a critical space is due to Zhang \cite{Z20}, who established global existence for small initial velocity in $\dot B_{2,1}^{\frac{1}{2}}$. Uniqueness for this class of solutions was subsequently proved in \cite{DW23}, where the authors also generalized the existence result to initial velocities in the larger critical spaces $\dot B_{p,1}^{-1+\frac{d}{p}}$. An interesting related direction shows that smallness of two velocity components  can also suffice for global existence; see \cite{CMZ14,DZ14, HPZ13, Z20} for details.

%All results above have the restriction for density to be away from $0$, since vacuum will lead to further difficulties. In 1990, Simon extended the global weak solution theories before in \cite{S90} and removed the request for lower bound of density. Lions \cite{PL96} further extended the results before to a density-dependent viscosity (with a renormalized density equation), and proved the existence for global weak solutions with $L^2$ data. In \cite{DM17}, Danchin and Mucha proved global existence for $H^1(\Omega)$ for $L^\infty$ density and some bounded domain $\Omega$. Recently, Hao et.al proved in \cite{HSWZ} for global existence for $L^\infty$ density and $L^2(\mathbb{R}^2)$ velocity in 2D, and corresponding result in 3D for initial velocity small in $H^{\frac{1}{2}}(\mathbb{R}^3)$. The result soon has been extended to the case when $\sqrt{\rho_0}u_0$ is small in $\dot B_{p,\infty}^{-1+\frac{3}{p}}$ in \cite{HSWZZ}.

All the aforementioned results require the density to be bounded away from zero, as the presence of vacuum introduces substantial additional difficulties. A significant advance was made by Simon \cite{S90} in 1990, who improved the results of \cite{K74} by removing the requirement of a positive lower bound for the density. Lions \cite{PL96} further generalized these results to the case of density-dependent viscosity (formulated via a renormalized continuity equation), establishing the existence of global weak solutions for initial density in $L^\infty$ and initial velocity  in $L^2$. Subsequently, Danchin and Mucha \cite{DM17} proved global existence for $L^\infty$ densities and velocities in $H^1(\Omega)$ on a bounded domain $\Omega$. More recently, Hao et al. \cite{HSWZ} obtained a global existence result in two dimensions for $L^\infty$ densities and velocities in $L^2(\mathbb{R}^2)$; in three dimensions, they required the initial velocity to be small in $\dot{H}^{\frac{1}{2}}(\mathbb{R}^3)$. This result was soon extended in \cite{HSWZZ}, where global existence was established under the smallness condition $\|\rho_0-1\|_{L^\infty}+\|\mathbb{P}(\rho_0u_0)\|_{\dot{B}_{p,\infty}^{-1+\frac{3}{p}}}\ll 1$ for $3<p<\infty$.

\subsection{Functional setting and main results}
We recall the $\mathrm{BMO}^{-1}$ setting as in \cite{KT01}. For $b>0$, we define the localized $\mathrm{BMO}_b^{-1}$ norm
\begin{equation*}\label{bmo}
\|u\|_{\mathrm{BMO}^{-1}_b} := \sup_{t \leq b^2} t^{\frac{1}{2}} \|e^{t\Delta} u\|_{L^\infty} + \sup_{x \in \mathbb{R}^3,\, 0 < R < b} R^{-\frac{3}{2}} \Bigl( \int_0^{R^2} \int_{B(x,R)} |e^{t\Delta} u(y)|^2 \, dy \, dt \Bigr)^{1/2}.
\end{equation*}
For the case $b = \infty$, the norm reduces to the $\mathrm{BMO}^{-1}$ norm, and we denote the corresponding norm simply by $\mathrm{BMO}^{-1}$. The space $\mathrm{VMO}^{-1}$ is defined as
\begin{equation*}
\mathrm{VMO}^{-1} := \bigl\{ f \in \mathcal{S}':\ \  \lim_{b \to 0} \|f\|_{\mathrm{BMO}^{-1}_b} = 0 \bigr\}.
\end{equation*}
For simplicity, we introduce the following norm for a function $u: [0,T] \times \mathbb{R}^3 \to \mathbb{R}$:
\begin{equation*}
\|u\|_{V_T^b} := \sup_{x \in \mathbb{R}^3,\, 0 < R < b} R^{-\frac{3}{2}} \Bigl( \int_0^{\min\{T, R^2\}} \int_{B(x,R)} |u(t,y)|^2 \, dy \, dt\Bigr)^{1/2},
\end{equation*}
which satisfies $\|e^{t\Delta} u_0\|_{V_\infty^b} \leq \|u_0\|_{\mathrm{BMO}^{-1}_b}$.

To capture both critical and subcritical behaviors, we introduce the norms
\begin{equation*}
\begin{aligned}
\|u\|_{X_T^1} &:= \sup_{t \leq T} t^{\frac{1}{2}} \|u(t)\|_{L^\infty} + \|u\|_{V_T^b} + \sup_{t \leq T} t^{1+\frac{\kappa}{2}} \|\nabla u(t)\|_{\dot{C}^{\kappa}} \\
&\quad + \sup_{s < t \leq T} \Bigl( s^{\frac{1}{2} + \varkappa_1} \frac{\|u(t) - u(s)\|_{L^\infty}}{(t-s)^{\varkappa_1}} \Bigr),
\end{aligned}
\end{equation*}
where $\varkappa_1 = \frac{18}{35} > \frac{1}{2}$ and $\kappa = \frac{1}{700}$, and
\begin{equation*}
\begin{aligned}
\|u\|_{X_T^2} &:= \sup_{t \leq T} \Bigl( t^{\frac{5}{12}} \|u(t)\|_{L^{12}} + \sup_{s < t \leq T} s^{\frac{5}{12} + \varkappa_2} \frac{\|u(t) - u(s)\|_{L^{12}}}{(t-s)^{\varkappa_2}} \Bigr),
\end{aligned}
\end{equation*}
with $\varkappa_2 = \frac{5}{7} > \varkappa_1$. The values of $\kappa$, $\varkappa_1$, and $\varkappa_2$ remain fixed throughout the paper. It is easy to verify that the norm $\|\cdot\|_{X_T^1}$ is critical under the scaling \eqref{S1eq1} when $b = \infty$, while $\|\cdot\|_{X_T^2}$ is a subcritical norm. We define the following mixed norm
\begin{equation}\label{defxt}
\|u\|_{X_T} := \|u\|_{X_T^1} + \|u\|_{X_T^2}
\end{equation}
for some $T \leq 1$. We will use the subcritical norm $\|\cdot\|_{X_T^2}$ to estimate the remainder term $u_R$ in \eqref{decop} below, for which we may gain nearly one additional derivative. Further details will be explained in the next subsection.

By the properties of the transport equation, if the Lipschitz norm of the convection velocity field is integrable in time, then the regularity of the solution is inherited from the initial data (see \cite{BCD}). However, in our case, the $L^\infty$ norm of the gradient to the velocity behaves like $t^{-1}$, which is not integrable, leading to a loss of space regularity in the density. In what follows, we will use the auxiliary norm
\begin{equation}\label{auxnorm}
\|u\|_{T} := \sup_{t \leq T} \left( t^{\frac{1}{2}} \|u(t)\|_{L^\infty} + t \|u(t)\|_{\dot{C}^1} \right)
\end{equation}
to control the flow map associated with $u$.

For the lower-order remainder terms that appear in the next subsection, we define another mixed norm without the $\mathrm{BMO}_b^{-1}$ structure, namely
\begin{equation*}
\begin{aligned}
\|u\|_{Y_T} &:= \sup_{t \leq T} \Big( t^{\frac{2}{5}} \|u(t)\|_{L^\infty} + t^{\frac{5}{12}} \|u(t)\|_{L^{12}} + t^{1+\frac{\kappa}{2}} \|\nabla u(t)\|_{\dot{C}^{\kappa}} \Big) \\
&\quad + \sup_{s < t \leq T} \Bigl( s^{\frac{1}{2} + \varkappa_1} \frac{\|u(t) - u(s)\|_{L^\infty}}{(t-s)^{\varkappa_1}} \Bigr) + \sup_{s < t \leq T} \Bigl( s^{\frac{5}{12} + \varkappa_2} \frac{\|u(t) - u(s)\|_{L^{12}}}{(t-s)^{\varkappa_2}} \Bigr).
\end{aligned}
\end{equation*}
Since $Y_T$ contains subcritical terms, it follows readily from the definition of the $\|\cdot\|_{V_T^b}$ norm that for any $T < 1$,
\begin{equation*}
\|u\|_{X_T} \lesssim \|u\|_{Y_T}.
\end{equation*}

Our first main result is stated as follows.

\begin{thm}\label{mainthm}
Let $\rho_0 \in C^1$ satisfy
\begin{equation}\label{bdrho0}
0 < C_1 < \rho_0 < C_2 < \infty
\end{equation}
for some positive constants $C_1$ and $C_2$. Assume that $u_0 \in L^2 \cap \mathrm{VMO}^{-1}$. There exist $b \in (0,1)$ and $\varepsilon_0 > 0$ such that if
\begin{equation*}
\|u_0\|_{\mathrm{BMO}^{-1}_b} \leq \varepsilon_0,
\end{equation*}
then system \eqref{eqinns} admits a solution $(\rho, u, P)$ on $[0, T(\|\rho_0\|_{C^1},u_0, \varepsilon_0, b)]$ satisfying
\begin{equation}\label{S1eqw}
\|u\|_{X_T} \leq C\epsilon_0^{\frac{5}{6}}\|u_0\|_{L^2}^{\frac{1}{6}}\|\rho_0\|_{C^1},
\end{equation}
and
\begin{equation}\label{estL2u}
\sup_{t \leq T} \left( \bigl\|\bigl(u(t), t^{\frac{1}{2}}\nabla u(t), t u_t(t)\bigr)\bigr\|_{L^2}+ \bigl\|\bigl(\na u, t^{\frac{1}{2}} u_t, t\na u_t\bigr)\bigr\|_{L^2_t(L^2)} \right) \leq C (\ep_0,\|u_0\|_{L^2}).
\end{equation}
\end{thm}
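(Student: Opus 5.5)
We construct the solution as the limit of an iteration (or of smooth approximations) and close a priori estimates in the mixed norm $\|\cdot\|_{X_T}$ of \eqref{defxt}. The argument is based on a freezing-coefficient decomposition of the velocity, in the spirit of \cite{KHN24}.

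\textbf{Step 1: the density.} Given an approximate velocity $v$ with $\|v\|_{X_T}$ small, we solve $\partial_t\rho+v\cdot\nabla\rho=0$ via the flow map $X_v$. Since $\|v\|_T$ from \eqref{auxnorm} only gives $\|\nabla v(t)\|_{L^\infty}\lesssim t^{-1}\|v\|_T$, which is not integrable, we do not propagate $C^1$ regularity exactly. Instead we bound the flow: $|X_v(t,x)-X_v(t,y)|\lesssim |x-y|^{1-C\|v\|_T}$ for $|x-y|\le 1$, and $|X_v(t,x)-x|\lesssim t^{1/2}\|v\|_T$. This gives
\[
\|\rho(t)\|_{C^{1-C\|v\|_T}}\lesssim\|\rho_0\|_{C^1},\qquad
|\rho(t,x)-\rho_0(x)|\lesssim\|\rho_0\|_{C^1}\,t^{1/2}\|v\|_T .
\]
The bounds \eqref{bdrho0} are preserved. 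Because $\|v\|_T\lesssim \|v\|_{X_T}\ll1$, the density stays Hölder of order close to $1$, which is what the freezing argument needs.

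\textbf{Step 2: the momentum equation by freezing coefficients.} We rewrite the momentum equation as
\[
\rho\partial_t u-\Delta u+\nabla P=-\rho u\cdot\nabla u.
\]
On a ball of radius $\sim t^{1/2}$ around a point $x_0$ we freeze $\rho\approx\rho(t_0,x_0)$. This reduces the problem to a Stokes system with constant density plus commutator errors controlled by $[\rho]_{C^{1-}}\,t^{1/2}$. We then decompose
\[
u=e^{t\Delta/\rho_0}\text{-type linear part}+u_{\mathrm{NL}}+u_R, \qquad \text{(decop)}
\]
where $u_{\mathrm{NL}}$ is the Koch--Tataru bilinear term, estimated in $X^1_T$ exactly as in \cite{KT01}, and $u_R$ collects the density-variation and commutator terms.

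\begin{itemize}
\item For the linear part, the $V^b_T$ and $L^\infty$ pieces of $X^1_T$ are bounded by $\|u_0\|_{\mathrm{BMO}^{-1}_b}\le\varepsilon_0$. The Hölder-in-time pieces with exponents $\varkappa_1,\varkappa_2$ come from parabolic smoothing.
\item The remainder $u_R$ is estimated in the subcritical norm $X^2_T$, or in $Y_T$, where the extra $t^{\delta}$ coming from the density regularity yields a small factor $T^{\delta}$.
\item The $L^{12}$ bounds are obtained by interpolating the $L^\infty$ bound against the $L^2$ energy. This is where the factor $\|u_0\|_{L^2}^{1/6}\varepsilon_0^{5/6}$ in \eqref{S1eqw} enters, since $\|f\|_{L^{12}}\le\|f\|_{L^\infty}^{5/6}\|f\|_{L^2}^{1/6}$.
\item The factor $\|\rho_0\|_{C^1}$ enters through the commutators.
\end{itemize}

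Choosing first $b$ and $\varepsilon_0$, and then $T$ small depending on $\|\rho_0\|_{C^1}$, $u_0$ and $b$, the map $v\mapsto u$ sends a small ball of $X_T$ into itself. Compactness, together with the uniform bounds, lets us pass to the limit.

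\textbf{Step 3: energy estimates \eqref{estL2u}.}
\begin{itemize}
\item The basic energy equality gives $\|\sqrt\rho u\|_{L^\infty_t L^2}+\|\nabla u\|_{L^2_tL^2}\le C\|u_0\|_{L^2}$.
\item Testing with $t\,u_t$ gives the bound on $t^{1/2}\nabla u$ and $t^{1/2}u_t$. Differentiating in time and testing with $t^2u_t$ gives the bound on $t u_t$ and $t\nabla u_t$.
\item In these estimates the nonlinear terms $\int\rho u\cdot\nabla u\cdot u_t$ are absorbed using $t^{1/2}\|u\|_{L^\infty}\le\|u\|_{X_T}\lesssim\varepsilon$, via Hardy-type weights. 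The terms involving $\rho_t=-u\cdot\nabla\rho$ are handled by integrating by parts onto $u$.
\end{itemize}

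\textbf{Main obstacle.} The hardest step is Step 2 with the density only $C^{1-}$: controlling the commutator and remainder terms in the critical norm $X^1_T$, in particular the $\nabla u$ bound in $\dot C^\kappa$ with weight $t^{1+\kappa/2}$. This requires parabolic Schauder estimates with Hölder-in-time coefficients, which is why the norm tracks the time differences $\|u(t)-u(s)\|$ with exponents $\varkappa_1,\varkappa_2>1/2$. I would first prove the linear estimate for $\rho\partial_t w-\Delta w+\nabla P=f$ with $\rho\in C^{1-}$ in the weighted norms, using local freezing and summation over dyadic time scales. The nonlinear closure then follows from it.
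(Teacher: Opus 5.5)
\textbf{There is a genuine gap: your Step 2 never handles the pressure.} Steps 1 and 3 are essentially Section \ref{sec2} and Lemmas \ref{eng1}--\ref{eng3}; your bound $|\rho(t,x)-\rho_0(x)|\lesssim\|\nabla\rho_0\|_{L^\infty}\|v\|_T\,t^{1/2}$ is even a little sharper than \eqref{rhouse}.

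Freezing $\rho\approx\rho(t_0,x_0)$ on cylinders of size $t^{1/2}$ is a local device. To eliminate $\nabla P$ you must apply $\mathbb{P}$, and since $\nabla\cdot\partial_t u=0$ this produces the commutator
$\mathbb{P}(\rho\,\partial_t u)-\rho\,\partial_t u=-\nabla\Delta^{-1}(\nabla\rho\cdot\partial_t u)$.
This term causes three problems:
\begin{itemize}
\item It contains no factor $\rho-\rho(t_0,x_0)$, so freezing gives it no smallness. The ``extra $t^\delta$ from density regularity'' is not available for it.
\item It is nonlocal of order $-1$, with kernel $\sim|x|^{-2}$. Bounding it in $L^\infty$ needs integrability below $L^3$, which neither $\mathrm{BMO}^{-1}$ nor the $L^\infty$, Carleson and $L^{12}$ parts of $X_T$ supply.
\item With your choice of frozen coefficient it involves $\nabla\rho(t)$, which is not controlled, since $\rho(t)$ is only known to be $C^{1-}$.
\end{itemize}

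The paper's decomposition \eqref{decop} is built around exactly this. It freezes the \emph{initial} density $\rho_0(x_0)$, which is time-independent and genuinely $C^1$. It treats $\varrho=\rho-\rho_0$ separately in $u_{R2}$ via Lemma \ref{CVPDE}; that is what the time-H\"older seminorms are for. It isolates the commutator as
$u_{R1}=\int_0^tK_{x_0}(t-\tau)\ast\nabla\Delta^{-1}(\nabla\rho_0\cdot\partial_\tau u)\,d\tau$.
After integrating by parts in time, the boundary term $K_{x_0}(t)\ast\nabla\Delta^{-1}\bigl(\nabla\rho_0\cdot(u_0-u(t))\bigr)$ is controlled only through:
\begin{itemize}
\item the interpolation $\|\nabla\Delta^{-1}f\|_{L^\infty}\lesssim\|f\|_{L^2}^{2/3}\|f\|_{L^\infty}^{1/3}$;
\item the energy bound $\|u(t)\|_{L^2}\lesssim\|u_0\|_{L^2}$;
\item for time differences, $\|u(t)-u(s)\|_{L^2}\lesssim\min\{1,(t-s)/s\}$, which comes from $\sup_t t\|u_t\|_{L^2}<\infty$.
\end{itemize}

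This is the real role of the hypothesis $u_0\in L^2$; it is not only there to produce the $L^{12}$ interpolation factor. It also means Step 3 cannot come afterwards. The energy bounds must be closed together with the $X_T$ bound, because each uses the other (the $X_T$ bound enters through $M$ in \eqref{engasp}). The linear \emph{Schauder-type} estimate you defer would have to overcome exactly this nonlocal, low-frequency obstruction, and your proposal gives no mechanism for it.

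\textbf{A secondary problem is the construction.} Showing that $v\mapsto u$ maps a small ball of $X_T$ into itself, without a contraction, does not yield a fixed point by compactness: $u^n$ and $u^{n+1}$ may have different subsequential limits. A contraction is not available here. Difference estimates for densities transported by non-Lipschitz fields lose derivatives, and the paper notes that uniqueness in this class is open. Use the route you mention only parenthetically, as the paper does:
\begin{itemize}
\item regularize the data;
\item prove the a priori bound of Proposition \ref{S5prop1} for the smooth solutions on a uniform interval $T\sim b^2$ by a continuity argument;
\item pass to the limit by weak compactness, following Lions' theory for the transported density.
\end{itemize}
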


\begin{rmk}
Moreover, we note that $T$ can be chosen such that $C_1' b^2 \leq T \leq C_2' b^2$ for some universal constants $C_1'$ and $C_2'$ (in the sequel, we write $T \sim b^2$). This scaling is consistent with the parabolic nature of the equations.
\end{rmk}

\begin{rmk}
At present, establishing the uniqueness of solutions under the assumptions that $\rho_0 \in C^1$ has a positive lower bound, $u_0 \in L^2 \cap \mathrm{VMO}^{-1}$, and the quantity $\|\rho_0 - 1\|_{L^\infty} + \|u_0\|_{\mathrm{BMO}^{-1}}$ is sufficiently small remains a significant open problem. As noted in \cite[Remark 1.2]{HSWZZ}, the methodology employed in that paper cannot be adapted to prove weak-strong uniqueness in the $\mathrm{BMO}^{-1}$ setting. The principal difficulty arises from the lack of an effective decomposition $f = f_1 + f_2$ for a function $f \in L^2 \cap \mathrm{BMO}^{-1}$, where $f_1$ belongs to a suitable subcritical space. It is worth mentioning that weak-strong uniqueness can be established under the stronger assumption $u_0 \in L^p \cap \mathrm{BMO}^{-1}$ for some $p > 2$. In this case, one can approximate $u_0$ by a sequence $\{u_{0,N}\} \subset L^q$ with $q > 3$ such that $\|u_0 - u_{0,N}\|_{L^2} \to 0$, allowing the application of the method from \cite{HSWZZ}. This argument, however, fails for the critical case $p = 2$.

For the classical Navier–Stokes equations, uniqueness for small initial data in $\mathrm{BMO}^{-1}$ was proved by Koch and Tataru \cite{KT01}. Their approach, which relies on a fixed-point argument that simultaneously yields global existence and uniqueness, can not be extended to our inhomogeneous setting, as our existence proof does not imply uniqueness. To the best of our knowledge, even in the classical Navier–Stokes setting, weak-strong uniqueness for $u_0 \in L^2 \cap \mathrm{BMO}^{-1}$ (without a smallness condition) remains a major open problem. For recent progress, we refer to \cite{Barker2018, Chemin2011, DZ2014}. Notably, Coiculescu and Palasek \cite{CP2025} have very recently proved the non-uniqueness of solutions to the classical Navier–Stokes equations for large initial data in $\mathrm{BMO}^{-1}$.
\end{rmk}

By assuming that the initial density is nearly constant and the initial velocity is sufficiently small in $\mathrm{BMO}^{-1}$, we obtain the following global existence result.

\begin{thm}\label{thm.global-existence}
Let $M > 0$. Assume that $u_0 \in L^2$ and $\rho_0 \in C^2$ satisfy $\|u_0\|_{L^2} + \|\rho_0\|_{C^2} \leq M$. There exists a constant $\varepsilon_2 \in (0, 1/2)$, depending only on $M$, such that if
\begin{equation}\label{glocond}
\|\rho_0 - 1\|_{L^\infty} + \|u_0\|_{\mathrm{BMO}^{-1}} < \varepsilon_2,
\end{equation}
then \eqref{eqinns} admits a global strong solution.
\end{thm}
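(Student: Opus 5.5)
The plan is to combine the local result, Theorem \ref{mainthm}, with a global-in-time critical a priori estimate that exploits $\|\rho_0-1\|_{L^\infty}\ll1$, and then continue the solution by the energy method.

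\emph{Step 1: a global critical estimate.} Since \eqref{glocond} gives smallness in $\mathrm{BMO}^{-1}$ rather than only in $\mathrm{BMO}^{-1}_b$, I would rerun the construction behind Theorem \ref{mainthm} with $b=\infty$. In that setting $\|\cdot\|_{X^1_T}$ is scale invariant. The key structural observation is to rewrite the momentum equation as
\[
\partial_t u-\Delta u+\nabla P=(1-\rho)\partial_t u-\rho u\cdot\nabla u .
\]
The term $(1-\rho)\partial_t u$ is then a perturbation of size $\|\rho-1\|_{L^\infty}=\|\rho_0-1\|_{L^\infty}<\varepsilon_2$, and this bound is preserved by transport. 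Consequently the freezing-coefficient step is no longer needed at large scales, and one can run a Koch--Tataru type argument in the critical part of $X_T$ with $T=\infty$. The bilinear term is handled as in \cite{KT01}. The linear term $(1-\rho)\partial_t u$ would be controlled in a critical space that also carries weighted $\partial_t u$ norms. These could be $\sup_t t\|\partial_t u\|_{L^\infty}$-type quantities, or the time-Hölder seminorms already present in $X_T$, which scale correctly.

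\emph{Step 2: controlling $\rho$ and the non-scale-invariant pieces.} This is where $\rho_0\in C^2$ and $M$ enter. The density is transported, so its Hölder or Lipschitz regularity is governed by the auxiliary norm \eqref{auxnorm}. With smallness of $\|u\|_T$ globally, the flow satisfies $\|\nabla X(t)\|_{L^\infty}\lesssim t^{C\|u\|_T}$. The resulting growth of $\|\nabla\rho(t)\|_{L^\infty}$ is only a small power of $t$, and I would absorb it using the extra derivative on $\rho_0$, by interpolating between $C^2$ and $L^\infty$ at scale $t^{1/2}$. This produces $\|\rho(t)-1\|_{\dot C^{\kappa}}\lesssim \varepsilon_2^{\theta}M^{1-\theta}t^{-\kappa/2+\text{small}}$, which is exactly the scaling needed in the commutator estimates of the Hölder parts of $X^1_T$. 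Choosing $\varepsilon_2=\varepsilon_2(M)$ small closes the bootstrap: $\|u\|_{X^1_\infty}\lesssim\|u_0\|_{\mathrm{BMO}^{-1}}$.

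\emph{Step 3: globalization.} I would combine the global bound with the $L^2$ energy estimate \eqref{estL2u}, made global through the basic energy identity $\|\sqrt\rho u(t)\|_{L^2}^2+2\int_0^t\|\nabla u\|_{L^2}^2\le\|\sqrt{\rho_0}u_0\|_{L^2}^2$. For the higher quantities, I would use $t\|\nabla u\|_{L^\infty}\lesssim\varepsilon$ for all $t$ to propagate $t^{1/2}\nabla u$ and $t u_t$ in $L^2$ via Gronwall with integrable weights. Then, for $t\ge t_0$ the solution is smooth with $\nabla u\in L^\infty$, and the standard strong-solution theory (for instance \cite{PZZ13}) extends it. An alternative is to restart Theorem \ref{mainthm} from time $t_0$, using uniform smallness of $\|u(t_0)\|_{\mathrm{BMO}^{-1}}$ and a $C^1$ bound on $\rho(t_0)$ that is uniform on compact time intervals.

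The main obstacle I expect is Step 2. The non-integrability of $t^{-1}$ for $\|\nabla u\|_{L^\infty}$ produces genuine growth of $\|\nabla\rho\|$, and this must be beaten uniformly in time by the smallness of $\|\rho_0-1\|_{L^\infty}$ via interpolation with $C^2$. I would first try to show that only Hölder-type norms of $\rho$ of order $\kappa$ enter the estimates, so that the loss is an arbitrarily small power.
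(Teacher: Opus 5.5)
Your proposal has a genuine gap in Steps 1--2. A global-in-time critical bound of the form $\|u\|_{X^1_\infty}\lesssim\|u_0\|_{\mathrm{BMO}^{-1}}$ cannot be obtained along these lines.

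\emph{The flow bound is not available.} You only know $\|\nabla u(\tau)\|_{L^\infty}\lesssim\|u\|_T\tau^{-1}$, so $\int_0^t\|\nabla u\|_{L^\infty}d\tau$ diverges at $\tau=0$. Hence $\|\nabla X(t)\|_{L^\infty}\lesssim t^{C\|u\|_T}$ does not follow. One only has the relative bound \eqref{diffrho}, with factor $(t/s)^{C\|u\|_T}$. Consequently $\rho(t)=\rho_0\circ X_t^{-1}$ is merely $C^\gamma$ with $\gamma<1$ (Proposition \ref{regdiff}). Smoothness of $\rho_0$ beyond $C^1$ cannot help, because the loss comes from $X_t^{-1}$, not from $\rho_0$.

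\emph{The claimed H\"older decay of $\rho$ is false for large $t$.} This is the decisive problem. You claim $\|\rho(t)-1\|_{\dot C^\kappa}\lesssim t^{-\kappa/2+\text{small}}$. But $\rho(t)$ is a volume-preserving rearrangement of $\rho_0$. Take $\rho_0-1\ge 0$ to be a nontrivial compactly supported bump with maximum $\omega$. Then $\sup\rho(t)=1+\omega$ for all $t$, while $\{\rho(t)>1+\omega/10\}$ has fixed finite measure. This forces $\|\rho(t)\|_{\dot C^\kappa}\ge c(\rho_0)>0$ uniformly in $t$, and interpolation with $C^2$ cannot create decay. So $t^{\kappa/2}\|\rho(t)\|_{\dot C^\kappa}\to\infty$, and any argument that needs scale-invariant H\"older control of $\rho$ fails at large times.

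\emph{Without that control, $(1-\rho)\partial_t u$ cannot be absorbed.} Smallness of $\|\rho-1\|_{L^\infty}$ alone does not suffice in the $L^\infty$-based Koch--Tataru space. First, $\|\partial_\tau u\|_{L^\infty}$ scales like $\tau^{-3/2}$, which is not integrable at $0$. Second, $\mathbb P$ is unbounded on $L^\infty$. Third, integrating by parts in time as in Lemma \ref{CVPDE} needs time regularity of $\rho$, and Proposition \ref{Holtrho} derives that only from spatial H\"older regularity.

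\emph{Rerunning Theorem \ref{mainthm} with $b=\infty$ fails for the same reason.} The remainder bounds of Section \ref{sec5} carry positive powers of $T$, because $\nabla\rho_0$ and $\|u_0\|_{L^2}$ break scaling. For example, $t^{1/2}\|u^2_{R1,x_0}(t)\|_{L^\infty}\lesssim t^{1/4}\|\rho_0\|_{C^1}\|u_0\|_{L^2}$.

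\emph{Step 3 has nothing to stand on.} Your restart alternative also fails, since $\rho(t_0)$ is not known to be $C^1$, which Theorem \ref{mainthm} requires.

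The missing idea is that no global critical estimate is needed. The paper uses $\rho_0\in C^2$ only on a short interval. Rewriting $u^2_{R1,x_0}$ through the momentum equation (Proposition \ref{Prop4.2}) makes the remainders small in $\|u\|_{X_T}+\|\rho_0-1\|_{L^\infty}$, not merely in $T$. The bootstrap therefore closes on $[0,T_\varepsilon]$ with $T_\varepsilon\sim\varepsilon^{4/3}$ (constants depending on $M$) and $\|u\|_{X_{T_\varepsilon}}\lesssim\varepsilon^{5/6}$. This gives $\|u(T_\varepsilon/2)\|_{L^\infty}\lesssim T_\varepsilon^{-1/2}\varepsilon^{5/6}\sim\varepsilon^{1/6}$.

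The supercritical energy bound then converts this into critical smallness:
$\|\mathbb P(\rho u)(T_\varepsilon/2)\|_{\dot B^{-1+3/p}_{p,\infty}}\lesssim\|u(T_\varepsilon/2)\|_{L^3}\le\|u(T_\varepsilon/2)\|_{L^2}^{2/3}\|u(T_\varepsilon/2)\|_{L^\infty}^{1/3}$. Meanwhile $\|\rho(T_\varepsilon/2)-1\|_{L^\infty}=\|\rho_0-1\|_{L^\infty}$. Theorem \ref{HWZ} from \cite{HSWZZ} then continues the solution globally from $T_\varepsilon/2$. It works in $L^p$-based spaces, where multiplication by $\rho-1\in L^\infty$ is harmless, and needs no regularity of $\rho$.

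This route also explains the $M$-dependence of $\varepsilon_2$ and why $C^1$ does not suffice. With only $C^1$, the local time is of order $\varepsilon^{20}$, and $T^{-1/2}\varepsilon^{5/6}$ is not small (cf.\ Remark \ref{rmk6.1}).
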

\begin{rmk}
    Here in Theorem \ref{thm.global-existence}, we require $\rho_0\in C^2$ to have higher regularity. This is because in the a priori estimate of Theorem \ref{mainthm}, the maximal existence time $T$ depends on $\ep_0$ even if $\ep_0$ is small. More explanation can be seen in Section \ref{sec7} and Remark \ref{rmk6.1}.
\end{rmk}

\begin{rmk}
A notable aspect of Theorem \ref{thm.global-existence} is that the required smallness of the initial data is measured in terms of the norms $\|u_0\|_{L^2}$ and $\|\rho_0\|_{C^2}$. Whether an ``unconditional'' global existence result can be established remains an open problem.
\end{rmk}

\subsection{Ideas and outline}\label{outline}
We explain the main ideas of the proof and the motivation behind the choice of the norm $\|u\|_{X_T}$.

We apply the freezing coefficient method developed in \cite{KHN24}. For any fixed $x_0$, applying the Leray projection operator $\mathbb{P} = \mathrm{Id} - \nabla \Delta^{-1} \nabla \cdot$ to the momentum equation in \eqref{eqinns} and freezing the coefficient at $x_0$, we obtain
\begin{equation*}
\left\{
\begin{array}{l}
\partial_t \rho + u \cdot \nabla \rho = 0,\\[4pt]
\rho_0(x_0) \partial_t u - \Delta u = -\mathbb{P} \operatorname{div}(\rho u \otimes u) - \mathbb{P} \partial_t \bigl[(\rho - \rho_0) u\bigr] \\
\qquad\qquad\qquad\qquad+ \nabla \Delta^{-1}\bigl( \nabla \rho_0 \cdot \partial_t u\bigr) - (\rho_0 - \rho_0(x_0)) u_t,\\[4pt]
\nabla \cdot u = 0,\\[4pt]
(\rho, u)|_{t=0} = (\rho_0, u_0),
\end{array}
\right.
\end{equation*}
where $\rho_0 = \rho_0(y)$ denotes the initial density, and we have used the notation $\rho_0 = \rho_0(y)$ in the convolution kernels below.

Let $\varrho := \rho - \rho_0$ and let $K_{x_0}(t,x)$ be the fundamental solution to
\begin{equation}\label{defKx0}
\left\{
\begin{array}{l}
\rho_0(x_0) \partial_t K_{x_0}(t,x) - \Delta K_{x_0}(t,x) = 0,\\[4pt]
K_{x_0}(t,x)|_{t=0} = \delta.
\end{array}
\right.
\end{equation}
With this definition, we can write
\begin{equation}\label{decop}
\begin{aligned}
u(t) &= K_{x_0}(t) \ast u_0 - \int_0^t K_{x_0}(t-\tau) \ast \mathbb{P} \operatorname{div}(\rho u \otimes u)(\tau) \, d\tau \\
&\quad + \int_0^t K_{x_0}(t-\tau) \ast \nabla \Delta^{-1} \partial_\tau (\nabla \rho_0 \cdot u)(\tau) \, d\tau \\
&\quad - \int_0^t K_{x_0}(t-\tau) \ast \mathbb{P} \partial_\tau (\varrho u)(\tau) \, d\tau \\
&\quad - \int_0^t \int K_{x_0}(t-\tau, x-y) (\rho_0(y) - \rho_0(x_0)) \partial_\tau u(\tau, y) \, dy \, d\tau \\
&=: u_{L,x_0} + u_{N,x_0} + \sum_{i=1}^3 u_{Ri,x_0},
\end{aligned}
\end{equation}
for any $x_0 \in \mathbb{R}^3$. Since \eqref{decop} holds for every $x_0 \in \mathbb{R}^3$, it holds in particular for $x_0 = x$. If we define $\tilde{u}_{\star}(t,x) = u_{\star,x_0}(t,x)|_{x_0=x}$ for $\star \in \{L, N, R1, R2, R3\}$, then
\begin{equation}\label{deftilu}
u(t,x) = \sum_{\star = L, N, R1, R2, R3} \tilde{u}_{\star}(t,x).
\end{equation}
With a slight abuse of notation, we will use $u_R$ and $\tilde{u}_R$ to denote any of the components $u_{R1,x_0}, u_{R2,x_0}, u_{R3,x_0}, \tilde{u}_{R1}, \tilde{u}_{R2}, \tilde{u}_{R3}$ in the sequel. From \eqref{decop}, we observe that $u_{L,x_0}$ and $u_{N,x_0}$ are the critical terms, while the remainder terms $u_R$ enjoy supercritical regularity due to the extra smoothing from $\rho_0$ and $\varrho$.

Here and throughout, we denote $\delta_\alpha f(t,x) := f(t,x+\alpha) - f(t,x)$ for any $\alpha \in \mathbb{R}^3$, and $\delta_a^t f(t,x) := f(t+a,x) - f(t,x)$ for $a \in \mathbb{R}$. A key observation is that
\begin{equation*}
(\delta_\alpha \nabla u_{R3})|_{x_0=x} \neq \delta_\alpha \nabla (u_{R3}|_{x_0=x}).
\end{equation*}
From \eqref{decop}, we obtain the following estimates:
\begin{equation}\label{estnorm}
\begin{aligned}
&\|u(t)\|_{L^p} \lesssim \sum_{\star = L, N, R1, R2, R3} \|\tilde{u}_{\star}(t)\|_{L^p}, \quad \forall p \in [1,\infty],\\
&\|\na|D|^{\kappa} u(t)\|_{L^\infty} \lesssim \sum_{\star = L, N, R1, R2, R3} \| (\na|D|^{\kappa} u_{\star,x_0})|_{x_0=x} \|_{L^\infty},\\
&\frac{\|u(t) - u(s)\|_{L^p}}{(t-s)^\alpha} \lesssim \sum_{\star = L, N, R1, R2, R3} \frac{\|\tilde{u}_{\star}(t) - \tilde{u}_{\star}(s)\|_{L^p}}{(t-s)^\alpha}.
\end{aligned}
\end{equation}
Here and below, we always denote $|D|^{\alpha}$ to be Fourier multiplier with symbol $|\xi|^{\alpha}.$

For the $V_T^b$ norm, note that for any fixed $z$, \eqref{decop} yields
\begin{equation*}
|u(t,x)| \lesssim \sum_{\star = L, N, R1, R2, R3} |u_{\star,z}(t,x)|,
\end{equation*}
which implies
\begin{equation*}
\int_0^{\min\{T,R^2\}} \int_{B(z,R)} |u(t,x)|^2 \, dx \, d\tau \lesssim \sum_{\star = L, N, R1, R2, R3} \int_0^{\min\{T,R^2\}} \int_{B(z,R)} |u_{\star,z}(t,x)|^2 \, dx \, d\tau
\end{equation*}
for any $R < b$. Consequently,
\begin{equation*}
\|u\|_{V_T^b} \lesssim \sup_z \sum_{\star = L, N, R1, R2} \|u_{\star,z}\|_{V_T^b} + \sup_{z \in \mathbb{R}^3} \sup_{R < b} R^{-\frac{3}{2}} \Bigl( \int_0^{\min\{T,R^2\}} \int_{B(z,R)} |u_{R3,z}(t,x)|^2 \, dx \, dt \Bigr)^{\frac{1}{2}}.
\end{equation*}

The estimates for the density $\rho$ established in Section \ref{sec2} allow us to complete the proof, provided that we control the norm $\|u\|_T$, and in particular the $C^1$ component. According to the decomposition \eqref{decop}, the solution $u$ comprises three parts: the linear term from the initial data $u_{L,x_0}$, the bilinear term $u_{N,x_0}$, and the remainder $u_{R,x_0}$. The terms $u_{L,x_0}$ and $u_{N,x_0}$ constitute the critical components of the analysis. To establish the required a priori bounds for $\|u\|_{L^\infty}$ and $\|u\|_{\dot{C}^{1+\kappa}}$, we employ the strategy introduced by Koch and Tataru. This analysis is carried out in Appendix \ref{sub7.2}.

The term $u_{R2}$ can be expressed as
\begin{equation*}
u_{R2} = \int_0^t \int \partial_t K_{x_0}(t-\tau, x-y) \mathbb{P}(\varrho u)(\tau, y) \, dy \, d\tau + \mathbb{P}(\varrho u)(t,x).
\end{equation*}
Although the first integral can be estimated by making a difference, the second term—despite its favorable temporal decay—poses difficulties due to its limited spatial regularity. To circumvent this issue, we adapt the method from \cite{KHN}, specifically Lemma \ref{CVPDE}, by introducing time-weighted Hölder norms in $X_T^1$ and $X_T^2$. This allows us to compensate for the spatial regularity loss from $\rho$ by fully exploiting the available time regularity.

A second challenge lies in estimating the time-weighted Hölder norm of
\begin{equation*}
u_{R1} = \int_0^t \int \partial_t K_{x_0}(t-\tau, x-y) \nabla \Delta^{-1} (\nabla \rho_0 \cdot u)(\tau, y) \, dy \, d\tau + \nabla \Delta^{-1} (\nabla \rho_0 \cdot u)(t,x),
\end{equation*}
particularly the latter term. To address this, we employ energy estimates (developed in Appendix \ref{sec3}) to obtain
\begin{equation*}
\sup_{t \leq T} t \|u_t(t)\|_{L^2} < \infty,
\end{equation*}
which, via interpolation, supplies sufficient regularity for the desired bound.

The inclusion of the $\|u(t)\|_{L^{12}}$ norm in our functional setting is motivated by the fact that the projection $\mathbb{P}$ does not preserve $L^\infty$ but is bounded on $L^p$ for any $p \in (1, \infty)$. All newly introduced norms for $u_{N,x_0}$ and $u_{L,x_0}$ can be controlled using the energy inequality in Lemmas \ref{eng1}-\ref{eng3} together with standard interpolation.

The rest of the paper is organized as follows. Section \ref{sec2} establishes estimates for the transport equation governing the density. Sections \ref{estcri} and \ref{sec5} provide the analysis for the critical terms and the remainder terms, respectively. Section \ref{sec6} synthesizes these a priori estimates and establishes the local existence of a solution, namely, Theorem \ref{mainthm}. Section \ref{sec7} extends the result to the global case, namely, Theorem \ref{thm.global-existence}. Finally, Appendix \ref{append} presents several weighted $L^2$ energy estimates and proves a bilinear estimate of Koch–Tataru type.

\section{Preliminaries}\label{sec2}
In this section, we derive a priori estimates for the density $\rho$, which satisfies the following transport equation:
\begin{equation}\label{eqtrans}
\begin{cases}
\partial_t \rho(t,x) + (u \cdot \nabla \rho)(t,x) = 0,\\[4pt]
\rho|_{t=0}(x) = \rho_0(x),
\end{cases}
\end{equation}
where $u : \mathbb{R}_+ \times \mathbb{R}^3 \to \mathbb{R}^3$ is assumed to be divergence-free and Lipschitz for any $t > 0$. Below we always assume that
\begin{equation}\label{S2eq1}
\|\nabla \rho_0\|_{L^\infty} < \infty \quad \text{and} \quad
\|u\|_T \leq \varepsilon_1,
\end{equation}
where $\|\cdot\|_T$ is defined in \eqref{auxnorm}, and $\varepsilon_1$ is a small constant to be determined later.

We first establish the spatial Hölder regularity of $\rho$ for small times.

\begin{prop}\label{regdiff}
Let $\rho$ be a smooth enough solution of \eqref{eqtrans}. Then, under condition \eqref{S2eq1}, for any $T \leq 1$ and $\gamma \in (0,1)$, there exists $\varepsilon_1 > 0$ depending only on $\gamma$ such that if $\|u\|_T \leq \varepsilon_1$, one has
\begin{equation}\label{S2eqa}
\sup_{t \leq T} \|\rho(t)\|_{C^\gamma} \leq C \|\rho_0\|_{C^1},
\end{equation}
where $C > 1$ is a constant depending only on $\gamma$.
\end{prop}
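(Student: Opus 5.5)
The plan is to work with the Lagrangian flow. Let $X(t,x)$ solve $\partial_t X(t,x)=u(t,X(t,x))$ with $X(0,x)=x$. Since $\rho(t,X(t,x))=\rho_0(x)$, we have $\rho(t)=\rho_0\circ X^{-1}(t)$. The inverse map $X^{-1}(t)$ is obtained by flowing backward along $u$ from time $t$ to time $0$. Because $\|\rho(t)\|_{L^\infty}=\|\rho_0\|_{L^\infty}$, it suffices to bound the Hölder seminorm:
\[
|\rho(t,x)-\rho(t,y)|\le \|\nabla\rho_0\|_{L^\infty}\,|X^{-1}(t,x)-X^{-1}(t,y)|.
\]
So we need $|X^{-1}(t,x)-X^{-1}(t,y)|\lesssim |x-y|^\gamma$ whenever $|x-y|\le 1$; for $|x-y|\ge1$ the $L^\infty$ bound suffices. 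Equivalently, we study two backward trajectories $Z_1(s)$ and $Z_2(s)$, $s\in[0,t]$, with $Z_1(t)=x$ and $Z_2(t)=y$, and set $d(s)=|Z_1(s)-Z_2(s)|$.

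The two parts of the norm $\|u\|_T$ give two competing bounds on $|\dot d(s)|$:
\[
|\dot d(s)|\le 2\|u(s)\|_{L^\infty}\le 2\varepsilon_1 s^{-1/2},\qquad |\dot d(s)|\le \|\nabla u(s)\|_{L^\infty}\,d(s)\le \varepsilon_1 s^{-1}d(s).
\]
Fix $\delta=|x-y|\le1$ and split the time interval at $s_0=\delta^2$.

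On $[s_0,t]$ we use the Lipschitz bound. Grönwall gives
\[
d(s)\le \delta\,(t/s)^{\varepsilon_1}\le \delta\, s^{-\varepsilon_1}\le \delta^{1-2\varepsilon_1}\quad\text{for } s\ge \delta^2,
\]
using $t\le T\le1$. On $[0,s_0]$ we use the $L^\infty$ bound. Integrating $2\varepsilon_1 s^{-1/2}$ over $[0,\delta^2]$ adds at most $4\varepsilon_1\delta$. Hence
\[
d(0)\le \delta^{1-2\varepsilon_1}+4\varepsilon_1\delta\lesssim \delta^{1-2\varepsilon_1}.
\]
If $t<\delta^2$, only the second step is needed, and it gives $d(0)\le \delta+4\varepsilon_1\delta$.

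Choosing $\varepsilon_1\le (1-\gamma)/2$ gives $1-2\varepsilon_1\ge \gamma$. Then
\[
\|\rho(t)\|_{C^\gamma}\le \|\rho_0\|_{L^\infty}+C\|\nabla\rho_0\|_{L^\infty}\le C\|\rho_0\|_{C^1}.
\]

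The main obstacle is the non-integrable $s^{-1}$ singularity of $\|\nabla u\|_{L^\infty}$, which prevents preserving Lipschitz regularity. The time splitting at $s_0=|x-y|^2$ is precisely what converts this log-divergence into a small loss of Hölder exponent. The remaining points are routine justification. The flow is well defined because $u$ is Lipschitz for $t>0$ and $\int_0 \|u\|_{L^\infty}\,ds<\infty$, which gives existence and uniqueness of trajectories up to $s=0$. One also needs $\dot d\le|\dot Z_1-\dot Z_2|$ together with a mean-value estimate for $|u(s,Z_1)-u(s,Z_2)|$. Since $\rho$ is assumed smooth enough, the representation $\rho=\rho_0\circ X^{-1}$ is legitimate.
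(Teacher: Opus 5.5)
Your argument is correct and follows essentially the same route as the paper. Both use the representation $\rho(t)=\rho_0\circ X_t^{-1}$, a backward Grönwall estimate in which $\|\nabla u(s)\|_{L^\infty}\lesssim\varepsilon_1 s^{-1}$ turns the logarithmic divergence into a factor $(t/s)^{C\varepsilon_1}$, and the bound $\|u(s)\|_{L^\infty}\le\varepsilon_1 s^{-1/2}$ on an initial time layer. The only difference is the splitting time: you take the parabolic choice $s_0=|x-y|^2$ (using $t\le 1$), while the paper takes $\tau=t\,|x-y|^{2\gamma}/(1+2|x-y|^{2\gamma})$, and this changes nothing essential.
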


\begin{proof}
Let $X(t,x)$ be the trajectory map associated with $u$, defined by
\begin{equation}\label{Eq.trajectory}
\begin{aligned}
\partial_t X(t,x) = u(t, X(t,x)), \quad X(0,x) = x.
\end{aligned}
\end{equation}
Then the transport equation implies that
\begin{equation}\label{transol}
\rho(t, X(t,x)) = \rho_0(x), \quad \text{i.e.,} \quad \rho(t,x) = \rho_0(X_t^{-1}(x)),
\end{equation}
where $X_t : \mathbb{R}^3 \to \mathbb{R}^3$ is defined by $X_t(x) = X(t,x)$.

Note that
\begin{equation*}
-\|\nabla u(t)\|_{L^\infty} |X(t,x) - X(t,y)| \leq \frac{d}{dt} |X(t,x) - X(t,y)| \leq \|\nabla u(t)\|_{L^\infty} |X(t,x) - X(t,y)|.
\end{equation*}
By Grönwall's lemma, we obtain
\begin{equation}\label{IFM}
|X(s,x) - X(s,y)| \leq \exp\left( \int_s^t \|\nabla u(\tau)\|_{L^\infty} \, d\tau \right) |X(t,x) - X(t,y)|, \quad \forall s < t.
\end{equation}
Moreover, from the definition \eqref{auxnorm} of $\|u\|_T$, we have
\begin{equation*}
\int_s^t \|\nabla u(\tau)\|_{L^\infty} \, d\tau \leq C\|u\|_T \log\left( \frac{t}{s} \right),
\end{equation*}
which together with \eqref{IFM} yields
\begin{equation}\label{diffrho}
|X(s,x) - X(s,y)| \leq \exp\Bigl( C\|u\|_T \log\bigl( \frac{t}{s} \bigr) \Bigr) |X(t,x) - X(t,y)|, \quad \forall s < t.
\end{equation}    

	Now we use this inequality to derive the Hölder estimate of $\rho$. Let $x, y \in \mathbb{R}^3$. Then there exist $\overline{x}, \overline{y} \in \mathbb{R}^3$ such that
\begin{equation*}
\begin{aligned}
&\rho(t,x) = \rho_0(\overline{x}), \quad x = X(t, \overline{x}),\\
&\rho(t,y) = \rho_0(\overline{y}), \quad y = X(t, \overline{y}).
\end{aligned}
\end{equation*}
Consequently, for any $\tau < t$, by \eqref{Eq.trajectory} and \eqref{transol} we have
\begin{equation*}
\begin{aligned}
|\rho(t,x) - \rho(t,y)| &\leq \|\nabla \rho_0\|_{L^\infty} |\overline{x} - \overline{y}| \\
&\leq \|\nabla \rho_0\|_{L^\infty} |X(\tau, \overline{x}) - X(\tau, \overline{y})| + 2 \|\nabla \rho_0\|_{L^\infty} \int_0^\tau \|u(\tau)\|_{L^\infty} \, d\tau \\
&\leq \|\nabla \rho_0\|_{L^\infty} |X(\tau, \overline{x}) - X(\tau, \overline{y})| + 2 \|\nabla \rho_0\|_{L^\infty} \|u\|_T \tau^{\frac{1}{2}}.
\end{aligned}
\end{equation*}
For any $\gamma \in (0,1)$ and $t \in [0,T]$, we take $\tau = t \frac{|x-y|^{2\gamma}}{1 + 2|x-y|^{2\gamma}}$. Then it follows from \eqref{diffrho} and assumption \eqref{S2eq1} that
\begin{equation*}
\begin{aligned}
|\rho(t,x) - \rho(t,y)| &\leq \|\nabla \rho_0\|_{L^\infty} \exp\left( \log\Bigl(2 + \frac{1}{|x-y|^{2\gamma}}\Bigr) \|u\|_T \right) |X(t, \overline{x}) - X(t, \overline{y})| \\
&\quad + 2\sqrt{t} \|\nabla \rho_0\|_{L^\infty} \|u\|_T |x-y|^\gamma \\
&\leq \|\nabla \rho_0\|_{L^\infty} \left( \exp\bigl(-C_0 \gamma \varepsilon_1 \log(|x-y|)\bigr) |x-y| + 2\sqrt{t} \varepsilon_1 |x-y|^\gamma \right) \\
&\lesssim \|\nabla \rho_0\|_{L^\infty} |x-y|^\gamma, \quad \forall |x-y| < 1/2,
\end{aligned}
\end{equation*}
for some universal $C_0$, provided we choose $\varepsilon_1$ sufficiently small such that $1 - C_0 \gamma \varepsilon_1 \geq \gamma$. Note that a direct corollary of \eqref{transol} is
\begin{equation}\label{lowerbd}
\|\rho(t)\|_{L^\infty} \leq \|\rho_0\|_{L^\infty}, \quad \forall t \geq 0.
\end{equation}
This completes the proof \eqref{S2eqa}.
\end{proof}

Furthermore, it is important to establish the time regularity of $\rho$. In the remainder of this paper, we denote
\begin{equation}\label{eq3.1}
|D|^\eta f(x) := (-\Delta)^{\frac{\eta}{2}} f(x) = \mathcal{F}^{-1}(|\cdot|^\eta \widehat{f}(\cdot))(x) = C_{d,\eta} \int \frac{\delta_\alpha f(x)}{|\alpha|^{d+\eta}} \, d\alpha, \quad \forall \eta \in (0,2),
\end{equation}
noting that
\[
\int_{\mathbb{R}^d} \frac{1 - e^{i\xi\cdot\alpha}}{|\alpha|^{d+\eta}} \, d\alpha = C_{d,\kappa} |\xi|^{\eta}.
\]
Since for $\kappa \in (0,1)$ we have $\|u\|_{\dot{C}^{\kappa}} = \|u\|_{\dot{B}_{\infty,\infty}^{\kappa}}$, Proposition \ref{regdiff} implies that
\begin{equation*}
\||D|^\eta \rho\|_{L_T^\infty L^\infty} < \infty, \quad \forall \eta \in (0,1).
\end{equation*}
We next quantify the time regularity of fractional derivatives of $\rho$.

\begin{prop}\label{Holtrho}
Let $\alpha \in (0,1)$ and $0 \leq \alpha' < \alpha$. Assume that $\rho$ is the solution of \eqref{eqtrans} with $\||D|^{\alpha}\rho\|_{L_T^\infty L^\infty} + \|u\|_T < \infty$. Then for any $0 < s < t < T$, we have
\begin{equation*}
\||D|^{\alpha'}\rho(t) - |D|^{\alpha'}\rho(s)\|_{L^\infty} \lesssim \||D|^{\alpha}\rho\|_{L_T^\infty L^\infty} (1 + \|u\|_T) \min\Bigl\{ (t-s)^{\frac{\alpha-\alpha'}{2}}, s^{-\frac{\alpha-\alpha'}{2}} (t-s)^{\alpha-\alpha'} \Bigr\}.
\end{equation*}
\end{prop}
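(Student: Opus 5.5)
Write $\beta := \alpha - \alpha' \in (0,1]$ and $\delta^t_{t-s}\rho := \rho(t) - \rho(s)$. The plan is to use the transport equation $\partial_t\rho = -\operatorname{div}(u\rho)$, which holds because $\nabla\cdot u = 0$. Integrating in time gives
\[
\rho(t)-\rho(s) = -\int_s^t \operatorname{div}(u\rho)(\tau)\,d\tau .
\]
We then interpolate between two bounds for this difference: a high-regularity bound of ``one derivative'' type, and a low-regularity bound coming directly from $|D|^\alpha\rho \in L^\infty$. Concretely, we split with a Littlewood–Paley-type cutoff at frequency $\lambda$, or equivalently compare with the heat semigroup $e^{h\Delta}$ at scale $h = \lambda^{-2}$:
\[
|D|^{\alpha'}\delta^t_{t-s}\rho = |D|^{\alpha'}(1-e^{h\Delta})\delta^t_{t-s}\rho + |D|^{\alpha'}e^{h\Delta}\delta^t_{t-s}\rho .
\]

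\textbf{High-frequency part.} This part uses only $\sup_\tau\||D|^\alpha\rho(\tau)\|_{L^\infty}$. The operator $|D|^{\alpha'}(1-e^{h\Delta})|D|^{-\alpha}$ has a kernel with $L^1$ norm $\lesssim h^{\beta/2}$. This can be checked on dyadic pieces, or through the representation \eqref{eq3.1}. Hence this part is bounded by $\lesssim h^{\beta/2}\||D|^\alpha\rho\|_{L^\infty_TL^\infty}$.

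\textbf{Low-frequency part.} Here we use the equation. The key step is to move one derivative onto the smooth kernel and keep the $\alpha$ fractional derivatives on $\rho$ through a commutator/product estimate. It is cleaner to write $\operatorname{div}(u\rho) = u\cdot\nabla\rho$ and bound
\[
\bigl\||D|^{\alpha'}e^{h\Delta}(u\cdot\nabla\rho)\bigr\|_{L^\infty} \lesssim h^{-\frac{1-\beta}{2}}\,\bigl\||D|^{\alpha-1}\nabla\cdot(u\rho)\bigr\|_{L^\infty}\text{-type quantities}.
\]
Since $\rho \in \dot C^{\alpha}$ and $u$ is Lipschitz for $\tau>0$, a product estimate in $\dot C^\alpha$ (equivalently $\dot B^{\alpha}_{\infty,\infty}$) gives
\[
\|u\rho\|_{\dot C^\alpha} \lesssim \|u\|_{L^\infty}\|\rho\|_{\dot C^\alpha} + \|\rho\|_{L^\infty}\|u\|_{\dot C^\alpha}.
\]
This introduces $\|\rho\|_{L^\infty}$. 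To avoid it, we instead use the commutator form
\[
\delta_\gamma(u\rho) - u\,\delta_\gamma\rho = (\delta_\gamma u)\,\rho(\cdot+\gamma),
\]
and subtract constants: $\operatorname{div}(u(\rho - c)) = \operatorname{div}(u\rho)$ for any constant $c$. Locally, $\rho - \rho(x)$ is of size $|\gamma|^\alpha\||D|^\alpha\rho\|$, and $\|u\|_{\dot C^1} \lesssim \tau^{-1}\|u\|_T$. The resulting bound is
\[
\bigl\||D|^{\alpha'}e^{h\Delta}\partial_\tau\rho(\tau)\bigr\|_{L^\infty} \lesssim \|u\|_T\,\||D|^\alpha\rho\|_{L^\infty}\, h^{-\frac{1-\beta}{2}}\,\bigl(\tau^{-\frac12} + h^{\frac12}\tau^{-1}\bigr).
\]
The first term comes from the $\|u\|_{L^\infty}$ part and the second from the $\nabla u$ part.

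\textbf{Integration and optimization.} Integrating from $s$ to $t$ and adding the high-frequency part gives
\[
\||D|^{\alpha'}\delta^t_{t-s}\rho\|_{L^\infty} \lesssim \||D|^\alpha\rho\|_{L^\infty_TL^\infty}(1+\|u\|_T)\Bigl[h^{\beta/2} + h^{-\frac{1-\beta}{2}}\Bigl(\int_s^t\tau^{-\frac12}\,d\tau + h^{\frac12}\log\tfrac ts\Bigr)\Bigr].
\]
For the first bound in the minimum, we use $\int_s^t\tau^{-1/2}\,d\tau \le 2(t-s)^{1/2}$ and choose $h = t-s$. The first two terms then give $(t-s)^{\beta/2}$. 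The log term $\log(t/s)$ is the main obstacle here. To handle it, we bound $\|u\|_{\dot C^1} \lesssim \tau^{-1}$ only through the interpolation $\|u\|_{\dot C^\alpha}$-type bound $\tau^{-(1+\alpha)/2}$ if needed. Alternatively, we split the $\nabla u$ term as $|\gamma|\cdot\min\{\tau^{-1},\,\tau^{-1/2}|\gamma|^{-1}\}$ so that its contribution integrates to $(t-s)^{1/2}$.

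For the second bound in the minimum, we use $\tau \ge s$, so the integrand is $\lesssim s^{-1/2} + h^{1/2}s^{-1}$. Its integral is $(t-s)s^{-1/2} + h^{1/2}(t-s)s^{-1}$. Choosing $h = (t-s)^2/s \le t-s$ in the relevant regime $t-s \le s$ gives $s^{-\beta/2}(t-s)^{\beta}$. In the regime $t-s > s$, the first bound is already smaller, so nothing more is needed there. The step I expect to require the most care is exactly this endpoint handling of the $\nabla u$ contribution, which behaves like $\tau^{-1}$, so that no logarithm appears in the first bound.
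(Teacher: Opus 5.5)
Your proposal is essentially the paper's argument. Both regularize at scale $\varepsilon=\sqrt h$ (the paper with a mollifier $\psi_\varepsilon$, you with $e^{h\Delta}$) and pay $\varepsilon^{\alpha-\alpha'}$ for the regularization error. Both bound the time derivative of the regularized $|D|^{\alpha'}\rho$ by $\varepsilon^{\alpha-\alpha'-1}\|u(\tau)\|_{L^\infty}\||D|^\alpha\rho\|_{L^\infty}$ without using $\|\rho\|_{L^\infty}$: the paper writes $\operatorname{div}(u\rho)=u\cdot\nabla\rho_\varepsilon+\operatorname{div}(u(\rho-\rho_\varepsilon))$, and you subtract $\rho(x)$ using $\nabla\cdot u=0$. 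Both then optimize with $\varepsilon^2=\min\{t-s,(t-s)^2/s\}$.

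The one point to correct is that the $\nabla u$ contribution, and the $\log(t/s)$ you single out as the main obstacle, are self-inflicted. With $K_h$ the kernel of $|D|^{\alpha'}\nabla e^{h\Delta}$ one has exactly $|D|^{\alpha'}e^{h\Delta}\operatorname{div}(u\rho)(x)=\int K_h(x-y)\cdot u(y)\bigl(\rho(y)-\rho(x)\bigr)\,dy$. The per-time bound is therefore just $h^{-\frac{1-\beta}{2}}\tau^{-\frac12}\|u\|_T\||D|^\alpha\rho\|_{L^\infty}$, which is what your ``alternative'' split reduces to. No logarithm ever appears, and, as in the paper, the $\dot C^1$ part of $\|u\|_T$ is not needed at all.
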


\begin{proof}
Let $\psi \in \mathcal{S}(\mathbb{R}^3)$ be a standard mollifier. Denote $\psi_\varepsilon = \varepsilon^{-3} \psi(\cdot/\varepsilon)$ and $\rho_\varepsilon = \rho \ast \psi_\varepsilon$. Then
\begin{align*}
|D|^{\alpha'}\rho(t) - |D|^{\alpha'}\rho(s) = &\bigl( |D|^{\alpha'}\rho(t) - |D|^{\alpha'}\rho_\varepsilon(t) \bigr) \\
&- \bigl( |D|^{\alpha'}\rho(s) - |D|^{\alpha'}\rho_\varepsilon(s) \bigr) + \bigl( |D|^{\alpha'}\rho_\varepsilon(t) - |D|^{\alpha'}\rho_\varepsilon(s) \bigr).
\end{align*}
For the mollification error, using standard inequalities, we obtain
\begin{equation*}
\begin{aligned}
||D|^{\alpha'}\rho(x) - |D|^{\alpha'}\rho_\varepsilon(x)| &= \left| \int \bigl( |D|^{\alpha'}\rho(x) - |D|^{\alpha'}\rho(y) \bigr) \psi_\varepsilon(x-y) \, dy \right| \\
&\lesssim \||D|^\alpha \rho\|_{L_T^\infty L^\infty} \| |\cdot|^{\alpha-\alpha'} \phi_\varepsilon(\cdot) \|_{L^1} \lesssim \||D|^\alpha \rho\|_{L_T^\infty L^\infty} \varepsilon^{\alpha-\alpha'}.
\end{aligned}
\end{equation*}
Moreover, the mollified function $\rho_\varepsilon$ satisfies
\begin{equation*}
\partial_t \rho_\varepsilon + \psi_\varepsilon \ast (u \cdot \nabla \rho_\varepsilon) + \psi_\varepsilon \ast \nabla \cdot \bigl( u (\rho - \rho_\varepsilon) \bigr) = 0,
\end{equation*}
hence
\begin{equation*}
\begin{aligned}
||D|^{\alpha'}\rho_\varepsilon(t) - |D|^{\alpha'}\rho_\varepsilon(s)|
&\lesssim \Bigl ||D|^{\alpha'} \psi_\varepsilon \ast \int_s^t (\nabla \rho_\varepsilon \cdot u)(\tau) \, d\tau \Bigr|
+ \Bigl| |D|^{\alpha'} \nabla \psi_\varepsilon \ast \int_s^t (\rho - \rho_\varepsilon) u(\tau) \, d\tau \Bigr| \\
&\lesssim \||D|^\alpha \rho\|_{L^\infty} \|u\|_T \int_s^t \varepsilon^{\alpha-\alpha'-1} \tau^{-\frac{1}{2}} \, d\tau \\
&\lesssim \||D|^\alpha \rho\|_{L^\infty} \|u\|_T \, \varepsilon^{\alpha-\alpha'-1} \min\bigl\{ s^{-\frac{1}{2}} (t-s), (t-s)^{\frac{1}{2}} \bigr\},
\end{aligned}
\end{equation*}
where we used the estimate
\begin{equation*}
|\nabla \rho_\varepsilon| \lesssim ||D|^\alpha \rho \ast \nabla |D|^{-\alpha} \psi_\varepsilon| \lesssim \||D|^\alpha \rho\|_{L^\infty} \varepsilon^{-1+\alpha}.
\end{equation*}
Optimizing with respect to $\varepsilon$ by taking $\varepsilon = \min\bigl\{ s^{-\frac{1}{2}} (t-s), (t-s)^{\frac{1}{2}} \bigr\}$ yields the desired estimate.
\end{proof}

For any $\eta > -d$, we call $\mathcal{P}^{\eta}$ an $\eta$-order Fourier multiplier if $\mathcal{P}^{\eta} f = \mathcal{F}^{-1}(P^{\eta} \widehat{f})$, where $P^{\eta}$ is an $\eta$-order homogeneous function. We list some properties of the heat kernel $K_{x_0}(t,x)$ in the following.

\begin{prop}\label{S2prop3}
Assume that $0 < C_1 < \rho_0(x) < C_2 < \infty$. Let $K_{x_0}(t,x)$ be determined by \eqref{defKx0}. Then we have the pointwise estimate
\begin{equation}\label{esthtk}
\sup_{x_0} |\partial_t^{m} \mathcal{P}^{\eta} K_{x_0}(t,x)| \leq \frac{C_{m,\eta}(C_1,C_2)}{(\sqrt{t} + |x|)^{d + 2m + \eta}}, \quad \forall m \in \mathbb{N}, \, \eta > -d.
\end{equation}
Furthermore, we have the time Hölder estimate
\begin{equation}\label{htkint}
\sup_{x_0} \| \delta_a^t \partial_t^{m} \mathcal{P}^{\eta} K_{x_0}(t,x) \|_{L_x^1} \leq C_{m,\eta}(C_1,C_2) t^{-m - \frac{\eta}{2}} \min\left\{ 1, \frac{a}{t} \right\}, \quad \forall m \in \mathbb{N}, \, \eta > 0.
\end{equation}
\end{prop}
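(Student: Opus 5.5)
The plan is to write $K_{x_0}$ explicitly and reduce everything to estimates for the standard heat kernel. Set $\lambda := \rho_0(x_0)^{-1} \in (C_2^{-1}, C_1^{-1})$. Then $K_{x_0}(t,x) = G(\lambda t, x)$, where $G(t,x) = (4\pi t)^{-d/2} e^{-|x|^2/(4t)}$ is the heat kernel. Consequently
\begin{equation*}
\partial_t^m \mathcal{P}^\eta K_{x_0}(t,x) = \lambda^m (\partial_t^m \mathcal{P}^\eta G)(\lambda t, x),
\end{equation*}
and since $\lambda$ ranges in a compact subset of $(0,\infty)$, $\sqrt{\lambda t} + |x| \sim \sqrt t + |x|$ with constants depending only on $C_1, C_2$. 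The supremum over $x_0$ is therefore harmless, and it suffices to prove both estimates for $G$.

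For the pointwise bound \eqref{esthtk}, first use $\partial_t^m G = \Delta^m G$. Then $\partial_t^m \mathcal{P}^\eta G(t,\cdot)$ is the inverse Fourier transform of $P^\eta(\xi)(-|\xi|^2)^m e^{-t|\xi|^2}$. By parabolic scaling,
\begin{equation*}
\partial_t^m \mathcal{P}^\eta G(t,x) = t^{-\frac{d+2m+\eta}{2}}\, F\bigl(x/\sqrt t\bigr),
\end{equation*}
where $F = \mathcal{F}^{-1}\bigl(P^\eta(\xi)(-|\xi|^2)^m e^{-|\xi|^2}\bigr)$. So it suffices to show $|F(y)| \lesssim (1+|y|)^{-(d+2m+\eta)}$.

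Boundedness of $F$ is immediate, because the symbol is integrable ($\eta > -d$). The decay is the main step. Split the symbol with a smooth cutoff $\chi$ near the origin. The part $(1-\chi)\cdot$symbol is Schwartz, so its inverse transform decays rapidly. The part $\chi(\xi) P^\eta(\xi)(-|\xi|^2)^m e^{-|\xi|^2}$ can be Taylor expanded: it equals $P^\eta(\xi)|\xi|^{2m}$ times a smooth function, plus higher-order homogeneous terms up to a remainder of high regularity. The inverse transform of a homogeneous symbol of degree $\eta + 2m + 2k$, cut off near $0$, decays like $|y|^{-d-\eta-2m-2k}$; this is the standard fact that such an inverse transform is homogeneous of degree $-d-\eta-2m-2k$ away from the origin. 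The remainder is $C^N$-regular enough to give decay $|y|^{-N'}$ after integrating by parts.

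This is where the main difficulty lies, in particular the edge case where the degree $\eta + 2m$ is an even integer, since the leading term may then be a polynomial times $\chi$. In that case the inverse transform contributes only a Schwartz-type term, which is even better. An alternative route is to use the kernel representation, writing $\mathcal{P}^\eta$ as convolution with a homogeneous kernel of degree $-d-\eta$ (for $\eta$ in the appropriate range). One then estimates $\int |y|^{-d-\eta}\,\partial^{2m}G(t,x-y)\,dy$ directly using the cancellation of $\partial^{2m}G$, splitting into the regions $|y| < |x|/2$, $|x-y| < |x|/2$ and the rest. I would present the Fourier/homogeneity argument since it covers all $\eta > -d$ uniformly.

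For \eqref{htkint}, with $\eta > 0$ the bound \eqref{esthtk} is integrable in $x$ and gives
\begin{equation*}
\|\partial_t^m \mathcal{P}^\eta K_{x_0}(t)\|_{L^1} \lesssim t^{-m-\frac{\eta}{2}}.
\end{equation*}
This is exactly where $\eta > 0$ is needed, since for $\eta \le 0$ the decay $|x|^{-d-2m-\eta}$ may fail to be integrable when $m=0$. When $a \ge t$, apply this bound at times $t+a$ and $t$ and use the triangle inequality; this gives $t^{-m-\eta/2}$. When $a < t$, write
\begin{equation*}
\delta_a^t f = \int_t^{t+a} \partial_s f\, ds,
\end{equation*}
and apply the $L^1$ bound with $m+1$ at times $s \in [t, 2t]$. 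This gives $a\, t^{-m-1-\eta/2}$, which combined with the first case yields $t^{-m-\eta/2}\min\{1, a/t\}$.
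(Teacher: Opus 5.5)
Your proposal is correct and follows essentially the paper's route. The pointwise bound \eqref{esthtk} comes from a low/high-frequency splitting of the symbol: the paper only cites \cite{KHN24} for this, while you supply the details through the exact rescaling $K_{x_0}(t,x)=G(t/\rho_0(x_0),x)$, parabolic scaling, and the homogeneity of $\mathcal{F}^{-1}P^{\lambda}$ away from the origin, which tacitly needs $P^\eta$ smooth on the unit sphere, as is implicit in the paper's definition of $\mathcal{P}^\eta$ as well. The estimate \eqref{htkint} comes from writing $\delta_a^t$ as $\int_t^{t+a}\partial_s^{m+1}\,ds$ and integrating \eqref{esthtk} in $x$, and your separate triangle-inequality treatment of $a\ge t$ supplies the $\min\{1,\cdot\}$ branch that the paper leaves implicit.
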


\begin{proof}
The estimate \eqref{esthtk} follows from a classical decomposition into low and high frequencies; details can be found in \cite{KHN24}. For \eqref{htkint}, we observe that
\begin{equation*}
\delta_a^t \partial_t^{m} \mathcal{P}^{\eta} K_{x_0}(t,x) = \int_t^{t+a} \partial_t^{m+1} \mathcal{P}^{\eta} K_{x_0}(s,x) \, ds,
\end{equation*}
and then \eqref{htkint} follows directly from \eqref{esthtk}.
\end{proof}    

    \section{Estimates for critical terms}\label{estcri}
    In this section, we establish the main estimates for $u_{L,x_0}$ and $u_{N,x_0}$ defined in \eqref{decop}, following the approach of Koch and Tataru in \cite{KT01}.

We will frequently use the following property, whose proof can be found in Section 2.4 of \cite{BCD}:
\begin{equation*}
\|u\|_{\dot{C}^{\kappa}} \sim \|u\|_{\dot{B}_{\infty,\infty}^{\kappa}} \sim \||D|^{\kappa} u\|_{\dot{B}_{\infty,\infty}^{0}} \lesssim \||D|^{\kappa} u\|_{L^\infty}, \quad \forall \kappa \in (0,1).
\end{equation*}

For the linear term $u_{L,x_0}$, by the properties of the heat kernel, we have the following proposition:

\begin{prop}\label{uL}
Let $u_0 \in L^2$ satisfy $\|u_0\|_{\mathrm{BMO}_b^{-1}} \leq \varepsilon_0$. Then there exists $T \sim b^2$ such that for $\tilde{u}_L$ defined in \eqref{deftilu}, the following estimates hold:
\begin{equation}\label{estuLx}
\begin{aligned}
&\sup_{x_0} \Bigl( \sup_{t \leq T} t^{\frac{1}{2}} \|u_{L,x_0}(t)\|_{L^\infty} + \|u_{L,x_0}\|_{V_T^b} + \sup_{s < t < T} s^{\frac{1}{2} + \varkappa_1} \frac{\|u_{L,x_0}(t) - u_{L,x_0}(s)\|_{L^\infty}}{(t-s)^{\varkappa_1}} \Bigr) \\
&\quad + \sup_{t \leq T} t^{1+\frac{\kappa}{2}} \| (\nabla D^{\kappa} u_{L,x_0})|_{x_0=x}(t) \|_{L^\infty}
+ \sup_{t \leq T} t^{\frac{5}{12}} \|\tilde{u}_L(t)\|_{L^{12}} \\
&\quad + \sup_{s < t < T} s^{\frac{5}{12} + \varkappa_2} \frac{\|\tilde{u}_L(t) - \tilde{u}_L(s)\|_{L^{12}}}{(t-s)^{\varkappa_2}} \\
&\lesssim \|u_0\|_{\mathrm{BMO}_b^{-1}} + \|u_0\|_{\mathrm{BMO}_b^{-1}}^{\frac{5}{6}} \|u_0\|_{L^2}^{\frac{1}{6}}.
\end{aligned}
\end{equation}
\end{prop}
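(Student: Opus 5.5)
The key observation is that $K_{x_0}(t)$ is just a rescaled heat kernel: since $\rho_0(x_0)\partial_t K - \Delta K=0$, we have $K_{x_0}(t)=e^{\lambda t\Delta}\delta$ with $\lambda=\rho_0(x_0)^{-1}\in(C_2^{-1},C_1^{-1})$. Hence
\[
u_{L,x_0}(t)=e^{\lambda t\Delta}u_0 .
\]
Every estimate for $u_{L,x_0}$ therefore reduces to a heat-semigroup estimate at time $\lambda t$, which is comparable to $t$ uniformly in $x_0$. The $x_0$-uniform norms on the first line of \eqref{estuLx} will be proved for a fixed $\lambda$ with constants depending only on $C_1,C_2$. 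For the diagonal quantities $\tilde u_L(t,x)=(e^{\lambda(x)t\Delta}u_0)(x)$ we bound pointwise by $\sup_\lambda|e^{\lambda t\Delta}u_0(x)|$, or by $\sup_\lambda|\partial_\lambda e^{\lambda t\Delta}u_0|$ where needed. We choose $T=c\,b^2$ with $c\le C_2^{-1}$ small, so that $\lambda T\le b^2$.

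\textbf{$L^\infty$ and $V_T^b$ parts.} First, $t^{1/2}\|e^{\lambda t\Delta}u_0\|_{L^\infty}\lesssim (\lambda t)^{1/2}\|e^{\lambda t\Delta}u_0\|_{L^\infty}\le \|u_0\|_{\mathrm{BMO}^{-1}_b}$ directly from the definition. For $V_T^b$, change variables $t'=\lambda t$. With $R<b$, the time interval $[0,\min\{T,R^2\}]$ maps into $[0,\lambda R^2]\subset[0,(C R)^2]$. A ball of radius $CR$ is covered by $O(1)$ balls of radius $R$, and each of these carries Carleson mass over time $[0,(CR)^2]$. That longer time interval is handled by splitting $[R^2,(CR)^2]$ and using the $L^\infty$ bound $|e^{t'\Delta}u_0|\le t'^{-1/2}\|u_0\|_{\mathrm{BMO}_b^{-1}}$ for $t'\le b^2$, which contributes $\lesssim R^3\|u_0\|^2$. (Alternatively, shrink $b$ by a constant factor.)

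\textbf{Higher derivatives and time differences.} For the remaining pieces we use semigroup splitting: $e^{\lambda t\Delta}u_0=e^{\lambda t\Delta/2}\,e^{\lambda t\Delta/2}u_0$, where the inner factor is $O(t^{-1/2}\|u_0\|_{\mathrm{BMO}^{-1}_b})$ in $L^\infty$. This gives
\[
\|\nabla|D|^\kappa e^{\lambda t\Delta}u_0\|_{L^\infty}\lesssim t^{-1-\kappa/2}\|u_0\|_{\mathrm{BMO}_b^{-1}} .
\]
The frozen operator $(\nabla D^\kappa u_{L,x_0})|_{x_0=x}$ is evaluated at a fixed $\lambda$ before setting $x_0=x$, so this bound is uniform. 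For the time differences with $s<t$, write $e^{\lambda t\Delta}u_0-e^{\lambda s\Delta}u_0=\int_s^t\lambda\Delta e^{\lambda\tau\Delta}u_0\,d\tau$. Its $L^\infty$ norm is bounded by $\min\{2s^{-1/2},\,(t-s)s^{-3/2}\}\|u_0\|$. Interpolating the two bounds gives $s^{-1/2-\varkappa_1}(t-s)^{\varkappa_1}$, valid since $\varkappa_1<1$.

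\textbf{The $L^{12}$ pieces, which are the main obstacle.} These carry the factor $\|u_0\|_{L^2}^{1/6}$, and they require handling the diagonal $x_0=x$, where $\lambda(x)$ varies with $x$. We bound
\[
|\tilde u_L(t,x)|\le \sup_{\lambda\in[C_2^{-1},C_1^{-1}]}|e^{\lambda t\Delta}u_0(x)|\le |e^{\lambda_* t\Delta}u_0(x)|+\int |\partial_\lambda e^{\lambda t\Delta}u_0(x)|\,d\lambda ,
\]
with $\partial_\lambda e^{\lambda t\Delta}u_0=t\Delta e^{\lambda t\Delta}u_0$. Then we interpolate $L^{12}$ between $L^2$ and $L^\infty$:
\[
\|f\|_{L^{12}}\le\|f\|_{L^2}^{1/6}\|f\|_{L^\infty}^{5/6} .
\]
For $f=e^{\lambda t\Delta}u_0$ or $t\Delta e^{\lambda t\Delta}u_0$, the $L^2$ factor is $\lesssim\|u_0\|_{L^2}$ and the $L^\infty$ factor is $\lesssim t^{-1/2}\|u_0\|_{\mathrm{BMO}_b^{-1}}$. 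The product is $t^{-5/12}\|u_0\|_{\mathrm{BMO}^{-1}_b}^{5/6}\|u_0\|_{L^2}^{1/6}$, matching the weight $t^{5/12}$. The $L^{12}$ time-Hölder piece is handled the same way. We bound the difference pointwise by $\sup_\lambda|\int_s^t\lambda\Delta e^{\lambda\tau\Delta}u_0\,d\tau|$, again controlling the supremum over $\lambda$ by the $\lambda$-derivative. The $L^2$ bound is $\min\{1,(t-s)/s\}\|u_0\|_{L^2}$ and the $L^\infty$ bound is $\min\{s^{-1/2},(t-s)s^{-3/2}\}\|u_0\|$. Interpolating gives $s^{-5/12}\min\{1,(t-s)/s\}$, which is at most $s^{-5/12-\varkappa_2}(t-s)^{\varkappa_2}$ because $\varkappa_2<1$. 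The delicate points are justifying the supremum over $\lambda$ via the fundamental theorem in $\lambda$, and checking that no derivative falls on $\rho_0(x)$; none does, since we only take norms of $\tilde u_L$ and not its spatial derivatives.
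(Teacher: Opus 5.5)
Your proof is correct. For the $L^\infty$, $\dot C^{1+\kappa}$, time-difference and $V_T^b$ pieces it follows the paper: you write $u_{L,x_0}=e^{\lambda t\Delta}u_0$ with $\lambda=\rho_0(x_0)^{-1}$, split the semigroup, express time increments by the fundamental theorem of calculus, and use $\min\{1,(t-s)/s\}\le((t-s)/s)^{\varkappa}$. Your handling of the enlarged window $[R^2,\lambda R^2]$ in $V_T^b$, via $|e^{t'\Delta}u_0|\le t'^{-1/2}\|u_0\|_{\mathrm{BMO}^{-1}_b}$, is cleaner than the paper's covering remark.

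You genuinely differ on the diagonal $L^{12}$ estimates. The paper exploits the explicit positive Gaussian: it bounds $|\tilde u_L(t,x)|\le\int\sup_z K(t,x-y,z)\,|u_0(y)|\,dy$. Here $\sup_z K(t,\cdot,z)\lesssim t^{-3/2}e^{-C_1|\cdot|^2/(4t)}$ has uniformly bounded $L^1$ norm, so Young's inequality gives $\|\tilde u_L(t)\|_{L^2}\lesssim\|u_0\|_{L^2}$. Likewise $\|\sup_z|K(t)-K(s)|\|_{L^1}\lesssim\min\{1,(t-s)/s\}$ handles the increment. The paper then interpolates with $\|\tilde u_L\|_{L^\infty}\le\sup_{x_0}\|u_{L,x_0}\|_{L^\infty}$.

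You instead control the supremum over the frozen parameter by the fundamental theorem of calculus in $\lambda$, namely $\sup_\lambda|F(\lambda)|\le|F(\lambda_*)|+\int|\partial_\lambda F|\,d\lambda$. After that you need only operator bounds for $e^{\lambda t\Delta}$ and $t\Delta e^{\lambda t\Delta}$ on $L^2$ and, through $\mathrm{BMO}^{-1}_b$, on $L^\infty$.

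The paper's kernel domination is the same device as \eqref{refY} and is reused for every other diagonal term, but it depends on pointwise kernel bounds. Your parameter-Sobolev argument needs neither positivity nor pointwise kernel structure. The price is one extra $\lambda$-derivative, which is harmless here. You should still state explicitly that $\partial_\lambda$ of the increment, $\int_s^t\partial_\tau(\tau\Delta e^{\lambda\tau\Delta}u_0)\,d\tau$, obeys the same $L^2$ bound $\min\{1,(t-s)/s\}\|u_0\|_{L^2}$ and $L^\infty$ bound $\min\{s^{-1/2},(t-s)s^{-3/2}\}\|u_0\|_{\mathrm{BMO}^{-1}_b}$. It does, since $\|\partial_\tau(\tau\Delta e^{\lambda\tau\Delta}u_0)\|$ is $O(\tau^{-1})$ in $L^2$ and $O(\tau^{-3/2})$ in $L^\infty$.

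One small slip: $\lambda$ can be as large as $C_1^{-1}$, so $\lambda T\le b^2$ requires $T\le C_1b^2$, not $c\le C_2^{-1}$.
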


\begin{proof} We first deduce from \eqref{defKx0} that $\wh{K}_{x_0}(t,\xi)=e^{-\f{t}{\rho_0(x_0)}|\xi|^2},$ from which, we infer
\begin{equation}\label{S3eq5}
\begin{aligned}
\sup_{t \leq T} t^{\frac{1}{2}} \|u_{L,x_0}(t)\|_{L^\infty}
=&\sqrt{\rho_0(x_0)}\sup_{t \leq T} \left( \Bigl(\frac{t }{\rho_0(x_0)}\Bigr)^{\f12}\bigl\|e^{\f{t}{\rho_0(x_0)}\D}u_0\bigr\|_{L^\infty}\right)\\
\leq & \sqrt{\rho_0(x_0)}\|u_0\|_{\mathrm{BMO}_b^{-1}},\quad \mbox{if}\ \ T\leq \rho(x_0)b^2,
\end{aligned}
\end{equation}
and
\begin{align*}
\sup_{t \leq T}& t^{1+\frac{\kappa}{2}} \| (|D|^{1+\kappa} u_{L,x_0})(t) \|_{L^\infty} \\
=&\rho_0^{1+\f\kappa2}(x_0)\sup_{t \leq T} \left(\Bigl(\f{t}{\rho_0(x_0)}\Bigr)^{\f12} \Bigl(\f{t}{\rho_0(x_0)}\Bigr)^{\frac{1+\kappa}{2}} \bigl\| (|D|^{1+\kappa}e^{\f{t}{2\rho_0(x_0)}\D} e^{\f{t}{2\rho_0(x_0)}\D}u_0\bigr\|_{L^\infty} \right)\\
\lesssim &\rho_0^{1+\f\kappa2}(x_0)\sup_{t \leq T} \left(\Bigl(\f{t}{\rho_0(x_0)}\Bigr)^{\f12} \bigl\|e^{\f{t}{2\rho_0(x_0)}\D}u_0\bigr\|_{L^\infty}  \right)\\
\lesssim &\rho_0^{1+\f\kappa2}(x_0)\|u_0\|_{\mathrm{BMO}_b^{-1}},\quad \mbox{if}\ \ T\leq 2\rho(x_0)b^2. \end{align*}
{ Observe that for $t\geq s$, if $t-s\leq \frac{s}{2}$, then
\begin{equation*}\label{S3eq4}
    \begin{aligned}
        \left\|e^{\frac{t}{\rho_0(x_0)}\Delta}u_0- e^{\frac{s}{\rho_0(x_0)}\Delta}u_0\right\|_{L^\infty}&=
        \Bigl\|\int_{\frac{s}{2}}^{t-\frac{s}{2}}\partial_\tau \left(e^{\frac{\tau}{\rho_0(x_0)}\Delta}\right)e^{\frac{s}{2\rho_0(x_0)}\Delta}u_0d\tau\Bigr\|_{L^\infty}\\
        &\lesssim \frac{1}{\rho_0(x_0)}\int_{\frac{s}{2}}^{t-\frac{s}{2}}\bigl\|e^{\frac{\tau}{\rho_0(x_0)}\Delta}\Delta e^{\frac{s}{2\rho_0(x_0)}\Delta}u_0\bigr\|_{L^\infty}d\tau\\
        &\lesssim s^{-1}(t-s)\|e^{\frac{s}{4\rho_0(x_0)}\Delta}u_0\|_{L^\infty}\\
        &\lesssim \sqrt{\rho_0(x_0)}s^{-\frac{3}{2}}(t-s)\|u_0\|_{BMO^{-1}_b},\quad \mbox{if}\ \ T\leq 4\rho(x_0)b^2.
    \end{aligned}
\end{equation*}
While it follows from \eqref{S3eq5} that
\begin{equation*}
    \left\|e^{\frac{t}{\rho_0(x_0)}\Delta}u_0- e^{\frac{s}{\rho_0(x_0)}\Delta}u_0\right\|_{L^\infty}\lesssim \sqrt{\rho_0(x_0)}s^{-\frac{1}{2}}\|u_0\|_{BMO_b^{-1}},\quad \mbox{if}\ \ s\leq t\leq T\leq \rho(x_0)b^2.
\end{equation*}
As a result, it comes out
\begin{equation}\label{S3eq6}
    \left\|e^{\frac{t}{\rho_0(x_0)}\Delta}u_0- e^{\frac{s}{\rho_0(x_0)}\Delta}u_0\right\|_{L^\infty}\lesssim \sqrt{\rho_0(x_0)}\min\{1,s^{-1}(t-s)\}s^{-\frac{1}{2}}\|u_0\|_{BMO_b^{-1}}.
\end{equation}
For the case $t-s\geq \frac{s}{2}$, \eqref{S3eq6} follows from \eqref{S3eq5}.}

By virtue of \eqref{S3eq6}, we deduce that
\begin{equation*}\label{S3eq11}
\begin{split}
\sup_{s < t \leq T}  s^{\frac{1}{2} + \varkappa_1} \frac{\|u_{L,x_0}(t) - u_{L,x_0}(s)\|_{L^\infty}}{(t-s)^{\varkappa_1}}  \lesssim& \f{\min\{1,s^{-1}(t-s)\}}{(t-s)^{\varkappa_1}}  s^{\varkappa_1}\|u_0\|_{BMO_b^{-1}}\\
\lesssim &\|u_0\|_{BMO_b^{-1}},
\end{split}
\end{equation*}
where we used the fact that if $t-s\leq s,$ there holds
\[ \f{\min\{1,s^{-1}(t-s)\}}{(t-s)^{\varkappa_1}}  s^{\varkappa_1}\leq (t-s)^{-1+\varkappa_1}
s^{1-\varkappa_1}\leq 1\quad \forall \varkappa_1\in (0,1),
 \]
 whereas if $t-s\geq s,$  one has
\[ \f{\min\{1,s^{-1}(t-s)\}}{(t-s)^{\varkappa_1}}  s^{\varkappa_1}\leq (t-s)^{-\varkappa_1}
s^{\varkappa_1}\leq 1\quad \forall \varkappa_1\in (0,1).
 \]

 For $\|u_{L,x_0}\|_{V_T^b},$ we get, by using changes of variable, that
    \begin{equation*}
        \begin{aligned}
         \|u_{L,x_0}\|_{V_T^b}=   &R^{-\frac{3}{2}}\Bigl(\int_0^{\min\{T,R^2\}}\int_{B(x,R)}\bigl|e^{\frac{t}{\rho_0(x_0)}\Delta}u_0(y)\bigr|^2\,dy\,dt\Bigr)^{\f12}\\
         \lesssim &\rho_0(x_0)R^{-\frac{3}{2}}\Bigl(\int_0^{\min\bigl\{\frac{T}{\rho_0(x_0)}, \frac{R^2}{\rho_0(x_0)}\bigr\}}\int_{B(x,R)}\bigl|e^{s\Delta}u_0(y)\bigr|\,dy\,ds\Bigr)^{\f12}\\\
             \lesssim & C(\rho_0(x_0))R^{-\frac{3}{2}}\Bigl(\int_0^{\min\bigl\{\frac{T}{\rho_0(x_0)},\frac{R^2}{\rho_0(x_0)}\bigr\}}\int_{B\bigl(x,\frac{R}{\sqrt{\rho_0(x_0)}}\bigr)}\bigl|e^{s\Delta}u_0(y)\bigr|^2\,dy\,\Bigr)^{\f12}\\\
             \lesssim &\|u_0\|_{BMO_b^{-1}}
        \end{aligned}
    \end{equation*}
    where  in the last step, we used the fact that the ball with radius $R$ can be covered by finite balls with radius $\frac{R}{\sqrt{\rho_0(x_0)}}$, and with a constant multiple to $T,b$ with respect to $\|\rho_0^{-1}\|_{L^\infty}$.
    
 As a consequence, we deduce that,
 \begin{equation}\label{S3eq1}
\begin{aligned}
&\sup_{x_0} \Bigl( \sup_{t \leq T} \left( t^{\frac{1}{2}} \|u_{L,x_0}(t)\|_{L^\infty} + t^{1+\frac{\kappa}{2}} \| (|D|^{1+\kappa} u_{L,x_0})(t) \|_{L^\infty} \right) + \|u_{L,x_0}\|_{V_T^b} \Bigr) \\
&\quad + \sup_{x_0} \sup_{s < t \leq T} s^{\frac{1}{2} + \varkappa_1} \frac{\|u_{L,x_0}(t) - u_{L,x_0}(s)\|_{L^\infty}}{(t-s)^{\varkappa_1}}  \lesssim \|u_0\|_{\mathrm{BMO}_b^{-1}}.
\end{aligned}
\end{equation}

On the other hand, we observe that for any function $K(t,s,x,z)\geq 0$, 
\begin{align*}
    &\left(\int_{\mathbb{R}^3}\Bigl(\int_{\mathbb{R}^3}K(t,s,x-y,x)u_0(y)\,dy\Bigr)^2\,dx\right)^{\frac{1}{2}}\\
            &\lesssim \left(\int_{\mathbb{R}^3}\Bigl(\Bigl(\int_{\mathbb{R}^3} K(t,s,x-y,x)|u_0(y)|^2\,dy \Bigr)^{\frac{1}{2}}\Bigl(\int_{\mathbb{R}^3}K(t,s,x-y,x)\,dy \Bigr)^{\frac{1}{2}}\Bigr)^2dx\right)^{\frac{1}{2}}\\
            &\lesssim \Bigl(\sup_{z\in\mathbb{R}^3}\|K(t,s,\cdot,z)\|_{L^1}\Bigr)^{\frac{1}{2}}\left(\int_{\mathbb{R}^3}\int_{\mathbb{R}^3} K(t,s,x-y,x)|u_0(y)|^2\,dy dx\right)^{\frac{1}{2}}\\
            &\lesssim \|\sup_{z\in\mathbb{R}^3}K(t,s,\cdot,z)\|_{L^1}\|u_0\|_{L^2}.
    \end{align*}
We now take $K(t,s,x,z)=K(t,x,z)=\frac{\rho_0^{\frac{3}{2}}(z)}{(4\pi t)^{\frac{3}{2}}}\exp(-\frac{\rho_0(z)|x|^2}{4t}),$ then  it follows from \eqref{bdrho0} that 
\begin{equation*}
  \sup_{z\in\R^3}K(t,s,x,z)\leq \frac{C_2^{\frac{3}{2}}}{(4\pi t)^{\frac{3}{2}}}\exp\Bigl(-\frac{C_1|x|^2}{4t}\Bigr) \andf  \|\sup_{z\in\mathbb{R}^3}K(t,\cdot,z)\|_{L^1}\lesssim 1,\quad \mbox{for any}\ t\geq 0.
\end{equation*}
Hence we obtain
\begin{equation*}\label{S3eq9}
    \sup_{t\leq T}\|\tilde{u}_L(t)\|_{L^2}\lesssim \|u_0\|_{L^2}
\end{equation*}
from which and  \eqref{S3eq1}, we infer
\beq \label{S3eq3}
\begin{split}
\sup_{t \leq T} &t^{\frac{5}{12}} \left\| (K_{x_0} \ast u_0)|_{x_0=x}(t,x) \right\|_{L_x^{12}}\\
\leq &\Bigl(\sup_{x_0\in\R^3}\sup_{t \leq T} t^{\f12} \left\| (K_{x_0} \ast u_0)(t,x) \right\|_{L_x^{\infty}}\Bigr)^{\f56}\Bigl(\sup_{x_0\in\R^3}\sup_{t \leq T} \| \wt{u}_L(t,x)\|_{L_x^{2}}\Bigr)^{\f16}\\
\lesssim & \|u_0\|_{\mathrm{BMO}_b^{-1}}^{\frac{5}{6}} \|u_0\|_{L^2}^{\frac{1}{6}}.\end{split} \eeq

For the term involving time difference of $\wt{u}_L,$  we take 
\begin{equation*}
    \begin{aligned}
      K(t,s,x,z)&=\frac{(\rho_0(z))^{\frac{3}{2}}}{(4\pi t)^{-\frac{3}{2}}}\exp\Bigl(-\frac{\rho_0(z)|x|^2}{4t}\Bigr)-\frac{(\rho_0(z))^{\frac{3}{2}}}{(4\pi s)^{-\frac{3}{2}}}\exp\Bigl(-\frac{\rho_0(z)|x|^2}{4s}\Bigr)  \\
      &=\int_s^t\partial_\tau \Bigl(\frac{(\rho_0(z))^{\frac{3}{2}}}{(4\pi \tau)^{-\frac{3}{2}}}\exp\Bigl(-\frac{\rho_0(z)|x|^2}{4\tau}\Bigr)\Bigr)\,d\tau,
    \end{aligned}
\end{equation*}
from which and \eqref{bdrho0}, we infer
\begin{equation*}
    \bigl\|\sup_{z\in\mathbb{R}^3}K(t,s,\cdot,z)\bigr\|_{L^1}\lesssim \min\bigl\{1,s^{-1}(t-s)\bigr\},\quad \mbox{for any}\ t\geq s\geq 0,
\end{equation*}
so that we obtain
\begin{align*}
&\|\tilde{u}_L(t)-\tilde{ u}_L(s)\|_{L^2}\lesssim \min\bigl\{1,s^{-1}(t-s)\bigr\}\|u_0\|_{L^2},\\
&\|\tilde{u}_L(t)-\tilde{ u}_L(s)\|_{L^\infty}\min\bigl\{1,s^{-1}(t-s)\bigr\}\|u_0\|_{BMO_b^{-1}}.
\end{align*}
As a consequence, we deduce that
\beq \label{S3eq12}
\begin{split}
   & \sup_{s<t<T}s^{\f5{12}+\varkappa_2}\frac{\|\tilde{u}_L(t)-\tilde{ u}_L(s)\|_{L^{12}}}{(t-s)^{\varkappa_2}}\\
    &\lesssim \sup_{s<t<T}\Bigl(\f{s^{\varkappa_2}}{(t-s)^{\varkappa_2}} \bigl(s^{\f12}\|\tilde{u}_L(t)-\tilde{ u}_L(s)\|_{L^\infty}\bigr)^{\f56}\|\tilde{u}_L(t)-\tilde{ u}_L(s)\|_{L^2}^{\f16} \Bigr)\\
    &\lesssim  \f{s^{\varkappa_2}}{(t-s)^{\varkappa_2}}  {\min\{1,s^{-1}(t-s)\}}\|u_0\|_{BMO_b^{-1}}^{\f56}\|u_0\|_{L^2}^{\f16}\\
&\lesssim \|u_0\|_{BMO_b^{-1}}^{\f56}\|u_0\|_{L^2}^{\f16}, \end{split}\eeq

By summarizing the estimates \eqref{S3eq1}, \eqref{S3eq3} and \eqref{S3eq12}, we complete the proof of  \eqref{uL}.
\end{proof}

For $u_{N,x_0}$, we have the following estimates:

\begin{prop}\label{uN}
Let $T \sim b^2 < 1$. Let $(\rho, u, P)$ be a smooth enough solution of \eqref{eqinns} on $[0, T]$ and $u_{N,x_0}$ be defined in \eqref{decop}.  We assume that $\|u\|_{X_T} < \varepsilon_0$ for some sufficiently small $\varepsilon_0 > 0.$ Then we have the following estimates:
\begin{equation}\label{S3eq16}
\begin{aligned}
&\sup_{x_0} \Bigl( \sup_{t \leq T} t^{\frac{1}{2}} \|u_{N,x_0}(t)\|_{L^\infty} + \|u_{N,x_0}\|_{V_T^b} + \sup_{s < t < T} s^{\frac{1}{2} + \varkappa_1} \frac{\|u_{N,x_0}(t) - u_{N,x_0}(s)\|_{L^\infty}}{(t-s)^{\varkappa_1}} \Bigr) \\
&\quad + \sup_{t \leq T} t^{1 + \frac{\kappa}{2}} \| (\nabla |D|^\kappa u_{N,x_0})|_{x_0=x}(t) \|_{L^\infty}
+ \sup_{t \leq T} t^{\frac{5}{12}} \|\tilde{u}_N(t)\|_{L^{12}} \\
&\quad + \sup_{s < t < T} s^{\frac{5}{12} + \varkappa_2} \frac{\|\tilde{u}_N(t) - \tilde{u}_N(s)\|_{L^{12}}}{(t-s)^{\varkappa_2}} \lesssim \|\rho_0\|_{C^1} \|u\|_{X_T}^2.
\end{aligned}
\end{equation}
\end{prop}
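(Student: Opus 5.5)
The plan is to follow the Koch--Tataru bilinear analysis, treating $u_{N,x_0}$ as a heat-type Duhamel integral of the divergence of $F:=\rho u\otimes u$. By Proposition \ref{regdiff} and \eqref{lowerbd}, $F$ satisfies pointwise $|F|\le C_2|u|^2$. Moreover, $\sup_{t\le T}\|\rho(t)\|_{C^\gamma}\lesssim\|\rho_0\|_{C^1}$, which is the only place the factor $\|\rho_0\|_{C^1}$ enters. All kernel bounds are uniform in $x_0$ by Proposition \ref{S2prop3}: the kernel of $K_{x_0}(t)\ast\mathbb P\operatorname{div}$ is bounded by $(\sqrt t+|x|)^{-4}$, and its time differences are controlled by \eqref{htkint}.

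\textbf{Step 1: $L^\infty$, $V^b_T$ and the Koch--Tataru part.} For $t^{1/2}\|u_{N,x_0}(t)\|_{L^\infty}$, split the time integral into $\tau\in(0,t/2)$ and $(t/2,t)$.
\begin{itemize}
\item On $(t/2,t)$, use $|F(\tau)|\lesssim\tau^{-1}\|u\|_{X_T}^2$ against the $L^1$ norm $(t-\tau)^{-1/2}$ of the kernel.
\item On $(0,t/2)$, use the decay $(\sqrt t+|x-y|)^{-4}$, decompose $\mathbb R^3$ into cubes of side $\sqrt t$, and bound $\int_0^{t}\int_{Q}|u|^2$ by $t^{3/2}\|u\|_{V^b_T}^2$. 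This is exactly the Koch--Tataru argument, so the mixed $L^\infty$/Carleson structure of $X_T^1$ closes.
\end{itemize}
The $V^b_T$ bound is the genuinely delicate Koch--Tataru estimate. I will localize $F=\chi_{B(x,2R)}F+(1-\chi)F$. The far part is handled pointwise as above. For the near part, I use the $L^2_tL^2_x$ energy estimate for $\rho_0(x_0)\partial_t v-\Delta v=\mathbb P\operatorname{div}G$, namely $\|v\|_{L^2_tL^2}\lesssim\|G\|_{L^2_t\dot H^{-1}}$, after splitting $G$ into its divergence-friendly and $L^1$-in-time pieces as in \cite{KT01}. This is what Appendix \ref{sub7.2} provides, and I will invoke it uniformly in $x_0$, since the coefficient $\rho_0(x_0)\in(C_1,C_2)$ only rescales time.

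\textbf{Step 2: Hölder-in-space, time-Hölder, and $L^{12}$ bounds.}
\begin{itemize}
\item \emph{Gradient seminorm.} For $t^{1+\kappa/2}\|\nabla|D|^\kappa u_{N,x_0}\|$, I differentiate through the kernel on $(0,t/2)$. On $(t/2,t)$ I move one derivative onto $F$ and use $\|\nabla F(\tau)\|_{\dot C^{\kappa'}}$-type bounds; these come from the $\nabla u$ part of $X_T^1$ together with $\rho\in C^\gamma$ for $\gamma>\kappa$. Since $\nabla\rho$ is unavailable, I instead put the fractional derivative $|D|^{1+\kappa}$ on the kernel at $(t-\tau)^{-(1+\kappa)/2}\cdot(t-\tau)^{-1/2}$. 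The resulting nonintegrability is compensated using the Hölder continuity of $F$ in space, of order $\gamma>\kappa$, on short intervals. Everything is evaluated before setting $x_0=x$, so the sup over $x_0$ suffices.
\item \emph{Time differences.} For the time Hölder quotients, write $u_{N,x_0}(t)-u_{N,x_0}(s)$ as $\int_s^t$ plus $\int_0^s\delta^t_{t-s}K$. Apply \eqref{htkint} with $\eta=1$ to get the factor $\min\{1,(t-s)/\tau'\}$. Since $\varkappa_1<1$, the exponents integrate, as in the proof of Proposition \ref{uL}.
\item \emph{$L^{12}$ norms.} These use $\mathbb P$-boundedness on $L^p$: $\|F\|_{L^{6}}\lesssim\|u\|_{L^{12}}^2$, and the $\tilde u_N$ versions follow from pointwise kernel domination by $\sup_z$, exactly as in the $K(t,s,x,z)$ trick of Proposition \ref{uL}. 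They give $\tau^{-5/6}$ decay against $(t-\tau)^{-1/2-3/2(1/6-1/12)}$, which is integrable and yields $t^{-5/12}$.
\end{itemize}

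\textbf{Main obstacle.} I expect the $\nabla|D|^\kappa$ estimate on the near-diagonal interval $\tau\in(t/2,t)$ to be the main obstacle, because $\rho$ is only Hölder. If the direct approach fails, I will first try replacing $F(\tau,y)$ by $F(\tau,y)-F(\tau,x)$. The constant part is annihilated by the derivative of the kernel, and the difference gains $|x-y|^\gamma$ with $\gamma\in(\kappa,1)$ from Proposition \ref{regdiff} and from the $\dot C^{1+\kappa}$ control of $u$. This makes $(t-\tau)^{-1-\kappa/2+\gamma/2}$ integrable.
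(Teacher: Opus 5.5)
Your time-difference step fails as written. The relevant condition is not $\varkappa_1<1$: in $X_T$ one has $\varkappa_1=\frac{18}{35}$ and $\varkappa_2=\frac{5}{7}$, both strictly larger than $\frac12$. Suppose you keep the whole derivative on the kernel (\eqref{htkint} with $\eta=1$) and use only the size bounds $\|F(\tau)\|_{L^\infty}\lesssim \tau^{-1}\|u\|_{X_T}^2$ and $\|F(\tau)\|_{L^{12}}\lesssim\tau^{-\frac{11}{12}}\|u\|_{X_T}^2$. Then, for $a=t-s<\frac{s}{4}$, the near-diagonal pieces give
\[
\int_s^t (t-\tau)^{-\frac12}\tau^{-1}\,d\tau+\int_{s/2}^s (s-\tau)^{-\frac12}\min\Bigl\{1,\frac{a}{s-\tau}\Bigr\}\tau^{-1}\,d\tau\sim a^{\frac12}s^{-1}.
\]
Dividing by $a^{\varkappa_1}s^{-\frac12-\varkappa_1}$ gives $(s/a)^{\varkappa_1-\frac12}$, which is unbounded as $a\to0$. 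The same happens in $L^{12}$ with $\varkappa_2$. A merely bounded forcing in divergence form yields time regularity of order $\frac12$ and no more. The analogy with Proposition \ref{uL} is misleading, because there is no source term there.

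The missing idea, which is how the paper argues, is to trade spatial regularity of $F=\rho u\otimes u$ for time regularity. Write $\mathbb{P}\nabla K_{x_0}\ast F=\Gamma_\eta\ast|D|^{1-\eta}F$ with $\Gamma_\eta=\mathbb{P}\nabla|D|^{\eta-1}K_{x_0}$, so that $\|\Gamma_\eta(t)\|_{L^1}\lesssim t^{-\eta/2}$. Then prove
\[
\||D|^{1-\eta}F(\tau)\|_{L^\infty}\lesssim\tau^{-\frac{3-\eta}{2}}\|\rho_0\|_{C^1}\|u\|_{X_T}^2,\qquad \||D|^{1-\eta}F(\tau)\|_{L^{12}}\lesssim\tau^{-\frac{17}{12}+\frac{\eta}{2}}\|\rho_0\|_{C^1}\|u\|_{X_T}^2.
\]
This uses $\rho\in C^{1-\eta}$ with $\eta$ small (Proposition \ref{regdiff}) and interpolation with the $\dot C^{1+\kappa}$ part of $X_T$. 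It produces $a^{1-\frac{\eta}{2}}$, which suffices since $\varkappa_j<1-\frac{\eta}{2}$. So the near-Lipschitz regularity of $\rho$ is needed here too, not only in the $\nabla|D|^\kappa$ bound.

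In addition, on $(0,s/2)$ the size bound $\tau^{-1}$ is not integrable at $\tau=0$. There you must reuse the Carleson/cube argument of your Step 1; the paper applies Lemma \ref{lem5.37} with $\alpha=3$, $R=\sqrt{s}$, and gets $a\,s^{-\frac32}$.

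Your $L^{12}$ bound also contains a slip. Pairing $\|F(\tau)\|_{L^6}\lesssim\|u(\tau)\|_{L^{12}}^2\lesssim\tau^{-\frac56}$ with $\|\mathbb{P}\nabla K_{x_0}(t-\tau)\|_{L^{12/11}}\sim(t-\tau)^{-\frac{5}{8}}$ gives $\int_0^t(t-\tau)^{-\frac{5}{8}}\tau^{-\frac{5}{6}}\,d\tau\sim t^{-\frac{11}{24}}$, not $t^{-\frac{5}{12}}$. Then $t^{\frac{5}{12}}\|\tilde u_N(t)\|_{L^{12}}$ would grow like $t^{-\frac{1}{24}}$. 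Because the weight $t^{\frac{5}{12}}$ is off-critical, using two $L^{12}$ factors loses. Instead use $\|F\|_{L^{12}}\le C_2\|u\|_{L^\infty}\|u\|_{L^{12}}\lesssim\tau^{-\frac{11}{12}}\|u\|_{X_T}^2$ against the $L^1$ kernel norm $(t-\tau)^{-\frac12}$, which gives exactly $t^{-\frac{5}{12}}$; this is what the paper does.

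The rest of your plan agrees with the paper: Koch--Tataru for the $L^\infty$ and $V_T^b$ norms, and the subtraction $F(\tau,y)-F(\tau,x)$ on $(t/2,t)$ for $\nabla|D|^\kappa u_{N,x_0}$.
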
   

\begin{proof}
Define the norms
\begin{equation*}
\begin{aligned}
&\|f\|_{Z_T^{1,b}} := \sup_{t \leq T} t^{\frac{1}{2}} \|f(t)\|_{L^\infty} + \sup_{x_0 \in \mathbb{R}^3,\, R \leq b} R^{-\frac{3}{2}} \Bigl( \int_0^{\min\{T, R^2\}} \int_{B_{x_0}(R)} |f(\tau,y)|^2 \, dy \, d\tau \Bigr)^{\frac{1}{2}},\\[6pt]
&\|f\|_{Z_T^{2,b}} := \sup_{t \leq T} t \|f(t)\|_{L^\infty} + \sup_{x_0 \in \mathbb{R}^3,\, R \leq b} R^{-3} \int_0^{\min\{T, R^2\}} \int_{B_{x_0}(R)} |f(\tau,y)| \, dy \, d\tau.
\end{aligned}
\end{equation*}

 Thanks to the approach of Koch and Tataru, we have the following lemma (see also \cite[Lemma 5.36]{BCD}). 

\begin{lem}\label{lemtta}
Let 
\[
B(f,g):= \int_0^t K_{x_0}(t-\tau) \ast \mathbb{P} \nabla \cdot (f \otimes g)(\tau) \, d\tau.
\]
Then, for some $T \sim b^2$, we have
\[
\|B(f,g)\|_{Z_T^{1,b}} \leq C \|f \otimes g\|_{Z_T^{2,b}} \leq C \|f\|_{Z_T^{1,b}} \|g\|_{Z_T^{1,b}},
\]
where the implicit constant is independent of $x_0$.
\end{lem}

The proof of this lemma is postponed to the appendix. 

It follows from Lemma \ref{lemtta} that
 \begin{equation}\label{S3eq17}
\sup_{x_0\in\R^3} \sup_{t \leq T} t^{\frac{1}{2}} \|u_{N,x_0}(t)\|_{L^\infty} + \sup_{x_0\in\R^3} \|u_{N,x_0}\|_{V_T^b} \lesssim \|\rho_0\|_{L^\infty} \|u\|_{X_T}^2.
\end{equation}

Let us turn to the estimate  of the remaining norms of $u_{N,x_0}$ in the space $X_T,$ which we will handle term by term below.\smallskip

\noindent\textbf{(i): The  $C^{1+\kappa}$ norm of $u_{N,x_0}$}. \smallskip

We split the time interval into two parts as
\begin{align*}
|D|^\kappa \nabla u_{N,x_0}(t,x) =& \left( \int_0^{t/2} + \int_{t/2}^t \right) \int |D|^\kappa \mathbb{P} \nabla^2 K_{x_0}(t-\tau, x-y) (\rho u \otimes u)(\tau, y) \, dy \, d\tau\\
 =:&u_{N1,x_0} + u_{N2,x_0}.
\end{align*}
Note that by Proposition \ref{regdiff}, if $\|u\|_{X_T} < \varepsilon_0$ for some sufficiently small $\varepsilon_0 > 0$, then
\begin{equation*}\label{rhoreg}
\|\rho\|_{C^\gamma} \lesssim C(\gamma, \varepsilon_0) \|\rho_0\|_{C^1}, \quad \forall \gamma < 1.
\end{equation*}

For $u_{N1}$, we use a method similar to that in the Appendix \ref{sub7.2}.  In order to do so, for all $(t,x) \in [0,+\infty) \times \mathbb{R}^3,$ we define
\begin{equation}\label{defnewker}
\Gamma_R^1(t,x) := \mathbf{1}_{|x| > R} \frac{1}{|x|^{3+\alpha}}, \qquad 
\Gamma_R^2(t,x) := \mathbf{1}_{|x| < R} \frac{1}{(\sqrt{t} + |x|)^{3+\alpha}},
\end{equation}
for some $\alpha>0$. It is easy to observe that  \[
||D|^\kappa \mathbb{P} \nabla^2 K_{x_0}(t, x)| \lesssim \Gamma_R^1(t,x) + \Gamma_R^2(t,x), \] with $\alpha = 2 + \kappa$. Then
\begin{align*}
|u_{N1,x_0}| \lesssim& \int_0^{t/2} \int_{\R^3} \Gamma_R^1(x-y) |\rho u \otimes u|(\tau, y) \, dy \, d\tau\\ &+ \int_0^{t/2} \int_{\R^3} \Gamma_R^2(t-\tau, x-y) |\rho u \otimes u|(\tau, y) \, dy \, d\tau := u_{N1,x_0}^1 + u_{N1,x_0}^2.
\end{align*}
Taking $R = \sqrt{t}$ in \eqref{5.37h} of Lemma \ref{lem5.37} gives
\begin{equation*}
\|u_{N1,x_0}^1(t)\|_{L^\infty} \lesssim t^{-1 - \frac{\kappa}{2}} \|\rho u \otimes u\|_{Z_T^{2,b}}.
\end{equation*}
Note that
\begin{align*}
\|u_{N1,x_0}^2(t)\|_{L^\infty} &\lesssim \int_0^{t/2} \int_{|y| \leq \sqrt{t}} \frac{1}{(\sqrt{t})^{5+\kappa}} |\rho u \otimes u|(\tau, y) \, d\tau \\
&\lesssim \frac{t^{\f32}}{t^{\frac{5+\kappa}{2}}} \|\rho u \otimes u\|_{Z_T^{2,b}} \lesssim t^{-1 - \frac{\kappa}{2}} \|\rho u \otimes u\|_{Z_T^{2,b}}.
\end{align*}
 Combining the estimates for $u_{N1}^1$ and $u_{N1}^2$, and noting that $\|\rho u \otimes u\|_{Z_T^{2,b}} \lesssim \|u\|_{Z_T^{1,b}}^2 \|\rho_0\|_{L^\infty}$, we obtain
\begin{equation}\label{uN1}
\sup_{t \leq T} t^{1 + \frac{\kappa}{2}} \|u_{N1,x_0}(t)\|_{L^\infty} \lesssim \|u\|_{X_T}^2 \|\rho_0\|_{L^\infty}.
\end{equation}

%It follows from \eqref{eq3.1} that
%\begin{equation}\label{eq3.6}\|D^{1-\eta} (\rho u \otimes u)\|_{L^\infty} \lesssim t^{-\frac{3-\eta}{2}} \|u\|_{X_T}^2 \|\rho_0\|_{C^1} (1 + t^{\frac{1-\eta}{2}}).
%\end{equation}
For $u_{N2}$,  we have
\begin{align*}
|u_{N2,x_0}(t,x)| &\lesssim \Bigl| \int_{t/2}^t \int_{\mathbb{R}^3} \mathbb{P} \nabla^2 |D|^\kappa K_{x_0}(t-\tau, x-y) \bigl( (\rho u \otimes u)(\tau, y) - (\rho u \otimes u)(\tau, x) \bigr) \, dy \, d\tau \Bigr| \\
&\lesssim \int_{t/2}^t \int_{\mathbb{R}^3} \left| \mathbb{P} \nabla^2 |D|^\kappa K_{x_0}(t-\tau, x-y) \right| |x-y|^{\kappa'} \|\rho u \otimes u(\tau)\|_{\dot{C}^{\kappa'}} \, dy \, d\tau \\
&\lesssim \int_{t/2}^t (t-\tau)^{-1 - \frac{\kappa}{2} + \frac{\kappa'}{2}} \|\rho u \otimes u(\tau)\|_{\dot{C}^{\kappa'}} \, d\tau.
\end{align*}
for some $\kappa' \in (\kappa, 1)$.
Yet we get, by using interpolation inequality in H\"older space and the definition of $\|\cdot\|_{X_T}$ given by \eqref{defxt} that
\begin{align*}
\|u(t)\|_{\dot{C}^{\kappa'}}\lesssim \|u(t)\|_{L^\infty}^{1-\f{\kappa'}{1+\kappa}}\|u(t)\|_{\dot{C}^{1+\kappa}}^{\f{\kappa'}{1+\kappa}}
\lesssim t^{\f12\left(1+\kappa'\right)}\|u\|_{X_T},
\end{align*}
from which and the law of product in H\"older space (see \cite{BCD}), we infer
\beq \label{S3eq13}
\begin{split}
 \|\rho u \otimes u(\tau)\|_{\dot{C}^{\kappa'}} \lesssim &\|\rho u\|_{L^\infty}\|u\|_{\dot{C}^{\kappa'}}
 +\|\rho u\|_{\dot{C}^{\kappa'}}\|u\|_{L^\infty}\\
 \lesssim &\|\rho_0\|_{L^\infty}\|u\|_{L^\infty}\|u\|_{\dot{C}^{\kappa'}}
+\|\rho\|_{\dot{C}^{\kappa'}}\|u\|_{L^\infty}^2
\lesssim \tau^{-1-\frac{\kappa'}2}\|u\|_{X_T}^2 \|\rho_0\|_{C^1}.
\end{split}\eeq
As a consequence, 
we obtain
\begin{equation}\label{uN2}
|u_{N2,x_0}(t,x)|\\
\lesssim t^{-1 - \frac{\kappa}{2}} \|u\|_{X_T}^2 \|\rho_0\|_{C^1} 
\end{equation}

Thus, by \eqref{uN1} and \eqref{uN2}, we have proved
\begin{equation*}
\sup_{t \leq T} t^{1 + \frac{\kappa}{2}} \|\nabla |D|^\kappa u_{N,x_0}(t)\|_{L^\infty} \lesssim \|u\|_{X_T}^2 \|\rho_0\|_{C^1} .
\end{equation*}
Note that all of the estimates above are uniform with respect to $x_0$. Therefore, for $T \sim b^2$, we have
\begin{equation*}
\sup_{t \leq T} t^{1 + \frac{\kappa}{2}} \| (\nabla |D|^\kappa u_{N,x_0})|_{x_0=x}(t) \|_{L^\infty} \lesssim \|u\|_{X_T}^2 \|\rho_0\|_{C^1}.
\end{equation*}

\noindent \textbf{(ii): The non-critical $L^{12}$ norm}. \smallskip

We first observe from Proposition \ref{S2prop3} that
\begin{align*}
|\na_x K_{x_0}(t,x)|\leq \f{C(C_1,C_2)}{\left(\sqrt{t}+|x|\right)^4} \andf \|\na_x K_{x_0}(t,\cdot)\|_{L^1}\lesssim t^{-\f12},
\end{align*}
from which and 
 \eqref{deftilu},  we get, by direct computation, that
\begin{equation*}
\begin{aligned}
|\tilde{u}_N(t,x)| &\lesssim \int_0^t \int \bigl| \nabla_y K_{x_0}(t-\tau, x, x-y) \bigr|^{\frac{11}{12}} \Bigl( \bigl| \nabla_y K_{x_0}(t-\tau, x, x-y) \bigr|^{\frac{1}{12}} |\rho u \otimes u(\tau, y)| \Bigr) dy \, d\tau \\
&\lesssim \int_0^t (t-\tau)^{-\frac{11}{24}} \Bigl( \int_{\R^3} \bigl| \nabla_y K_{x_0}(t-\tau, x, x-y) \bigr| |\rho u \otimes u(\tau, y)|^{12} \, dy \Bigr)^{\frac{1}{12}} d\tau.
\end{aligned}
\end{equation*}
Taking the $L^{12}$ norm with respect to $x$, we obtain
\begin{equation*}
\begin{aligned}
\|\tilde{u}_N(t)\|_{L^{12}}&\lesssim \int_0^t (t-\tau)^{-\frac{11}{24}} \Bigl( \int_{\R^3}\int_{\R^3} \bigl| \nabla_y K_{x_0}(t-\tau, x, x-y) \bigr| |\rho u \otimes u(\tau, y)|^{12} \, dy\,dx \Bigr)^{\frac{1}{12}} \,d\tau\\
&\lesssim \int_0^t (t-\tau)^{-\frac{1}{2}} \|\rho u \otimes u(\tau) \|_{L^{12}}\, d\tau \\
&\lesssim \int_0^t (t-\tau)^{-\frac{1}{2}} \tau^{-\frac{11}{12}} \, d\tau \, \|u\|_{X_T}^2 \|\rho\|_{L_T^\infty L^\infty} \\
&\lesssim t^{-\frac{5}{12}} \|u\|_{X_T}^2 \|\rho_0\|_{L^\infty},
\end{aligned}
\end{equation*}
where we used the fact that
\begin{align*}
\|u(\tau)\|_{L^{12}}^2\leq \|u(\tau)\|_{L^{\infty}}\|u(\tau)\|_{L^{12}}\lesssim \tau^{-\f{11}{12}} \|u\|_{X_T}^2.
\end{align*}

    \noindent  \textbf{(iii): The time difference for $L^\infty$}. \smallskip
    
For the quantity $\frac{\|u_{N,x_0}(t)-u_{N,x_0}(s)\|_{L^\infty}}{(t-s)^{\varkappa_1}}$, it suffices to consider the case $t-s < \frac{s}{4}$, since otherwise, we have
\begin{align*}
\sup_{s<t\leq T}s^{\f12+\varkappa_1}\frac{\|u_{N,x_0}(t)-u_{N,x_0}(s)\|_{L^\infty}}{(t-s)^{\varkappa_1}}
\lesssim &\sup_{s<t\leq T}s^{\f12}{\|u_{N,x_0}(t)-u_{N,x_0}(s)\|_{L^\infty}}\\
\lesssim &\sup_{t<T} t^{\frac{1}{2}} \|u_{N,x_0}(t)\|_{L^\infty}.
\end{align*}

 Let $a := t-s$ and write
\begin{equation}\label{decopuN}
\begin{aligned}
u_{N,x_0}(t) - u_{N,x_0}(s) &= \int_s^t \mathbb{P} \nabla K_{x_0}(t-\tau) \ast (\rho u \otimes u)(\tau) \, d\tau \\
&\quad + \left( \int_0^{s/2} + \int_{s/2}^s \right) \delta_a^t \mathbb{P} \nabla K_{x_0}(s-\tau) \ast (\rho u \otimes u)(\tau) \, d\tau \\
&=: Du_{N,x_0}^1(t,s) + Du_{N,x_0}^2(t,s) + Du_{N,x_0}^3(t,s).
\end{aligned}
\end{equation}

By virtue of \eqref{S3eq13},  for any $\eta\in (0,1),$ we get, by first taking $\kappa'\in (1-\eta,1) $ and then using the interpolation inequality in Besov spaces, that 
\beq\label{S3eq14}
\begin{split}
\|D^{1-\eta}(\rho u \otimes u)(\tau)\|_{L^\infty} \lesssim &\|(\rho u \otimes u)(\tau)\|_{\dot{B}^{1-\eta}_{\infty,1}} 
\lesssim \|(\rho u \otimes u)(\tau)\|_{L^\infty}^{1-\theta}\|\rho u \otimes u(\tau)\|_{\dot{C}^{\kappa'}}^{\theta}\\
 \lesssim &\bigl(\|\rho_0\|_{L^\infty}\tau\|u(\tau)\|_{L^\infty}^2\bigr)^{1-\theta}
 \tau^{1-\theta}\times \tau^{-\left(1+\f{\kappa'}2\right)\theta}\bigl(\|\rho_0\|_{C^1}\|u\|_{X_T}^2\bigr)^\theta\\ 
\lesssim &\tau^{-\f{3-\eta}2}\|\rho_0\|_{C^1}\|u\|_{X_T}^2,
\end{split}\eeq
where $\theta=\f{1-\eta}{\kappa'}.$

While  for $\Gamma_\eta(t) := \mathbb{P} \nabla |D|^{\eta-1} K_{x_0}(t)$ with some $\eta \ll \kappa$ sufficiently small, it follows from Proposition \ref{S2prop3} that
\begin{align*}
|\Gamma_\eta(t,x)|\leq \f{C(C_1,C_2)}{(\sqrt{t}+|x|)^{3+\eta}} \andf \|\Gamma_\eta(t,\cdot)\|_{L^1}\lesssim t^{-\f\eta2}.
\end{align*}

As a consequence, we deduce that
\begin{equation*}
\|Du_{N,x_0}^1(t,s)\|_{L^\infty} \lesssim \int_s^t \|\Gamma_\eta(t-\tau)\|_{L^1} \|D^{1-\eta}(\rho u \otimes u)(\tau)\|_{L^\infty} \, d\tau \lesssim a^{1-\frac{\eta}{2}} s^{-\frac{3-\eta}{2}} \|u\|_{X_T}^2 \|\rho_0\|_{C^1}.
\end{equation*}

 Similarly, for $Du_{N,x_0}^3(t,s)$, by \eqref{htkint} we have
\begin{equation}\label{diffker}
\|\delta_a^t \Gamma_\eta(t)\|_{L^1} \lesssim \min\left\{ t^{-\frac{\eta}{2}}, a t^{-1-\frac{\eta}{2}} \right\}.
\end{equation}
In particular, for $0<\varkappa_1<1-\f{\eta}2,$ we deduce from \eqref{S3eq14} that
\begin{align*}
\|Du_{N,x_0}^3(t,s)\|_{L^\infty} \lesssim & \int_{s/2}^s \|\delta_a^t \Gamma_\eta(s-\tau)\|_{L^1} \|D^{1-\eta}(\rho u \otimes u)(\tau)\|_{L^\infty} \, d\tau\\
\lesssim&a^{\varkappa_1}\int_{s/2}^s (s-\tau)^{-\f\eta2-\varkappa_1}\tau^{-\f{3-\eta}2}\,d\tau \|u\|_{X_T}^2 \|\rho_0\|_{C^1}\\
 \lesssim &a^{\varkappa_1} s^{-\frac{1}{2}-\varkappa_1} \|u\|_{X_T}^2 \|\rho_0\|_{C^1} .
\end{align*}

For $Du_{N,x_0}^2(t,s)$, we write
\begin{equation*}
Du_{N,x_0}^2(t,s) = \int_0^{s/2} \left( \int_s^t \partial_\gamma \mathbb{P} \nabla K_{x_0}(\gamma-\tau) \, d\gamma \right) \ast (\rho u \otimes u)(\tau) \, d\tau.
\end{equation*}
Denote $H(\tau) := \partial_\gamma \mathbb{P} \nabla K_{x_0}(\tau)$; then $H(\gamma-\tau) \lesssim \frac{1}{(\sqrt{\gamma-\tau} + |y|)^6}$. Using the same decomposition as in Lemma \ref{lem5.37}, we write $H = H^1 + H^2$, with $H^1 \lesssim \mathbf{1}_{|y| \geq R} \frac{1}{|y|^6}$ and $H^2 \lesssim \mathbf{1}_{|y| \leq R} \frac{1}{(\sqrt{\gamma-\tau} + |y|)^6}$. Take $R = \sqrt{s} < b$. By \eqref{5.37h} in Lemma \ref{lem5.37} with $\alpha = 3$, we obtain
\begin{equation*}
\left| \int_0^{s/2} \int_s^t H^1(\gamma-\tau) \ast (\rho u \otimes u)(\tau) \, d\gamma \, d\tau \right| \lesssim \frac{t-s}{R^3} \|u\|_{X_T}^2 \|\rho_0\|_{C^1}.
\end{equation*}
Moreover, for the part concerning $H^2,$ we have
\begin{equation*}
\begin{aligned}
&\left| \int_0^{s/2} \int_s^t H^2(\gamma-\tau) \ast (\rho u \otimes u)(\tau) \, d\gamma \, d\tau \right| \\
&\quad \lesssim \int_0^{s/2} \int_s^t \int_{|y| \leq R} \frac{1}{(\gamma-\tau)^3} |\rho u \otimes u|(\tau, x-y) \, dy \, d\gamma \, d\tau \\
&\quad \lesssim \int_s^t \frac{R^3}{(\gamma - \frac{s}{2})^3} \, d\gamma \, \|u\|_{X_T}^2 \|\rho_0\|_{C^1} \lesssim \frac{(t-s) R^3}{s^3} \|u\|_{X_T}^2 \|\rho_0\|_{C^1}.
\end{aligned}
\end{equation*}
As a result, since $R=\sqrt{s}$, it comes out 
\[\|Du_{N,x_0}^3(t,s)\|_{L^\infty} \lesssim \frac{t-s}{s^{3/2}} \|u\|_{X_T}^2 \|\rho_0\|_{C^1}.
% \lesssim \left( \frac{t-s}{s} \right)^{1-\frac{\eta}{2}} s^{-1/2} \|u\|_{X_T}^2 \|\rho_0\|_{C^1}.
\]
Since all the estimates above are uniform with respect to $x_0$ and $t-s < \frac{s}{4}$, we conclude
\begin{equation*}
\sup_{x_0} \sup_{s < t < T} s^{\frac{1}{2} + \varkappa_1} \frac{\|u_{N,x_0}(t) - u_{N,x_0}(s)\|_{L^\infty}}{(t-s)^{\varkappa_1}} \lesssim \|u\|_{X_T}^2 \|\rho_0\|_{C^1}.
\end{equation*}

\noindent\textbf{(iv): The time difference for $L^{12}$}. \smallskip

For any $\eta\in (0,1),$ we get, by using \eqref{eq3.1}, that
\begin{align*}
    \left\||D|^{1-\eta}(\rho u\otimes u)\right\|_{L^{12}}&\lesssim\int_{\mathbb{R}^3}\frac{\left\|\delta_a(\rho u\otimes u)\right\|_{L^{12}}}{|\alpha|^{4-\eta}}d\alpha\\
    &\lesssim \int_{\R^3}\frac{\|\delta_\alpha \rho\|_{L^\infty}\|u\|_{L^\infty}\|u\|_{L^{12}}}{|\alpha|^{4-\eta}}d\alpha+\int_{\R^3}\frac{\|\rho\|_{L^\infty}\|\delta_\alpha u\|_{L^\infty}\|u\|_{L^{12}}}{|\alpha|^{4-\eta}}d\alpha\\
    &\qquad+\int_{\R^3}\frac{\|\rho\|_{L^\infty}\|\delta_\alpha u\|_{L^\infty}\sup_{\alpha}\|u(\cdot-\alpha)\|_{L^{12}}}{|\alpha|^{4-\eta}}d\alpha
\end{align*}
As a result, it comes out
 \beq\label{S3eq18}
 \begin{split}
\| |D|^{1-\eta}&(\rho u \otimes u)(t) \|_{L^{12}} \lesssim \|u(t)\|_{L^\infty} \|u(t)\|_{L^{12}} \|\rho(t)\|_{C^{1-\eta}} \\
&+ \|u(t)\|_{L^\infty}^{\eta}\|u(t)\|_{\dot C^1}^{1-\eta} \|u(t)\|_{L^{12}} \|\rho\|_{L^\infty}
\lesssim \|\rho_0\|_{C^1}t^{-\left(\f{17}{12}-\f\eta2\right)}\|u\|_{X_T}^2.
\end{split}
\eeq

Let us now turn to the estimate of  $\sup_{s < t < T} s^{\frac{5}{12} + \varkappa_2} \frac{\|\tilde{u}_N(t) - \tilde{u}_N(s)\|_{L^{12}}}{(t-s)^{\varkappa_2}}.$ We again use the corresponding decomposition in \eqref{decopuN}. Let $a = t-s < \frac{s}{4}$. Then we deduce from \eqref{esthtk} and \eqref{S3eq18} that
\begin{align*}
\|D\tilde{u}_N^1(t,s)\|_{L^{12}} &\lesssim \int_s^t \| \tilde{\Gamma}_\eta(t-\tau, z, y)\|_{L^1_y(L^\infty_z)} \| |D|^{1-\eta}(\rho u \otimes u)(\tau) \|_{L^{12}} \, d\tau \\
&\lesssim a^{1-\frac{\eta}{2}} s^{-\frac{2-\eta}{2} - \frac{5}{12}} \|\rho_0\|_{C^1} \|u\|_{X_T}^2.
\end{align*}
Here, we denote $\tilde{\Gamma}_\eta(t, z, y) := (\nabla |D|^{\eta-1} K_{x_0}(t, y))|_{x_0=z}$ for $0 < \eta \ll \kappa$ and $T \leq 1$. 

Similarly, we deduce from \eqref{htkint} and \eqref{S3eq18} that
\begin{align*}
\|D\tilde{u}_N^3\|_{L^{12}} &\lesssim \int_{s/2}^s \| \delta_a^t \tilde{\Gamma}_\eta(s-\tau, z, y) \|_{L^1_y L^\infty_z} \| D^{1-\eta}(\rho u \otimes u) \|_{L^{12}} \, d\tau \\
&\lesssim a^{1-\theta} \int_{s/2}^s (s-\tau)^{-\theta-\f\eta2}\tau^{-\left(\f{17}{12}-\f\eta2\right)}\,d\tau\|\rho_0\|_{C^1} \|u\|_{X_T}^2\\
&\lesssim a^{\theta} s^{-\theta- \frac{5}{12}} \|\rho_0\|_{C^1} \|u\|_{X_T}^2,
\end{align*}
for any $\theta<1-\f\eta2.$

For $u_{N,x_0}^2$, using the fact that $\|\rho u \otimes u(t)\|_{L^{12}} \lesssim t^{-\frac{11}{12}} \|u\|_{X_T}^2$ and \eqref{diffker}, we infer
\begin{align*}
\|D\tilde{u}_N^2(t,s)\|_{L^{12}} \lesssim& \int_0^{s/2} \| \delta_a^t \tilde{\Gamma}_1(s-\tau, z, y) \|_{L^1_y (L^\infty_z)} \|\rho u \otimes u(\tau)\|_{L^{12}} \, d\tau\\
\lesssim&a^{\varkappa_2}\int_0^{s/2}(s-\tau)^{-\f12-\varkappa_2}\tau^{-\f{11}{12}}\,d\tau\|u\|_{X_T}^2 \|\rho_0\|_{C^1}\\
 \lesssim & a^{\varkappa_2} s^{-\varkappa_2 - \frac{5}{12}} \|u\|_{X_T}^2 \|\rho_0\|_{C^1}.
\end{align*}
Thus we conclude that
\begin{equation*}
\sup_{s < t < T} s^{\frac{5}{12} + \varkappa_2} \frac{\|\tilde{u}_N(t) - \tilde{u}_N(s)\|_{L^{12}}}{(t-s)^{\varkappa_2}} \lesssim \|u\|_{X_T}^2 \|\rho_0\|_{C^1}.
\end{equation*}

Combining the results (i)–(iv) above with  \eqref{S3eq17}, we  complete the proof of Proposition \ref{uN}.
\end{proof}

 \section{Estimates for remainder terms}\label{sec5}
 
	In this section we derive estimates for the remainder terms $u_{R_i,x_0}$ $(i=1,2,3)$ that appear in the decomposition \eqref{decop}. Recall the definitions
\begin{equation}\label{S4eq0}
\begin{aligned}
&u_{R1,x_0}(t,x) = \int_0^t \int_{\R^3} K_{x_0}(t-\tau, x-y) \nabla \Delta^{-1} (\nabla \rho_0 \cdot \partial_\tau u)(\tau, y) \, dy \, d\tau,\\
&u_{R2,x_0}(t,x) = -\int_0^t \int_{\R^3} K_{x_0}(t-\tau, x-y) \mathbb{P} \partial_\tau (\varrho u)(\tau, y) \, dy \, d\tau,\\
&u_{R3,x_0}(t,x) = -\int_0^t \int_{\R^3} K_{x_0}(t-\tau, x-y) \partial_\tau \bigl( (\rho_0(y) - \rho_0(x_0)) u(\tau) \bigr) \, dy \, d\tau.
\end{aligned}
\end{equation}
We denote $\tilde{u}_{Ri}(t,x) := (u_{Ri,x_0})|_{x_0=x}$ and $K(t,x,y) := (K_{x_0}(t,y))|_{x_0=x}$. These terms arise from the spatial inhomogeneity of the density $\rho$ and the localization in $x_0$. We shall prove that each $u_{R_i,x_0}$ satisfies an estimate of the same type as \eqref{estuLx} and \eqref{S3eq16},
  with additional small factors depending on the regularity of $\rho$. All constants appearing in this section are independent of $x_0$ and $b$.

To prove the estimates of time differences, we introduce the following technical lemma. The proof is direct, and a similar version can be found in Lemma 2.3 of \cite{KHN} and Lemma 2.2 of \cite{KHN22}.

\begin{lem}\label{CVPDE}
Let $0 \leq \eta, \sigma < 1$ and $1 \leq p \leq \infty$. Define
\begin{equation*}
g_{x_0}(t,x) := \int_0^t \int \partial_\tau K_{x_0}(t-\tau, x-y) f(\tau, y) \, dy \, d\tau,
\end{equation*}
where $K_{x_0}$ is defined in \eqref{defKx0}, and define
\begin{equation*}
\tilde{g}(t,x) := g_{x_0}(t,x)|_{x_0=x}.
\end{equation*}
Then we have
\begin{equation}\label{lem51inf}
\begin{aligned}
&\sup_{x_0} \sup_{t \leq T} t^{\eta} \|g_{x_0}(t)\|_{L^\infty}
+ \sup_{x_0} \sup_{s < t \leq T} s^{\eta + \sigma} \frac{\|g_{x_0}(t) - g_{x_0}(s)\|_{L^\infty}}{(t-s)^{\sigma}} \\
&\qquad \lesssim \sup_{t \leq T} t^{\eta} \|f(t)\|_{L^\infty}
+ \sup_{s < t \leq T} s^{\eta + \sigma} \frac{\|f(t) - f(s)\|_{L^\infty}}{(t-s)^{\sigma}},
\end{aligned}
\end{equation}
and for any $p \in [1, \infty)$,
\begin{equation}\label{lem51p}
\begin{aligned}
&\sup_{t \leq T} t^{\eta} \|\tilde{g}(t)\|_{L^p}
+ \sup_{s < t \leq T} s^{\eta + \sigma} \frac{\|\tilde{g}(t) - \tilde{g}(s)\|_{L^p}}{(t-s)^{\sigma}} \\
&\qquad \lesssim \sup_{t \leq T} t^{\eta} \|f(t)\|_{L^p}
+ \sup_{s < t \leq T} s^{\eta + \sigma} \frac{\|f(t) - f(s)\|_{L^p}}{(t-s)^{\sigma}}.
\end{aligned}
\end{equation}
\end{lem}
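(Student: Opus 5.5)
The idea is to exploit the cancellation $\int \partial_\tau K_{x_0}(t-\tau,x-y)\,dy=0$: the kernel $\partial_t K_{x_0}$ has mean zero in space, and it is a time derivative. Writing $\partial_\tau K_{x_0}(t-\tau)=-\partial_t K_{x_0}(t-\tau)$, we integrate by parts in time. This gives
\[
g_{x_0}(t)=f(t)-K_{x_0}(t)\ast f(0)-\int_0^t\partial_t K_{x_0}(t-\tau)\ast\bigl(f(\tau)-f(t)\bigr)\,d\tau ,
\]
in the sense that $\int_0^t \partial_t K_{x_0}(t-\tau)\ast f(t)\,d\tau=K_{x_0}(t)\ast f(t)-f(t)$. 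A cleaner version, avoiding $f(0)$ (which need not exist given the weight $t^{\eta}$), is
\[
g_{x_0}(t)=-\int_0^t \partial_t K_{x_0}(t-\tau)\ast\bigl(f(\tau)-f(t)\bigr)\,d\tau-\bigl(K_{x_0}(t)\ast f(t)-f(t)\bigr)\cdot(\pm1),
\]
with the signs to be fixed. Either way, $g_{x_0}$ is written as a combination of $f(t)$, $K_{x_0}(t)\ast f(t)$, and an integral of $\partial_tK_{x_0}$ against time differences of $f$.

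\textbf{Bound on $t^\eta\|g_{x_0}(t)\|$.} By \eqref{esthtk} with $m=1,\eta=0$, we have $\|\partial_tK_{x_0}(t)\|_{L^1}\lesssim t^{-1}$ uniformly in $x_0$. We split the integral at $\tau=t/2$. On $[t/2,t]$ we use $\|f(\tau)-f(t)\|\le \tau^{-\eta-\sigma}(t-\tau)^{\sigma}[f]$, which gives $\int_{t/2}^t (t-\tau)^{-1+\sigma}t^{-\eta-\sigma}\,d\tau\lesssim t^{-\eta}$. On $[0,t/2]$ we use the crude bound $\|f(\tau)\|+\|f(t)\|\lesssim \tau^{-\eta}$ together with $(t-\tau)^{-1}\sim t^{-1}$, which gives $t^{-1}\int_0^{t/2}\tau^{-\eta}\,d\tau\lesssim t^{-\eta}$ since $\eta<1$. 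The boundary terms are bounded by $\|f(t)\|$ via Young's inequality. For $\sigma=0$ the time integral does not converge, but in that case the statement only concerns the sup norm and no cancellation is needed.

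\textbf{H\"older bound.} Fix $a=t-s$. If $a\ge s/4$, the H\"older quotient is controlled directly by the sup bounds already obtained. So assume $a<s/4$ and decompose $g_{x_0}(t)-g_{x_0}(s)$ as in \eqref{decopuN}. The boundary terms are handled as follows: $f(t)-f(s)$ is immediate, and
\[
K(t)\ast f(t)-K(s)\ast f(s)=K(t)\ast(f(t)-f(s))+\delta_a^tK(s)\ast f(s),
\]
where $\|\delta_a^tK(s)\|_{L^1}\lesssim a/s$ by \eqref{htkint}. For the integral terms there are three pieces.
\begin{itemize}
\item[(1)] The near-diagonal piece $\int_{s-a}^{t}$, in unified form after the shift $\tau\mapsto$ difference variable, is controlled by $a^\sigma s^{-\eta-\sigma}$ using the kernel bound $(t-\tau)^{-1}$ and the H\"older modulus of $f$.
\item[(2)] The middle piece $\int_{s/2}^{s-a}$ involves $\delta_a^t\partial_tK(s-\tau)$, bounded in $L^1$ by $a(s-\tau)^{-2}$, tested against $f(\tau)-f(s)$ of size $(s-\tau)^\sigma s^{-\eta-\sigma}$. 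This gives $a\int_a^{s}r^{-2+\sigma}\,dr\,s^{-\eta-\sigma}\lesssim a^\sigma s^{-\eta-\sigma}$, since $\sigma<1$.
\item[(3)] The far piece $\int_0^{s/2}$ gives $a s^{-2}\int_0^{s/2}\tau^{-\eta}\,d\tau\lesssim (a/s)s^{-\eta}\le a^\sigma s^{-\eta-\sigma}$.
\end{itemize}
There is also a cross term coming from $f(t)-f(s)$ multiplied against $\int\partial_tK$. By the mean-zero integration it collapses to the boundary terms already treated.

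\textbf{The $L^p$ statement for $\tilde g$.} Every estimate above used only $L^1$ bounds of kernels, uniform in $x_0$. For the frozen version, the kernel $\partial_tK(t,x,x-y)$ depends on $x$, so Young's inequality is replaced by Schur's test, exactly as in the proof of Proposition \ref{uL}. By \eqref{esthtk}, $\sup_z|\partial_t^m K_{z}(t,\cdot)|$ is dominated by an integrable profile with the same $L^1$ scaling, and likewise for $\delta_a^t$ (integrating \eqref{esthtk} in time). Hence each piece satisfies the corresponding $L^p$ bound. The one delicate point is the identity $\int K_{x_0}(t,x-y)\,dy=1$ used in the integration by parts: it holds for each fixed $x_0$ and therefore also after setting $x_0=x$, because the freezing happens pointwise in $x$.

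The step I expect to be most technical is organizing the near-diagonal time-difference pieces so that the singular kernel $(t-\tau)^{-1}$ is always paired with a H\"older difference of $f$ measured at a comparable time. This requires choosing the subtraction point ($f(t)$ versus $f(s)$) consistently across the split.
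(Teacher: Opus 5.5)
Your argument is correct for $0<\sigma<1$ and is essentially the paper's proof. You use $\int_0^t\partial_\tau K_{x_0}(t-\tau)\,d\tau=\delta-K_{x_0}(t)$ to subtract $f(t)$ near the diagonal, split in time after reducing to $a<s/4$, and replace Young's inequality by the Schur-type bound \eqref{refY} for the frozen kernel. The only cosmetic difference is that you subtract $f(t)$ on all of $[0,t]$, whereas the paper subtracts it only on $[t/2,t]$. Your first display should read $K_{x_0}(t)\ast f(t)$ rather than $K_{x_0}(t)\ast f(0)$, so the correct identity is
\[
g_{x_0}(t)=f(t)-K_{x_0}(t)\ast f(t)-\int_0^t(\partial_tK_{x_0})(t-\tau)\ast\bigl(f(\tau)-f(t)\bigr)\,d\tau .
\]

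Your remark on $\sigma=0$ is wrong. In that case the right-hand side is at most $3\sup_{t\le T}t^{\eta}\|f(t)\|$, and the estimate genuinely fails. Take $f(\tau,y)=\mathrm{sgn}\bigl(\partial_\tau K_{x_0}(t-\tau,x-y)\bigr)\mathbf{1}_{\{\tau<t-\epsilon\}}$. Then $g_{x_0}(t,x)=\int_\epsilon^t\|\partial_rK_{x_0}(r)\|_{L^1}\,dr\sim\log(t/\epsilon)$, which is unbounded as $\epsilon\to0$ while $f$ stays bounded. Far from cancellation being unnecessary there, it simply does not help without a H\"older gain in time. So the lemma requires $\sigma>0$. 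The paper's proof implicitly assumes this as well, since it integrates $(t-\tau)^{-1+\sigma}$, and the lemma is only applied with $\sigma=\varkappa_1,\varkappa_2$.
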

\begin{proof}
		For simplicity, we only prove \eqref{lem51p}, since \eqref{lem51inf} can be proved along the same line. Denote that $\partial_t K(t,x,y) = (\partial_t K_{x_0}(t,y))|_{x_0=x}$. First, for the $L^p$ norm, we write
\begin{equation*}
\begin{aligned}
\tilde{g}(t,x) =& \int_0^{t/2} \int_{\R^3} \partial_\tau K(t-\tau, x, x-y) f(\tau, y) \, dy \, d\tau \\
&+ \int_{t/2}^t \int_{\R^3} \partial_\tau K(t-\tau, x, x-y) \bigl( f(\tau, y) - f(t, y) \bigr) \, dy \, d\tau \\
&+ f(t,x) - \int_{\R^3} K(t/2, x, x-y) f(t, y) \, dy.
\end{aligned}
\end{equation*}
Note that for any $h(x)$ and $m,n\geq 0$, if we denote $\mathbf{h}(t,x) := \int_{\R^3} \partial_t^{m_1} \nabla_y^{m_2} K(t, x, x-y) h(y) \, dy$, then by Hölder's inequality,
\begin{equation*}
\begin{aligned}
|\mathbf{h}(t,x)| &\lesssim \int_{\R^3} |\partial_t^{m_1} \nabla_y^{m_2} K(t, x, x-y)|^{\frac{1}{p'}} |\partial_t^{m_1} \nabla_y^{m_2} K(t, x, x-y)|^{\frac{1}{p}} |h(y)| \, dy \\
&\lesssim \bigl\| \sup_z \partial_t^{m_1} \nabla_x^{m_2} K(t, z, x) \bigr\|_{L_x^1}^{\frac{1}{p'}} \left( \int_{\R^3} |\partial_t^{m_1} \nabla_y^{m_2} K(t, x, x-y)| |h(y)|^p \, dy \right)^{\frac{1}{p}},
\end{aligned}
\end{equation*}
which leads to
\begin{equation}\label{refY}
\begin{aligned}
\|\mathbf{h}(t)\|_{L^p} &\lesssim \bigl\| \sup_z \partial_t^{m_1} \nabla_x^{m_2} K(t, z, x) \bigr\|_{L_x^1}^{\frac{1}{p'}} \left( \iint_{\R^3\times\R^3} |\partial_t^{m_1} \nabla_y^{m_2} K(t, x, x-y)| |h(y)|^p \, dy \, dx \right)^{\frac{1}{p}} \\
&\lesssim \bigl\| \sup_z \partial_t^{m_1} \nabla_x^{m_2} K(t, z, x) \bigr\|_{L_x^1} \|h\|_{L^p}.
\end{aligned}
\end{equation}
Note that, for any $x_0$ and $t$, one has
$
    \left\|K(t,x_0,x)\right\|_{L_x^1}=1,$
so that it follows from \eqref{refY} that
\begin{equation*}
    \left\|\int_{\mathbb{R}^d}K({t}/{2},x,x-y)f(t,y)\,dy\right\|_{L_x^p}\lesssim \|f(t,\cdot)\|_{L^p}\lesssim t^{-\eta}\sup_{t\leq T}t^\eta\|f(t)\|_{L^p}.
\end{equation*}
By \eqref{refY} and the pointwise estimate \eqref{esthtk}, we find
\begin{equation}\label{S4eq1}
\begin{aligned}
\|\tilde{g}(t)\|_{L^p} &\lesssim \Bigl( \int_0^{t/2} (t-\tau)^{-1} \tau^{-\eta} \, d\tau + t^{-\eta} \Bigr) \sup_{t \leq T} t^{\eta} \|f(t)\|_{L^p} \\
&\quad + \int_{t/2}^t (t-\tau)^{-1+\sigma} \tau^{-\sigma-\eta} \, d\tau \sup_{s < t \leq T} s^{\eta+\sigma} \frac{\|f(t) - f(s)\|_{L^p}}{(t-s)^{\sigma}} \\
&\lesssim t^{-\eta} \Bigl( \sup_{t \leq T} t^{\eta} \|f(t)\|_{L^p} + \sup_{s < t \leq T} s^{\eta+\sigma} \frac{\|f(t) - f(s)\|_{L^p}}{(t-s)^{\sigma}} \Bigr).
\end{aligned}
\end{equation}
For the time differences, let $a:= t-s$. For the same reason as in the previous section, we only need to consider the case $a < \frac{s}{4}$. We write
\begin{equation*}
\begin{aligned}
\tilde{g}(t,x) &= \int_0^{t/2} \int_{\R^3} \partial_\tau K(t-\tau, x, x-y) f(\tau, y) \, dy \, d\tau \\
&\quad + \int_{t/2}^t \int_{\R^3} \partial_\tau K(t-\tau, x, x-y) f(\tau, y) \, dy \, d\tau:= I(t,x) + II(t,x).
\end{aligned}
\end{equation*}
For $I(t)$, since
\begin{align*}
I(t) - I(s) = &\int_0^{s/2} \int_{\R^3} \delta_a^t \partial_\tau K(s-\tau, x, x-y) f(\tau, y) \, dy \, d\tau\\
& + \int_{s/2}^{t/2} \int_{\R^3} \partial_\tau K(t-\tau, x, x-y) f(\tau, y) \, dy \, d\tau,
\end{align*}
by \eqref{esthtk} and \eqref{refY},  one obtains
\begin{align*}
 {\|I(t) - I(s)\|_{L^p}} \lesssim& \Bigl(\int_0^{s/2}\min\bigl\{(s-\tau)^{-1}, a(s-\tau)^{-2}\big\}\tau^{-\eta}\,d\tau\\
 &+\int_{s/2}^{t/2}(t-\tau)^{-1}\tau^{-\eta}\,d\tau\Bigr)
 \sup_{t \leq T} t^{\eta} \|f(t)\|_{L^p}\\
 \lesssim & a^{\sigma}s^{-\sigma-\eta}\sup_{t \leq T} t^{\eta} \|f(t)\|_{L^p},
\end{align*}
where we used the fact that 
\begin{align*}
\int_{s/2}^{t/2}(t-\tau)^{-1}\tau^{-\eta}\,d\tau\lesssim a t^{-1}s^{-\eta}
\lesssim a^{\sigma}s^{-\sigma-\eta} \Bigl(\f{a}t\Bigr)^{1-\sigma}\lesssim a^\sigma s^{-\sigma-\eta}.
\end{align*}
Furthermore, notice that
\begin{align*}
II(t) - II(s) &= -\int_{s/2}^{t/2} \int_{\R^3} \partial_\tau K(s-\tau, x, x-y) f(\tau, y) \, dy \, d\tau \\
&\quad + \int_{t/2}^{s} \int_{\R^3} \delta_a^t \partial_\tau K(s-\tau, x, x-y) f(\tau, y) \, dy \, d\tau \\
&\quad + \int_{s}^{t} \int_{\R^3} \partial_\tau K(t-\tau, x, x-y) f(\tau, y) \, dy \, d\tau \\
&= -\int_{s/2}^{t/2} \int_{\R^3} \partial_\tau K(s-\tau, x, x-y) f(\tau, y) \, dy \, d\tau \\
&\quad + \int_{t/2}^{s} \int_{\R^3} \delta_a^t \partial_\tau K(s-\tau, x, x-y) \left(f(\tau, y)-f(s,y)\right) \, dy \, d\tau \\
&\quad + \int_{s}^{t} \int_{\R^3} \partial_\tau K(t-\tau, x, x-y) \left(f(\tau, y)-f(t,y)\right) \, dy \, d\tau \\
&\quad +\left(\int_{t/2}^{s} \int_{\R^3} \delta_a^t \partial_\tau K(s-\tau, x, x-y)f(s,y) \, dy \, d\tau\right.\\
&\left.\qquad+\int_{s}^{t} \int_{\R^3} \partial_\tau K(t-\tau, x, x-y)f(t,y) \, dy \, d\tau\right)\\
&:= DII_1(t,s) + DII_2(t,s) + DII_3(t,s)+DII_4(t,s).
\end{align*}
Observing that
\begin{equation*}
    \sup_x\|\partial_\tau K(s-\tau,x,y)\|_{L_y^1}\lesssim (s-\tau)^{-1},
\end{equation*}
and on the time interval $(s/2, t/2)$, there holds $(s-\tau)^{-1}\lesssim s^{-1}$,  we deduce from \eqref{refY} and  $a<\frac{s}{4}$,  that
    \begin{align*}
        \|DII_1(t,s)\|_{L^p}&\lesssim s^{-1}\int_{\frac{s}{2}}^{\frac{t}{2}}\|f(\tau)\|_{L^p}\,d\tau\lesssim s^{-1}\int_{\frac{s}{2}}^{\frac{t}{2}}\tau^{-\eta}\,d\tau\sup_{\tau\leq T}\tau^{\eta}\|f(\tau)\|_{L^p}\\
        &\lesssim (t-s)s^{-1-\eta}\sup_{\tau\leq T}\tau^{\eta}\|f(\tau)\|_{L^p}\lesssim (t-s)^{\sigma}s^{-\sigma-\eta}\sup_{\tau\leq T}\tau^{\eta}\|f(\tau)\|_{L^p}.
    \end{align*}

By \eqref{esthtk} and \eqref{refY} (pointwisely for each $\tau$) we have
    \begin{align*}
       &\bigl\| \int_{t/2}^{s} \int_{\R^3} \delta_a^t \partial_\tau K(s-\tau, x, x-y) \bigl( f(\tau, y) - f(s, y) \bigr) \, dy \, d\tau\bigr\|_{L^p}\\
        &\lesssim \int_{\frac{t}{2}}^s(s-\tau)^{-1+\sigma} \min\Bigl\{1,\frac{a}{s-\tau}\Bigr\}\,\tau^{-\sigma-\eta}\,d\tau \sup_{s < t \leq T} s^{\eta+\sigma} \frac{\|f(t) - f(s)\|_{L^p}}{(t-s)^{\sigma}}\\
        &\lesssim a^{\sigma}s^{-\sigma-\eta}\sup_{s < t \leq T} s^{\eta+\sigma} \frac{\|f(t) - f(s)\|_{L^p}}{(t-s)^{\sigma}},
    \end{align*}
    where we used the fact that
    \begin{align*}
    \int_{\frac{t}{2}}^s(s-\tau)^{-1+\sigma} \min\Bigl\{1,\frac{a}{s-\tau}\Bigr\}\,\tau^{-\sigma-\eta}\,d\tau 
    = &a\int_{\frac{t}{2}}^{2s-t}(s-\tau)^{-2+\sigma} \tau^{-\sigma-\eta}\,d\tau \\
&+\int_{2s-t}^s(s-\tau)^{-1+\sigma} \tau^{-\sigma-\eta}\,d\tau 
\lesssim a^{\sigma}s^{-\sigma-\eta}.
\end{align*}
Then along the same line to the proof of \eqref{S4eq1}, we find
\begin{align*}
&\|DII_2(t,s)\|_{L^p} + \|DII_3(t,s)\|_{L^p}\\
&\quad \lesssim (t-s)^{\sigma} s^{-\eta-\sigma} \Bigl( \sup_{t \leq T} t^{\eta} \|f(t)\|_{L^p} + \sup_{s < t \leq T} s^{\eta+\sigma} \frac{\|f(t) - f(s)\|_{L^p}}{(t-s)^{\sigma}} \Bigr).
\end{align*}
For the term $DII_4(t,s)$, we write
    \begin{align*}
        DII_4(t,s)=&\Bigl(\int_{\mathbb{R}^3}K(t-s,x,x-y)\bigl(f(s,y)dy-f(t,y)\bigr)\,dy\Bigr)\\
        &+\Bigl(\int_{\mathbb{R}^3}K(s-\frac{t}{2},x,x-y)f(s,y)dy-\int_{\mathbb{R}^3}K(\frac{t}{2},x,x-y)f(s,y)dy\Bigr)\\
        &+\left(f(t,x)-f(s,x)\right).
    \end{align*}
Then by \eqref{diffker} and \eqref{refY}, we deduce that
\begin{equation*}
    \begin{aligned}
        \|DII_4(t,s)\|_{L^p}&\lesssim \|f(t)-f(s)\|_{L^p}+\|f(s)\|_{L^p}\sup_{x}\left\|K(s-{t}/{2},x,y)-K({t}/{2},x,y)\right\|_{L_y^1}\\
        &\lesssim a^\sigma s^{-\sigma-\eta}\Bigl(\sup_{t \leq T} t^{\eta} \|f(t)\|_{L^p} + \sup_{s < t \leq T} s^{\eta+\sigma} \frac{\|f(t) - f(s)\|_{L^p}}{(t-s)^{\sigma}}  \Bigr).
    \end{aligned}
\end{equation*}
 This completes the proof of Lemma \ref{CVPDE}.
	\end{proof}
	
  We will prove the {\it a priori} estimates  for the remainder terms in the rest of this section. The term $u_{R1,x_0}$ is the most difficult one, and the next two propositions will be used separately in local and global existence.

\begin{prop}\label{uR1}
Let $T \sim b^2 < 1$. Let  $(\rho, u, P)$      be a smooth enough solution of  \eqref{eqinns} on $[0, T]$ with  initial data satisfying \eqref{bdrho0}, $u_0 \in L^2$, $\rho_0 \in C^1$. Then we have
\begin{equation}\label{S4eq3}
\begin{aligned}
&\sup_{x_0} \Bigl( \sup_{t \leq T} t^{\frac{1}{2}} \|u_{R1,x_0}(t)\|_{L^\infty} + \|u_{R1,x_0}\|_{V_T^b} + \sup_{s < t < T} s^{\frac{1}{2} + \varkappa_1} \frac{\|u_{R1,x_0}(t) - u_{R1,x_0}(s)\|_{L^\infty}}{(t-s)^{\varkappa_1}} \Bigr) \\
&\quad + \sup_{t \leq T} t^{1 + \frac{\kappa}{2}} \| (\nabla |D|^\kappa u_{R1,x_0})|_{x_0=x}(t) \|_{L^\infty}
+ \sup_{t \leq T} t^{\frac{5}{12}} \|\tilde{u}_{R1}(t)\|_{L^{12}} \\
&\quad + \sup_{s < t < T} s^{\frac{5}{12} + \varkappa_2} \frac{\|\tilde{u}_{R1}(t) - \tilde{u}_{R1}(s)\|_{L^{12}}}{(t-s)^{\varkappa_2}} \leq CT^{\f1{4}} \|\rho_0\|_{C^1} \mathbf{C}^2(u),
\end{aligned}
\end{equation}
where the constant $C$ depends only on $C_1$ and $C_2$ in \eqref{bdrho0}, and we denote
\begin{equation*}
\mathbf{C}(u) := \|u\|_{X_T} + \|u_0\|_{L^2}+1.
\end{equation*}
\end{prop}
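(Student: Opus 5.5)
We rewrite $u_{R1,x_0}$ by moving the time derivative from $\partial_\tau u$ onto the kernel. Setting $f(\tau,y):=\nabla\Delta^{-1}(\nabla\rho_0\cdot u)(\tau,y)$ and integrating by parts in $\tau$ gives
\begin{equation*}
u_{R1,x_0}(t,x)=\int_0^t\!\!\int_{\R^3}\partial_\tau K_{x_0}(t-\tau,x-y)\,f(\tau,y)\,dy\,d\tau+f(t,x)-\bigl(K_{x_0}(t)\ast f(0)\bigr)(x),
\end{equation*}
up to a harmless sign convention. Since $u_0$ is not regular, the boundary term at $\tau=0$ must be treated with care. We will instead keep $f(0)=\nabla\Delta^{-1}(\nabla\rho_0\cdot u_0)$ and note that $\|f(0)\|_{L^2\cap L^{6}}$ is controlled by $\|\nabla\rho_0\|_{L^\infty}\|u_0\|_{L^2}$ through the Riesz potential of order $1$ and Hardy--Littlewood--Sobolev. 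This term is then a caloric extension of an $L^6$ function, and the heat kernel estimates \eqref{esthtk} give $t^{1/4}\cdot$(weights) bounds. The first integral is exactly the $g_{x_0}$ of Lemma \ref{CVPDE}, so every $X_T$-type norm of it (sup norms with weights $t^{1/2}$, $t^{5/12}$, and the weighted time-H\"older seminorms with $\varkappa_1,\varkappa_2$) reduces to the same norms of $f$ itself. The $V_T^b$ norm follows from the $L^\infty$ bound, since $R^{-3/2}(\int_0^{R^2}\int_{B_R}|g|^2)^{1/2}\lesssim\sup t^{1/2}\|g\|_{L^\infty}$. The whole proposition therefore reduces to estimating $f$ in $Y_T$-like norms with a gain $T^{1/4}$.

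\textbf{Step 1: sup norms of $f$.} Since $\nabla\Delta^{-1}$ has order $-1$, Sobolev embedding gives $\|f\|_{L^\infty}\lesssim\|\nabla\rho_0\cdot u\|_{L^3}^{1/2}\|\nabla\rho_0\cdot u\|_{L^{6}}^{1/2}$ or a similar pair straddling $L^3$, and $\|f\|_{L^{12}}\lesssim\|\nabla\rho_0 \cdot u\|_{L^{12/5}}$. We interpolate $u$ between $L^2$, with $\|u(t)\|_{L^2}\le C(\e_0,\|u_0\|_{L^2})$ from the energy estimate \eqref{estL2u} (Lemmas \ref{eng1}--\ref{eng3}), and $L^{12}$ or $L^\infty$, with weights from $\|u\|_{X_T}$. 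This yields $t^{1/2}\|f\|_{L^\infty}+t^{5/12}\|f\|_{L^{12}}\lesssim t^{1/4}\|\rho_0\|_{C^1}\mathbf C(u)^2$. The point is that $f$ is one derivative smoother than $u$, so it is subcritical and produces a positive power of $t$.

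\textbf{Step 2: time differences of $f$.} This is the main obstacle. We have $f(t)-f(s)=\nabla\Delta^{-1}(\nabla\rho_0\cdot(u(t)-u(s)))$, and we need an $L^\infty$ and $L^{12}$ bound of order $(t-s)^{\varkappa}s^{-1/2-\varkappa}$ with $\varkappa>1/2$. Using only the H\"older-in-time part of $X_T$ costs a full weight and yields no gain. We will therefore use the energy information $\sup_t t\|u_t\|_{L^2}$ and $\|t^{1/2}u_t\|_{L^2_tL^2}$ from \eqref{estL2u}, which gives $\|u(t)-u(s)\|_{L^2}\lesssim (t-s)s^{-1}$. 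We interpolate this with $\|u(t)-u(s)\|_{L^{12}}\lesssim (t-s)^{\varkappa_2}s^{-5/12-\varkappa_2}\|u\|_{X_T}$, and with the $L^\infty$ analogue, to place $\nabla\rho_0\cdot(u(t)-u(s))$ in $L^{p}$ for $p$ slightly below and above $3$. The order $-1$ operator then yields $L^\infty$ and $L^{12}$ bounds on $f(t)-f(s)$. Since the $L^2$ piece comes with the exponent $1>\varkappa_2>\varkappa_1$, a convex combination gives an exponent of $(t-s)$ at least $\varkappa_1$ (respectively $\varkappa_2$), together with leftover powers of $s$ that are positive, bounded by $T^{1/4}$. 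The exponents $\varkappa_1=18/35$ and $\varkappa_2=5/7$ should be exactly what makes this balance work, and checking these exponents is the delicate computation.

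\textbf{Step 3: the $C^{1+\kappa}$ part.} For $(\nabla|D|^\kappa u_{R1,x_0})|_{x_0=x}$ we cannot reduce to $f$ in $\dot C^{1+\kappa}$, since $\rho_0$ is only $C^1$. Instead we work with the original form $\int_0^t\nabla|D|^\kappa K_{x_0}(t-\tau)\ast f_\tau$, where $f_\tau=\nabla\Delta^{-1}(\nabla\rho_0\cdot u_\tau)$. We split at $t/2$. On $[0,t/2]$ we integrate by parts in $\tau$ so that $\partial_\tau\nabla|D|^\kappa K$ acts on $f$, which has $L^1$ norm $\lesssim(t-\tau)^{-3/2-\kappa/2}$, and we use Step 1. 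On $[t/2,t]$ we keep $u_\tau$ and bound $\|\nabla\Delta^{-1}(\nabla\rho_0\cdot u_\tau)\|_{\dot C^{\kappa'}}$ for $\kappa'>\kappa$ by $\|\nabla\rho_0\cdot u_\tau\|_{L^p}$ with $p>3$, using the Riesz order $-1$ into $\dot C^{1-3/p}$. Here $\|u_\tau\|_{L^p}$ is interpolated from $L^2$ ($\tau\|u_\tau\|_{L^2}$) and $L^{12}$. This gives an integrable singularity $(t-\tau)^{-1+(\kappa'-\kappa)/2}$ and the weight $t^{-1-\kappa/2}\cdot t^{1/4}$. All constants depend on $\rho_0$ only through $C_1,C_2$ and $\|\rho_0\|_{C^1}$, and are uniform in $x_0$, which completes \eqref{S4eq3}.
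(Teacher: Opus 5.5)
\textbf{Steps 1--2.} Your treatment of the $L^\infty$, $L^{12}$ and time-difference norms is a valid alternative to the paper's. Your pieces $g_{x_0}+f(t)-K_{x_0}(t)\ast f(0)$ are a regrouping of $u^1_{R1,x_0}+u^2_{R1,x_0}$ in \eqref{estR11n}. The paper estimates those two pieces directly, using $\partial_tK_{x_0}\ast\nabla\Delta^{-1}=\rho_0(x_0)^{-1}\nabla K_{x_0}\ast$ and \eqref{S4eqp}. You instead pass through Lemma \ref{CVPDE} and interpolate the energy bounds against $X_T$ for $f$. Steps 1--2 do close, with a gain $s^{1/3}$ for every $\varkappa\in(0,1)$, so no delicate balance of $\varkappa_1,\varkappa_2$ is involved.

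One correction there: the inequality $R^{-3/2}\bigl(\int_0^{R^2}\int_{B_R}|g|^2\bigr)^{1/2}\lesssim\sup_t t^{1/2}\|g(t)\|_{L^\infty}$ is false, since the $dt/t$ integral diverges at $0$. You need a subcritical decay as in \eqref{uR1bmo}. Lemma \ref{CVPDE} with $\eta=5/12$, combined with your Step 1--2 bounds, supplies it.

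\textbf{The gap is in Step 3 on $[t/2,t]$.} There you keep $\partial_\tau u$ and need $\|\partial_\tau u(\tau)\|_{L^p}$, $p>3$, pointwise in $\tau$. You propose to get it by interpolating $\tau\|\partial_\tau u\|_{L^2}$ with an $L^{12}$ bound on $\partial_\tau u$. No such bound is available:
\begin{itemize}
\item $X_T$ only gives $\varkappa_2$-H\"older continuity in time of $u$ in $L^{12}$, not $\partial_\tau u\in L^{12}$;
\item Lemmas \ref{eng1}--\ref{eng3} control $\partial_\tau u$ only in $L^2$, plus $\tau\nabla\partial_\tau u\in L^2_TL^2$;
\item extracting $\partial_\tau u$ from the equation would need $\nabla^2 u$, which none of the norms control.
\end{itemize}
So the $C^{1+\kappa}$ bound is not closed as written.

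\textbf{How to fix it.} Your worry that $\rho_0\in C^1$ prevents moving the time derivative off $u$ is unfounded. After subtracting $f(t)$, all derivatives land on the kernel. Schematically, the boundary term at $\tau=t$ vanishes and
\begin{align*}
\nabla|D|^\kappa\int_{t/2}^t K_{x_0}(t-\tau)\ast\partial_\tau f(\tau)\,d\tau
={}&-\nabla|D|^\kappa K_{x_0}(t/2)\ast\bigl(f(t/2)-f(t)\bigr)\\
&+\frac{1}{\rho_0(x_0)}\int_{t/2}^t\nabla|D|^\kappa\nabla K_{x_0}(t-\tau)\ast\bigl(\nabla\rho_0\cdot(u(\tau)-u(t))\bigr)\,d\tau .
\end{align*}
By \eqref{esthtk}, the last kernel has $L^1$ norm $\lesssim(t-\tau)^{-1-\kappa/2}$. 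Also
\begin{equation*}
\|\nabla\rho_0\cdot(u(\tau)-u(t))\|_{L^\infty}\le\|\rho_0\|_{C^1}(t-\tau)^{\varkappa_1}\tau^{-\frac12-\varkappa_1}\|u\|_{X_T}.
\end{equation*}
Since $\varkappa_1>\kappa/2$, the integral is $\lesssim t^{-\frac12-\frac\kappa2}$, a gain of $t^{1/2}$. The boundary term at $t/2$ is handled by your Step 1. This is what the paper does on the whole interval $[0,t]$ in \eqref{S4eq22}.

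Alternatively, you can keep $\partial_\tau u$ and use
\begin{itemize}
\item $\|\partial_\tau u\|_{L^6}\lesssim\|\nabla\partial_\tau u\|_{L^2}$ and $\dot W^{1,6}\subset\dot C^{1/2}$, which give $\|\nabla\Delta^{-1}(\nabla\rho_0\cdot\partial_\tau u)\|_{\dot C^{1/2}}\lesssim\|\rho_0\|_{C^1}\|\nabla\partial_\tau u\|_{L^2}$;
\item Cauchy--Schwarz in $\tau$ against $\|\tau\nabla\partial_\tau u\|_{L^2_TL^2}$ from Lemma \ref{eng3}.
\end{itemize}
This gives exactly the $t^{1/4}$ gain.
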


\begin{proof} We divide the proof into the following two steps:\smallskip

\noindent \textbf{i) Estimates for spatial integrability.}\smallskip

In view of \eqref{S4eq0}, we write
\begin{equation}\label{estR11n}
\begin{aligned}
u_{R1,x_0}(t,x) &= \int_0^t K_{x_0}(t-\tau) \ast \nabla \Delta^{-1} \partial_\tau \bigl( \nabla \rho_0 \cdot (u(\tau) - u(t)) \bigr) \, d\tau \\
&= \int_0^t \partial_t K_{x_0}(t-\tau) \ast \nabla \Delta^{-1} \bigl( \nabla \rho_0 \cdot (u(\tau) - u(t)) \bigr) \, d\tau \\
&\quad - K_{x_0}(t) \ast \nabla \Delta^{-1} \bigl( \nabla \rho_0 \cdot (u_0 - u(t)) \bigr)\\
&:=u_{R1,x_0}^1(t,x)+u_{R1,x_0}^2(t,x),
\end{aligned}
\end{equation}
and correspondingly we define $\tilde{u}_{R1}^i=(u_{R1,x_0}^i)|_{x_0=x}$ as in \eqref{deftilu}. Furthermore, by the formula \eqref{defKx0}, $u_{R1,x_0}^1(t,x)$ satisfies
\begin{equation*}
    u_{R1,x_0}^1(t,x)=\frac{1}{\rho_0(x_0)}\int_0^t \nabla K_{x_0}(t-\tau) \ast  \bigl( \nabla \rho_0 \cdot (u(\tau) - u(t)) \bigr) \, d\tau.
\end{equation*}
It follows from  Lemmas \ref{eng1}, \ref{eng2} and \ref{eng3} that there holds \eqref{estL2u}.
Below we will frequently use the following interpolation inequalities:
\begin{equation}\label{intpoeq}
\begin{aligned}
    &\|\nabla \Delta^{-1} u\|_{L^\infty} \lesssim \min\{\|u\|_{L^2}^{\frac{2}{3}} \|u\|_{L^\infty}^{\frac{1}{3}},\|u\|_{L^2}^{\f35}\|u\|_{L^{12}}^{\f25}\}, \quad &&\|\nabla \Delta^{-1}\nabla  u\|_{L^\infty} \lesssim\|u\|_{L^2}^{\frac{2\kappa}{2\kappa+3}}\|u\|_{\dot C^{\kappa}}^{\frac{3}{2\kappa+3}} ,\\
& \|\nabla \Delta^{-1} u\|_{L^{12}} \lesssim \min\{\|u\|_{L^2}^{\frac{5}{6}} \|u\|_{L^\infty}^{\frac{1}{6}},\|u\|_{L^2}^{\f45}\|u\|_{L^{12}}^{\f15}\},\quad &&\|\nabla \Delta^{-1}\nabla u\|_{L^{12}} \lesssim \|u\|_{L^{12}}.
\end{aligned}
\end{equation}

First we consider the case $\rho_0\in C^1$. By \eqref{estR11n} and \eqref{estL2u}, we find
\begin{align*}
\|u_{R1,x_0}^1(t)\|_{L^\infty} \lesssim& \int_0^t (t-\tau)^{-\f12} \|\rho_0\|_{C^1} \|(u(\tau) - u(t))\|_{L^\infty} \, d\tau
%&+ \bigl\| K_{x_0}(t) \ast \bigl( \nabla \rho_0 (u_0 - u(t)) \bigr) \bigr\|_{L^2}^{\frac{2}{3}}\bigl\| K_{x_0}(t) \ast \bigl( \nabla \rho_0 (u_0 - u(t)) \bigr) \bigr\|_{L^\infty}^{\frac{1}{3}}\\
\lesssim \|\rho_0\|_{C^1}\|u\|_{X_T}\int_0^t(t-\tau)^{-\f12}
\tau^{-\f12}\,d\tau
%&+\|\rho_0\|_{C^1}\|u_0\|_{L^2}\|K_{x_0}(t)\|_{L^2}^{\f13},
 \end{align*}
 which implies
\begin{equation}\label{uS4eq20}
\begin{aligned}
\sup_{x_0} \|u_{R1,x_0}^1(t)\|_{L^\infty} \lesssim   \mathbf{C}(u)\|\rho_0\|_{C^1}\|u\|_{X_T}.
\end{aligned}
\end{equation}
Along the same line, we get,  by using  \eqref{refY}, that
\begin{align*}
\|\tilde{u}_{R1}^1(t)\|_{L^{12}} \lesssim& \int_0^t (t-\tau)^{-\f12} \|\rho_0\|_{C^1}  \|u(\tau) - u(t)\|_{L^{12}}  \, d\tau 
%&+ \bigl\| \bigl((K_{x_0}(t,\cdot))\big|_{x_0=x} \ast (\nabla \rho_0 (u_0 - u(t))) \bigr) \bigr\|_{L_x^{2}}^{\frac{5}{6}} \\
%&\qquad \times \bigl\| \bigl( (K_{x_0}(t,\cdot))|_{x_0=x} \ast (\nabla \rho_0 (u_0 - u(t))) \bigr) \bigr\|_{L_x^{\infty}}^{\frac{1}{6}} \\
\lesssim\|\rho_0\|_{C^1}\|u\|_{X_T}\int_0^t(t-\tau)^{-\f12}\tau^{-\frac{5}{12}}\,d\tau%+\|\sup_{x_0\in\mathbb{R}^3}K_{x_0}(t,\cdot)\|_{L^2}^{\f16}\|u_0\|_{L^2}^{\f16},
 \end{align*}
which leads to
\begin{equation}\label{S4eq21}
\|\tilde{u}_{R1}^1(t)\|_{L^{12}}\lesssim t^{\frac{1}{12}} \mathbf{C}(u) \|\rho_0\|_{C^1}\|u\|_{X_T}.
\end{equation}

Furthermore, for the $C^{1+\kappa}$ norm, we have
\begin{equation}\label{S4eq22}
\begin{aligned}
\sup_{x_0} \| |D|^{1+\kappa} u_{R1,x_0}^1(t) \|_{L^\infty} \lesssim
& \int_0^t (t-\tau)^{ -1- \frac{\kappa}{2}} \|\rho_0\|_{C^1}\|u(\tau) - u(t)\|_{L^\infty} \, d\tau \\
%&\quad +  \|\sup_{x_0} \nabla \Delta^{-1} |D|^{1+\kappa} K_{x_0}(t) \|_{L^2} \| \nabla \rho_0 (u_0 - u(t)) \|_{L^2} \\
 \lesssim &\int_0^t (t-\tau)^{-1 - \frac{\kappa}{2}} \tau^{-\frac{1}{2}}\min\{1,(t-\tau)^{\varkappa_1}\tau^{-\varkappa_1}\}d\tau\|\rho_0\|_{C^1}\|u\|_{X_T}\\
  \lesssim& t^{-\frac{1 + \kappa}{2}} \mathbf{C}(u) \|\rho_0\|_{C^1}\|u\|_{X_T} %+t^{-\frac{3 + 2\kappa}{4}}\|\rho_0\|_{C^1}\|u_0\|_{L^2}
.
\end{aligned}
\end{equation}

For $u_{R1,x_0}^2$, by \eqref{estR11n} and \eqref{intpoeq}, we have the $L^\infty$ estimate
\begin{equation}\label{S4eq14}
    \begin{aligned}
        \|u_{R1,x_0}^2(t)\|_{L^\infty}&\lesssim \bigl\| K_{x_0}(t) \ast \bigl( \nabla \rho_0 (u_0 - u(t)) \bigr) \bigr\|_{L^2}^{\frac{2}{3}}\bigl\| K_{x_0}(t) \ast \bigl( \nabla \rho_0 (u_0 - u(t)) \bigr) \bigr\|_{L^\infty}^{\frac{1}{3}}\\
        &\lesssim \|\rho_0\|_{C^1}\|u_0\|_{L^2}\|K_{x_0}(t)\|_{L^2}^{\f13}\lesssim t^{-\frac{1}{4}}\|\rho_0\|_{C^1}\mathbf{C}(u),
    \end{aligned}
\end{equation}
the corresponding $L^{12}$ estimate
\begin{equation}\label{S4eq15}
    \begin{aligned}
        \|\tilde{u}_{R1}^2(t)\|_{L^{12}}&\lesssim \bigl\| \bigl((K_{x_0}(t,\cdot))\big|_{x_0=x} \ast (\nabla \rho_0 (u_0 - u(t))) \bigr) \bigr\|_{L_x^{2}}^{\frac{5}{6}} \bigl\| \bigl( (K_{x_0}(t,\cdot))|_{x_0=x} \ast (\nabla \rho_0 (u_0 - u(t))) \bigr) \bigr\|_{L_x^{\infty}}^{\frac{1}{6}}\\
        &\lesssim\|\sup_{x_0\in\mathbb{R}^3}K_{x_0}(t,\cdot)\|_{L^2}^{\f16}\|\rho_0\|_{C^1}\|u_0\|_{L^2}\lesssim t^{-\frac{1}{8}}  \|\rho_0\|_{C^1}\mathbf{C}(u),
    \end{aligned}
\end{equation}
and the $C^{1+\kappa}$ estimate
\begin{equation}\label{S4eq16}
    \begin{aligned}
        \sup_{x_0}\||D|^{1+\kappa}u_{R1,x_0}^2(t)\|_{L^\infty}&\lesssim \|\sup_{x_0} \nabla \Delta^{-1} |D|^{1+\kappa} K_{x_0}(t) \|_{L^2} \| \nabla \rho_0 (u_0 - u(t)) \|_{L^2}\\
        &\lesssim\|\sup_{x_0} \nabla \Delta^{-1} |D|^{1+\kappa} K_{x_0}(t) \|_{L^2} \| \nabla \rho_0 (u_0 - u(t)) \|_{L^2}\\
        &\lesssim t^{-\frac{3 + 2\kappa}{4}}\|\rho_0\|_{C^1}\mathbf{C}(u).
    \end{aligned}
\end{equation}

So combine \eqref{uS4eq20} and \eqref{S4eq14}, we have
\begin{equation}\label{S4eq68}
    \sup_{x_0}\sup_{t\leq T}t^{\frac{1}{2}}\|u_{R1,x_0}(t)\|_{L^\infty}\lesssim T^{\frac{1}{4}}\|\rho_0\|_{C^1}\mathbf{C}(u).
\end{equation}
Combine \eqref{S4eq21} and \eqref{S4eq15}, we have
\begin{equation}\label{S4eq69}
    \sup_{t\leq T}t^{\frac{5}{12}}\|\tilde{u}_{R1}(t)\|_{L^{12}}\lesssim T^{\frac{7}{24}}\|\rho_0\|_{C^1}\mathbf{C}(u).
\end{equation}
Combine \eqref{S4eq22} and \eqref{S4eq16}, we have
\begin{equation}\label{S4eq70}
    \sup_{x_0}\sup_{t\leq T}t^{1+\frac{\kappa}{2}}\||D|^{1+\kappa}u_{R1,x_0}(t)\|_{L^\infty}\lesssim T^{\frac{1}{4}}\|\rho_0\|_{C^1}\mathbf{C}(u).
\end{equation}

Note that
\begin{equation}\label{uR1bmo}
\|u_{R1,x_0}\|_{V_T^b} \lesssim T^{\f1{12}}\sup_{t \leq T} t^{\frac{5}{12}} \|u_{R1,x_0}\|_{L^\infty}
 \lesssim \sup_{t \leq T} t^{\frac{5}{12}} \|u_{R1,x_0}\|_{L^\infty}, \quad \forall T < 1.
\end{equation}
Hence, similar estimates as \eqref{uR1Linf} also implies the estimate for $\sup_{x_0} \|u_{R1,x_0}\|_{V_T^b}$.\\
\textbf{ii) Estimates for time differences.}\\
We still use \eqref{estR11n}. For $u_{R1,x_0}^1$ and $s<t<T$, let $a=t-s$, we denote
\begin{align*}
    u_{R1,x_0}^1(t,x)-u_{R1,x_0}^1(s,x)=&\frac{1}{\rho_0(x_0)}\int_s^t\nabla K_{x_0}(t-\tau)\ast (\nabla\rho_0 (u(\tau)-u(t)))d\tau\\
    &+\frac{1}{\rho_0(x_0)}\int_0^s\nabla K_{x_0}(t-\tau)\ast (\nabla\rho_0 (u(t)-u(s)))d\tau\\
    &+\frac{1}{\rho_0(x_0)}\int_0^s\nabla \delta_a^t K_{x_0}(s-\tau)\ast (\nabla\rho_0 (u(\tau)-u(s)))d\tau.
\end{align*}
Then separately, we have the $L^\infty$ estimate
\begin{align*}
   \|u_{R1,x_0}^1(t)-u_{R1,x_0}^1(s)\|_{L^\infty} \lesssim &\int_s^t(t-\tau)^{-\frac{1}{2}}(t-\tau)^{\varkappa_1}\tau^{-\frac{1}{2}-\varkappa_1}d\tau\|\rho_0\|_{C^1}\|u\|_{X_T}\\
   & +\int_0^s(t-\tau)^{-\frac{1}{2}}(t-s)^{\varkappa_1}s^{-\frac{1}{2}-\varkappa_1}d\tau\|\rho_0\|_{C^1}\|u\|_{X_T}\\
   &+\int_0^s(s-\tau)^{-\frac{1}{2}}\min\{1,a(s-\tau)^{-1}\}\\
   &\qquad\times\tau^{-\frac{1}{2}}\min\{1,(s-\tau)^{\varkappa_1}\tau^{-\varkappa_1}\}d\tau\|\rho_0\|_{C^1}\|u\|_{X_T}\\
   \lesssim &a^{\varkappa_1}s^{-\varkappa_1}\|\rho_0\|_{C^1}\|u\|_{X_T},
\end{align*}
and the $L^{12}$ estimate
\begin{align*}
   \|\tilde u_{R1}^1(t)-\tilde u_{R1}^1(s)\|_{L^{12}} \lesssim &\int_s^t(t-\tau)^{-\frac{1}{2}}(t-\tau)^{\varkappa_2}\tau^{-\frac{5}{12}-\varkappa_2}d\tau\|\rho_0\|_{C^1}\|u\|_{X_T}\\
   & +\int_0^s(t-\tau)^{-\frac{1}{2}}(t-s)^{\varkappa_2}s^{-\frac{5}{12}-\varkappa_2}d\tau\|\rho_0\|_{C^1}\|u\|_{X_T}\\
   &+\int_0^s(s-\tau)^{-\frac{1}{2}}\min\{1,a(s-\tau)^{-1}\}\\
   &\qquad\times\tau^{-\frac{5}{12}}\min\{1,(s-\tau)^{\varkappa_2}\tau^{-\varkappa_2}\}d\tau\|\rho_0\|_{C^1}\|u\|_{X_T}\\
   \lesssim &a^{\varkappa_2}s^{\frac{1}{12}-\varkappa_2}\|\rho_0\|_{C^1}\|u\|_{X_T}.
\end{align*}
Combine the two estimates above, we can obtain that
\begin{align}
    &\sup_{x_0}\sup_{s\leq T\leq T}s^{\frac{1}{2}+\varkappa_1}\frac{\|u_{R1,x_0}^1(t)-u_{R1,x_0}^1(s)\|_{L^{\infty}}}{(t-s)^{\varkappa_1}}\lesssim T^{\frac{1}{2}}\|\rho_0\|_{C^1}\|u\|_{X_T},\label{S4eq41}\\
    &\sup_{s\leq T\leq T}s^{\frac{5}{12}+\varkappa_2}\frac{\|\tilde u_{R1}^1(t)-u_{R1}^1(s)\|_{L^{12}}}{(t-s)^{\varkappa_2}}\lesssim T^{\frac{1}{2}}\|\rho_0\|_{C^1}\|u\|_{X_T}\nonumber
\end{align}

For $u_{R1,x_0}^2$, by \eqref{estR11n}, we have
\begin{align}
   u_{R1,x_0}^2(t,x)-u_{R1,x_0}^2(s,x)=&\left(K_{x_0}(t)-K_{x_0}(s)\right)\ast \nabla\Delta^{-1}\left(\nabla\rho_0(u_0-u(t))\right)(x)\label{S4eq40}\\
   &+K_{x_0}(s)\ast \nabla\Delta^{-1}\left(\nabla\rho_0(u(t)-u(s))\right)(x)\nonumber\\
   =&\frac{1}{\rho_0(x_0)}\int_s^t\nabla K_{x_0}(\gamma)d\gamma\ast \left(\nabla\rho_0(u_0-u(t))\right)(x)\nonumber\\
   &+\nabla\Delta^{-1}K_{x_0}(s)\ast \left(\nabla\rho_0(u(t)-u(s))\right)(x).\nonumber
\end{align}
 While it follows  from \eqref{estL2u} and Lemma \ref{eng3} that for $s<t,$
 \begin{align*} \|u(t) - u(s)\|_{L^2}\leq \int_s^t\|u_\tau(\tau)\|_{L^2}\,d\tau\leq \int_s^t\tau^{-1}\,d\tau\|\tau u_\tau(\tau)\|_{L^\infty_t(L^2)}\leq s^{-1}(t-s)\|u_0\|_{L^2}, \end{align*}
so that
\beq\label{S4eqp}
\|u(t) - u(s)\|_{L^2}\lesssim \min\bigl\{1, s^{-1}(t-s)\bigr\}\|u_0\|_{L^2}.
\eeq
By \eqref{S4eq40}, \eqref{S4eqp} and Lemma \ref{eng1}, we have
\begin{equation*}
    \begin{aligned}
        \|u_{R1,x_0}^2(t)-u_{R1,x_0}^2(s)\|_{L^\infty}\lesssim& \int_s^t\|\sup_{x_0}|\nabla K_{x_0}(\gamma)|\|_{L^2}d\gamma\|\rho_0\|_{C^1}\|u_0\|_{L^2}\\
        &+\|\sup_{x_0}|\nabla\Delta^{-1}K_{x_0}(s)|\|_{L^2}\|\rho_0\|_{C^1}\|u_0\|_{L^2}\min\{1,(t-s)s^{-1}\}\\
        \lesssim &s^{-\frac{1}{4}}\min\{1,(t-s)s^{-1}\}\|\rho_0\|_{C^1}\|u_0\|_{L^2},\\
        \|\tilde u_{R1}^2(t)-\tilde u_{R1}^2(s)\|_{L^{12}}\lesssim &\int_s^t\|\sup_{x_0}|\nabla K_{x_0}(\gamma)|\|_{L^{\frac{12}{7}}}d\gamma\|\rho_0\|_{C^1}\|u_0\|_{L^2}\\
        &+\|\sup_{x_0}|\nabla\Delta^{-1}K_{x_0}(s)|\|_{L^{\frac{12}{5}}}\|\rho_0\|_{C^1}\|u_0\|_{L^2}\min\{1,(t-s)s^{-1}\}\\
        \lesssim& s^{-\frac{1}{8}}\min\{1,(t-s)s^{-1}\}\|\rho_0\|_{C^1}\|u_0\|_{L^2}.
    \end{aligned}
\end{equation*}
These lead to
\begin{equation}\label{S4eq42}
    \begin{aligned}
        &\sup_{x_0}\sup_{s<t<T}s^{\frac{1}{2}+\varkappa_1}\frac{\|u_{R1,x_0}^2(t)-u_{R1,x_0}^2(s)\|_{L^\infty}}{(t-s)^{\varkappa_1}}\lesssim T^{\frac{1}{4}}\|\rho_0\|_{C^1}\|u_0\|_{C^2}\\
        &\sup_{s<t<T}s^{\frac{5}{12}+\varkappa_2}\frac{\|\tilde u_{R1}^2(t)-\tilde u_{R1}^2(s)\|_{L^{12}}}{(t-s)^{\varkappa_2}}\lesssim T^{\frac{7}{24}}\|\rho_0\|_{C^1}\|u_0\|_{C^2}
    \end{aligned}
\end{equation}

So \eqref{S4eq41} and \eqref{S4eq42} lead to
\begin{equation}\label{S4eq71}
    \begin{aligned}
        &\sup_{x_0}\sup_{s<t<T}s^{\frac{1}{2}+\varkappa_1}\frac{\|u_{R1,x_0}(t)-u_{R1,x_0}(s)\|_{L^\infty}}{(t-s)^{\varkappa_1}}\lesssim T^{\frac{1}{4}}\|\rho_0\|_{C^1}\mathbf{C}(u),\\
        &\sup_{s<t<T}s^{\frac{5}{12}+\varkappa_2}\frac{\|\tilde u_{R1}(t)-\tilde u_{R1}(s)\|_{L^{12}}}{(t-s)^{\varkappa_2}}\lesssim T^{\frac{7}{24}}\|\rho_0\|_{C^1}\mathbf{C}(u).
    \end{aligned}
\end{equation}

Now combine \eqref{S4eq68}, \eqref{S4eq69}, \eqref{S4eq70}, \eqref{uR1bmo} and \eqref{S4eq71}, we have proved \eqref{S4eq3}. 
\end{proof}
\begin{prop}\label{Prop4.2}
    Under the assumptions of Proposition \ref{uR1}, if additionally $\rho_0\in C^2$, then we have
\begin{equation}\label{S4eq12}
\begin{aligned}
&\sup_{x_0} \Bigl( \sup_{t \leq T} t^{\frac{1}{2}} \|u_{R1,x_0}(t)\|_{L^\infty} + \|u_{R1,x_0}\|_{V_T^b} + \sup_{s < t < T} s^{\frac{1}{2} + \varkappa_1} \frac{\|u_{R1,x_0}(t) - u_{R1,x_0}(s)\|_{L^\infty}}{(t-s)^{\varkappa_1}} \Bigr) \\
&\quad + \sup_{t \leq T} t^{1 + \frac{\kappa}{2}} \| (\nabla |D|^\kappa u_{R1,x_0})|_{x_0=x}(t) \|_{L^\infty}
+ \sup_{t \leq T} t^{\frac{5}{12}} \|\tilde{u}_{R1}(t)\|_{L^{12}} \\
&\quad + \sup_{s < t < T} s^{\frac{5}{12} + \varkappa_2} \frac{\|\tilde{u}_{R1}(t) - \tilde{u}_{R1}(s)\|_{L^{12}}}{(t-s)^{\varkappa_2}} \\
&\qquad\leq C\|\rho_0\|_{C^2}^2\mathbf{C}^3(u)\left(T^{\frac{1}{4}}(\|u\|_{X_T}+\|\rho_0-1\|_{L^\infty})+T^{\frac{3}{8}}\|u\|_{X_T}^{\f12}+T^{\frac{5}{8}}\right).
\end{aligned}
\end{equation}
\end{prop}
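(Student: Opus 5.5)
Compared with Proposition \ref{uR1}, the point is to make the whole bound small, because the factor $\|\rho_0\|_{C^1}\mathbf{C}^2(u)$ there contains $\|u_0\|_{L^2}$ and is not small. The culprit is the boundary term $u_{R1,x_0}^2=-K_{x_0}(t)\ast\nabla\Delta^{-1}(\nabla\rho_0\cdot(u_0-u(t)))$, together with the use of $\|u_0\|_{L^2}$ in its estimate. I would therefore not split off this boundary term. Instead I would use the extra derivative on $\rho_0$ to move $\partial_\tau$ and one spatial derivative onto the density. Write
\begin{equation*}
\nabla\rho_0\cdot\partial_\tau u=\nabla\cdot(\partial_\tau u\,\rho_0)=\nabla\cdot\bigl(\partial_\tau u\,(\rho_0-1)\bigr),
\end{equation*}
using $\operatorname{div}u=0$. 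Then
\begin{equation*}
u_{R1,x_0}=\int_0^t K_{x_0}(t-\tau)\ast\nabla\Delta^{-1}\nabla\cdot\bigl((\rho_0-1)\partial_\tau u\bigr)\,d\tau .
\end{equation*}
The operator $\nabla\Delta^{-1}\nabla\cdot$ is a zeroth order Riesz transform, so this term has the same structure as $u_{R2}$ or $u_{R3}$, with the weight $\rho_0-1$ in place of $\varrho$.

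The plan is to integrate by parts in time, as for $u_{R2}$, but keeping the difference $u(\tau)-u(t)$ as in \eqref{estR11n}. This gives a Lemma~\ref{CVPDE}-type term
\begin{equation*}
\int_0^t\partial_tK_{x_0}(t-\tau)\ast\mathcal R\bigl((\rho_0-1)(u(\tau)-u(t))\bigr)\,d\tau,
\end{equation*}
where $\mathcal{R}=\nabla\Delta^{-1}\nabla\cdot$, and a boundary piece $K_{x_0}(t)\ast\mathcal R((\rho_0-1)(u(t)-u_0))$. For the $L^{12}$ and time-difference norms, I would apply \eqref{lem51p} with $f=\mathcal R((\rho_0-1)u)$. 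Since $\mathcal R$ is bounded on $L^{12}$, this yields a factor $\|\rho_0-1\|_{L^\infty}\|u\|_{X_T}$ times powers of $T$. For the $L^\infty$ and $C^{1+\kappa}$ norms, which $\mathcal R$ does not preserve, I would interpolate as in \eqref{intpoeq}. For instance,
\begin{equation*}
\|\mathcal R g\|_{L^\infty}\lesssim\|g\|_{L^{12}}^{1-\theta}\|g\|_{\dot C^{\kappa}}^{\theta}.
\end{equation*}
This is the only place where $\|\rho_0\|_{C^2}$ enters, through a product law for the H\"older norm of $(\rho_0-1)u$ and of $\nabla\rho_0\,u$. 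It accounts for the $T^{1/4}(\|u\|_{X_T}+\|\rho_0-1\|_{L^\infty})$ term.

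The boundary piece involving $u_0$ is the main obstacle. It is not small in $\|u\|_{X_T}$, and $u_0$ is only in $L^2\cap\mathrm{BMO}^{-1}$. I would split it as $K_{x_0}(t)\ast\mathcal R((\rho_0-1)u_0)$ minus the part with $u(t)$. For the $u_0$ part, I would commute the heat kernel with the multiplication by $\rho_0-1$. The main term $\mathcal R((\rho_0-1)e^{t\Delta/\rho_0(x_0)}u_0)$ is controlled by $\|\rho_0-1\|_{L^\infty}\|u_0\|_{\mathrm{BMO}^{-1}}$ in $L^{12}$ by interpolation with $L^2$. The commutator gains one derivative on $\rho_0$ and a factor $t^{1/2}$, and is estimated using $\|\nabla\rho_0\|_{L^\infty}\|u_0\|_{L^2}$ and $\|K(t)\|_{L^2}$-type bounds as in \eqref{S4eq14}. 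This produces the pure power $T^{5/8}$, with $\|\rho_0\|_{C^2}^2$ coming from second derivatives that appear when commuting inside $\nabla$. The mixed term $T^{3/8}\|u\|_{X_T}^{1/2}$ would come from interpolating $\|u(t)-u_0\|$-type quantities between the energy bound \eqref{estL2u} (one power of $\mathbf{C}(u)$) and the $X_T$ norm (a square root). I expect the hardest step to be the $C^{1+\kappa}$ bound for $(\nabla|D|^\kappa u_{R1,x_0})|_{x_0=x}$, since the Riesz transform and the freezing at $x_0=x$ interact. I would handle it by placing all spatial derivatives on $K_{x_0}$ for $\tau<t/2$, and for $\tau>t/2$ subtracting the value at $y=x$ as in the proof of \eqref{uN2}, using the H\"older regularity of $(\rho_0-1)u$ provided by $\rho_0\in C^2$.
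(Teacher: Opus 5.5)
Your rewriting does not avoid the boundary term. For divergence-free $g$ one has $\mathcal R((\rho_0-1)g)=\nabla\Delta^{-1}(\nabla\rho_0\cdot g)$, so your two pieces are exactly $u^1_{R1,x_0}$ and $u^2_{R1,x_0}$ from \eqref{estR11n}. The whole point of the proposition is to bound $u^2_{R1,x_0}$ without a pure low power of $T$, and your treatment of it does not achieve this.

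\textbf{The $u_0$ part of the boundary term.} In the $\rho_0-1$ form, $\mathcal R$ gains no derivative. With $L^2$ data you only get $\|K_{x_0}(t)\ast\mathcal R((\rho_0-1)u_0)\|_{L^\infty}\lesssim t^{-3/4}$, so you are forced to use $\|u_0\|_{\mathrm{BMO}^{-1}}$. That produces terms like $\|\rho_0-1\|_{L^\infty}\|u_0\|_{\mathrm{BMO}^{-1}}^{5/6}\|u_0\|_{L^2}^{1/6}$ with no power of $T$, which are absent from the right side of \eqref{S4eq12}.

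\textbf{The commutator.} Estimated as you propose, with $\|\nabla\rho_0\|_{L^\infty}\|u_0\|_{L^2}$ and $\||y|K_{x_0}(t,y)\|_{L^2_y}\lesssim t^{-1/4}$ (cf.\ \eqref{S4eq31}), it is $O(t^{-1/4})$. After the weight $t^{1/2}$ it contributes $T^{1/4}\|\nabla\rho_0\|_{L^\infty}\|u_0\|_{L^2}$, not $T^{5/8}$. This is exactly the non-small term of \eqref{S4eq14} that Proposition \ref{Prop4.2} is meant to remove: a pure $T^\beta$ with $\beta\le\frac12$ ruins \eqref{S6eq1}, cf.\ Remark \ref{rmk6.1}. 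Going to second order does not help. The leading part is $\frac{2t}{\rho_0(x_0)}\partial_j\rho_0(x)\,(\partial_jK_{x_0}(t)\ast u_0)(x)$, again $O(t^{-1/4})$ from $L^2$ data.

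\textbf{The $u(t)$ part.} Via \eqref{intpoeq} it gives terms like $T^{1/3}\|u_0\|_{L^2}^{2/3}\|u\|_{X_T}^{1/3}$, also not of the stated form.

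\textbf{The first piece.} Lemma \ref{CVPDE} returns the input weights, so $f=\mathcal R((\rho_0-1)u)$ gives $\|\rho_0-1\|_{L^\infty}\|u\|_{X_T}$ with no power of $T$. That is harmless for Theorem \ref{thm.global-existence}, but it is not the stated bound. The interpolation $\|\mathcal Rg\|_{L^\infty}\lesssim\|g\|_{L^{12}}^{1-\theta}\|g\|_{\dot C^\kappa}^{\theta}$ has $\theta=\frac{1}{1+4\kappa}$ forced by scaling. It only yields $t^{\frac12}\|\mathcal R((\rho_0-1)u(t))\|_{L^\infty}\lesssim t^{-\frac{\kappa}{6(1+4\kappa)}}$, because the weight $t^{5/12}$ on $L^{12}$ is weaker than the scaling weight $t^{3/8}$. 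In Proposition \ref{propuR2} this loss is paid for by $\|\varrho(t)\|_{L^\infty}\lesssim t^{(1-\epsilon')/2}$, which $\rho_0-1$ lacks. The paper keeps the $\nabla\rho_0$ form there and uses $\partial_tK_{x_0}\ast\nabla\Delta^{-1}=\rho_0(x_0)^{-1}\nabla K_{x_0}\ast$, which gains $T^{1/2}$.

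The missing idea is to use the equation to exploit the cancellation in $u_0-u(t)$, rather than splitting it. As in \eqref{S4eq11}, write
\begin{equation*}
u_0-u(t)=\bigl(\rho_0u_0-\rho(t)u(t)\bigr)-(\rho_0-1)u_0+(\rho(t)-1)u(t),\qquad \rho(t)u(t)-\rho_0u_0=\int_0^t\bigl(\Delta u-\operatorname{div}(\rho u\otimes u)-\nabla P\bigr)\,d\tau .
\end{equation*}
\begin{itemize}
\item The two density terms carry both $\nabla\rho_0$ and $\rho_0-1$. Using $\|\nabla\Delta^{-1}K_{x_0}(t)\|_{L^2}\lesssim t^{-1/4}$ and $\|\rho(t)-1\|_{L^\infty}=\|\rho_0-1\|_{L^\infty}$, they give $T^{1/4}\|\rho_0-1\|_{L^\infty}$.
\item In the time integral, integrate by parts in space so the derivative falls on $\nabla\Delta^{-1}K_{x_0}$ or on $\nabla\rho_0$; this is where $\rho_0\in C^2$ enters. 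The energy bounds $\int_0^t\|\nabla u\|_{L^2}\,d\tau\lesssim t^{1/2}$ and $\|\nabla u(\tau)\|_{L^4}\lesssim\tau^{-3/4}\mathbf C(u)\|u\|_{X_T}^{1/2}$ then produce $T^{3/4}$ and $T^{3/8}\|u\|_{X_T}^{1/2}$.
\item The time-integrated pressure splits into three parts. $P_1$ involves $\rho-\rho_0$ and is small by \eqref{rhouse} and $\sup_\tau\|\tau\partial_\tau u\|_{L^2}<\infty$ (Lemma \ref{eng3}), giving $T^{5/8}$. The convective part $P_2$ gives $T^{5/8}$ and $T^{1/4}\|u\|_{X_T}$. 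The part $P_3=\nabla\Delta^{-1}\nabla\cdot((\rho_0-1)(u(t)-u_0))$ gives $T^{1/4}\|\rho_0-1\|_{L^\infty}$.
\item The time differences are handled the same way, using Proposition \ref{Holtrho} and \eqref{S4eqp}.
\end{itemize}
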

\begin{proof}
Note that from the decomposition \eqref{estR11n}, the estimates of $u_{R1,x_0}^1$ follows from \eqref{uS4eq20}, \eqref{S4eq21}, \eqref{S4eq22}, \eqref{uR1bmo} and \eqref{S4eq41}. We only need to get new estimates for $u_{R1,x_0}^2$. 

\noindent \textbf{i) Estimates for spatial integrability.}\smallskip

We will use the following formula for $u_{R1,x_0}^2$. By \eqref{eqinns}, $u_{R1,x_0}^2(t,x)$ satisfies
  \begin{align}
        u_{R1,x_0}^2(t,x)=&-\nabla\Delta^{-1}K_{x_0}(t)\ast\left(\nabla\rho_0\cdot(\rho_0u_0-\rho(t)u(t))\right)+\nabla\Delta^{-1}K_{x_0}(t)\ast\left(\nabla\rho_0\cdot(\rho_0-1) u_0\right)\nonumber\\
        &-\nabla\Delta^{-1}K_{x_0}(t)\ast\left(\nabla\rho_0\cdot(\rho(t)-1)u(t)\right)\nonumber\\
        =&\,\nabla\Delta^{-1}K_{x_0}(t)\ast\left(\nabla\rho_0\cdot\int_0^t\left(\Delta u-\nabla\cdot(\rho u\otimes u)+\nabla P\right)(\tau,x)d\tau\right)\label{S4eq11} \\
        &+\nabla\Delta^{-1}K_{x_0}(t)\ast\left(\nabla\rho_0\cdot(\rho_0-1)u_0\right)-\nabla\Delta^{-1}K_{x_0}(t)\ast\left(\nabla\rho_0\cdot(\rho(t)-1)u(t)\right)\nonumber\\
        &:=\sum_{i=1}^3u_{R1,x_0}^{2i},\nonumber
    \end{align}
and we denote $\tilde u_{R1}^{2i}=u_{R1,x_0}^{2i}|_{x_0=x}$. 
Note that for the error terms come from density, by \eqref{transol} and Lemma \ref{eng1}, we have
\begin{equation}\label{S4eq55}
    \begin{aligned}
        &\sup_{x_0}\left(\|u_{R1,x_0}^{22}\|_{L^\infty}+\|u_{R1,x_0}^{23}\|_{L^\infty}\right)\lesssim \|\sup_{x_0}|\nabla\Delta^{-1}K_{x_0}|(t)\|_{L^2}\|\rho_0-1\|_{L^\infty}\|u_0\|_{L^2}\\
        &\qquad\lesssim t^{-\frac{1}{4}}\|u_0\|_{L^2}\|\rho_0-1\|_{L^\infty},\\
        &\left(\|\tilde u_{R1}^{22}\|_{L^{12}}+\|\tilde u_{R1}^{23}\|_{L^{12}}\right)\lesssim \|\sup_{x_0}|\nabla\Delta^{-1}K_{x_0}|(t)\|_{L^{\frac{12}{7}}}\|\rho_0-1\|_{L^\infty}\|u_0\|_{L^2}\\
        &\qquad\lesssim t^{-\frac{1}{8}}\|u_0\|_{L^2}\|\rho_0-1\|_{L^\infty},\\
        &\sup_{x_0}\left(\||D|^{1+\kappa}u_{R1,x_0}^{22}\|_{L^\infty}+\||D|^{1+\kappa}u_{R1,x_0}^{23}\|_{L^\infty}\right)\lesssim \|\sup_{x_0}||D|^{1+\kappa}\nabla\Delta^{-1}K_{x_0}|(t)\|_{L^2}\|\rho_0-1\|_{L^\infty}\|u_0\|_{L^2}\\
        &\qquad\lesssim t^{-\frac{3+2\kappa}{4}}\|u_0\|_{L^2}\|\rho_0-1\|_{L^\infty}.
    \end{aligned}
\end{equation}
For $u_{R1,x_0}^{21}$, we further decompose $u_{R1,x_0}^{21}=u_{R1,x_0}^{21,m}+u_{R1,x_0}^{21,P}$, with
\begin{align}
    &u_{R1,x_0}^{21,m}=\nabla\Delta^{-1}K_{x_0}(t)\ast\left(\nabla\rho_0\cdot\int_0^t\left(\Delta u-\nabla\cdot(\rho u\otimes u)\right)(\tau,x)d\tau\right),\label{S4eq60}\\
    &u_{R1,x_0}^{21,P}=\nabla\Delta^{-1}K_{x_0}(t)\ast\left(\nabla\rho_0\cdot\int_0^t\nabla P(\tau,x)d\tau\right).\nonumber
\end{align}
For $u_{R1,x_0}^{21,m}$, using integration by parts and \eqref{esthtk}, we have the $L^\infty$ estimate
\begin{equation}\label{S4eq53}
    \begin{aligned}
        \sup_{x_0}\|u_{R1,x_0}^{21,m}\|_{L^\infty}&\lesssim \int_0^t\bigl\|\sup_{x_0}|\nabla\Delta^{-1}\nabla K_{x_0}(t)\|_{L^{\frac{4}{3}}} \|\nabla\rho_0\|_{L^\infty}\left\|\nabla u(\tau)-(\rho u\otimes u)(\tau)\right\|_{L^{4}}d\tau\\
        &\quad+\int_0^t\sup_{x_0}|\nabla\Delta^{-1}K_{x_0}(t)|\|_{L^2}\|\rho_0\|_{C^2}\|\nabla u(\tau)-(\rho u\otimes u)(\tau)\|_{L^2}d\tau \\
        &\lesssim \|\rho_0\|_{C^1}^2\int_0^tt^{-\frac{3}{8}}\tau^{-\frac{3}{4}}d\tau \mathbf{C}(u)^2\|u\|_{X_T}^{\frac{1}{2}}+\|\rho_0\|_{C^2}^2t^{-\frac{1}{4}}\int_0^t\tau^{-\frac{1}{2}}d\tau \mathbf{C}(u)^2\\
        &\lesssim \|\rho_0\|_{C^2}^2\mathbf{C}^2(u)\left(t^{-\frac{1}{8}}\|u\|_{X_T}^{\frac{1}{2}}+t^{\f14}\right),
    \end{aligned}
\end{equation}
the $L^{12}$ estimate
\begin{equation}\label{S4eq18}
    \begin{aligned}
        \|\tilde u_{R1}^{21,m}(t)\|_{L^{12}}&\lesssim \int_0^t\bigl\|K_{x_0}(t)\ast \left(\nabla\rho_0\cdot\left(\nabla u(\tau)-(\rho u\otimes u)(\tau) \right)\right)\bigr\|_{L^{12}}d\tau\\
        &\quad+\int_0^t\|\sup_{x_0}|\nabla\Delta^{-1}K_{x_0}(t)|\|_{L^{\frac{12}{7}}} \|\rho_0\|_{C^2}\|\nabla u(\tau)-(\rho u\otimes u)(\tau)\|_{L^2}d\tau \\
        &\lesssim \|\rho_0\|_{C^1}^2\int_0^t\tau^{-\frac{11}{12}}d\tau \mathbf{C}^2(u)\|u\|_{X_T}^{\frac{5}{6}}+\|\rho_0\|_{C^2}^2\int_0^tt^{-\frac{1}{8}}\tau^{-\frac{1}{2}}d\tau\mathbf{C}^2(u)\\
        &\lesssim \|\rho_0\|_{C^2}^2\mathbf{C}^2(u)\left(t^{\frac{1}{12}}\|u\|_{X_T}^{\frac{5}{6}}+t^{\frac{3}{8}}\right),
    \end{aligned}
\end{equation}
and the $C^{1+\kappa}$ estimate
\begin{align}
        &\sup_{x_0}\||D|^{1+\kappa}u_{R1,x_0}^{21,m}(t)\|_{L^\infty \label{S4eq19}}\\
       &\lesssim \int_0^t\|\sup_{x_0}\nabla\Delta^{-1}\nabla|D|^{1+\kappa}K_{x_0}(t)\|_{L^{\frac{4}{3}}}\|\rho_0\|_{C^1}^2\left(\|\nabla u(\tau)\|_{L^4}+\|(\rho u\otimes u)(\tau)\|_{L^4}\right)\nonumber\\
        &\quad+\int_0^t\|\sup_{x_0}\nabla\Delta^{-1}|D|^{1+\kappa}K_{x_0}(t)\|_{L^{\frac{4}{3}}}\|\rho_0\|_{C^2}^2\left(\|\nabla u(\tau)\|_{L^4}+\|(\rho u\otimes u)(\tau)\|_{L^4}\right)\nonumber\\
        &\lesssim \|\rho_0\|_{C^1}^2\int_0^tt^{-\frac{7+4\kappa}{8}}\tau^{-\frac{3}{4}}d\tau\mathbf{C}^2(u)\|u\|_{X_T}^{\frac{1}{2}}+\|\rho_0\|_{C^2}\int_0^tt^{-\frac{3+4\kappa}{8}}\tau^{-\frac{3}{4}}\mathbf{C}^2(u)\|u\|_{X_T}^{\frac{1}{2}}\nonumber\\
        &\lesssim t^{-\frac{5+4\kappa}{8}}\|\rho_0\|_{C^2}^2\mathbf{C}^2(u)\|u\|_{X_T}^{\frac{1}{2}}.\nonumber
    \end{align}
    For $u_{R1,x_0}^{21,P}$, since by \eqref{eqinns} and due to the divergence free condition of $u$ and $\partial_tu$, we have
    \begin{equation}\label{S4eq61}
        \begin{aligned}
            -\int_0^t\nabla P(\tau)d\tau&=\int_0^t\left(\nabla\Delta^{-1}\nabla\cdot\left(\rho\partial_tu+\rho u\cdot\nabla u\right)\right)(\tau)d\tau\\
            &=\int_0^t\nabla\Delta^{-1}\nabla\cdot\left((\rho-\rho_0)\partial_tu\right)(\tau)d\tau+\int_0^t\nabla\Delta^{-1}\nabla\cdot\left(\rho u\cdot\nabla u\right)(\tau)d\tau\\
            &\qquad+\nabla\Delta^{-1}\nabla\cdot \big((\rho_0-1)(u(t)-u_0)\big)\\
            &:=P_1(t)+P_2(t)+P_3(t).
        \end{aligned}
    \end{equation}
   For $P_1(t)$, we use Proposition \ref{Holtrho} and Lemma \ref{eng3} to show that
   \begin{align}
       &\left\|\nabla\Delta^{-1}K_{x_0}(t)\ast \left(\nabla\rho_0 P_1(t)\right)\right\|_{L^\infty}\lesssim \left\|\sup_{x_0}|\nabla\Delta^{-1}K_{x_0}(t)| \right\|_{L^2}\int_0^t\|\rho(\tau)-\rho_0\|_{L^\infty}\|\partial_\tau u(\tau)\|_{L^2}d\tau\label{S4eq50}\\
       &\qquad\lesssim t^{-\frac{1}{4}}\int_0^t\tau^{\frac{3}{8}}\tau^{-1}d\tau\|\rho_0\|_{C^1}\mathbf{C}^2(u)\lesssim t^{\frac{1}{8}}\|\rho_0\|_{C^1}\mathbf{C}^2(u),\nonumber\\
       &\left\|\left(\nabla\Delta^{-1}K_{x_0}(t)\ast \left(\nabla\rho_0 P_1(t)\right)\right)\big|_{x_0=x}\right\|_{L^{12}}\lesssim \left\|\sup_{x_0}|\nabla\Delta^{-1}K_{x_0}(t)| \right\|_{L^\frac{12}{7}}\int_0^t\|\rho(\tau)-\rho_0\|_{L^\infty}\|\partial_\tau u(\tau)\|_{L^2}d\tau\nonumber\\
       &\qquad\lesssim t^{-\frac{1}{8}}\int_0^t\tau^{\frac{3}{8}}\tau^{-1}d\tau\|\rho_0\|_{C^1}\mathbf{C}^2(u)\lesssim t^{\frac{1}{4}}\|\rho_0\|_{C^1}\mathbf{C}^2(u),\nonumber\\
       &\left\||D|^{1+\kappa}\nabla\Delta^{-1}K_{x_0}(t)\ast \left(\nabla\rho_0 P_1(t)\right)\right\|_{L^\infty}\lesssim \left\|\sup_{x_0}||D|^{1+\kappa}\nabla\Delta^{-1}K_{x_0}(t)| \right\|_{L^2}\int_0^t\|\rho(\tau)-\rho_0\|_{L^\infty}\|\partial_\tau u(\tau)\|_{L^2}d\tau\nonumber\\
       &\qquad\lesssim t^{-\frac{3+2\kappa}{4}}\int_0^t\tau^{\frac{3}{8}}\tau^{-1}d\tau\|\rho_0\|_{C^1}\mathbf{C}^2(u)\lesssim t^{-\frac{3+4\kappa}{8}}\|\rho_0\|_{C^1}\mathbf{C}^2(u).\nonumber
   \end{align}
   For $P_3(t)$, by Lemma \ref{eng1}, we simply have
   \begin{align}
       &\left\|\nabla\Delta^{-1}K_{x_0}(t)\ast \left(\nabla\rho_0 P_3(t)\right)\right\|_{L^\infty}\lesssim \left\|\sup_{x_0}|\nabla\Delta^{-1}K_{x_0}(t)| \right\|_{L^2}\|\rho_0-1\|_{L^\infty}\|u_0\|_{L^2}\label{S4eq54}\\
       &\qquad\lesssim t^{-\frac{1}{4}}\|\rho_0-1\|_{L^\infty}\mathbf{C}^2(u),\nonumber\\
       &\left\|\left(\nabla\Delta^{-1}K_{x_0}(t)\ast \left(\nabla\rho_0 P_3(t)\right)\right)\big|_{x_0=x}\right\|_{L^{12}}\lesssim \left\|\sup_{x_0}|\nabla\Delta^{-1}K_{x_0}(t)| \right\|_{L^\frac{12}{7}}\|\rho_0-1\|_{L^\infty}\|u_0\|_{L^2}\nonumber\\
       &\qquad\lesssim t^{-\frac{1}{8}}\|\rho_0-1\|_{L^\infty}\mathbf{C}^2(u),\nonumber\\
       &\left\||D|^{1+\kappa}\nabla\Delta^{-1}K_{x_0}(t)\ast \left(\nabla\rho_0 P_3(t)\right)\right\|_{L^\infty}\lesssim \left\|\sup_{x_0}||D|^{1+\kappa}\nabla\Delta^{-1}K_{x_0}(t)| \right\|_{L^2}\|\rho_0-1\|_{L^\infty}\|u_0\|_{L^2}d\tau\nonumber\\
       &\qquad\lesssim t^{-\frac{3+2\kappa}{4}}\|\rho_0-1\|_{L^\infty}\mathbf{C}^2(u).\nonumber
   \end{align}
   Finally, for $P_2$, we use $\rho_0$ to approximate $\rho$, and write
   \begin{equation}\label{S4eq63}
       P_2(t)=\int_0^t\nabla\Delta^{-1}\nabla\cdot((\rho(\tau)-\rho_0)u\cdot\nabla u)(\tau)d\tau+\int_0^t\nabla\Delta^{-1}\nabla\cdot(\rho_0 u\cdot\nabla u)(\tau)d\tau:=P_2^1(t)+P_2^2(t).
   \end{equation}
   For $P_2^1$, since by Lemma \ref{eng1} and \ref{eng2}, $\|u\cdot\nabla u\|_{L^2}(\tau)\lesssim \tau^{-1}$, we can prove similar estimate as \eqref{S4eq50}, and obtain
   \begin{align}
       &\left\|\nabla\Delta^{-1}K_{x_0}(t)\ast \left(\nabla\rho_0 P_2^1(t)\right)\right\|_{L^\infty}\lesssim t^{\frac{1}{8}}\|\rho_0\|_{C^1}\mathbf{C}^2(u),\label{S4eq51}\\
       &\left\|\left(\nabla\Delta^{-1}K_{x_0}(t)\ast \left(\nabla\rho_0 P_2^1(t)\right)\right)\big|_{x_0=x}\right\|_{L^{12}} \lesssim t^{\frac{1}{4}}\|\rho_0\|_{C^1}\mathbf{C}^2(u),\nonumber\\
       &\left\||D|^{1+\kappa}\nabla\Delta^{-1}K_{x_0}(t)\ast \left(\nabla\rho_0 P_2^1(t)\right)\right\|_{L^\infty}\lesssim t^{-\frac{3+4\kappa}{8}}\|\rho_0\|_{C^1}\mathbf{C}^2(u).\nonumber
   \end{align}
   For $P_2^2$, also by the divergence free condition of $u$, we note that
   \begin{equation*}
       P_2^2(t)=\int_0^t\nabla\Delta^{-1}\nabla\cdot\operatorname{div}(\rho_0u\otimes u)(\tau)d\tau-\int_0^t\nabla\Delta^{-1}\nabla\cdot(u\nabla\rho_0\cdot u)(\tau)d\tau, 
   \end{equation*}
   which implies
   \begin{align}
       \nabla\Delta^{-1}K_{x_0}(t)\ast \nabla\rho_0 P_2^2(t)=&\nabla\Delta^{-1}\nabla K_{x_0}(t)\ast \left(\nabla\rho_0\int_0^t\Delta^{-1}\nabla\cdot\operatorname{div}(\rho_0 u\otimes u)(\tau)d\tau\right)\label{S4eq64}\\
       &-\nabla\Delta^{-1} K_{x_0}(t)\ast \left(\nabla\rho_0\int_0^t\nabla\Delta^{-1}\nabla\cdot(u\nabla\rho_0\cdot u)(\tau)d\tau\right)\nonumber\\
       &-\nabla\Delta^{-1} K_{x_0}(t)\ast \left(\nabla^2\rho_0\int_0^t\left(\Delta^{-1}\nabla\cdot\operatorname{div}(\rho_0 u\otimes u)d\tau\right)\right)\nonumber.
   \end{align}
   By this way, we can prove that
   \begin{align}
       &\left\|\nabla\Delta^{-1}K_{x_0}(t)\ast \left(\nabla\rho_0 P_2^2(t)\right)\right\|_{L^\infty}\lesssim \left\|\sup_{x_0}|\nabla^2\Delta^{-1}K_{x_0}(t)| \right\|_{L^2}\int_0^t\|\rho_0\|_{C^1}^2\left(\|u\otimes u(\tau)\|_{L^2}  \right)d\tau\label{S4eq52}\\
       &\qquad\qquad+ \left\|\sup_{x_0}|\nabla\Delta^{-1}K_{x_0}(t)| \right\|_{L^2}\|\rho_0\|_{C^2}^2\int_0^t\|u(\tau)\|_{L^\infty}\|u(\tau)\|_{L^2}d\tau \nonumber\\
       &\qquad\lesssim t^{-\frac{3}{4}}\int_0^t\tau^{-\frac{1}{2}}d\tau\|\rho_0\|_{C^1}\mathbf{C}^2(u)\|u\|_{X_T}+t^{-\frac{1}{4}}\|\rho_0\|_{C^2}\int_0^t\tau^{-\frac{1}{2}}d\tau\mathbf{C}^2(u)\|u\|_{X_T}\nonumber\\
       &\qquad\lesssim t^{-\frac{1}{4}}\|\rho_0\|_{C^2}^2\mathbf{C}^2(u)\|u\|_{X_T},\nonumber\\
       &\left\|\left(\nabla\Delta^{-1}K_{x_0}(t)\ast \left(\nabla\rho_0 P_2^2(t)\right)\right)\big|_{x_0=x}\right\|_{L^{12}}\lesssim \left\|\sup_{x_0}|\nabla^2\Delta^{-1}K_{x_0}(t)| \right\|_{L^\frac{12}{7}}\int_0^t\|\rho_0\|_{C^1}^2\|u\otimes u(\tau)\|_{L^2}d\tau\nonumber\\
       &\qquad\qquad+ \left\|\sup_{x_0}|\nabla\Delta^{-1}K_{x_0}(t)| \right\|_{L^\frac{12}{7}}\|\rho_0\|_{C^2}^2\int_0^t\|u(\tau)\|_{L^\infty}\|u(\tau)\|_{L^2}d\tau\nonumber\\
       &\qquad\lesssim t^{-\frac{5}{8}}\int_0^t\tau^{-\frac{1}{2}}d\tau\|\rho_0\|_{C^1}\mathbf{C}^2(u)\|u\|_{X_T}+t^{-\frac{1}{8}}\|\rho_0\|_{C^2}\int_0^t\tau^{-\frac{1}{2}}d\tau\mathbf{C}^2(u)\|u\|_{X_T}\nonumber\\
       &\qquad\lesssim t^{-\frac{1}{8}}\|\rho_0\|_{C^1}\mathbf{C}^2(u)\|u\|_{X_T},\nonumber\\
       &\left\||D|^{1+\kappa}\nabla\Delta^{-1}K_{x_0}(t)\ast \left(\nabla\rho_0 P_2^2(t)\right)\right\|_{L^\infty}\lesssim \left\|\sup_{x_0}||D|^{1+\kappa}\nabla^2\Delta^{-1}K_{x_0}(t)| \right\|_{L^2}\int_0^t\|\rho_0\|_{C^1}^2\|u\otimes u(\tau)\|_{L^2}d\tau\nonumber\\
       &\qquad\qquad+\left\|\sup_{x_0}||D|^{1+\kappa}\nabla\Delta^{-1}K_{x_0}(t)| \right\|_{L^2}\|\rho_0\|_{C^2}^2\int_0^t\|u(\tau)\|_{L^\infty}\|u(\tau)\|_{L^2}d\tau\nonumber\\
       &\qquad\lesssim t^{-\frac{5+2\kappa}{4}}\int_0^t\tau^{-\frac{1}{2}}d\tau\|\rho_0\|_{C^1}\mathbf{C}^2(u)\|u\|_{X_T}+t^{-\frac{3+2\kappa}{4}}\|\rho_0\|_{C^2}^2\int_0^t\tau^{-\frac{1}{2}}d\tau\mathbf{C}^2(u)\|u\|_{X_T}\nonumber\\
       &\qquad\lesssim t^{-\frac{3+2\kappa}{4}}\|\rho_0\|_{C^2}^2\mathbf{C}^2(u)\|u\|_{X_T}.\nonumber
   \end{align}
   Now we combine \eqref{S4eq50}, \eqref{S4eq54}, \eqref{S4eq51} and \eqref{S4eq52}, then we can obtain
   \begin{equation}\label{S4eq56}
       \begin{aligned}
           &\sup_{x_0}\sup_{t\leq T}t^{\frac{1}{2}}\|u_{R1,x_0}^{21,P}(t)\|_{L^\infty}\lesssim \|\rho_0\|_{C^2}^2\mathbf{C}^2(u)\left(T^{\frac{5}{8}}+T^{\frac{3}{8}}\|u\|_{X_T}^{\frac{1}{2}}+T^{\frac{1}{4}}\left(\|\rho_0-1\|_{L^\infty}+\|u\|_{X_T}\right) \right),\\
           &\sup_{t\leq T}t^{\frac{5}{12}}\|\tilde{ u}_{R1}^{21,P}(t)\|_{L^{12}}\lesssim \|\rho_0\|_{C^2}^2\mathbf{C}^2(u)\left(T^{\frac{2}{3}}+T^{\frac{5}{12}}\|u\|_{X_T}^{\frac{1}{2}}+T^{\frac{7}{24}}\left(\|\rho_0-1\|_{L^\infty}+\|u\|_{X_T}\right) \right),\\
           &\sup_{x_0}\sup_{t\leq T}t^{\frac{2+\kappa}{2}}\||D|^{1+\kappa}u_{R1,x_0}^{21,P}(t)\|_{L^\infty}\lesssim\|\rho_0\|_{C^2}^2\mathbf{C}^2(u)\left(T^{\frac{5}{8}}+T^{\frac{3}{8}}\|u\|_{X_T}^{\frac{1}{2}}+T^{\frac{1}{4}}\left(\|\rho_0-1\|_{L^\infty}+\|u\|_{X_T}\right) \right).
       \end{aligned}
   \end{equation}
   Finally, \eqref{S4eq55}, \eqref{S4eq53}, \eqref{S4eq18}, \eqref{S4eq19} and \eqref{S4eq56} imply that
   \begin{equation}\label{S4eq57}
       \begin{aligned}
           &\sup_{x_0}\sup_{t\leq T}t^{\frac{1}{2}}\|u_{R1,x_0}^{2}(t)\|_{L^\infty}\lesssim \|\rho_0\|_{C^2}^2\mathbf{C}^2(u)\left(T^{\frac{5}{8}}+T^{\frac{3}{8}}\|u\|_{X_T}^{\frac{1}{2}}+T^{\frac{1}{4}}\left(\|\rho_0-1\|_{L^\infty}+\|u\|_{X_T}\right) \right),\\
           &\sup_{t\leq T}t^{\frac{5}{12}}\|\tilde{ u}_{R1}^{2}(t)\|_{L^{12}}\lesssim \|\rho_0\|_{C^2}^2\mathbf{C}^2(u)\left(T^{\frac{2}{3}}+T^{\frac{5}{12}}\|u\|_{X_T}^{\frac{1}{2}}+T^{\frac{7}{24}}\left(\|\rho_0-1\|_{L^\infty}+\|u\|_{X_T}\right) \right),\\
           &\sup_{x_0}\sup_{t\leq T}t^{\frac{2+\kappa}{2}}\||D|^{1+\kappa}u_{R1,x_0}^{2}(t)\|_{L^\infty}\lesssim\|\rho_0\|_{C^2}^2\mathbf{C}^2(u)\left(T^{\frac{5}{8}}+T^{\frac{3}{8}}\|u\|_{X_T}^{\frac{1}{2}}+T^{\frac{1}{4}}\left(\|\rho_0-1\|_{L^\infty}+\|u\|_{X_T}\right) \right).
       \end{aligned}
   \end{equation}
\iffalse
then by \eqref{defKx0}, \eqref{S4eq11}, \eqref{intpoeq} and Lemma \eqref{eng1}, \ref{eng2}, \ref{eng3}, we have the $L^\infty$ estimate
\begin{equation}\label{S4eq17}
    \begin{aligned}
        \|u_{R1,x_0}^2(t)\|_{L^\infty}&\lesssim \int_0^t\bigl\|K_{x_0}(t)\ast \left(\nabla\rho_0\cdot\left(\nabla u(\tau)-\mathbb{P}(u\otimes u)(\tau)\right)\right)\bigr\|_{L^2}^{\frac{2\kappa}{2\kappa+3}} \\
        &\qquad\times\bigl\|K_{x_0}(t)\ast \left(\nabla\rho_0\cdot\left(\nabla u(\tau)-\mathbb{P}(u\otimes u)(\tau)\right)\right)\bigr\|_{\dot C^{\kappa}}^{\frac{3}{2\kappa+3}} d\tau\\
        &\quad+\int_0^t\sup_{x_0}|\nabla\Delta^{-1}K_{x_0}(t)|\|_{L^2}\|\rho_0\|_{C^2}\|\nabla u(\tau)-\mathbb{P}(u\otimes u)(\tau)\|_{L^2}d\tau \\
        &\lesssim \|\rho_0\|_{C^1}\int_0^t\tau^{-\frac{1}{2}\frac{2\kappa}{2\kappa+3}}t^{-\frac{\kappa}{2}\frac{3}{2\kappa+3}}\tau^{-\frac{3}{2\kappa+3}}d\tau \mathbf{C}(u)^2\|u\|_{X_T}^{\frac{3}{2\kappa+3}}\\
        &\quad+\|\rho_0\|_{C^2}t^{-\frac{1}{4}}\int_0^t\tau^{-\frac{1}{2}}d\tau \mathbf{C}(u)^2\\
        &\lesssim \|\rho_0\|_{C^2}\mathbf{C}^2(u)\left(t^{-\frac{\kappa}{4\kappa+6}}\|u\|_{X_T}^{\frac{1}{2\kappa+3}}+t^{\f14}\right),
    \end{aligned}
\end{equation}
the $L^{12}$ estimate

\fi
So \eqref{uS4eq20} and \eqref{S4eq57} show that
\begin{equation}\label{uR1Linf}
\begin{aligned}
&\sup_{x_0} \sup_{t\leq T}t^{\frac{1}{2}}\|u_{R1,x_0}(t)\|_{L^\infty}\\ &\lesssim \mathbf{C}^2(u)\|\rho_0\|_{C^2}^2\mathbf{C}^2(u)\left(T^{\frac{5}{8}}+T^{\frac{3}{8}}\|u\|_{X_T}^{\frac{1}{2}}+T^{\frac{1}{4}}\left(\|\rho_0-1\|_{L^\infty}+\|u\|_{X_T}\right) \right),
\end{aligned}
\end{equation}
\eqref{S4eq21} and \eqref{S4eq57} show that
\begin{equation}\label{uR1L12}
\begin{aligned}
    &\sup_{t\leq T}t^{\frac{5}{12}}\|\tilde{u}_{R1}(t)\|_{L^{12}}\\
    &\lesssim  \mathbf{C}^2(u) \|\rho_0\|_{C^2}^2\mathbf{C}^2(u)\left(T^{\frac{2}{3}}+T^{\frac{5}{12}}\|u\|_{X_T}^{\frac{1}{2}}+T^{\frac{7}{24}}\left(\|\rho_0-1\|_{L^\infty}+\|u\|_{X_T}\right) \right),
\end{aligned}
\end{equation}
and \eqref{S4eq22} and \eqref{S4eq57} show that
\begin{equation}\label{uR1Hol}
\begin{aligned}
&\sup_{x_0} \sup_{t\leq T}t^{1+\frac{\kappa}{2}}\| |D|^{1+\kappa} u_{R1,x_0}(t) \|_{L^\infty} \\
&\lesssim  \mathbf{C}^2(u) \|\rho_0\|_{C^2}^2\mathbf{C}^2(u)\left(T^{\frac{5}{8}}+T^{\frac{3}{8}}\|u\|_{X_T}^{\frac{1}{2}}+T^{\frac{1}{4}}\left(\|\rho_0-1\|_{L^\infty}+\|u\|_{X_T}\right) \right)%+t^{-\frac{3 + 2\kappa}{4}}\|\rho_0\|_{C^1}\|u_0\|_{L^2}
.
\end{aligned}
\end{equation}

\noindent\textbf{ii) Estimates for time differences.}\smallskip

By \eqref{S4eq11}, for $s<t<T$,
\begin{align*}
    u_{R1,x_0}^2(t,x)-u_{R1,x_0}^2(s,x)=\sum_{i=1}^3\left(u_{R1,x_0}^{2i}(t,x)-u_{R1,x_0}^{2i}(s,x)\right).
\end{align*}
For the latter two terms, since by Proposition \ref{Holtrho} we have
\begin{equation*}
    \|\rho(t)-\rho(s)\|_{L^\infty}\lesssim_\eta \|\rho_0\|_{C^1}\mathbf{C}^2(u)(t-s)^{1-\eta}s^{-\frac{1-\eta}{2}},\qquad\forall \eta\in(0,1).
\end{equation*}
By the estimate above and \eqref{S4eqp}, for $a=t-s\leq \frac{s}{4}$, we have
\begin{align*}
    &\|u_{R1,x_0}^{22}(t)-u_{R1,x_0}^{22}(s)\|_{L^\infty}\lesssim \int_s^t\|\nabla\Delta^{-1}\partial_\tau K_{x_0}(\tau)\|_{L^2}d\tau\|\nabla\rho_0\|_{L^\infty}\|\rho_0-1\|_{L^\infty}\|u_0\|_{L^2}\label{S4eq58}\\
    &\qquad\lesssim s^{-\frac{1}{4}}\min\{1,as^{-1}\}\|\nabla\rho_0\|_{L^\infty}\|\rho_0-1\|_{L^\infty}\|u_0\|_{L^2},\nonumber\\
    &\|\tilde u_{R1}^{22}(t)-\tilde u_{R1}^{22}(s)\|_{L^{12}}\lesssim \int_s^t\|\nabla\Delta^{-1}\partial_\tau K_{x_0}(\tau)\|_{L^\frac{12}{7}}d\tau\|\nabla\rho_0\|_{L^\infty}\|\rho_0-1\|_{L^\infty}\|u_0\|_{L^2}\nonumber\\
    &\qquad\lesssim s^{-\frac{1}{8}}\min\{1,as^{-1}\}\|\nabla\rho_0\|_{L^\infty}\|\rho_0-1\|_{L^\infty}\|u_0\|_{L^2},\nonumber\\
    &\|u_{R1,x_0}^{23}(t)-u_{R1,x_0}^{23}(s)\|_{L^\infty}\lesssim \int_s^t\|\nabla\Delta^{-1}\partial_\tau K_{x_0}(\tau)\|_{L^2}d\tau\|\nabla\rho_0\|_{L^\infty}\|\rho_0-1\|_{L^\infty}\|u_0\|_{L^2}\nonumber\\
    &\qquad\qquad+\left\|\sup_{x_0}|\nabla\Delta^{-1}K_{x_0}(s)|\right\|_{L^2 }\|\nabla\rho_0\|_{L^\infty}\left(\|\rho(t)-\rho(s)\|_{L^\infty}\|u_0\|_{L^2}+\|\rho_0-1\|_{L^\infty}\|u(t)-u(s)\|_{L^2}\right) \nonumber \\
    &\qquad\lesssim a^{\varkappa_1}\|\nabla\rho_0\|_{L^\infty}\|u_0\|_{L^2}\left(s^{-\frac{1}{4}-\varkappa_1}\|\rho_0-1\|_{L^\infty}+s^{\frac{1-\eta}{2}-\varkappa_1-\frac{1}{4}}\right),\nonumber\\
    &\|\tilde u_{R1}^{23}(t)-\tilde u_{R1}^{23}(s)\|_{L^{12}}\lesssim \int_s^t\|\nabla\Delta^{-1}\partial_\tau K_{x_0}(\tau)\|_{L^{\frac{12}{7}}}d\tau\|\nabla\rho_0\|_{L^\infty}\|\rho_0-1\|_{L^\infty}\|u_0\|_{L^2}\nonumber\\
    &\qquad\qquad+\left\|\sup_{x_0}|\nabla\Delta^{-1}K_{x_0}(s)|\right\|_{L^{\frac{12}{7}} }\|\nabla\rho_0\|_{L^\infty}\left(\|\rho(t)-\rho(s)\|_{L^\infty}\|u_0\|_{L^2}+\|\rho_0-1\|_{L^\infty}\|u(t)-u(s)\|_{L^2}\right) \nonumber \\
    &\qquad\lesssim a^{\varkappa_2}\|\rho_0\|_{C^1}^2\|u_0\|_{L^2}\left(s^{-\frac{1}{8}-\varkappa_2}\|\rho_0-1\|_{L^\infty}+s^{\frac{1-\eta}{2}-\varkappa_1-\frac{1}{8}}\right),\nonumber
\end{align*}
for some $\eta\in(0,1)$. So we can take $\eta=\frac{1}{4}$ to get the following difference estimate:
\begin{equation}\label{S4eq59}
    \begin{aligned}
        &\sup_{x_0}\sup_{s<t<T}s^{\frac{1}{2}+\varkappa_1}\frac{\left\|u_{R1,x_0}^{22}(t)-u_{R1,x_0}^{22}(s)\right\|_{L^\infty}}{(t-s)^{\varkappa_1}}+\sup_{x_0}\sup_{s<t<T}s^{\frac{1}{2}+\varkappa_1}\frac{\left\|u_{R1,x_0}^{23}(t)-u_{R1,x_0}^{23}(s)\right\|_{L^\infty}}{(t-s)^{\varkappa_1}}\\
        &+\sup_{s<t<T}s^{\frac{5}{12}+\varkappa_2}\frac{\left\|\tilde u_{R1}^{22}(t)-\tilde u_{R1,}^{22}(s)\right\|_{L^{12}}}{(t-s)^{\varkappa_2}}+\sup_{s<t<T}s^{\frac{5}{12}+\varkappa_2}\frac{\left\|\tilde u_{R1}^{23}(t)-\tilde u_{R1}^{23}(s)\right\|_{L^{12}}}{(t-s)^{\varkappa_2}}\\
        &\qquad\lesssim \|\rho_0\|_{C^1}^2\|u_0\|_{L^2}\left(T^{\frac{5}{8}}+T^{\frac{1}{4}}\|\rho_0-1\|_{L^\infty}\right)
    \end{aligned}
\end{equation}
For $u_{R1,x_0}^{21}$, we still apply the decomposition \eqref{S4eq60}. For $u_{R1,x_0}^{21,m}$, we have
\begin{align*}
    u_{R1,x_0}^{21,m}(t,x)-u_{R1,x_0}^{21,m}(s,x)=&\int_s^t\nabla\Delta^{-1}\nabla\partial_\gamma K_{x_0}(\gamma)d\gamma\ast \int_0^t\nabla\rho_0\left(\nabla u(\tau)-(\rho u\otimes u)(\tau)\right)d\tau\\
    &-\int_s^t\nabla\Delta^{-1}\partial_\gamma K_{x_0}(\gamma)d\gamma\ast \int_0^t\nabla^2\rho_0\left(\nabla u(\tau)-(\rho u\otimes u)(\tau)\right)d\tau\\
    &+\nabla\Delta^{-1}\nabla K_{x_0}(s)\ast \int_s^t\nabla\rho_0\left(\nabla u(\tau)-(\rho u\otimes u)(\tau)\right)d\tau\\
    &-\nabla\Delta^{-1} K_{x_0}(s)\ast \int_s^t\nabla^2\rho_0\left(\nabla u(\tau)-(\rho u\otimes u)(\tau)\right)d\tau
\end{align*}
So by \eqref{esthtk}, Lemma \ref{eng1}, \ref{eng2} and \ref{eng3} we separately have the $L^\infty$ estimate
\begin{align*}
    \|u_{R1,x_0}^{21,m}(t)-u_{R1,x_0}^{21,m}(s)\|_{L^\infty}\lesssim &\int_s^t\|\nabla^2K_{x_0}(\gamma)\|_{L^{\frac{4}{3}}}d\gamma\int_0^t\|\rho_0\|_{C^1}^2\left(\|\nabla u(\tau)\|_{L^4}+\|u(\tau)\|_{L^\infty}\|u(\tau)\|_{L^4}\right)d\tau\\
    &+\int_s^t\|\nabla K_{x_0}(\gamma)\|_{L^{\frac{4}{3}}}d\gamma\int_0^t\|\rho_0\|_{C^2}^2\left(\|\nabla u(\tau)\|_{L^4}+\|u(\tau)\|_{L^\infty}\|u(\tau)\|_{L^4}\right)d\tau\\
    &+\|\rho_0\|_{C^1}^2\|\nabla\Delta^{-1}\nabla K_{x_0}(s)\|_{L^{\frac{4}{3}}}\int_s^t\left(\|\nabla u(\tau)\|_{L^4}+\|u(\tau)\|_{L^\infty}\|u(\tau)\|_{L^4}\right)d\tau\\
    &+\|\rho_0\|_{C^2}^2\|\nabla\Delta^{-1}K_{x_0}(s)\|_{L^{2}}\int_s^t\|\nabla u(\tau)-\mathbb{P}(u\otimes u)(\tau)\|_{L^{2}}d\tau\\
    \lesssim &\|\rho_0\|_{C^2}^2\mathbf{C}^2(u)\left(\int_s^t\gamma^{-1-\frac{3}{8}}d\gamma\int_0^t\tau^{-\frac{3}{4}}d\tau +\int_s^t\gamma^{-\frac{1}{2}-\frac{3}{8}}d\gamma \int_0^t\tau^{-\frac{3}{4}}d\tau\right)\|u\|_{X_T}^{\frac{1}{2}}\\
    &+\|\rho_0\|_{C^2}^2\mathbf{C}^2(u)\left(s^{-\frac{3}{8}}\int_0^t\tau^{-\frac{3}{4}}d\tau\|u\|_{X_T}^{\f12}+s^{-{\f14}}\int_s^t\tau^{-\frac{1}{2}}d\tau\|u_0\|_{L^2} \right)\\
    \lesssim&\|\rho_0\|_{C^2}^2(t-s)^{\varkappa_1} \mathbf{C}^2(u)\left(s^{-\frac{1}{8}-\varkappa_1}\|u\|_{X_T}^{\f12}+s^{\frac{1}{4}-\varkappa_1}\right),
\end{align*}
and the $L^{12}$ estimate
\begin{align*}
    \|\tilde u_{R1}^{21,m}(t)-\tilde u_{R1}^{21,m}(s)\|_{L^{12}}\lesssim &\int_s^t\|\sup_{x_0}|\nabla^2K_{x_0}(\gamma)|\|_{L^{\frac{6}{5}}}d\gamma\int_0^t\|\rho_0\|_{C^1}^2\left(\|\nabla u(\tau)\|_{L^4}+\|u(\tau)\|_{L^\infty}\|u(\tau)\|_{L^4}\right)d\tau\\
    &+\int_s^t\|\sup_{x_0}|\nabla K_{x_0}(\gamma)|\|_{L^{\frac{6}{5}}}d\gamma\int_0^t\|\rho_0\|_{C^2}^2\left(\|\nabla u(\tau)\|_{L^4}+\|u(\tau)\|_{L^\infty}\|u(\tau)\|_{L^4}\right)d\tau\\
    &+\|\rho_0\|_{C^1}^2\|\sup_{x_0}|\nabla\Delta^{-1}\nabla K_{x_0}(s)|\|_{L^{\frac{6}{5}}}\int_s^t\left(\|\nabla u(\tau)\|_{L^4}+\|u(\tau)\|_{L^\infty}\|u(\tau)\|_{L^4}\right)d\tau\\
    &+\|\rho_0\|_{C^2}^2\|\sup_{x_0}|\nabla\Delta^{-1}K_{x_0}(s)|\|_{L^\frac{12}{7}}\int_s^t\|\nabla u(\tau)-(u\otimes u)(\tau)\|_{L^{2}}d\tau\\
    \lesssim &\|\rho_0\|_{C^2}^2\mathbf{C}^2(u)\left(\int_s^t\gamma^{-1-\frac{1}{4}}d\gamma\int_0^t\tau^{-\frac{3}{4}}d\tau +\int_s^t\gamma^{-\frac{1}{2}-\frac{1}{4}}d\gamma \int_0^t\tau^{-\frac{3}{4}}d\tau\right)\|u\|_{X_T}^{\frac{1}{2}}\\
    &+\|\rho_0\|_{C^2}^2\mathbf{C}^2(u)\left(s^{-\frac{1}{4}}\int_0^t\tau^{-\frac{3}{4}}d\tau\|u\|_{X_T}^{\f12}+s^{-{\frac{1}{8}}}\int_s^t\tau^{-\frac{1}{2}}d\tau \right)\\
    \lesssim&\|\rho_0\|_{C^2}^2(t-s)^{\varkappa_2} \mathbf{C}^2(u)\left(s^{-\varkappa_2}\|u\|_{X_T}^{\f12}+s^{\frac{3}{8}-\varkappa_2}\right),
\end{align*}
So we conclude that 
\begin{equation}\label{S4eq43}
    \begin{aligned}
        &\sup_{x_0}\sup_{s<t<T}s^{\frac{1}{2}+\varkappa_1}\frac{\|u_{R1,x_0}^{21,m}(t)-u_{R1,x_0}^{21,m}(s)\|_{L^\infty}}{(t-s)^{\varkappa_1}}\lesssim \|\rho_0\|_{C^2}^2\mathbf{C}^2(u)\left(T^{\frac{3}{8}}\|u\|_{X_T}^{\frac{1}{2}}+T^{\frac{3}{4}} \right)\\
        &\sup_{s<t<T}s^{\frac{5}{12}+\varkappa_2}\frac{\|\tilde u_{R1}^{21,m}(t)-\tilde u_{R1}^{21,m}(s)\|_{L^{12}}}{(t-s)^{\varkappa_2}}\lesssim \|\rho_0\|_{C^2}^2\mathbf{C}^2(u)\left(T^{\frac{5}{12}}\|u\|_{X_T}^{\frac{1}{2}}+T^{\frac{19}{24}} \right)
    \end{aligned}
\end{equation}
For $u_{R1,x_0}^{21,P}$, we apply the decomposition of $\nabla P$ as in \eqref{S4eq61} and \eqref{S4eq63}. Note that by \eqref{S4eqp}, Proposition \ref{Holtrho} and Lemma \ref{eng3}, for any $\eta\in(0,1)$ and $s<t$, we have
\begin{align*}
&\left\|P_1(t)\right\|_{L^2}\lesssim \int_0^t \tau^{-1+\frac{1-\eta}{2}}d\tau\|\rho_0\|_{C^1}\mathbf{C}^2(u)\lesssim t^{\frac{1-\eta}{2}}\|\rho_0\|_{C^1}\mathbf{C}^2(u),\\
  &  \left\|P_1(t)-P_1(s) \right\|_{L^2}\lesssim \int_s^t\tau^{-1+\frac{1-\eta}{2}}d\tau\|\rho_0\|_{C^1}\mathbf{C}^2(u)\lesssim s^{-\frac{1+\eta}{2}}\min\{1,(t-s)s^{-1}\}\|\rho_0\|_{C^1}\mathbf{C}^2(u),\\
  &\left\|P_2^1(t)\right\|_{L^2}\lesssim \int_0^t \tau^{-1+\frac{1-\eta}{2}}d\tau\|\rho_0\|_{C^1}\mathbf{C}^3(u)\lesssim t^{\frac{1-\eta}{2}}\|\rho_0\|_{C^1}\mathbf{C}^3(u),\\
  &  \left\|P_2^1(t)-P_2^1(s) \right\|_{L^2}\lesssim \int_s^t\tau^{-1+\frac{1-\eta}{2}}d\tau\|\rho_0\|_{C^1}\mathbf{C}^3(u)\lesssim s^{-\frac{1+\eta}{2}}\min\{1,(t-s)s^{-1}\}\|\rho_0\|_{C^1}\mathbf{C}^3(u),\\
  &\left\|P_3(t)\right\|_{L^2}\lesssim \|\rho_0-1\|_{L^\infty}\|u_0\|_{L^2},\\
  &  \left\|P_3(t)-P_3(s) \right\|_{L^2}\lesssim \|\rho_0-1\|_{L^\infty}\|u(t)-u(s)\|_{L^2}\lesssim \|\rho_0-1\|_{L^\infty}\|u_0\|_{L^2}\min\{1,(t-s)s^{-1}\}.
\end{align*}
Then we denote the components in $u_{R1,x_0}^{21,P}$ related to $P_1,P_2^1, P_2^2,P_3$ by $u_{R1,x_0}^{21,P1},u_{R1,x_0}^{21,P21}, u_{R1,x_0}^{P22},u_{R1,x_0}^{21,P3}$ separately, and by \eqref{esthtk} we can show that
\begin{align}
    &\left\|u_{R1,x_0}^{21,P1}(t)-u_{R1,x_0}^{21,P1}(s)\right\|_{L^\infty}\lesssim \|\rho_0\|_{C^1}\int_s^t\left\|\sup_{x_0}\partial_\tau\nabla\Delta^{-1}K_{x_0}(\tau)\right\|_{L^2}d\tau \|P_1(t)\|_{L^2} \label{S4eq62}\\
    &\qquad\qquad+\|\rho_0\|_{C^1}\left\|\sup_{x_0}\nabla\Delta^{-1}K_{x_0}(s)\right\|_{L^2}\|P_1(t)-P_1(s)\|_{L^2} \nonumber\\
    &\qquad\lesssim (t-s)^{\varkappa_1}s^{\frac{1}{8}-\varkappa_1}\|\rho_0\|_{C^1}^2\mathbf{C}^2(u),\nonumber\\
    &\left\|u_{R1,x_0}^{21,P21}(t)-u_{R1,x_0}^{21,P21}(s)\right\|_{L^\infty}\lesssim \|\rho_0\|_{C^1}\int_s^t\left\|\sup_{x_0}\partial_\tau\nabla\Delta^{-1}K_{x_0}(\tau)\right\|_{L^2}d\tau \|P_2^1(t)\|_{L^2} \nonumber\\
    &\qquad\qquad+\|\rho_0\|_{C^1}\left\|\sup_{x_0}\nabla\Delta^{-1}K_{x_0}(s)\right\|_{L^2}\|P_2^1(t)-P_2^1(s)\|_{L^2} \nonumber\\
    &\qquad\lesssim (t-s)^{\varkappa_1}s^{\frac{1}{8}-\varkappa_1}\|\rho_0\|_{C^1}^2\mathbf{C}^3(u)\nonumber\\
    &\left\|u_{R1,x_0}^{21,P3}(t)-u_{R1,x_0}^{21,P3}(s)\right\|_{L^\infty}\lesssim \|\rho_0\|_{C^1}\int_s^t\left\|\sup_{x_0}\partial_\tau\nabla\Delta^{-1}K_{x_0}(\tau)\right\|_{L^2}d\tau \|P_3(t)\|_{L^2} \nonumber\\
    &\qquad\qquad+\|\rho_0\|_{C^1}\left\|\sup_{x_0}\nabla\Delta^{-1}K_{x_0}(s)\right\|_{L^2}\|P_3(t)-P_3(s)\|_{L^2} \nonumber\\
    &\qquad\lesssim (t-s)^{\varkappa_1}s^{-\frac{1}{4}-\varkappa_1}\|\rho_0\|_{C^1}^2\|\rho_0-1\|_{L^\infty}\mathbf{C}^2(u),\nonumber\\
    &\left\|\tilde u_{R1}^{21,P1}(t)-\tilde u_{R1}^{21,P1}(s)\right\|_{L^{12}}\lesssim \|\rho_0\|_{C^1}\int_s^t\left\|\sup_{x_0}\partial_\tau\nabla\Delta^{-1}K_{x_0}(\tau)\right\|_{L^{\frac{12}{7}}}d\tau \|P_1(t)\|_{L^2}\nonumber \\
    &\qquad\qquad+\|\rho_0\|_{C^1}\left\|\sup_{x_0}\nabla\Delta^{-1}K_{x_0}(s)\right\|_{L^{\frac{12}{7}}}\|P_1(t)-P_1(s)\|_{L^2}\nonumber \\
    &\qquad\lesssim (t-s)^{\varkappa_2}s^{\frac{1}{4}-\varkappa_2}\|\rho_0\|_{C^1}^2\mathbf{C}^2(u),\nonumber\\
    &\left\|\tilde u_{R1}^{21,P21}(t)-\tilde u_{R1}^{21,P21}(s)\right\|_{L^{12}}\lesssim \|\rho_0\|_{C^1}\int_s^t\left\|\sup_{x_0}\partial_\tau\nabla\Delta^{-1}K_{x_0}(\tau)\right\|_{L^{\frac{12}{7}}}d\tau \|P_2^1(t)\|_{L^2}\nonumber \\
    &\qquad\qquad+\|\rho_0\|_{C^1}\left\|\sup_{x_0}\nabla\Delta^{-1}K_{x_0}(s)\right\|_{L^{\frac{12}{7}}}\|P_2^1(t)-P_2^1(s)\|_{L^2}\nonumber \\
    &\qquad\lesssim (t-s)^{\varkappa_2}s^{\frac{1}{4}-\varkappa_2}\|\rho_0\|_{C^1}^2\mathbf{C}^3(u),\nonumber\\
    &\left\|\tilde u_{R1}^{21,P3}(t)-\tilde u_{R1}^{21,P3}(s)\right\|_{L^{12}}\lesssim \|\rho_0\|_{C^1}\int_s^t\left\|\sup_{x_0}\partial_\tau\nabla\Delta^{-1}K_{x_0}(\tau)\right\|_{L^{\frac{12}{7}}}d\tau \|P_3(t)\|_{L^2}\nonumber \\
    &\qquad\qquad+\|\rho_0\|_{C^1}\left\|\sup_{x_0}\nabla\Delta^{-1}K_{x_0}(s)\right\|_{L^{\frac{12}{7}}}\|P_3(t)-P_3(s)\|_{L^2}\nonumber \\
    &\qquad\lesssim (t-s)^{\varkappa_2}s^{-\frac{1}{8}-\varkappa_2}\|\rho_0\|_{C^1}^2\|\rho_0-1\|_{L^\infty}\mathbf{C}^2(u).\nonumber
\end{align}
Finally, by \eqref{S4eq64}, and the fact that
\begin{align*}
    &\|\int_0^t(u\otimes u)(\tau)d\tau\|_{L^2}\lesssim t^{\frac{1}{2}}\|u\|_{X_T}\mathbf{C}(u),\\
    &\|\int_s^t(u\otimes u)(\tau)d\tau\|_{L^2}\lesssim s^{\frac{1}{2}}\min\{1,(t-s)s^{-1}\}\|u\|_{X_T}\mathbf{C}(u),
\end{align*}
together with \eqref{esthtk}, we have
\begin{align}
    &\left\|u_{R1,x_0}^{21,P22}(t)-u_{R1,x_0}^{21,P22}(s) \right\|_{L^\infty}\lesssim \int_s^t\left\|\sup_{x_0}|\partial_\tau\nabla^2\Delta^{-1}K_{x_0}(\tau)|\right\|_{L^2}d\tau\|\rho_0\|_{C^1}^2t^{\frac{1}{2}}\|u\|_{X_T}\mathbf{C}(u)\label{S4eq65}\\
    &\qquad\qquad+\left\|\sup_{x_0}|\nabla^2\Delta^{-1}K_{x_0}(s)| \right\|_{L^2}s^{\frac{1}{2}}\|\rho_0\|_{C^1}\min\{1,(t-s)s^{-1}\}\|u\|_{X_T}\mathbf{C}(u)\nonumber\\
    &\qquad\qquad+\int_s^t\left\|\sup_{x_0}|\partial_\tau\nabla\Delta^{-1}K_{x_0}(\tau)|\right\|_{L^2}d\tau\|\rho_0\|_{C^2}^2t^{\frac{1}{2}}\|u\|_{X_T}\mathbf{C}(u)\nonumber\\
    &\qquad\qquad+\left\|\sup_{x_0}|\nabla\Delta^{-1}K_{x_0}(s)| \right\|_{L^2}s^{\frac{1}{2}}\|\rho_0\|_{C^2}^2\min\{1,(t-s)s^{-1}\}\|u\|_{X_T}\mathbf{C}(u)\nonumber\\
    &\qquad\lesssim (t-s)^{\varkappa_1}s^{-\frac{1}{4}-\varkappa_1}\|\rho_0\|_{C^2}^2\mathbf{C}^2(u)\|u\|_{X_T},\nonumber\\
    &\left\|\tilde u_{R1}^{21,P22}(t)-\tilde u_{R1}^{21,P22}(s) \right\|_{L^{12}}\lesssim \int_s^t\left\|\sup_{x_0}|\partial_\tau\nabla^2\Delta^{-1}K_{x_0}(\tau)|\right\|_{L^{\frac{12}{7}}}d\tau\|\rho_0\|_{C^1}^2t^{\frac{1}{2}}\|u\|_{X_T}\mathbf{C}(u)\nonumber\\
    &\qquad\qquad+\left\|\sup_{x_0}|\nabla^2\Delta^{-1}K_{x_0}(s)| \right\|_{L^{\frac{12}{7}}}s^{\frac{1}{2}}\|\rho_0\|_{C^1}\min\{1,(t-s)s^{-1}\}\|u\|_{X_T}\mathbf{C}(u)\nonumber\\
    &\qquad\qquad+\int_s^t\left\|\sup_{x_0}|\partial_\tau\nabla\Delta^{-1}K_{x_0}(\tau)|\right\|_{L^{\frac{12}{7}}}d\tau\|\rho_0\|_{C^2}^2t^{\frac{1}{2}}\|u\|_{X_T}\mathbf{C}(u)\nonumber\\
    &\qquad\qquad+\left\|\sup_{x_0}|\nabla\Delta^{-1}K_{x_0}(s)| \right\|_{L^{\frac{12}{7}}}s^{\frac{1}{2}}\|\rho_0\|_{C^2}^2\min\{1,(t-s)s^{-1}\}\|u\|_{X_T}\mathbf{C}(u)\nonumber\\
    &\qquad\lesssim (t-s)^{\varkappa_1}s^{-\frac{1}{8}-\varkappa_1}\|\rho_0\|_{C^2}^2\mathbf{C}^2(u)\|u\|_{X_T}.\nonumber
\end{align}

So by \eqref{S4eq59}, \eqref{S4eq43}, \eqref{S4eq62} and \eqref{S4eq65}, we have
\begin{equation}\label{S4eq27}
    \begin{aligned}
        &\sup_{x_0} \sup_{s < t \leq T} s^{\frac{1}{2} + \varkappa_1} \frac{\|u_{R1,x_0}^2(t) - u_{R1,x_0}^2(s)\|_{L^\infty}}{(t-s)^{\varkappa_1}}+\sup_{x_0} \sup_{s < t \leq T} s^{\frac{5}{12} + \varkappa_2} \frac{\|u_{R1,x_0}^2(t) - u_{R1,x_0}^2(s)\|_{L^{12}}}{(t-s)^{\varkappa_2}} \\
&\qquad\lesssim \mathbf{C}^3(u)\|\rho_0\|_{C^2}^2\left(T^{\frac{1}{4}}(\|u\|_{X_T}+\|\rho_0-1\|_{L^\infty})+T^{\frac{3}{8}}\|u\|_{X_T}^{\f12}+T^{\frac{5}{8}}\right) .
    \end{aligned}
\end{equation}

By summarizing the estimates \eqref{uR1Linf}, \eqref{uR1L12}, \eqref{uR1Hol}, \eqref{uR1bmo} and \eqref{S4eq27}, we conclude the proof of \eqref{S4eq12}.
\end{proof}

	\begin{prop}\label{propuR2}
	Under the assumptions of Proposition \ref{uR1}, one has
\begin{equation}\label{S4eq4}
\begin{aligned}
&\sup_{x_0} \Bigl( \sup_{t \leq T} t^{\frac{1}{2}} \|u_{R2,x_0}(t)\|_{L^\infty} + \|u_{R2,x_0}\|_{V_T^b} + \sup_{s < t < T} s^{\frac{1}{2} + \varkappa_1} \frac{\|u_{R2,x_0}(t) - u_{R2,x_0}(s)\|_{L^\infty}}{(t-s)^{\varkappa_1}} \Bigr) \\
&\quad + \sup_{t \leq T} t^{1 + \frac{\kappa}{2}} \| (\nabla D^\kappa u_{R2,x_0})|_{x_0=x}(t) \|_{L^\infty}
+ \sup_{t \leq T} t^{\frac{5}{12}} \|\tilde{u}_{R2}(t)\|_{L^{12}} \\
&\quad + \sup_{s < t < T} s^{\frac{5}{12} + \varkappa_2} \frac{\|\tilde{u}_{R2}(t) - \tilde{u}_{R2}(s)\|_{L^{12}}}{(t-s)^{\varkappa_2}} \leq CT^{\f38}\|\rho_0\|_{C^1} \mathbf{C}(u)\|u\|_{T}.
\end{aligned}
\end{equation}
\end{prop}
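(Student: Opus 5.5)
The plan is to move the time derivative from $\varrho u$ onto the kernel and then apply Lemma \ref{CVPDE}. Since $K_{x_0}(0)=\delta$ and $\varrho(0)=0$, integration by parts in $\tau$ gives
\begin{equation*}
u_{R2,x_0}(t,x)=-\int_0^t\int_{\R^3}\partial_\tau K_{x_0}(t-\tau,x-y)\,\mathbb{P}(\varrho u)(\tau,y)\,dy\,d\tau-\mathbb{P}(\varrho u)(t,x),
\end{equation*}
up to the sign convention ($\partial_\tau K(t-\tau)=-\partial_t K$). The first term is exactly of the form $g_{x_0}$ in Lemma \ref{CVPDE} with $f=\mathbb{P}(\varrho u)$. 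The second term is $f$ itself, and it is independent of $x_0$. Therefore every norm in \eqref{S4eq4} except the $C^{1+\kappa}$ one reduces to weighted bounds on $f$, namely $\sup_t t^{\eta}\|f(t)\|_{L^p}$ and $\sup_{s<t} s^{\eta+\sigma}\|f(t)-f(s)\|_{L^p}/(t-s)^{\sigma}$. I will use $p=12$ with $(\eta,\sigma)=(\frac5{12},\varkappa_2)$, and $L^\infty$ with $(\frac12,\varkappa_1)$.

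\textbf{Step 1: bounds on $\varrho$.} By the trajectory representation \eqref{transol}, $|\varrho(t,x)|\le\|\nabla\rho_0\|_{L^\infty}|X_t^{-1}(x)-x|\le\|\rho_0\|_{C^1}\int_0^t\|u\|_{L^\infty}\lesssim t^{1/2}\|\rho_0\|_{C^1}\|u\|_T$. Proposition \ref{Holtrho} with $\alpha'=0$ gives the time Hölder bound for $\varrho$. Proposition \ref{regdiff} gives $\|\varrho(t)\|_{C^\gamma}\lesssim\|\rho_0\|_{C^1}$.

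\textbf{Step 2: bounds on $f$.} Because $\mathbb{P}$ is bounded on $L^{12}$ but not on $L^\infty$, I start with $L^{12}$:
\[
\|\varrho u\|_{L^{12}}\lesssim t^{\frac12-\frac5{12}}\|\rho_0\|_{C^1}\|u\|_T\|u\|_{X_T}.
\]
For time differences, I split $\varrho u(t)-\varrho u(s)=(\varrho(t)-\varrho(s))u(t)+\varrho(s)(u(t)-u(s))$ and use the $X^2_T$ Hölder seminorm together with Step 1. This gains a factor $T^{1/2-\epsilon}$.

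For $L^\infty$ I interpolate. I write $\mathbb{P}=\mathrm{Id}+\nabla\Delta^{-1}\nabla\cdot$, so the $\mathrm{Id}$ part is trivial. For the Riesz part I bound $\|\mathcal R h\|_{L^\infty}\lesssim\|h\|_{L^{12}}^{\theta}\|h\|_{\dot C^{\gamma}}^{1-\theta}$. Here $\|\varrho u\|_{\dot C^\gamma}$ is controlled by $\|\rho\|_{C^\gamma}\|u\|_{L^\infty}+\|\varrho\|_{L^\infty}\|u\|_{\dot C^\gamma}$, with $\|u\|_{\dot C^\gamma}$ obtained by interpolating the $L^\infty$ and $\dot C^{1+\kappa}$ parts of $X_T$. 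The $L^{12}$ factor supplies a positive power of $t$, which I expect to yield the prefactor $T^{3/8}$. Time differences are handled the same way, with differences in place of values and with $\gamma$ small. The $\dot C^{\gamma}$ norm of $u(t)-u(s)$ is obtained by interpolating $\|u(t)-u(s)\|_{L^\infty}$ against $\|u\|_{\dot C^{1+\kappa}}$. I accept a small loss in the exponent $\varkappa_1$ here, which is harmless since the weights are subcritical.

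The $V_T^b$ norm follows as in \eqref{uR1bmo} from the $L^\infty$ bound with weight $t^{5/12}$.

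\textbf{Step 3, the main obstacle: the $C^{1+\kappa}$ norm.} The term $\mathbb{P}(\varrho u)(t)$ has only $C^\gamma$ regularity, since $\rho$ is only $C^\gamma$. So $\nabla|D|^\kappa$ cannot be placed on it, and Lemma \ref{CVPDE} does not help. I would instead return to the original form
\[
\nabla|D|^\kappa u_{R2,x_0}=-\int_0^t\nabla|D|^\kappa K_{x_0}(t-\tau)\ast\mathbb{P}\partial_\tau(\varrho u)\,d\tau,
\]
which is evaluated at $x_0=x$, and substitute $\partial_\tau(\varrho u)=\varrho\,\partial_\tau u-u\cdot\nabla\rho\, u$ using $\partial_t\varrho=-u\cdot\nabla\rho$ together with the momentum equation. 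This is the point I am least sure works. The alternative I would try first is different: treat the $\mathbb{P}(\varrho u)(t)$ contribution by interpolating $\|\varrho u\|_{C^\gamma}$ with a time-difference bound, following the method of Lemma 2.3 of \cite{KHN}, and split the integral at $\tau=t/2$. On $(0,t/2)$ I integrate by parts in $\tau$, so the kernel $\partial_t\nabla|D|^\kappa K$ is harmless there. On $(t/2,t)$ I write $\partial_\tau(\varrho u)=\partial_\tau[\varrho u-\varrho u(t)]$, integrate by parts, and use the Hölder-in-time bound of Step 2 against $|\partial_t\nabla|D|^\kappa K(t-\tau)|\lesssim (t-\tau)^{-\frac32-\frac\kappa2}$ combined with the spatial $C^\gamma$ gain. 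Balancing a time exponent above $\frac12+\frac\kappa2-\frac\gamma2$ against the available $\varkappa_1>\frac12$ is exactly why $\varkappa_1=\frac{18}{35}$ was chosen. The boundary term at $\tau=t/2$ is bounded by Proposition \ref{S2prop3}.

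Collecting all terms gives the bound $CT^{3/8}\|\rho_0\|_{C^1}\mathbf{C}(u)\|u\|_T$.
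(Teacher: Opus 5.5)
Your architecture is the paper's. The time integration by parts that produces a $g_{x_0}$-term of Lemma~\ref{CVPDE} with $f=\mathbb{P}(\varrho u)$, plus the term $\mathbb{P}(\varrho u)(t)$, is exactly \eqref{S4eq7}, and the paper uses it for all time differences. As you propose, $\|\mathbb{P}(\varrho u)\|_{L^\infty}$ is controlled by interpolating between $L^{12}$ and a H\"older norm. Your ``alternative'' for the $C^{1+\kappa}$ norm is the paper's splitting $u_{R2,x_0}=u^1_{R2,x_0}+u^2_{R2,x_0}$. There $(\varrho u)(t)$ is subtracted on the whole interval $(0,t)$, so that $\mathbb{P}$ sits inside the integrable kernel $\partial_t\nabla|D|^\kappa\mathbb{P}K_{x_0}$ and the only boundary term is the smoothed $K_{x_0}(t)\ast\mathbb{P}(\varrho u)(t)$. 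No split at $t/2$ and no spatial $C^\gamma$ gain are needed, because $\varkappa_1=\frac{18}{35}>\frac{1+\kappa}{2}$. Drop the momentum-equation substitution: it produces $\nabla\rho$, which is not bounded for $t>0$. Your Step 1 bound $\|\varrho(t)\|_{L^\infty}\lesssim t^{1/2}\|\rho_0\|_{C^1}\|u\|_T$ via the flow is correct and cleaner than \eqref{rhouse}.

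The step whose justification is wrong is the $L^\infty$ time difference of $\mathbb{P}(\varrho u)$. A loss in the time-H\"older exponent is \emph{not} harmless just because the weights are subcritical. Suppose you only had $\|f(t)-f(s)\|_{L^\infty}\lesssim (t-s)^{\varkappa_1-\delta}s^{-\frac12-\varkappa_1+\delta}$. Then $s^{\frac12+\varkappa_1}\|f(t)-f(s)\|_{L^\infty}/(t-s)^{\varkappa_1}$ is only bounded by $\bigl(s/(t-s)\bigr)^{\delta}$, which blows up as $t\to s$, and no power of $s$ repairs this. Lemma~\ref{CVPDE} then cannot be applied with $\sigma=\varkappa_1$, and in any case $\mathbb{P}(\varrho u)(t)$ enters $u_{R2,x_0}$ directly.

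Your scheme survives only because the $L^{12}$ factor carries the larger exponent $\varkappa_2$. Take $\theta=\frac{4\gamma}{1+4\gamma}$. The worst H\"older term is $\|\varrho(s)\|_{L^\infty}\|u(t)-u(s)\|_{\dot C^\gamma}$, of order $(t-s)^{\varkappa_1(1-\frac{\gamma}{1+\kappa})}$ in $t-s$. The combined exponent $\theta\varkappa_2+(1-\theta)\varkappa_1\bigl(1-\frac{\gamma}{1+\kappa}\bigr)$ is $\ge\varkappa_1$ if and only if $4(1+\kappa)(\varkappa_2-\varkappa_1)\ge\varkappa_1$. This holds here (about $0.80$ versus $0.51$), but it has to be checked, not waved through.

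The paper avoids differencing the H\"older norm. It interpolates the $L^{12}$ difference against the undifferenced $\|\varrho u(s)\|_{\dot C^{9/14}}+\|\varrho u(t)\|_{\dot C^{9/14}}$, using $\|\mathbb{P}h\|_{L^\infty}\lesssim\|h\|_{L^{12}}^{18/25}\|h\|_{\dot C^{9/14}}^{7/25}$. This gives exactly $(t-s)^{\frac{18}{25}\varkappa_2}=(t-s)^{\varkappa_1}$, which is where the value $\varkappa_1=\frac{18}{35}$ comes from. It needs no time regularity of $|D|^\gamma\rho$, and it directly yields a power of $T$ above $T^{3/8}$.

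By contrast, with $\gamma$ small and only $\|\varrho\|_{\dot C^\gamma}\lesssim\|\rho_0\|_{C^1}$, your gain is about $T^{7\theta/12}$, which is tiny. To reach $T^{3/8}$ you would also need $\|\varrho(s)\|_{\dot C^\gamma}\lesssim s^{\frac12(1-\gamma/\gamma')}$, obtained by interpolating your Step 1 bound with Proposition~\ref{regdiff}.
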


\begin{proof} We divide the proof into the following two steps:

\noindent \textbf{(i) Estimates for spatial integrability.}\smallskip

First in view of \eqref{S4eq0}, we can rewrite $u_{R2,x_0}$ as
\begin{equation*}\label{S4eq5}
u_{R2,x_0} = \int_0^t \partial_\tau \mathbb{P} K_{x_0}(t-\tau) \ast \bigl( (\varrho u)(t) - (\varrho u)(\tau) \bigr) \, d\tau + K_{x_0}(t) \ast \mathbb{P} (\varrho u)(t) = u_{R2,x_0}^1 + u_{R2,x_0}^2.
\end{equation*}

It follows from the proof of Proposition \ref{Holtrho} that for any $\kappa\in (0,1),$
\begin{align*}
\|\rho(t)-\rho_0\|_{L^\infty}\lesssim& \||D|^\kappa\rho\|_{L^\infty_T(L^\infty)}\bigl(\e^{\kappa}+\|u\|_{X_T}\e^{\kappa-1}t^{\f12}\bigr)\\
\lesssim& \|\rho_0\|_{C^1}\bigl(1+\|u\|_{T}\bigr)t^{\f\kappa2},\quad\mbox{if we take }\ \ \e=t^{\f12},
\end{align*}
from which,  Propositions \ref{regdiff} and  \ref{Holtrho},  we deduce that for any $0 < \epsilon' < 1$ and for all $s < t < T$, 
\begin{equation}\label{rhouse}
\begin{split}
&\|\varrho(t)\|_{L^\infty} \lesssim t^{\frac{1-\epsilon'}{2}} \|\rho_0\|_{C^1}\bigl(1+\|u\|_{T}\bigr), \\
&\|\varrho(t) - \varrho(s)\|_{L^\infty} \lesssim \min\bigl\{ (t-s)^{\frac{1-\epsilon'}{2}}, s^{-\frac{1-\epsilon'}{2}} (t-s)^{1-\epsilon'} \bigr\} \|\rho_0\|_{C^1}\bigl(1+\|u\|_{T}\bigr).
\end{split}
\end{equation}

By \eqref{esthtk}, \eqref{refY}, \eqref{estL2u}, and \eqref{rhouse}, we infer
\begin{align*}
\|u^1_{R2,x_0}(t)\|_{L^\infty} 
 \lesssim & \int_0^t \|\partial_t \mathbb{P} K_{x_0}(t-\tau)\|_{L^1} \left( \|\varrho(\tau)(u(t)-u(\tau))\|_{L^\infty} + \|(\varrho(t)-\varrho(\tau)) u(t)\|_{L^\infty} \right) d\tau \\
\leq &C \int_0^t (t-\tau)^{-1} \bigl( \tau^{\frac{1-\epsilon'}{2}} (t-\tau)^{\varkappa_1} \tau^{-\frac{1}{2}-\varkappa_1} + (t-\tau)^{\frac{1-\epsilon'}{2}} t^{-1/2} \bigr) d\tau \mathbf{C}(u) \|\rho_0\|_{C^1} \|u\|_{T}\\
%\leq& C \mathbf{C}(u) \|\rho_0\|_{C^1}\bigl(1+\|u\|_{T}\bigr) t^{-\frac{\epsilon'}{2}} \\
\leq & C\mathbf{C}(u) \|\rho_0\|_{C^1}\|u\|_{T}t^{-\frac{1}{12}}, \quad (\mbox{if we take}\ \epsilon' = 1/24),
\end{align*}
and
\begin{align*}
&\|\na|D|^{\kappa} u^1_{R2,x_0}(t)\|_{L^\infty} \\
&\lesssim \int_0^t \|\partial_t |D|^{1+\kappa} \mathbb{P} K_{x_0}(t-\tau)\|_{L^1}  \left( \|\varrho(\tau)(u(t)-u(\tau))\|_{L^\infty} + \|(\varrho(t)-\varrho(\tau)) u(t)\|_{L^\infty} \right) d\tau \\
&\leq C\int_0^t (t-\tau)^{-1-\frac{1+\kappa}{2}} \bigl( \tau^{\frac{1-\epsilon'}{2}} (t-\tau)^{\varkappa_1} \tau^{-1/2-\varkappa_1} + (t-\tau)^{1-\epsilon'} \tau^{-\frac{1-\epsilon'}{2}} t^{-1/2} \bigr) d\tau \mathbf{C}(u) \|\rho_0\|_{C^1}\|u\|_{T}\\
 &\leq  C\mathbf{C}(u) \|\rho_0\|_{C^1}\|u\|_{T}t^{-\frac{\epsilon' + 1 + \kappa}{2}} \\
 &\leq  C\mathbf{C}(u) \|\rho_0\|_{C^1}\|u\|_{T} t^{-\frac{3}{4}}, \quad (\mbox{if we take}\ \epsilon' = 1/2 - \kappa),
 \end{align*}
and
\begin{align*}
\|\tilde{u}^1_{R2}(t)\|_{L^{12}} 
\lesssim &\int_0^t \bigl\| \sup_{x_0} \partial_t \mathbb{P} K_{x_0}(t-\tau) \bigr\|_{L^1} \left( \|\varrho(\tau)(u(t)-u(\tau))\|_{L^{12}} + \|(\varrho(t)-\varrho(\tau)) u(t)\|_{L^{12}} \right) d\tau \\
\leq & C \int_0^t (t-\tau)^{-1} \bigl( \tau^{\frac{1-\epsilon'}{2}} (t-\tau)^{\varkappa_2} \tau^{-\frac{5}{12}-\varkappa_2} + t^{-\frac{5}{12}} (t-\tau)^{1-\epsilon'} \tau^{-\frac{1-\epsilon'}{2}} \bigr) d\tau \mathbf{C}(u) \|\rho_0\|_{C^1}\|u\|_{T}\\
 \leq & C \mathbf{C}(u) \|\rho_0\|_{C^1}\|u\|_{T}t^{\frac{1}{12} - \frac{\epsilon'}{2}} \\
 \leq &\mathbf{C}(u) \|\rho_0\|_{C^1}\|u\|_{T}, \quad (\mbox{if we take}\ \epsilon' = 1/6).
\end{align*}

Similarly, we deduce from   \eqref{esthtk}, \eqref{refY} and  \eqref{estL2u}  that 
\begin{align*}
\|u_{R2,x_0}^2(t)\|_{L^\infty}\lesssim& \|\mathbb{P} K_{x_0}(t)\|_{L^\frac{12}{11}} \|(\varrho u)(t)\|_{L^{12}}
 \leq  Ct^{-\frac{1}{8}} t^{\frac{1-\epsilon'}{2}}t^{-\frac{5}{12}}  \mathbf{C}(u)\|u\|_{T} \|\rho_0\|_{C^1} \\
\leq &C \mathbf{C}(u) \|u\|_{T}\|\rho_0\|_{C^1} t^{-\frac{1}{6}}, \quad (\mbox{if we take}\ \epsilon' = 1/4),
\end{align*}
and
\begin{align*}
\|\na |D|^{\kappa} u_{R2,x_0}^2\|_{L^\infty} 
\lesssim&  \|\na |D|^{\kappa} K_{x_0}(t)\|_{L^1} \|(\varrho u)(t)\|_{L^\infty} \leq Ct^{-\frac{1+\epsilon}{2}} t^{\frac{1-\kappa}{2}} t^{-1/2} \mathbf{C}(u)\|u\|_{T} \|\rho_0\|_{C^1} \\
 \leq & C\mathbf{C}(u)\|u\|_{T}\|\rho_0\|_{C^1} t^{-\frac{7}{12}}, \quad (\mbox{if we take}\ \epsilon' = 1/6 - \epsilon),
\end{align*}
and
\begin{align*}
\|\tilde{u}^2_{R2}(t)\|_{L^{12}} \lesssim& \|\varrho u(t)\|_{L^{12}} \leq C t^{\frac{1-\epsilon'}{2}} t^{-\frac{5}{12}} \|u\|_{T}\mathbf{C}(u) \|\rho_0\|_{C^1} \\
 \leq& C\mathbf{C}(u)\|u\|_{T}\|\rho_0\|_{C^1}, \qquad (\mbox{if we take}\ \epsilon' = 1/6)
\end{align*}

Thanks to  \eqref{uR1bmo},  we get, by summarizing the above estimates, that
\begin{equation}\label{uR2spa}
\begin{aligned}
\sup_{x_0} \Bigl( & \sup_{t \leq T} \Bigl( t^{\frac{1}{2}} \|u_{R2,x_0}(t)\|_{L^\infty} + t^{1+\f\kappa2} \|\na |D|^{\kappa} u_{R2,x_0}(t)\|_{L^\infty} + t^{\frac{5}{12}} \|\tilde{u}_{R2}(t)\|_{L^{12}} \Bigr) \\
&+\|u_{R2,x_0}\|_{V_T^b} \Bigr) 
 \leq C T^{\frac{3}{8}} \mathbf{C}(u)\|u\|_{T} \|\rho_0\|_{C^1}.
\end{aligned}
\end{equation}

\noindent \textbf{(ii) Estimates for time differences.}\smallskip

We first consider the $L^{12}$ case. Note from \eqref{S4eq0}  that
\begin{equation}\label{S4eq7}
u_{R2,x_0} = \int_0^t \partial_t K_{x_0}(t-\tau) \ast \mathbb{P} (\varrho u)(\tau) \, d\tau - \mathbb{P} (\varrho u)(t).
\end{equation}
By \eqref{rhouse}  and the definition of the $\|\cdot\|_{X_T^2}$ norm, we have
\begin{equation}\label{uR2diffL12}
\begin{split}
\|\mathbb{P} (\varrho u)(t) - \mathbb{P} (\varrho u)(s)\|_{L^{12}}
\leq &\| \varrho(t) - \varrho(s)\|_{L^\infty}\|u(t)\|_{L^{12}}+\|\varrho(s)\|_{L^\infty}\|u(t)-u(s)\|_{L^{12}}\\
\lesssim & s^{-\left(\f{\varkappa_2}2+\f5{12}\right)}(t-s)^{\varkappa_2}\|\rho_0\|_{C^1}\|u\|_{X_T} \mathbf{C}(u)  \quad \forall s < t,
\end{split}
\end{equation}
from which, 
Lemma \ref{CVPDE} and \eqref{S4eq7}, we infer
\begin{align*}
&\sup_{s < t \leq T} s^{\frac{5}{12} + \varkappa_2} \frac{\|\tilde{u}_{R2}(t) - \tilde{u}_{R2}(s)\|_{L^{12}}}{(t-s)^{\varkappa_2}} \\
&\lesssim \sup_{t\leq T}t^{\f5{12}}\|\mathbb{P} (\varrho u)(t)\|_{L^{12}}+\sup_{s < t \leq T}s^{\frac{5}{12} + \varkappa_2}\f{\|\mathbb{P} (\varrho u)(t) - \mathbb{P} (\varrho u)(s)\|_{L^{12}}}{(t-s)^{\varkappa_2}}\\
&\lesssim T^{\f3{8}}\mathbf{C}(u)\|u\|_{X_T} \|\rho_0\|_{C^1}.
\end{align*}

For the $L^\infty$ case, we have for any $\sigma>0,$
\begin{equation*}
\|\mathbb{P} f\|_{L^\infty} \lesssim \|f\|_{L^{12}}^{\frac{4\sigma}{1+4\sigma}} \|f\|_{\dot{C}^{\sigma}}^{\frac{1}{1+4\sigma}}.
\end{equation*}
Taking $\sigma = \f{9}{14}$ in the above inequality, and using \eqref{rhouse}, \eqref{uR2diffL12}, and the fact that $\varkappa_1 = \frac{18}{25} \varkappa_2$, we obtain
\begin{equation}\label{uR2diffLinf}
\begin{split}
&\sup_{s<t\leq T}s^{\f12+\varkappa_1}\f{\|\mathbb{P} (\varrho u)(t) - \mathbb{P} (\varrho u)(s)\|_{L^\infty}}{(t-s)^{\varkappa_1}}\\
& \lesssim \Bigl(\sup_{s<t\leq T}s^{\f5{12}+\varkappa_2}\f{\|\varrho u(t) - \varrho u(s)\|_{L^{12}}}{(t-s)^{\varkappa_1}} \Bigr)^{\frac{18}{25}} \bigl(s^{\f57} \|\varrho u(s)\|_{\dot{C}^{\frac{9}{14}}} + t^{\f57}\|\varrho u(t)\|_{\dot{C}^{\frac{9}{14}}} \bigr)^{\frac{7}{25}}\\
& \lesssim  T^{\frac{3}{8}} \mathbf{C}(u)\|u\|_{X_T} \|\rho_0\|_{C^1},
\end{split}
\end{equation}
where we used the fact that $\|u(s)\|_{\dot{C}^{\frac{9}{14}}}\lesssim s^{-\f{25}{28}}\|u\|_{X_T}$ and
\begin{align*}
\|\varrho u(s)\|_{\dot{C}^{\frac{9}{14}}} \lesssim &\|\varrho(s)\|_{L^\infty}\| u(s)\|_{\dot{C}^{\frac{9}{14}}} 
+\|\varrho(s)\|_{\dot{C}^{\frac{9}{14}}}\|u(s)\|_{L^\infty}\\
\lesssim &\|u\|_{X_T}\|\rho_0\|_{C^1}s^{-\f12}.\end{align*}
Then it follows from Lemma \ref{CVPDE}, \eqref{S4eq7} and \eqref{uR2diffLinf} that
\begin{align*}
\sup_{x_0} \Bigl(\sup_{s < t < T} s^{\frac{1}{2} + \varkappa_1} \frac{\|u_{R2,x_0}(t) - u_{R2,x_0}(s)\|_{L^\infty}}{(t-s)^{\varkappa_1}} \Bigr) \lesssim T^{\f3{8}}\mathbf{C}(u)\|u\|_{X_T} \|\rho_0\|_{C^1}.
\end{align*}
This together with \eqref{uR2spa} completes the proof of \eqref{S4eq4}.
\end{proof}

\begin{prop}\label{uR3}
Under the assumptions of Proposition \ref{uR1}, if $\rho_0\in C^1$, then we have
\begin{equation*}\label{S4eq8}
\begin{aligned}
&\sup_{t \leq T} \Bigl( t^{\frac{1}{2}} \|\tilde{u}_{R3}(t)\|_{L^\infty} + \sup_{t \leq T} t^{1 + \frac{\kappa}{2}} \| (D^{1+\kappa} u_{R3,x_0})(t)|_{x_0=x} \|_{L^{\infty}} + t^{\frac{5}{12}} \|\tilde{u}_{R3}(t)\|_{L^{12}}  \\
&\quad + \sup_{s < t \leq T} s^{\frac{1}{2} + \varkappa_1} \frac{\| \tilde{u}_{R3}(t) - \tilde{u}_{R3}(s) \|_{L^\infty}}{(t-s)^{\varkappa_1}}
+ \sup_{s < t \leq T} s^{\frac{5}{12} + \varkappa_2} \frac{\| \tilde{u}_{R3}(t) - \tilde{u}_{R3}(s) \|_{L^{12}}}{(t-s)^{\varkappa_2}} \Bigr) \\
&\quad + \sup_{z \in \mathbb{R}^3} \sup_{R < b} R^{-\frac{3}{2}} \Bigl( \int_0^{R^2} \int_{B(z,R)} |u_{R3,z}(t,x)|^2 \, dx \, dt \Bigr)^{\frac{1}{2}} \\
&\qquad\leq C  \mathbf{C}^2(u) \|\rho_0\|_{C^1}\left(T^{\frac{3}{8}}\|u\|_{X_T}^{\frac{1}{2}}+T^{\frac{1}{4}}\|\rho_0-1\|_{C^1}\right).
\end{aligned}
\end{equation*}
\end{prop}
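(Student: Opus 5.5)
The plan is to remove the time derivative from $u$ by integrating by parts in $\tau$, exactly as was done for $u_{R1}$ in \eqref{estR11n}. Writing $\delta\rho_0(y):=\rho_0(y)-\rho_0(x_0)$, one gets
\begin{equation*}
u_{R3,x_0}(t,x)=-\int_0^t\!\!\int \partial_t K_{x_0}(t-\tau,x-y)\,\delta\rho_0(y)\bigl(u(\tau,y)-u(t,y)\bigr)\,dy\,d\tau+\int K_{x_0}(t,x-y)\,\delta\rho_0(y)\bigl(u_0(y)-u(t,y)\bigr)\,dy
\end{equation*}
(up to sign conventions), and this splits as $u^1_{R3,x_0}+u^2_{R3,x_0}$. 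The gain comes from $|\rho_0(y)-\rho_0(x_0)|\le\|\rho_0\|_{C^1}\min\{|x_0-y|,2\|\rho_0-1\|_{L^\infty}\}$. Once we set $x_0=x$, this factor becomes $|x-y|$, and against the kernel bounds \eqref{esthtk} it produces an extra $(t-\tau)^{1/2}$ (respectively $t^{1/2}$). Concretely, $|x-y|\,|\partial_tK(t-\tau,x,x-y)|\lesssim(\sqrt{t-\tau}+|x-y|)^{-4}$, whose $L^1$ norm is $(t-\tau)^{-1/2}$ instead of $(t-\tau)^{-1}$. This is why only the frozen quantities $\tilde u_{R3}$ and $u_{R3,z}$ restricted to $B(z,R)$ appear in the statement: for general $x_0$ there is no smallness.

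For the $L^\infty$ and $L^{12}$ bounds on $\tilde u^1_{R3}$, we use \eqref{refY} with the modified kernel $|x-y|\,|\partial_tK|$ together with the time-Hölder parts of $X_T$. This gives
\begin{equation*}
\|\tilde u^1_{R3}(t)\|_{L^\infty}\lesssim\|\rho_0\|_{C^1}\|u\|_{X_T}\int_0^t(t-\tau)^{-\frac12}\tau^{-\frac12}\min\{1,(t-\tau)^{\varkappa_1}\tau^{-\varkappa_1}\}\,d\tau,
\end{equation*}
which is $O(1)$, and one more $t^{1/2}$-type power is available to produce the factor $T^{\cdot}$. The $L^{12}$ bound is analogous. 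For $\tilde u^2_{R3}$ we use $\|u_0-u(t)\|_{L^2}\lesssim\|u_0\|_{L^2}$ from \eqref{estL2u}, the weighted kernel $|y|K(t,y)$ in $L^2$ (of size $t^{-1/4}$), and interpolation as in \eqref{S4eq14}. Where no power of $|x-y|$ is available, we use $\|\rho_0-1\|_{L^\infty}$ instead, and this is the origin of the term $T^{1/4}\|\rho_0-1\|_{C^1}$. To obtain the factor $T^{3/8}\|u\|_{X_T}^{1/2}$, we split $|\delta\rho_0|\le \|\rho_0\|_{C^1}|x-y|^{1/2}(2\|\rho_0\|_{L^\infty})^{1/2}$ where needed and interpolate $u$ between $\|u\|_{X_T}$ and the energy bound \eqref{estL2u}.

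For the time differences, we follow the proof of Lemma \ref{CVPDE} line by line, with $f=\delta\rho_0\, u$ and the kernels replaced by their $|x-y|$-weighted versions. All of the $\delta^t_a$ kernel estimates from \eqref{htkint} carry over with an extra factor $(s-\tau)^{1/2}$. For the $V^b_T$ piece we take $x\in B(z,R)$ and $x_0=z$, so that $|\rho_0(y)-\rho_0(z)|\le \|\rho_0\|_{C^1}(|x-y|+R)$. The $|x-y|$ part is handled as above, and the $R$ part is bounded by $R\,\|\rho_0\|_{C^1}$ times a Koch–Tataru/Carleson estimate in the spirit of \eqref{uR1bmo}. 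Specifically, $R^{-3/2}\bigl(\int_0^{R^2}\int_{B}|\cdot|^2\bigr)^{1/2}\lesssim \sup_t t^{5/12}\|\cdot\|_{L^\infty}\cdot R^{1/6}$, and the factor $R\le b\sim T^{1/2}$ supplies the smallness.

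The main obstacle is the $C^{1+\kappa}$ term $(|D|^{1+\kappa}u_{R3,x_0})|_{x_0=x}$. Here derivatives fall on the kernel only, and we evaluate at $x_0=x$ afterwards. Using $|x-y|\,|\partial_t|D|^{1+\kappa}K|\lesssim(\sqrt{t-\tau}+|x-y|)^{-5-\kappa}$, whose $L^1$ norm is $(t-\tau)^{-1-\frac{1+\kappa}{2}+\frac12}$, the $\tau$-integral near $\tau=t$ converges only because of the Hölder factor $(t-\tau)^{\varkappa_1}$ with $\varkappa_1>\frac12>\frac\kappa2$. This is exactly where the choice $\varkappa_1=\frac{18}{35}$ is used. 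The corresponding $u^2$ term is controlled via the $L^2$ bound on $|y|\,|D|^{1+\kappa}K(t,y)$ and $\|u_0\|_{L^2}$.
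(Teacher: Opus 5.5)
Your $\tau$-integration by parts, and the gain from $|\rho_0(x)-\rho_0(y)|\le\|\nabla\rho_0\|_{L^\infty}|x-y|$ (the paper's \eqref{S4eq31}), are correct. For $\tilde u_{R3}$ in $L^\infty$ and $L^{12}$, for their time differences, and for the $C^{1+\kappa}$ term, this is essentially the paper's argument. The $C^{1+\kappa}$ integral only needs $\varkappa_1>\kappa/2$, so it is not where $\varkappa_1>\frac12$ is used. There are, however, two problems: the $V^b_T$ step fails as written, and your treatment of the $u_0$ term proves the estimate only in the weak form in which it is literally displayed.

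\textbf{The $V^b_T$ step.} After you split $\rho_0(y)-\rho_0(z)=(\rho_0(y)-\rho_0(x))+(\rho_0(x)-\rho_0(z))$, the factor $\rho_0(x)-\rho_0(z)$ multiplies $-\int_0^t\partial_tK_z(t-\tau)\ast(u(\tau)-u(t))\,d\tau+K_z(t)\ast(u_0-u(t))$. This carries no weight $|x-y|$ and is exactly critical: the $\tau<t/2$ part of the integral and $K_z(t)\ast u_0$ are only $O(t^{-1/2})$ in $L^\infty$. So $\sup_t t^{5/12}\|\cdot\|_{L^\infty}$ is not controlled, and the \eqref{uR1bmo}-type conversion you invoke does not apply. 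The prefactor $R$ does not rescue it either: a pointwise bound $R\,t^{-1/2}$ on $[0,R^2]\times B(z,R)$ still leads to the divergent $\int_0^{R^2}t^{-1}\,dt$.

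What you need is a bound on this bracket that is integrable at $t=0$, and the paper gets it at the energy level. The terms $u(t)$, $K_z(t)\ast u_0$, $K_z(t)\ast u(t)$ are bounded in $L^2$ by Lemma \ref{eng1}. The Duhamel piece is bounded in $L^\infty_tL^2$ by $\int_0^t(t-\tau)^{-1}\|u(\tau)-u(t)\|_{L^2}\,d\tau$ together with \eqref{S4eqp}, which rests on $\sup_t t\|u_t\|_{L^2}$ from Lemma \ref{eng3}; this is the $L^2$ version of Lemma \ref{CVPDE}. Then $R\cdot R^{-3/2}\bigl(\int_0^{R^2}\|\cdot\|_{L^2}^2\,dt\bigr)^{1/2}\lesssim R^{1/2}$. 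The paper instead interpolates with the $t^{-1/2}$ bound in $L^4$ and gets $R^{3/4}$.

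\textbf{The small factors.} You bound $K_x(t)\ast\bigl((\rho_0-\rho_0(x))(u_0-u(t))\bigr)$ by $\||y|K(t,y)\|_{L^2}\|u_0\|_{L^2}$. This gives $T^{1/4}\|\nabla\rho_0\|_{L^\infty}\|u_0\|_{L^2}$, and likewise for its time difference and its $|D|^{1+\kappa}$ version, with no small factor. That fits the displayed statement only because its last factor is written $\|\rho_0-1\|_{C^1}\ge\|\nabla\rho_0\|_{L^\infty}$.

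The paper's proof actually ends with $\|\rho_0-1\|_{L^\infty}$ (see \eqref{uR3spa}, \eqref{uR3diff}), and that is the form fed into \eqref{S6eq2} for Theorem \ref{thm.global-existence}. There $T\sim\varepsilon^{4/3}$, so a bare $T^{1/4}$ term would be of size $\varepsilon^{1/3}$, not the $\varepsilon^{5/6}$ needed to close the bootstrap. Splitting $|\rho_0(x)-\rho_0(y)|$ between $|x-y|$ and $\|\rho_0-1\|_{L^\infty}$ only gives fractional powers such as $(\|\rho_0-1\|_{L^\infty}\|\nabla\rho_0\|_{L^\infty})^{1/2}$.

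The missing idea is the paper's rewriting of the initial data:
\begin{itemize}
\item write $u_0=(\rho_0u_0-\rho u(t))+\rho u(t)-(\rho_0-1)u_0$;
\item express $\rho_0u_0-\rho u(t)$, via the momentum equation, as a time integral of $\Delta u-\operatorname{div}(\rho u\otimes u)$ and $\nabla P$;
\item move the divergence onto $K_{x_0}(t,x-y)(\rho_0(x_0)-\rho_0(y))$;
\item split the pressure as in \eqref{S4eq61}.
\end{itemize}
This is where $T^{3/8}\|u\|_{X_T}^{1/2}$ comes from (via $\|\nabla u\|_{L^4}$), and where $T^{1/4}\|\rho_0-1\|_{L^\infty}$ comes from (via $(\rho_0-1)u_0$ and $P_3$). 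Your explanation of these factors, splitting as $|x-y|^{1/2}\|\rho_0\|_{L^\infty}^{1/2}$, does not produce them. It is also not needed in your own scheme, where $\|u\|_{X_T}$ enters linearly with a factor $T^{1/2}$.
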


\begin{proof} We divide the proof into the following three steps:

\noindent \textbf{(i) Estimates for spatial integrability.}\smallskip

We first observe from \eqref{esthtk} that
\begin{equation}\label{S4eq31}
    \| \sup_{x_0} (|D|^{\eta} \partial_t^i K_{x_0}(t,y) |y|) \|_{L^p_y} \lesssim t^{-i + \frac{1-\eta}{2} -\frac{3}{2}(1-\frac{1}{p})}, 
\end{equation}
for $\eta = 0, 1+\kappa$ and $i = 0, 1$.

We have the formula of $u_{R3,x_0}$ from \eqref{S4eq0} as
\begin{equation}\label{S4eq33}
    \begin{aligned}
        u_{R3,x_0}&=-\int_0^t\int_{\mathbb{R}^3}\partial_\tau K_{x_0}(t-\tau,x-y)\left(\rho_0(x_0)-\rho_0(y)\right)u(\tau,y)dyd\tau\\
        &\qquad+\left(\rho_0(x_0)-\rho_0(x)\right)u(t,x)-\int_{\mathbb{R}^3}K_{x_0}(t,x-y)\left(\rho_0(x_0)-\rho_0(y)\right)u_0(y)dy\\
        &:=\sum_{i=1}^3u_{R3,x_0}^i.
    \end{aligned}
\end{equation}
When we fix $x_0=x$, the second term $u_{R3,x_0}^2$ vanishes. The third term can be written as
\begin{equation}\label{S4eq34}
    \begin{aligned}
        &u_{R3,x_0}^3=\int_{\mathbb{R}^3}K_{x_0}(t,x-y)\left(\rho_0(x_0)-\rho_0(y)\right)u_0(y)dy\\
        &\quad=\int_{\mathbb{R}^3}K_{x_0}(t,x-y)\left(\rho_0(x_0)-\rho_0(y)\right)\left(\left(\rho_0u_0(y)-(\rho u)(t,y)\right)-(\rho_0-1)u_0(y)+(\rho u)(t,y)\right)dy\\
        &\quad=\int_{\mathbb{R}^3}K_{x_0}(t,x-y)\left(\rho_0(x_0)-\rho_0(y)\right)\int_0^t\left(\Delta u(\tau,y)-\nabla\cdot(\rho u\otimes u)(\tau,y)+\nabla P\right)(\tau)d\tau dy\\
        &\qquad+\int_{\mathbb{R}^3}K_{x_0}(t,x-y)\left(\rho_0(x_0)-\rho_0(y)\right)\left((1-\rho_0)u_0(y)+(\rho u)(t,y)\right)dy\\
        &\quad=\int_{\mathbb{R}^3}\nabla\left(K_{x_0}(t,x-y)\left(\rho_0(x_0)-\rho_0(y)\right)\right)\cdot\int_0^t\left(\nabla u(\tau,y)-(\rho u\otimes u)(\tau,y)\right)(\tau)d\tau dy\\
        &\qquad+\int_{\mathbb{R}^3}K_{x_0}(t,x-y)\left(\rho_0(x_0)-\rho_0(y)\right)\int_0^t\nabla P(\tau)d\tau dy\\
        &\qquad+\int_{\mathbb{R}^3}K_{x_0}(t,x-y)\left(\rho_0(x_0)-\rho_0(y)\right)\left((1-\rho_0)u_0(y)+(\rho u)(t,y)\right)dy\\
        &\quad:=\sum_{i=1}^3u_{R3,x_0}^{3i}.\\
    \end{aligned}
\end{equation}
By \eqref{S4eq31}, it is easy to prove
\begin{align}
        &\sup_{t\leq T}t^{\frac{1}{2}}\left(\|\tilde{u}_{R3}^1(t)\|_{L^\infty}+\|\tilde{u}_{R3}^{31}(t)\|_{L^\infty}+\|\tilde{u}_{R3}^{33}(t)\|_{L^\infty}\right)\label{S4eq32}\\
        &\qquad\qquad+\sup_{t\leq T}t^{\frac{5}{12}}\left(\|\tilde{u}_{R3}^1(t)\|_{L^{12}}+\|\tilde{u}_{R3}^{31}(t)\|_{L^{12}}+\|\tilde{u}_{R3}^{33}(t)\|_{L^{12}}\right)\nonumber\\
        &\quad\lesssim \sup_{t\leq T}t^{\frac{1}{2}}\int_0^t(t-\tau)^{-\frac{1}{2}}\tau^{-\frac{1}{2}}d\tau\|u\|_{X_T}+\sup_{t\leq T}t^{\frac{5}{12}}\int_0^t(t-\tau)^{-\frac{1}{2}}\tau^{-\frac{5}{12}}d\tau\|u\|_{X_T}\nonumber\\
        &\qquad+\sup_{t\leq T}t^{\frac{1}{2}}\int_0^t\|\rho_0\|_{C^1}\bigl\|\sup_{x_0}\left|\left(\nabla K_{x_0}(t,y)|y|+K_{x_0}(t,y)\right) \right|\bigr\|_{L^{\frac{4}{3}}_y}\left(\|\nabla u(\tau)\|_{L^4}d\tau+\|u(\tau)\|_{L^\infty}\|u(\tau)\|_{L^4} \right)d\tau\nonumber\\
        &\qquad+\sup_{t\leq T}t^{\frac{5}{12}}\int_0^t\|\rho_0\|_{C^1}\bigl\|\sup_{x_0}\left|\left(\nabla K_{x_0}(t,y)|y|+K_{x_0}(t,y)\right)\right|\bigr\|_{ L^{\frac{6}{5}}_y}\left(\|\nabla u(\tau)\|_{L^4}d\tau+\|u(\tau)\|_{L^\infty}\|u(\tau)\|_{L^4} \right)d\tau\nonumber\\
        &\qquad+\sup_{t\leq T}t^{\frac{1}{2}}\|\sup_{x_0}\left(K_{x_0}(t,y)|y|\right) \bigr\|_{L^{2}_y}\|\rho_0\|_{C^1}\|\rho_0-1\|_{L^\infty}\|u_0\|_{L^2}\nonumber\\
        &\qquad+\sup_{t\leq T}t^{\frac{5}{12}}\|\sup_{x_0}\left(K_{x_0}(t,y)|y|\right) \bigr\|_{L^{\frac{12}{7}}_y}\|\rho_0\|_{C^1}\|\rho_0-1\|_{L^\infty}\|u_0\|_{L^2}\nonumber\\
        &\qquad+\sup_{t\leq T}t^{\frac{1}{2}}\|\sup_{x_0}\left(K_{x_0}(t,y)|y|\right) \bigr\|_{L^{1}_y}\|\rho_0\|_{C^1}\|u(t)\|_{L^\infty}\nonumber\\
        &\qquad+\sup_{t\leq T}t^{\frac{5}{12}}\|\sup_{x_0}\left(K_{x_0}(t,y)|y|\right) \bigr\|_{L^{1}_y}\|\rho_0\|_{C^1}\|u(t)\|_{L^{12}}\nonumber\\
        &\quad\lesssim T^{\frac{3}{8}}\mathbf{C}^2(u)\|\rho\|_{C^1}\|u\|_{X_T}^{\frac{1}{2}}+T^{\frac{1}{4}}\mathbf{C}^3(u)\|\rho\|_{C^1}\|\rho_0-1\|_{L^\infty}\nonumber.
    \end{align}
    For the most subtle term $u_{R3,x_0}^{32}$, we use the decomposition \eqref{S4eq61}. Note that
    \begin{align}
        &\|P_1(t)\|_{L^2}\lesssim \|\rho_0\|_{C^1}\int_0^t\tau^{-1+\frac{1-\eta}{2}}d\tau\mathbf{C}^2(u)\|u_0\|_{L^2}\lesssim t^{\frac{1-\eta}{2}}\|\rho_0\|_{C^1}\mathbf{C}^3(u),\label{S4eq67}\\
        &\|P_2(t)\|_{L^{\frac{12}{7}}}\lesssim \|\rho\|_{L^\infty}\int_0^t\|u(\tau)\|_{L^{12}}\|\nabla u(\tau)\|_{L^2}d\tau\lesssim t^{\frac{1}{12}}\|\rho_0\|_{C^1}\mathbf{C}(u)\|u\|_{X_T},\nonumber\\
        &\|P_3(t)\|_{L^2}\lesssim \|\rho_0-1\|_{L^\infty}\|u_0\|_{L^2},\nonumber
    \end{align}
    which implies
    \begin{align}
        &\sup_{t\leq T}t^{\frac{1}{2}}\left(\|\tilde{u}_{R3}^{32}(t)\|_{L^\infty}\right)+\sup_{t\leq T}t^{\frac{5}{12}}\left(\|\tilde{u}_{R3}^{32}(t)\|_{L^{12}}\right)\label{S4eq66}\\
        &\quad\lesssim \sup_{t\leq T}t^{\frac{1}{2}}\|\sup_{x_0}\left(K_{x_0}(t,y)|y|\right) \bigr\|_{L^{2}_y}\left(\|P_1(t)\|_{L^2}+\|P_3(t)\|_{L^2}\right)\nonumber\\
        &\qquad+\sup_{t\leq T}t^{\frac{1}{2}}\|\sup_{x_0}\left(K_{x_0}(t,y)|y|\right) \bigr\|_{L^{\frac{12}{5}}_y}\|P_2(t)\|_{L^\frac{12}{7}}\nonumber\\
        &\qquad+\sup_{t\leq T}t^{\frac{5}{12}}\|\sup_{x_0}\left(K_{x_0}(t,y)|y|\right) \bigr\|_{L^{\frac{12}{7}}_y}\left(\|P_1(t)\|_{L^2}+\|P_3(t)\|_{L^2}\right)\nonumber\\
        &\qquad+\sup_{t\leq T}t^{\frac{1}{2}}\|\sup_{x_0}\left(K_{x_0}(t,y)|y|\right) \bigr\|_{L^{2}_y}\|P_2(t)\|_{L^\frac{12}{7}}\nonumber\\
        &\quad\lesssim \|\rho_0\|_{C^1}\mathbf{C}^2(u)\left(T^{\frac{3}{8}}\|u\|_{X_T}+T^{\frac{1}{4}}\|\rho_0-1\|_{L^\infty}\right).\nonumber
    \end{align}

    For $C^{1+\kappa}$ estimate, we have
    \begin{align*}
        &|D|^{1+\kappa}u_{R3,x_0}=-\int_0^t\int_{\mathbb{R}^3}\partial_\tau |D|^{1+\kappa}K_{x_0}(t-\tau,x-y)\left(\rho_0(x_0)-\rho_0(y)\right)\left(u(\tau,y)-u(t,y)\right)dyd\tau\label{S4eq35}\\
        &\qquad-\int_{\mathbb{R}^3}|D|^{1+\kappa}K_{x_0}(t,x-y)\left(\rho_0(x_0)-\rho_0(y)\right)\left(u_0(y)-u(t,y)\right)dy.\nonumber
    \end{align*}
   Note that the boundary term $\tau=t$ vanishes. We use similar methods as \eqref{S4eq34} for the second term (but change the $\nabla\left(K_{x_0}(t,x-y)(\rho_0(x)-\rho_0(y))\right)$ to $\nabla\left(|D|^{1+\kappa}K_{x_0}(t,x-y)(\rho_0(x)-\rho_0(y))\right)$), then we have
    \begin{align*}
            \sup_{t\leq T}t^{1+\frac{\kappa}{2}}\||D|^{1+\kappa}u_{R3,x_0}(t)|_{x_0=x}\|_{L^\infty}\lesssim \|\rho_0\|_{C^1}\mathbf{C}^3(u)\left(T^{\frac{3}{8}}\|u\|_{X_T}^\frac{1}{2}+T^{\frac{1}{4}}\|\rho_0-1\|_{L^\infty}\right).
        \end{align*}
As a consequence of the estimate above and \eqref{S4eq32}, \eqref{S4eq66}, we obtain
\begin{equation}\label{uR3spa}
\begin{aligned}
&\sup_{t \leq T} \Bigl( t^{\frac{1}{2}} \|\tilde{u}_{R3}(t)\|_{L^\infty} + \sup_{t \leq T} t^{1 + \frac{\kappa}{2}} \| (\na|D|^{\kappa} u_{R3,x_0})(t)|_{x_0=x} \|_{L^{\infty}} + t^{\frac{5}{12}} \|\tilde{u}_{R3}(t)\|_{L^{12}} \Bigr) \\
&\quad \leq C\|\rho_0\|_{C^1}\mathbf{C}^3(u)\left(T^{\frac{3}{8}}\|u\|_{X_T}^\frac{1}{2}+T^{\frac{1}{4}}\|\rho_0-1\|_{L^\infty}\right).
\end{aligned}
\end{equation}
\noindent \textbf{(ii) Estimates for time differences.}\smallskip

Furthermore, if we denote $a = t-s < \frac{s}{4}$, we use \eqref{S4eq33} and have the decomposition
\begin{align*}
        &\delta_a^tu_{R3,x_0}(t,s,x)=-\int_s^t\int_{\R^3}\partial_\tau K_{x_0}(t-\tau,x-y)\left(\rho_0(x_0)-\rho_0(y)\right)u(\tau,y)dyd\tau\\
    &\quad-\int_0^s\int_{\mathbb{R}^3}\delta_a^t\partial_\tau K_{x_0}(s-\tau,x-y)\left(\rho_0(x_0)-\rho_0(y)\right)\left(u(\tau,y)-u(s,y)\right)dyd\tau\\
    &\quad-\int_{\mathbb{R}^3}K_{x_0}(t-s,x-y)\left(\rho_0(x_0)-\rho_0(y)\right)u(s,y)dy+\left(\rho_0(x_0)-\rho_0(x)\right)u(s,x)\\
    &\quad+\int_{\R^3}\delta_a^tK_{x_0}(s,x-y)\left(\rho_0(x_0)-\rho_0(y)\right)u(s,y)dy+\left(\rho_0(x_0)-\rho_0(x)\right)\delta_a^tu(s,x)\\
    &\quad-\int_{\R^3}\delta_a^tK_{x_0}(s,x-y)\left(\rho_0(x_0)-\rho_0(y)\right)u_0(y)dy\\
    &:=\sum_{i=1}^7Du_{R3,x_0}^i,
\end{align*}
and we will also denote $D\tilde{u}_{R3}^i(t,s,x)=(Du_{R3,x_0}^i)(t,s,x)|_{x_0=x}$. Easy to see that $D\tilde{u}_{R3}^4(t,s,x)=D\tilde{u}_{R3}^6(t,s,x)=0$. \\
Observing  that 
\begin{equation*}
    \begin{aligned}
        \left|\partial_t K_{x_0}(t,y) \right|&=\left|\Bigl(\frac{\rho_0(x_0)}{4\pi t}\Bigr)^{\frac{3}{2}}\exp\left(-\frac{\rho_0(x_0)|y|^2}{4t}\right)\left(-\frac{3}{2}t^{-1}+\frac{\rho_0(x_0)|y|^2}{4t^2}\right)\right|\\
        &\lesssim \frac{C_2^{\frac{3}{2}}}{t^{\frac{5}{2}}}\exp\left(-\frac{C_1|y|^2}{8t}\right),
    \end{aligned}
\end{equation*}
and
\begin{equation*}
    \delta_a^t K_{x_0}(s,y)=\int_s^t\partial_\tau K_{x_0}(\tau,y)d\tau,
\end{equation*}
we find
\begin{equation}\label{S4eq9}
\begin{split}
    \| \sup_{x_0} |\delta_a^t K_{x_0}(s,y)| |y| \|_{L^p_y} \lesssim s^{\frac{1}{2}-\frac{3}{2}(1-\frac{1}{p})}\min\{1, a s^{-1} \}.
\end{split}
\end{equation}
Similarly, observing that 
\begin{equation*}
    \begin{aligned}
        |\delta_a^t\partial_sK_{x_0}(s,y)|=\left|\int_s^t \partial_\tau^2 K_{x_0}(\tau,y)d\tau\right|\lesssim \frac{C^{\frac{3}{2}}}{s^{-\frac{7}{2}}}\exp\Bigl(-\frac{c|y|^2}{8s}\Bigr)
    \end{aligned}
\end{equation*}
for any $x_0$. We get, by a similar derivation of \eqref{S4eq9}, that
\begin{equation}\label{S4eq10}
\begin{split}
&\| \sup_{x_0} |\partial_s\delta_a^t K_{x_0}(s,y)| |y| \|_{L^2_y} \lesssim s^{-1}\min\bigl\{ s^{-\frac{1}{4}}, a^{\frac{3}{4}} s^{-1} \bigr\},\\
& \| \sup_{x_0} |\partial_s\delta_a^t K_{x_0}(s,y)| |y| \|_{L^{12/7}_y} \lesssim s^{-1}\min\bigl\{ s^{-\frac{1}{8}}, a^{\frac{7}{8}} s^{-1} \bigr\}.
\end{split}
\end{equation}
So by \eqref{S4eq10} we have
\begin{align*}
        \|D\tilde{u}_{R3}^2(t,s)\|_{L^\infty}&\lesssim \int_s^t(s-\tau)^{-\frac{1}{2}}\min\{1,a(s-\tau)^{-1}\}\min\{1,(s-\tau)^{\varkappa_1}\tau^{-\varkappa_1}\}\tau^{-\frac{1}{2}}\|u\|_{X_T}\\
        &\lesssim s^{-\varkappa_1}a^{\varkappa_1}\|u\|_{X_T},\\
        \|D\tilde{u}_{R3}^2(t,s)\|_{L^{12}}&\lesssim \int_s^t(s-\tau)^{-\frac{5}{12}}\min\{1,a(s-\tau)^{-1}\}\min\{1,(s-\tau)^{\varkappa_2}\tau^{-\varkappa_2}\}\tau^{-\frac{5}{12}}\|u\|_{X_T}\\
        &\lesssim s^{-\varkappa_2}a^{\varkappa_2}\|u\|_{X_T}.
    \end{align*}
Note that
\begin{equation*}
    D\tilde{ u}_{R3}^3=\int_s^t\partial_\tau K_{x_0}(t-\tau,x-y)|_{x_0=x}\left(\rho_0(x)-\rho_0(y) \right)u(s,y)dyd\tau,
\end{equation*}
we have
\begin{equation*}
    D\tilde{u}_{R3}^1+D\tilde{u}_{R3}^3=-\int_s^t\int_{\R^3}\partial_\tau K_{x_0}(t-\tau,x-y)|_{x_0=x}\left(\rho_0(x)-\rho_0(y)\right)\left(u(\tau,y)-u(s,y)\right)dyd\tau,
\end{equation*}
so
\begin{align*}
        \|D\tilde{u}_{R3}^1(t,s)+D\tilde{u}_{R3}^3(t,s)\|_{L^\infty}&\lesssim \|\rho_0\|_{C^1}\int_s^t(t-\tau)^{-\frac{1}{2}}(s-\tau)^{\varkappa_1}s^{-\frac{1}{2}-\varkappa_1}d\tau\|u\|_{X_T}\\
        &\lesssim s^{-\varkappa_1}a^{\varkappa_1}\|u\|_{X_T}\|\rho_0\|_{C^1},\\
        \|D\tilde{u}_{R3}^1(t,s)+D\tilde{u}_{R3}^3(t,s)\|_{L^{12}}&\lesssim \|\rho_0\|_{C^1}\int_s^t(t-\tau)^{-\frac{1}{2}}(s-\tau)^{\varkappa_2}s^{-\frac{5}{12}-\varkappa_2}d\tau\|u\|_{X_T}\\
        &\lesssim s^{\frac{1}{12}-\varkappa_2}a^{\varkappa_2}\|u\|_{X_T}\|\rho_0\|_{C^1}.
    \end{align*}
Finally, $Du_{R3,x_0}^5$ and $Du_{R3,x_0}^7$ have the following cancellation property,
\begin{align*}
        &D\tilde{u}_{R3}^5+D\tilde{u}_{R3}^7=\int_{\R^3}\nabla_y\big(\delta_a^tK_{x_0}(s,x-y)\big)\big|_{x_0=x}\left(\rho_0(x)-\rho_0(y)\right)\int_0^s\left(\nabla u-\rho u\otimes u\right)(\tau)d\tau\\
        &\qquad+\int_{\R^3}\big(\delta_a^tK_{x_0}(s,x-y)\big)\big|_{x_0=x}\left(\rho_0(x)-\rho_0(y)\right)\int_0^s\nabla P(\tau)d\tau\\
        &\qquad-\int_{\mathbb{R}^3}\big(\delta_a^tK_{x_0}(s,x-y)\left(\rho_0(x)-\rho_0(y) \right)\big)\big|_{x_0=x}(\rho(s,y)-1)u(s,y)dy\\
        &\qquad+\int_{\mathbb{R}^3}\big(\delta_a^tK_{x_0}(s,x-y)\left(\rho_0(x)-\rho_0(y) \right)\big)\big|_{x_0=x}(\rho_0(y)-1)u_0(y)dy.
    \end{align*}
So by \eqref{esthtk} and \eqref{S4eq67}, via s similar procedure as \eqref{S4eq32} and \eqref{S4eq66}, we have
\begin{equation*}
    \begin{aligned}
        &\sup_{s<t<T}s^{\frac{1}{2}+\varkappa_1}\frac{\|D\tilde{u}_{R3}^5(t,x)+D\tilde{u}_{R3}^7(t,s)\|_{L^\infty}}{(t-s)^{\varkappa_1}}+\|D\tilde{u}_{R3}^5(t,x)+\sup_{s<t<T}s^{\frac{5}{12}+\varkappa_2}\frac{D\tilde{u}_{R3}^7(t,s)\|_{L^{12}}}{(t-s)^{\varkappa_2}}\\
        &\quad\lesssim \|\rho_0\|_{C^1}^2\mathbf{C}^3(u)\left(T^{\frac{3}{8}}\|u\|_{X_T}^{\frac{1}{2}}+T^{\frac{1}{4}}\|\rho_0-1\|_{L^\infty}\right).
    \end{aligned}
\end{equation*}

By summarizing the above estimates, we arrive at
\begin{equation}\label{uR3diff}
\begin{aligned}
&\sup_{s < t \leq T} s^{\frac{1}{2} + \varkappa_1} \frac{\| \tilde{u}_{R3}(t) - \tilde{u}_{R3}(s) \|_{L^\infty}}{(t-s)^{\varkappa_1}}
+ \sup_{s < t \leq T} s^{\frac{5}{12} + \varkappa_2} \frac{\| \tilde{u}_{R3}(t) - \tilde{u}_{R3}(s) \|_{L^{12}}}{(t-s)^{\varkappa_2}} \\
&\qquad \lesssim \|\rho_0\|_{C^1}^2\mathbf{C}^3(u)\left(T^{\frac{3}{8}}\|u\|_{X_T}^{\frac{1}{2}}+T^{\frac{1}{4}}\|\rho_0-1\|_{L^\infty}\right)
\end{aligned}
\end{equation}

\noindent \textbf{(iii) Estimates for the $\mathrm{BMO}^{-1}$ norm.}\smallskip

Finally let us turn to the $\mathrm{BMO}^{-1}$ type estimates. For any $x_0 \in \mathbb{R}^3$, by virtue of \eqref{S4eq0}, we write
\begin{align*}
u_{R3,x_0} =& \int_0^t \int_{\R^3} \partial_\tau K_{x_0}(t-\tau, x-y) (\rho_0(x_0)-\rho_0(y)) u(\tau,y) \, dy \, d\tau \\
&+ (\rho_0(x_0)-\rho_0(x)) u(t,x) - \int_{\R^3} K_{x_0}(t, x-y) (\rho_0(x_0)-\rho_0(y)) u_0(y) \, dy \\
=& (\rho_0(x_0)-\rho_0(x)) \Bigl( \int_0^t \int_{\R^3} \partial_\tau K_{x_0}(t-\tau, x-y) u(\tau,y) \, dy \, d\tau + u(x) - K_{x_0}(t) \ast u_0(x) \Bigr) \\
&+ \left(\int_0^t \int_{\R^3} \partial_\tau K_{x_0}(t-\tau, x-y) (\rho_0(x)-\rho_0(y)) u(\tau,y) \, dy \, d\tau \right.\\
&\left. - \int_{\R^3} K_{x_0}(t, x-y) (\rho_0(x)-\rho_0(y)) u_0(y) \, dy\right) \\
%= &%(\rho_0(x_0)-\rho_0(x)) u_{R3,x_0}^1 + u_{R3,x_0}^2 
=:& (\rho_0(x_0)-\rho_0(x)) \sum_{i=1}^3 u_{R3,x_0}^{1i} + u_{R3,x_0}^2.
\end{align*}
Since we still can write
\begin{equation*}
    \begin{aligned}
        &\int_{\R^3} K_{x_0}(t, x-y) (\rho_0(x)-\rho_0(y)) u_0(y) \, dy\\
        &=\int_{\R^3}K_{x_0}(t,x-y)(\rho_0(x_0)-\rho_0(y))\left((\rho_0u_0(y)-(\rho u)(t,y))+(\rho u)(t,y)\right)dy\\
        &\quad+\int_{\R^3}K_{x_0}(t,x-y)(\rho_0(x_0)-\rho_0(y))\left((1-\rho_0)u_0(y)\right)dy\\
        &=\int_{\R^3}\nabla_y\left(K_{x_0}(t,x-y)(\rho_0(x_0)-\rho_0(y))\right)\cdot\int_0^t\left(\nabla u-\rho u\otimes u\right)(\tau,y)d\tau dy\\
        &\quad+\int_{\R^3}\left(K_{x_0}(t,x-y)(\rho_0(x_0)-\rho_0(y))\right)\cdot\int_0^t\nabla P(\tau,y)d\tau dy\\
        &\quad+\int_{\R^3}K_{x_0}(t,x-y)(\rho_0(x_0)-\rho_0(y))\left((\rho u)(t,y)+(1-\rho_0)u_0(y)\right)dy,
    \end{aligned}
\end{equation*}
we can get by a similar derivation of  \eqref{uR3spa} that
\begin{equation*}
\sup_{t \leq T} t^{\frac{5}{12}} \| u_{R3,x_0}^2(t) \|_{L^\infty} \lesssim  \mathbf{C}^2(u)\|\rho_0\|_{C^1}^2\left(T^{\frac{3}{8}} \|u\|_{X_T}^{\frac{1}{2}}+T^{\frac{1}{4}}\|\rho_0-1\|_{L^\infty}\right).
\end{equation*}
It remains to handle the  estimate of the first term. 
%In particular, for any $R < b$ and $z \in \mathbb{R}^3$, we want to estimate
%\begin{equation*}R^{-\frac{3}{2}} \left( \int_0^{R^2} \int_{B(z,R)} | (\rho_0(z)-\rho_0(x)) u_{R3,z}^1(t,x) |^2 \, dx \, dt \right)^{\frac{1}{2}}.\end{equation*}
Note that for any $z \in \mathbb{R}^3$ and $x \in B(z,R)$, we have
\begin{equation*}
|\rho_0(z) - \rho_0(x)| \lesssim R \|\nabla \rho_0\|_{L^\infty}.
\end{equation*}
By the definitions of $u_{R3,z}^{12}$, $u_{R3,z}^{13}$ and the $X_T$ norm, we have
\begin{equation*}
R^{-\frac{3}{2}} \Bigl( \int_0^{R^2} \int_{B(z,R)} | (\rho_0(z)-\rho_0(x)) (u_{R3,z}^{12} + u_{R3,z}^{13})(t,x) |^2 \, dx \, dt \Bigr)^{\frac{1}{2}} \lesssim R^{\frac{1}{2}} \|\rho_0\|_{C^1} \mathbf{C}(u).
\end{equation*}
By Lemma \ref{CVPDE} and Lemma \ref{eng1}, \ref{eng3}, for $u_{R3,z}^{11}$, we have
\begin{equation*}
    \begin{aligned}
       & \sup_{t\leq T}t^{\frac{1}{2}}\|u_{R3,z}^{11}(t)\|_{L^\infty}\lesssim \|u\|_{X_T},\\
       & \sup_{t\leq T}\|u_{R3,z}^{11}(t)\|_{L^2}\lesssim \sup_{t\leq T}\|u(t)\|_{L^2}+\sup_{s<t\leq T}s^{\eta}\frac{\|u(t)-u(s)\|_{L^2}}{(t-s)^\eta}\lesssim \mathbf{C}^3(u) ,
    \end{aligned}
\end{equation*}
for some $\eta\in(0,1)$. Then for $x\in B(z,R)$, we have
\begin{equation*}
\begin{aligned}
&R^{-\frac{3}{2}} \Bigl( \int_0^{R^2} \int_{B(z,R)} | (\rho_0(z)-\rho_0(x)) u_{R3,z}^{11}(t,x) |^2 \, dx \, dt \Bigr)^{\frac{1}{2}} \\
&\quad \lesssim R^{\frac{1}{4}} \|\rho_0\|_{C^1} \Bigl( \int_0^{R^2} \| u_{R3,z}^{11}(t) \|_{L^4}^2 \, dt \Bigr)^{\frac{1}{2}} \\
&\quad \lesssim R^{\frac{3}{4}} \|\rho_0\|_{C^1} \mathbf{C}^3(u)\|u\|_{X_T}^{\frac{1}{2}}.
\end{aligned}
\end{equation*}
Since $R < b$, for $T \sim b^2$ we obtain the following estimate for the $\mathrm{BMO}^{-1}$ norm:
\begin{equation*}
\sup_{R \leq b} \sup_{z \in \mathbb{R}^3} R^{-\frac{3}{2}} \Bigl( \int_0^{R^2} \int_{B(z,R)} | u_{R3,z}(t,x) |^2 \, dt \, dx \Bigr)^{\frac{1}{2}} \lesssim CT^{\frac{3}{8}} \|\rho_0\|_{C^1} \mathbf{C}^3(u)\|u\|_{X_T}^{\frac{1}{2}}.
\end{equation*}
Combining this with \eqref{uR3spa} and \eqref{uR3diff} completes the proof.
\end{proof}

\section{Local existence}\label{sec6}

We first present the following a priori estimates:

\begin{prop}\label{S5prop1}
For any $(\rho_0, u_0)$ satisfying \eqref{bdrho0},  $\|\rho_0\|_{C^1} + \|u_0\|_{L^2} < \infty$, there exists $\varepsilon_0>0$ such that if $\|u_0\|_{\mathrm{BMO}_b^{-1}} < \varepsilon_0$ for some $b < 1$, and $(\rho, u)$ is a smooth enough solution to \eqref{eqinns} with initial data $(\rho_0, u_0)$, then there exists $T \sim b^2$ such that $u \in X_T,$  $u$ satisfies \eqref{estL2u}, and
\begin{equation} \label{S5eq1}
0 < C_1 < \rho(t,x) < C_2\andf \|u\|_{X_T} \leq 8C\epsilon_0^{\frac{5}{6}}\|u_0\|_{L^2}^{\frac{1}{6}}\|\rho_0\|_{C^1}.
\end{equation}
\end{prop}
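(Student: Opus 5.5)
This is a continuity (bootstrap) argument in $T$, assembled from the estimates of Sections \ref{sec2}--\ref{sec5}. The density bounds come for free: by \eqref{transol}, $\rho(t,x)=\rho_0(X_t^{-1}(x))$, so $C_1<\rho<C_2$ for all times. The energy bound \eqref{estL2u} is supplied by Lemmas \ref{eng1}--\ref{eng3}, using only \eqref{bdrho0} and $u_0\in L^2$.

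Set $A:=8C\varepsilon_0^{5/6}\|u_0\|_{L^2}^{1/6}\|\rho_0\|_{C^1}$ and define
\[
T^*:=\sup\{T'\le T:\ \|u\|_{X_{T'}}\le A\}.
\]
For a smooth solution, $t\mapsto\|u\|_{X_t}$ is continuous. As $t\to0$ it is bounded by the contribution of the linear part, which by Proposition \ref{uL} is $\lesssim \varepsilon_0+\varepsilon_0^{5/6}\|u_0\|_{L^2}^{1/6}$. Hence $T^*>0$, provided the constant $C$ is chosen at least as large as the constant of Proposition \ref{uL}, so that this bound is below $A/2$. Note that $\|u\|_T\lesssim\|u\|_{X_T}$, since the $\dot C^1$ piece follows from interpolating the $L^\infty$ and $\dot C^{1+\kappa}$ parts. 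Therefore, if $\varepsilon_0$ is small, $A\le\varepsilon_1$ and Propositions \ref{regdiff} and \ref{Holtrho} apply on $[0,T^*]$.

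On $[0,T^*]$ I would use the decomposition \eqref{deftilu} together with \eqref{estnorm} and the $V_T^b$ splitting of Subsection \ref{outline}, and sum the bounds of Propositions \ref{uL}, \ref{uN}, \ref{uR1}, \ref{propuR2} and \ref{uR3}. This gives
\[
\|u\|_{X_{T^*}}\le C_0\Bigl(\varepsilon_0+\varepsilon_0^{5/6}\|u_0\|_{L^2}^{1/6}\Bigr)+C_0\|\rho_0\|_{C^1}A^2+C_0 T^{1/4}\|\rho_0\|_{C^1}\mathbf C^3(u),
\]
where $\mathbf C(u)\le A+\|u_0\|_{L^2}+1$ and the last term collects all the remainder contributions, each of which carries a positive power of $T$ ($T^{1/4}$, $T^{3/8}$, and so on). Choosing $\varepsilon_0$ small makes $C_0\|\rho_0\|_{C^1}A\le\frac14$, so the quadratic term is at most $A/4$. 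Choosing $T\sim b^2$ small, which requires $b$ small depending on $\|\rho_0\|_{C^1}$, $\|u_0\|_{L^2}$ and $\varepsilon_0$, makes the remainder term at most $A/4$. The linear term is at most $A/4$ after absorbing $\varepsilon_0\lesssim\varepsilon_0^{5/6}\|u_0\|_{L^2}^{1/6}$ into the constant; if $\|u_0\|_{L^2}$ is small relative to $\varepsilon_0$, I would read the bound with $\max\{\cdot\}$, or note that $\|u_0\|_{\mathrm{BMO}^{-1}_b}$ is small as well. Altogether $\|u\|_{X_{T^*}}\le \frac34 A<A$, so by continuity $T^*=T$, which gives \eqref{S5eq1}.

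The main obstacle is the remainder step. Its smallness must come from $T$ and not from $\varepsilon_0$, because $\mathbf C(u)$ contains $\|u_0\|_{L^2}$, which is not small. The term $u_{R1}$ only yields $T^{1/4}\|\rho_0\|_{C^1}\mathbf C^2(u)$, so $T$ has to be chosen after $\varepsilon_0$; this is consistent with the dependence $T(\|\rho_0\|_{C^1},u_0,\varepsilon_0,b)$ in Theorem \ref{mainthm}. Care is also needed with the order of constants: $\varepsilon_1$ comes from Proposition \ref{regdiff} with a fixed $\gamma$ (say $\gamma=\frac{9}{14}$, as used in Proposition \ref{propuR2}), and it fixes the constants $C$ used afterwards.
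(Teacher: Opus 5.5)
Your proposal is correct and follows essentially the same route as the paper. Both run a continuity argument on $\|u\|_{X_T}$, built on the summed estimate \eqref{S5eq6} from Propositions \ref{uL}, \ref{uN}, \ref{uR1}, \ref{propuR2}, \ref{uR3}. Both fix $\varepsilon_0$ first and then shrink $b$, and hence $T\sim b^2$, so that the $T^{1/4}\mathbf{C}^3(u)$ remainder is small; this is legitimate because $\|u_0\|_{\mathrm{BMO}^{-1}_b}$ is nonincreasing as $b$ decreases.

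Two small points should be tidied up.

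\textbf{The linear term.} Instead of the $\max$ reading or the remark that $\|u_0\|_{\mathrm{BMO}^{-1}_b}$ is small, impose $\varepsilon_0\le\|u_0\|_{L^2}$ in the choice of $\varepsilon_0$, as the paper does. This gives $\varepsilon_0\le\varepsilon_0^{5/6}\|u_0\|_{L^2}^{1/6}$ and settles it.

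\textbf{The energy bound.} \eqref{estL2u} is not available from \eqref{bdrho0} and $u_0\in L^2$ alone. Lemmas \ref{eng2}--\ref{eng3} require \eqref{engasp}, and that comes from the bootstrap bound $\|u\|_{X_T}\le A$. So \eqref{estL2u} must be derived inside the bootstrap interval.
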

\begin{proof} Throughout all of our proof, the constant will depend on $C_1$ and $C_2$, and for simplicity we omit them. In view of \eqref{decop}, we get, by applying  \eqref{estnorm} and Propositions \ref{uL}, \ref{uN}, \ref{uR1}, \ref{propuR2} and \ref{uR3},  that for any $T\leq 1$,
\begin{align}
\|u\|_{X_T}\leq C\|\rho_0\|_{C^1}\Bigl(&\|u_0\|_{\mathrm{BMO}_b^{-1}} + \|u_0\|_{\mathrm{BMO}_b^{-1}}^{\frac{5}{6}} \|u_0\|_{L^2}^{\frac{1}{6}}+\|u\|_{X_T}^2\label{S5eq6}\\
&+T^{\f1{4}}\bigl(1+\|u_0\|_{L^2}+\|u\|_{X_T}\bigr)^3\Bigr).\nonumber
\end{align}
We define
\beq \label{S5eq1}
\begin{split}
\ep_0=\min\left\{(100C(1+\|\rho_0\|_{C^1}))^{-1}(1+\|u_0\|_{L^2})^{-1},\|u_0\|_{L^2}\right\}.
\end{split}
\eeq
Since $u_0\in VMO^{-1}$, there exists $b_*$ such that $\|u_0\|_{BMO_{b_*}^{-1}}\leq \ep_0$. Take
\begin{align*}
    &b=\min\left\{(\frac{1}{10}(2+\|u_0\|_{L^2})^{-3}\ep_0)^{10},\frac{b_*}{2}\right\},\\
&T^*=4C_2b^2.
\end{align*}
Then we define 
\begin{equation*}
    T_0:=\sup_{t\leq T^*}\{\|u\|_{X_T}\leq 8C\ep_0^{\frac{5}{6}}\|u_0\|_{L^2}^{\frac{1}{6}}\|\rho_0\|_{C^1}\}
\end{equation*}
We want to prove that $T_0=T^*$. If $T_0<T^*$, then by \eqref{S5eq6} and \eqref{S5eq1}, we have
\begin{equation*}
    \|u\|_{X_{T_0}}\leq 2C\|\rho_0\|_{C^1}\ep_0^{\frac{5}{6}}\|u_0\|_{L^2}^{\frac{1}{6}}+64C^2\|\rho_0\|_{C^1}^2\ep_0^{\frac{5}{3}}\|u_0\|_{L^2}^{\frac{1}{3}}+\frac{1}{10}C\|\rho_0\|_{C^1}\ep_0\leq 4C\ep_0^{\frac{5}{6}}\|u_0\|_{L^2}^{\frac{1}{6}}\|\rho_0\|_{C^1}.
\end{equation*}
This implies that $T_0=T^*$, so for any $T\leq T^*$, if $\|u_0\|_{BMO^{-1}_b}\leq \ep_0$ with $\ep_0,T^*,b$ above, there holds
\begin{align*}\label{S5eq2}
\|u\|_{X_T}\leq 8C\|\rho_0\|_{C^1}\ep_0^{\frac{5}{6}}\|u_0\|_{L^2}^{\frac{1}{6}},
\end{align*}
from which and \eqref{estL2u}, we conclude the proof of Proposition \ref{S5prop1}.
\end{proof}

We are now in a position to complete the proof of Theorem \ref{mainthm}.

\begin{proof}
 First, we take smooth approximating sequences $(\rho_{0,\varepsilon}, u_{0,\varepsilon})$ such that as $\varepsilon \to 0$,
\begin{align*}
&u_{0,\varepsilon} \to u_0 \quad \text{in } L^2 \cap \mathrm{VMO}^{-1}, \qquad
\rho_{0,\varepsilon} \rightharpoonup \rho_0 \quad \text{in } L^\infty \text{ weak-}* \andf \\&\rho_{0,\varepsilon} \to \rho_0 \quad \text{in } L^p_{\rm{loc}}(\R^3)\quad \mbox{for any}\ p<\infty.
\end{align*}
Then, by the classical well-posedness theory for the inhomogeneous Navier–Stokes equations, there exists a unique solution $(\rho^\varepsilon, u^\varepsilon)$ on $[0, T^\varepsilon]$. Furthermore, we deduce from Proposition \ref{S5prop1} that $T^\varepsilon$ has a uniform lower bound, which we denote by $T^\star$, so that \eqref{S5eq1} holds on $[0,T^\star]$ for the approximate solutions $(\rho^\varepsilon, u^\varepsilon).$

By the a priori estimates \eqref{estL2u} and \eqref{S5eq1} for $(\rho^\varepsilon, u^\varepsilon)$, there exists a subsequence (still denoted by $(\rho^\varepsilon, u^\varepsilon)$) such that $(\rho^\varepsilon, u^\varepsilon)$ converges to $(\rho, u)$ in the weak-$*$ sense on $[0,T^\star]$. 
Then we deduce from the proof of Theorem 2.1 of \cite{PL96} that thus obtained $(\rho,u)$ is indeed a weak solution of the system 
\eqref{eqinns}. Furthermore, the solution satisfy  \eqref{estL2u} and \eqref{S1eqw}.  This completes the proof of Theorem \ref{mainthm}.
\end{proof}

 \section{Global existence}\label{sec7}
For the global existence result, we rely on the following theorem from \cite{HSWZZ}.

\begin{thm}\label{HWZ}
Let $p \in (3, \infty)$. There exists a constant $\varepsilon_1 > 0$ such that if
\begin{equation*}
\|\rho_0 - 1\|_{L^\infty} + \|\mathbb{P}(\rho_0 u_0)\|_{\dot{B}_{p,\infty}^{-1 + \frac{3}{p}}} < \varepsilon_1,
\end{equation*}
then the system \eqref{eqinns} has a global strong solution.
\end{thm}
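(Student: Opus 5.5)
The statement is Theorem \ref{HWZ}, quoted from \cite{HSWZZ}, so in the paper it would simply be cited. If I had to prove it, I would run a perturbative argument around the constant-density Stokes flow. The one structural fact used throughout is that the transport equation preserves $\|\rho(t)-1\|_{L^\infty}=\|\rho_0-1\|_{L^\infty}<\varepsilon_1$, by \eqref{transol}. No regularity of $\rho$ is available, so every term involving $\rho-1$ must be placed against a norm of $u$ (or of $\partial_t u$) that comes from maximal regularity.

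\textbf{Step 1: free part and perturbation equation.} I would argue on smooth approximate data with uniform bounds and pass to the limit at the end, as in the proof of Theorem \ref{mainthm}. Rewrite the momentum equation as
\[
\partial_t u-\Delta u+\nabla P=-(\rho-1)\partial_t u-\rho u\cdot\nabla u .
\]
Set $u_L:=e^{t\Delta}\mathbb{P}(\rho_0u_0)$; this is the natural free evolution because $\rho u$, not $u$, is the quantity transported at $t=0$. By the heat characterization of $\dot B^{-1+3/p}_{p,\infty}$,
\[
\sup_t t^{\frac12-\frac{3}{2p}}\|u_L(t)\|_{L^p}\lesssim\varepsilon_1 ,
\]
and, since $p>3$, the embedding into $\dot B^{-1}_{\infty,\infty}$ also gives $\sup_t t^{\frac12}\|u_L(t)\|_{L^\infty}\lesssim\varepsilon_1$. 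Write $u=u_L+v$, so that $v$ solves the forced Stokes system with zero initial datum. The small mismatch $u_0-\mathbb{P}(\rho_0u_0)$ is absorbed into the $(\rho-1)$ forcing via $\rho u=\rho_0u_0+\int_0^t(\dots)$.

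\textbf{Step 2: closed estimate for $v$.} I would use $L^r_t(L^q)$ maximal regularity of the Stokes operator in time-weighted (Lorentz-in-time) spaces adapted to the scaling of $\dot B^{-1+3/p}_{p,\infty}$. The norm would be
\[
\|v\|_{\mathcal X}:=\bigl\|t^{1-\frac{3}{2p}}\bigl(\partial_t v,\nabla^2 v,\nabla P\bigr)\bigr\|_{L^{\infty}_t(L^p)}
\]
together with the companion weighted $\nabla v$ norm. The forcing term $(\rho-1)\partial_t u$ is bounded by $\|\rho_0-1\|_{L^\infty}\bigl(\|v\|_{\mathcal X}+\varepsilon_1\bigr)$, where the free part of $\partial_t u$ is controlled directly from heat kernel bounds. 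The convection term $\rho u\cdot\nabla u$ is quadratic and is bounded by $(\varepsilon_1+\|v\|_{\mathcal X})^2$ using Hölder and the $t^{1/2}\|u\|_{L^\infty}$ bound. This gives
\[
\|v\|_{\mathcal X}\le C\varepsilon_1+C\varepsilon_1\|v\|_{\mathcal X}+C\|v\|_{\mathcal X}^2 ,
\]
which closes by continuity for $\varepsilon_1$ small.

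\textbf{Step 3: regularity and passage to the limit.} Alongside Step 2, I would derive weighted energy bounds for $\sqrt\rho\,u_t$ and $\nabla u$ (in the spirit of Lemmas \ref{eng1}--\ref{eng3}) and interpolate them into $\int_0^\infty\|\nabla u\|_{L^\infty}dt<\infty$ away from $t=0$. Combined with $t\|\nabla u\|_{L^\infty}\lesssim\varepsilon_1$ near $t=0$, this gives a well-defined Lagrangian flow. These uniform bounds persist globally because the estimate in Step 2 is scale invariant. Compactness of $u$ and weak-$*$ convergence of $\rho$, as in \cite{PL96}, then yield a global strong solution.

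\textbf{Main obstacle.} The hard step is the term $(\rho-1)\partial_t u$ in Step 2 with only $\rho\in L^\infty$ and critical, Besov-$\infty$ data. The maximal regularity must hold in an endpoint weighted space in which multiplication by an $L^\infty$ function is bounded, which rules out any frequency-localized norm. I would first try weighted $L^\infty_t(L^p)$ spaces of the form above and, if the endpoint fails, real interpolation in time (Lorentz $L^{r,\infty}_t$ spaces).
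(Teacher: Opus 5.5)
\textbf{There is a genuine gap in Step 2: it cannot close for $p>3$.} (The paper gives no proof here; the theorem is simply quoted from \cite{HSWZZ}, so I am judging your sketch on its own terms.)

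\emph{The weight and the singularity at $t=0$.} The weight in $\mathcal X$ is not scale invariant. For data in $\dot B^{-1+3/p}_{p,\infty}$ one has $\|(\partial_t u_L,\nabla^2u_L)(t)\|_{L^p}\sim t^{-\frac32+\frac{3}{2p}}$, so the critical weight is $t^{\frac32-\frac{3}{2p}}$. With your weight $t^{1-\frac3{2p}}$ even the free part is unbounded, and the scale invariance you invoke in Step 3 is lost. Once the weight is corrected, its exponent $\frac32-\frac3{2p}$ exceeds $1$ exactly because $p>3$, i.e.\ because the data have negative regularity. Then $\|(\rho-1)\partial_t u(s)\|_{L^p}$ behaves like $\varepsilon_1 s^{-\frac32+\frac{3}{2p}}$, which is not integrable at $s=0$. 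So the Duhamel integral of your forcing is not even absolutely convergent in $L^p$.

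\emph{Why maximal regularity cannot rescue it.} The maximal regularity operator $f\mapsto\int_0^t\Delta e^{(t-s)\Delta}\mathbb{P}f(s)\,ds$ is bounded on $L^r(t^{\gamma}dt;L^p)$ only when $\gamma<r-1$. Equivalently, it only works on spaces whose trace space $\dot B^{2-2(1+\gamma)/r}_{p,r}$ has positive regularity. Your forcing would require $\gamma\ge(\frac32-\frac3{2p})r-1>r-1$. Real interpolation to $L^{r,\infty}_t$ does not change this, so neither your $L^\infty_t$ endpoint nor your Lorentz fallback can close at the level of $(\partial_t v,\nabla^2 v)$.

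\emph{The initial mismatch.} The same obstruction appears in $u_0-\mathbb{P}(\rho_0u_0)=-\mathbb{P}\bigl((\rho_0-1)u_0\bigr)$. It is not small in any critical norm: with $\rho_0$ merely bounded, $u_0$ itself is not controlled in $\dot B^{-1+3/p}_{p,\infty}$, since multiplication by $L^\infty$ functions is unbounded on negative-regularity spaces. That is precisely why the hypothesis is placed on $\mathbb{P}(\rho_0u_0)$. Absorbing the mismatch via $\rho u=\rho_0u_0+\int_0^t(\cdots)$ amounts to integrating by parts in time. That replaces $(\rho-1)\partial_tu$ by $\partial_t\mathbb{P}\bigl((\rho-1)u\bigr)$, which is incompatible with any norm on $\partial_t v$, because $\rho-1$ is only bounded and has no usable time regularity.

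\textbf{The missing idea} is to keep the time derivative on the momentum and to estimate $u$ itself rather than $\partial_t u$. Setting $w:=\mathbb{P}(\rho u)$ and using $u=\mathbb{P}u$, one gets
\[
\partial_t w-\Delta w=-\Delta\mathbb{P}\bigl((\rho-1)u\bigr)-\mathbb{P}\operatorname{div}(\rho u\otimes u),\qquad w(0)=\mathbb{P}(\rho_0u_0),\qquad u=w-\mathbb{P}\bigl((\rho-1)u\bigr).
\]
Now work in $L^{r,\infty}(\mathbb{R}_+;L^p)$ with $\frac2r=1-\frac3p$, so that $2<r<\infty$.
\begin{itemize}
\item The free part is bounded by $\|\mathbb{P}(\rho_0u_0)\|_{\dot B^{-1+3/p}_{p,\infty}}$.
\item The operator $f\mapsto\int_0^t\Delta e^{(t-s)\Delta}\mathbb{P}f(s)\,ds$ is bounded on this space by real interpolation of the $L^q_t(L^p)$ maximal regularity, and multiplication by $\rho-1$ costs only $\|\rho_0-1\|_{L^\infty}$.
\item The bilinear term maps $L^{r/2,\infty}_t(L^{p/2})$ into $L^{r,\infty}_t(L^p)$, using the kernel bound $(t-s)^{-\frac12-\frac{3}{2p}}$ and Hardy--Littlewood--Sobolev in time.
\end{itemize}
This gives $\|u\|\le C\varepsilon_1+C\varepsilon_1\|u\|+C\|u\|^2$ in a genuinely critical norm. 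Regularity and compactness then have to be built on top of this bound, for instance by time-weighted energy estimates in which $\rho\partial_t u$ stays on the left-hand side. In that way no maximal regularity for $\partial_t u$ in $L^p$ is ever needed.
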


Now we are ready to prove Theorem \ref{thm.global-existence}.

\begin{proof}[Proof of Theorem \ref{thm.global-existence}]
By applying Proposition \ref{uL}, \ref{Prop4.2}–\ref{uR3}, we can prove that there exists a constant $C_0<1$, such that for any $T,b < C_0$, there holds
\begin{align}
\|u\|_{X_T}\leq C\|\rho_0\|_{C^2}\Bigl(&\|u_0\|_{\mathrm{BMO}^{-1}} + \|u_0\|_{\mathrm{BMO}^{-1}}^{\frac{5}{6}} \|u_0\|_{L^2}^{\frac{1}{6}}+\|u\|_{X_T}^2\label{S6eq2}\\
&+C\|\rho_0\|_{C^2} ^2\mathbf{C}^3(u)\left(T^{\frac{1}{4}}\left(\|u\|_{X_T}+\|\rho_0-1\|_{L^\infty}\right)+T^{\frac{3}{8}}\|u\|_{X_T}^{\f12}+T^{\frac{5}{8}}\right).\nonumber
\end{align}
We define 
\begin{equation*}
    \begin{aligned}
        \ep_0=\min\left\{(100C(1+\|\rho_0\|_{C^2}))^{-1}(1+\|u_0\|_{L^2})^{-1},\|u_0\|_{L^2}\right\}.
    \end{aligned}
\end{equation*}
Then, for any $\ep<\ep_0$, define 
\begin{equation*}
    T_\ep:=\ep^{\frac{4}{3}}\|u_0\|_{L^2}^{\frac{2}{9}}\left(10C(2+\|\rho_0\|_{C^2}+\|u_0\|_{L^2})\right)^{-10}.
\end{equation*}
Let 
\begin{equation*}
    T_\ep^0=\sup\left\{T\leq T_\ep: \|u\|_{X_T}\leq 8C\|\rho_0\|_{C^2}\ep^{\f56}\|u_0\|_{L^2}^{\f16}\right\}.
\end{equation*}
By \eqref{S6eq2} and the definition of $T_\ep$, $\ep_0$, if $\|\rho_0-1\|_{L^\infty}+\|u_0\|_{BMO^{-1}}\leq \ep<\ep_0$, $T_\ep^0<T_\ep$, then
\begin{equation*}
    \begin{aligned}
        \|u\|_{X_{T_\ep^0}}&\leq 2C\|\rho_0\|_{C^2}\ep^{\f56}\|u_0\|_{L^2}^{\f16}+\f1{100}\|u\|_{X_{T_\ep^0}}+3C\|\rho_0\|_{C^2}\ep^{\f56}\|u_0\|_{L^2}^{\f16}\\
        &\leq 7C\|\rho_0\|_{C^2}\ep^{\f56}\|u_0\|_{L^2}^{\f16}.
    \end{aligned}
\end{equation*}
This implies $T_\ep^0=T_\ep$. Then by definition of $\|\cdot\|_{X_T}$, we have
\begin{equation}\label{S6eq1}
    \|u(\frac{T_\ep}{2})\|_{L^\infty}\leq (\frac{T_\ep}{2})^{-\frac{1}{2}} \|u\|_{X_{T_\ep}}\lesssim \ep^{\frac{5}{6}-\frac{2}{3}}\|u_0\|_{L^2}^{\f16-\f19}\|\rho_0\|_{C^2},
\end{equation}
which can be arbitrarily small if we take $\ep$ small.

This, together with the fact that $\|\rho(t) - 1\|_{L^\infty} = \|\rho_0 - 1\|_{L^\infty}$ and $\|u(t)\|_{L^2}\leq \|u_0\|_{L^2},$ implies that for any $3 < p < \infty,$
\begin{equation}\label{smallness}
\left\| \mathbb{P}\!\left( \rho\!\left( \frac{T^\ast}{2} \right) u\!\left( \frac{T^\ast}{2} \right) \right) \right\|_{\dot{B}_{p,\infty}^{-1 + \frac{3}{p}}} \leq C \left\| u\!\left( \frac{T^\ast}{2} \right) \right\|_{L^3} \leq C\|u_0\|_{L^2}^{\f23}  \left\| u\!\left( \frac{T^\ast}{2} \right) \right\|_{L^\infty}^{\f13}. \end{equation}
Therefore, we can take $\e_2$ sufficiently small in \eqref{glocond}, such that if $\ep<\ep_2$, then \eqref{S6eq1} and \eqref{smallness} lead to $\left\| \mathbb{P}\!\left( \rho\!\left( \frac{T^\ast}{2} \right) u\!\left( \frac{T^\ast}{2} \right) \right) \right\|_{\dot{B}_{p,\infty}^{-1 + \frac{3}{p}}} \leq \varepsilon_1$ in Theorem \ref{HWZ}, then  the system \eqref{eqinns} with initial data at $T^\ast/2$  generates  a global solution.  This completes the proof of Theorem \ref{thm.global-existence}. \end{proof}
\begin{rmk}\label{rmk6.1}
    The $C^2$ request for $\rho_0$ is used in \eqref{S6eq2}, since if we use the a priori estimate in Section \ref{sec6}, $T^*$ will be related to $\ep_2$, say $\ep_2^{20}$, so it is impossible to keep \eqref{S6eq1} small.
\end{rmk}

    \section{Appendix}\label{append}
  \subsection{Energy estimates}\label{sec3}
In this section, we derive energy estimates for smooth solutions $(\rho, u, P)$ to the inhomogeneous Navier–Stokes system \eqref{eqinns}. For the classical Navier–Stokes equations ($\rho \equiv 1$), it is well known that the equation satisfies the energy inequality $\|u(t)\|_{L^2} \leq \|u_0\|_{L^2}$ for any $t \geq 0$. In the inhomogeneous and non-vacuum case, a similar property holds; see Lemma \ref{eng1}. For the two-dimensional case, the $L^2$ norm is critical, which allows one to obtain global regularity for large initial data (see \cite{HSWZ} and references therein). In three dimensions, however, the $L^2$ norm is supercritical. Nevertheless, we will use the $L^2$ norm to estimate the lower-order remainder terms $u_R$, which enjoy additional regularity, making the loss of regularity from the critical level to $L^2$ manageable. We derive these energy estimates because they help control terms with negative regularity, such as $\|\nabla \Delta^{-1} u\|_{L^\infty} \lesssim \|u\|_{L^2}^{\frac{1}{3}} \|u\|_{L^\infty}^{\frac{2}{3}}$, and because the Leray projection $\mathbb{P}$ is bounded on $L^p$ only for $p \in (1, \infty)$.

Assume that $u_0 \in L^2$ and that
\begin{equation}\label{engasp}
\begin{split}
&\sup_{t \leq T} \left( t^{\frac{1}{2}} \|u(t)\|_{L^\infty} + t^{1+\frac{\eta}{2}} \|u(t)\|_{\dot{C}^{1+\eta}} \right) \leq M \andf\\
& C_1\leq \rho(t,x)\leq C_2\ \ \forall\ (t,x)\in [0,T]\times\R^3,
\end{split}
\end{equation}
for some $M > 1$, $T < +\infty$, and $\eta \in (0,1)$. These quantities arise respectively from \eqref{lowerbd} %Proposition \ref{regdiff}, 
and the definition of the $\|u\|_{X_T^1}$ norm. Below the constant $C$ may depend on $C_1$ and $C_2.$

\begin{lem}[Basic Energy Estimate]\label{eng1}
Let $(\rho, u, P)$ be a smooth enough solution of \eqref{eqinns} with initial data $u_0 \in L^2$. Then
\[
\|u\|_{L_T^\infty L^2} + \|\nabla u\|_{L_T^2 L^2} \leq C \|u_0\|_{L^2}.
\]
\end{lem}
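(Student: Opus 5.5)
The plan is the standard $L^2$ energy identity for the inhomogeneous system. It uses the transport equation to absorb the time derivative of $\rho$, and the bounds $C_1\le\rho\le C_2$ from \eqref{engasp} (equivalently \eqref{lowerbd} together with the same argument for the lower bound) to pass from $\sqrt\rho u$ to $u$.

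First, the mass equation and $\nabla\cdot u=0$ let me rewrite the momentum equation in non-conservative form. Since $\partial_t(\rho u)+\operatorname{div}(\rho u\otimes u)=\rho(\partial_t u+u\cdot\nabla u)+u(\partial_t\rho+\operatorname{div}(\rho u))$ and the last bracket vanishes, we get $\rho(\partial_t u+u\cdot\nabla u)-\Delta u+\nabla P=0$. I take the $L^2$ inner product with $u$, which is legitimate since the solution is assumed smooth enough with enough decay, as for the approximate solutions used in Theorem \ref{mainthm}. The pressure term vanishes after integration by parts because $\nabla\cdot u=0$, and the viscous term gives $\|\nabla u\|_{L^2}^2$. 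For the inertial term, I write
\[
\int\rho(\partial_t u+u\cdot\nabla u)\cdot u\,dx=\frac12\frac{d}{dt}\int\rho|u|^2\,dx-\frac12\int(\partial_t\rho+u\cdot\nabla\rho)|u|^2\,dx,
\]
where the second integral is zero by the transport equation \eqref{eqtrans} (one integrates $\rho u\cdot\nabla\frac{|u|^2}{2}$ by parts using $\nabla\cdot u=0$). This yields the identity
\[
\frac12\frac{d}{dt}\|\sqrt\rho\,u(t)\|_{L^2}^2+\|\nabla u(t)\|_{L^2}^2=0 .
\]

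Integrating in time over $[0,t]$, $t\le T$, gives $\|\sqrt{\rho}u(t)\|_{L^2}^2+2\int_0^t\|\nabla u\|_{L^2}^2\,d\tau=\|\sqrt{\rho_0}u_0\|_{L^2}^2$. By \eqref{engasp}, $\rho(t)\ge C_1$ and $\rho_0\le C_2$. Hence $\|u(t)\|_{L^2}^2\le C_1^{-1}C_2\|u_0\|_{L^2}^2$ and $\|\nabla u\|_{L^2_TL^2}^2\le \frac{C_2}{2}\|u_0\|_{L^2}^2$, which is the claim with $C=C(C_1,C_2)$. Note that the hypothesis involving $M$ is not needed here.

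The only delicate point, which I expect to be the main (though mild) obstacle, is justifying the integrations by parts. This means ensuring $u\in L^2$ with $\nabla u\in L^2$ and enough spatial decay that boundary terms at infinity vanish, and that $\rho$ stays between $C_1$ and $C_2$ for all $t$. The bounds on $\rho$ follow from the Lagrangian representation \eqref{transol}, since the flow of a divergence-free Lipschitz field is a measure-preserving bijection. Decay is inherited by the smooth approximate solutions from the classical theory, and the estimate then passes to the limit by lower semicontinuity.
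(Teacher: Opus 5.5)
Your proposal is correct and follows the same route as the paper: test the momentum equation with $u$, use the transport equation and $\nabla\cdot u=0$ to obtain $\frac12\frac{d}{dt}\|\sqrt\rho\,u\|_{L^2}^2+\|\nabla u\|_{L^2}^2=0$, integrate in time, and use $C_1\le\rho\le C_2$ from \eqref{engasp}. You spell out the cancellation of the $\partial_t\rho$ term and the explicit constants, which the paper leaves implicit, and you correctly note that the bound $M$ plays no role.
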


\begin{proof}
Taking the $L^2$ inner product of the momentum equation with $u$ and using $\nabla \cdot u = 0$, we obtain
\[
\frac{1}{2} \frac{d}{dt} \left( \int_{\R^3} \rho(t,x) |u(t,x)|^2 \, dx \right) + \|\nabla u(t)\|_{L^2}^2 = 0.
\]
Integrating in time and applying \eqref{engasp} yields the desired estimate.
\end{proof}

\begin{lem}[Estimates with Time Weights]\label{eng2}
Let $(\rho, u, P)$ be a smooth enough solution of \eqref{eqinns} with $u_0 \in L^2$ such that \eqref{engasp} holds. Then
\[
\sup_{t \leq T} t^{\frac{1}{2}} \|\nabla u(t)\|_{L^2} + \left\| t^{\frac{1}{2}} \partial_t u \right\|_{L_T^2 L^2} \leq C(1+ M) \|u_0\|_{L^2}.
\]
\end{lem}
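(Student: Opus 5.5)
The plan is to test the momentum equation with $t\,\partial_t u$, which is the standard second energy estimate for the inhomogeneous system. Write the momentum equation in nonconservative form,
\[
\rho\,\partial_t u+\rho u\cdot\nabla u-\Delta u+\nabla P=0,
\]
which is legitimate since $\partial_t\rho+\operatorname{div}(\rho u)=0$. Take the $L^2$ inner product with $\partial_t u$. The pressure term vanishes because $\nabla\cdot\partial_t u=0$, and we obtain
\[
\frac12\frac{d}{dt}\|\nabla u\|_{L^2}^2+\|\sqrt{\rho}\,\partial_t u\|_{L^2}^2=-\int_{\mathbb{R}^3}\rho\,(u\cdot\nabla u)\cdot\partial_t u\,dx .
\]

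The convection term is handled with Young's inequality and the pointwise hypothesis \eqref{engasp}:
\[
\Bigl|\int \rho\, u\cdot\nabla u\cdot \partial_t u\Bigr|\le \tfrac12\|\sqrt\rho\,\partial_t u\|_{L^2}^2+C\|u(t)\|_{L^\infty}^2\|\nabla u\|_{L^2}^2\le \tfrac12\|\sqrt\rho\,\partial_t u\|_{L^2}^2+C M^2 t^{-1}\|\nabla u\|_{L^2}^2 .
\]
Multiplying by $t$ then gives
\[
\frac{d}{dt}\bigl(t\|\nabla u\|_{L^2}^2\bigr)+t\|\sqrt\rho\,\partial_t u\|_{L^2}^2\le (1+CM^2)\|\nabla u\|_{L^2}^2 .
\]
The time weight removes the $t^{-1}$ singularity coming from $\|u\|_{L^\infty}\le Mt^{-1/2}$, and the boundary term at $t=0$ vanishes for smooth solutions.

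Integrating on $[0,t]$ and applying Lemma \ref{eng1} to $\int_0^T\|\nabla u\|_{L^2}^2\,d\tau\le C\|u_0\|_{L^2}^2$, we get
\[
\sup_{t\le T} t\|\nabla u\|_{L^2}^2+\int_0^T t\|\partial_t u\|_{L^2}^2\,dt\le C(1+M^2)\|u_0\|_{L^2}^2 ,
\]
where the lower bound $\rho\ge C_1$ turns $\|\sqrt\rho\,\partial_t u\|_{L^2}$ into $\|\partial_t u\|_{L^2}$. Taking square roots gives the claim with constant $C(1+M)$.

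The only delicate point is the convection term. The estimate closes only because the weight $t$ exactly compensates $\|u\|_{L^\infty}^2\lesssim M^2t^{-1}$. The $\dot C^{1+\eta}$ part of \eqref{engasp} is not needed here.
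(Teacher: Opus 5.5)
Your proof is correct and follows the same route as the paper: you test the momentum equation (in nonconservative form) with $\partial_t u$, multiply by $t$ so that $\|u(t)\|_{L^\infty}^2\le M^2t^{-1}$ absorbs the convection term, and close with the basic energy bound of Lemma \ref{eng1}. As you note, only the $L^\infty$ part of \eqref{engasp} and the two-sided bound on $\rho$ are needed, which is also all the paper's argument uses.
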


\begin{proof}
Taking the $L^2$ inner product of the momentum equation with $\partial_t u$, we obtain
\[
\int_{\R^3} \rho |\partial_t u|^2 \, dx + \frac{1}{2} \frac{d}{dt } \|\nabla u(t)\|_{L^2}^2 = -\int_{\R^3} \rho \operatorname{div}(u \otimes u) \cdot \partial_t u \, dx.
\]
Multiplying both sides by $t$ and applying Hölder's inequality yields
\begin{align*}
\frac{d}{dt}\bigl( t \|\nabla u(t)\|_{L^2}^2 \bigr) + t \|\sqrt{\rho} \partial_t u\|_{L^2}^2 \lesssim
& \|\nabla u\|_{L^2}^2 + t \|\rho u \cdot \nabla u\|_{L^2}^2\\
\lesssim &\|\nabla u\|_{L^2}^2 +C_1^2 t \|u(t)\|_{L^\infty}^2 \|\nabla u\|_{L^2}^2.
\end{align*}
Integrating over $(0, T)$ and using \eqref{engasp} together with Lemma \ref{eng1} completes the proof.
\end{proof}

\begin{lem}[Higher Regularity Estimate]\label{eng3}
Let $(\rho, u, P)$ be a smooth enough solution of \eqref{eqinns} with $u_0 \in L^2$ such that \eqref{engasp} holds. Then
\[
\sup_{t \leq T} t \|\partial_t u(t)\|_{L^2} + \left\| t \nabla \partial_t u \right\|_{L_T^2 L^2} \leq C(1+M)^2 M^{2} \|u_0\|_{L^2}.
\]
\end{lem}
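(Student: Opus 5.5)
The plan is to differentiate the momentum equation in time and run a weighted energy estimate on $\partial_t u$, with weight $t^2$. Write the momentum equation in nonconservative form, $\rho(\partial_t u + u\cdot\nabla u) - \Delta u + \nabla P = 0$, using the transport equation. Differentiating in $t$ and using $\partial_t\rho = -u\cdot\nabla\rho = -\operatorname{div}(\rho u)$ gives
\[
\rho\,\partial_t u_t + \rho u\cdot\nabla u_t - \Delta u_t + \nabla P_t = -\rho_t(\partial_t u + u\cdot\nabla u) - \rho\, u_t\cdot\nabla u .
\]
I will test this against $u_t$, which is divergence free, so the pressure term drops out. The transport part on the left combines, thanks to the mass equation, into $\frac12\frac{d}{dt}\int\rho|u_t|^2$. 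For the term with $\rho_t$, I will integrate by parts using $\rho_t = -\operatorname{div}(\rho u)$. This moves the derivative onto $u$ and onto $(u_t + u\cdot\nabla u)\cdot u_t$, so no derivative of $\rho$ appears. That matters, because we have no spatial regularity on $\rho$ beyond what \eqref{engasp} gives.

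The resulting right-hand side consists of terms of the following types:
\[
\int \rho|u||u_t||\nabla u_t|,\quad \int\rho |u||u_t|^2|\nabla u|\ \text{(after expansion)},\quad \int \rho|u|^2|\nabla u||\nabla u_t|,\quad \int\rho|u|^2|\nabla^2 u||u_t|,\quad \int\rho|u_t|^2|\nabla u|.
\]
I will bound each using \eqref{engasp}: $\|u\|_{L^\infty}\le Mt^{-1/2}$, and $\|\nabla u\|_{L^\infty}\lesssim Mt^{-1}$, the latter obtained by interpolating between $L^\infty$ and $\dot C^{1+\eta}$. The terms containing $\nabla u_t$ are absorbed via Young's inequality into the dissipation $\|\nabla u_t\|_{L^2}^2$. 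For example,
\[
\int\rho|u||u_t||\nabla u_t|\le \tfrac14\|\nabla u_t\|_{L^2}^2 + CM^2t^{-1}\|u_t\|_{L^2}^2 ,
\]
and the term $\int\rho|u|^2|\nabla u||\nabla u_t|$ gives a contribution $CM^4t^{-2}\|\nabla u\|_{L^2}^2$. For the term with $\nabla^2 u$, I first integrate by parts once more to move the derivative onto $u_t$ and $u$, which reduces it to the previous types. Alternatively, the elliptic Stokes estimate $\|\nabla^2 u\|_{L^2}\lesssim \|\rho u_t\|_{L^2} + \|\rho u\cdot\nabla u\|_{L^2}$ handles it directly.

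This yields the differential inequality
\[
\frac{d}{dt}\|\sqrt\rho u_t\|_{L^2}^2 + \|\nabla u_t\|_{L^2}^2 \lesssim M^2t^{-1}\|u_t\|_{L^2}^2 + M^4t^{-2}\|\nabla u\|_{L^2}^2 .
\]
Multiplying by $t^2$ produces
\[
\frac{d}{dt}\bigl(t^2\|\sqrt\rho u_t\|_{L^2}^2\bigr) + t^2\|\nabla u_t\|_{L^2}^2 \lesssim (1+M^2)\,t\|u_t\|_{L^2}^2 + M^4\|\nabla u\|_{L^2}^2 .
\]
Both right-hand terms are time-integrable by Lemma \ref{eng2} and Lemma \ref{eng1}, giving
\[
\int_0^T t\|u_t\|_{L^2}^2\,dt \lesssim (1+M)^2\|u_0\|_{L^2}^2 \quad\text{and}\quad \int_0^T\|\nabla u\|_{L^2}^2\,dt\lesssim\|u_0\|_{L^2}^2 .
\]
Integrating from $0$ to $t$ and taking square roots gives the claimed bound of the form $C(1+M)^2M^2\|u_0\|_{L^2}$. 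The boundary term at $t=0$ vanishes for smooth solutions because of the $t^2$ weight.

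The main obstacle is the $\rho_t$ term. A direct bound would require $\|\nabla\rho\|_{L^\infty}$, which we do not control uniformly, so the integration-by-parts trick via $\rho_t=-\operatorname{div}(\rho u)$ is essential. After it, the terms cubic in $u$ with two derivatives must be arranged so that only $\|\nabla u\|_{L^\infty}\lesssim Mt^{-1}$ and $\|u\|_{L^\infty}\lesssim Mt^{-1/2}$ are used. The $t^2$ weight exactly compensates the resulting $t^{-2}$ singularity.
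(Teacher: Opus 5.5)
Your proposal follows the paper's proof essentially step by step: differentiate the momentum equation in time, test with $\partial_t u$ under the weight $t^2$, eliminate $\partial_t\rho$ through $\partial_t\rho=-\operatorname{div}(\rho u)$, use $\|t\nabla u\|_{L^\infty}\lesssim M$ by interpolation, absorb the $\nabla\partial_t u$ terms by Young's inequality, and handle the $\nabla^2 u$ term with the Stokes estimate together with Lemmas \ref{eng1}--\ref{eng2}. Two small corrections. First, only your second option for the $\nabla^2 u$ term works: integrating $\int\rho\, u_k u_j\,\partial_k\partial_j u\cdot\partial_t u\,dx$ by parts in $x_k$ or $x_j$ brings back the factor $\operatorname{div}(\rho u)=u\cdot\nabla\rho$. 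Second, the cubic term produced by the expansion is $\rho|u||\nabla u|^2|\partial_t u|$, not $\rho|u||\partial_t u|^2|\nabla u|$; it is bounded by $M^2t^{-3/2}\|\nabla u\|_{L^2}\|\partial_t u\|_{L^2}$ and therefore fits your differential inequality.
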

\begin{proof}
Differentiating \eqref{eqinns} in time gives
\begin{equation*}
\rho \partial_{tt} u - \partial_t \Delta u = -\partial_t \rho \, \partial_t u - \partial_t \rho \, \nabla \cdot (u \otimes u) - \rho \partial_t u \cdot \nabla u - \rho u \cdot \nabla \partial_t u + \nabla \partial_t P.
\end{equation*}
Taking the $L^2$ inner product with $\partial_t u$ and multiplying the resulting inequality by $t^2$, we obtain
\begin{equation}\label{ener}
\begin{split}
&\frac{1}{2} \frac{d}{dt} \bigl( t^2 \|\sqrt{\rho} \partial_t u(t)\|_{L^2}^2 \bigr) + t^2 \|\partial_t \nabla u\|_{L^2}^2 = t \|\sqrt{\rho} \partial_t u(t)\|_{L^2}^2 \\
&\quad- t^2 \int_{\R^3} \partial_t \rho(t,x) |\partial_t u(t,x)|^2 \, dx  - t^2 \int_{\R^3} \partial_t \rho(t,x) (u \otimes \partial_t u)(t,x) : \nabla u(t,x) \, dx \\
&\quad - t^2 \int_{\R^3} \rho(t,x) (\partial_t u \otimes \partial_t u)(t,x) : \nabla u(t,x) \, dx \\
&:= I(t) + II(t) + III(t) + IV(t).
\end{split}
\end{equation}
By Lemma \ref{eng1}, Lemma \ref{eng2}, and \eqref{engasp}, we have
\begin{equation}\label{eng33}
\begin{split}
\int_0^T \bigl( |I(t)| + |IV(t)| \bigr) \, dt \lesssim & \|t^{\frac12}\sqrt{\rho} \partial_t u(t)\|_{L^2_TL^2}^2
+C_1\|t\na u(t)\|_{L^\infty}\|t^{\frac12}\sqrt{\rho} \partial_t u(t)\|_{L^2_TL^2}^2\\
\leq &C(1+ M)^{2} \|u_0\|_{L^2}^2,
\end{split}
\end{equation}
where we used \eqref{engasp} and interpolation inequality so that
\[ 
\|t\na u(t)\|_{L^\infty}\lesssim \|t^{\f12}u(t)\|_{L^\infty}^{\frac\eta{1+\eta}}\|t^{1+\frac\eta2}u(t)\|_{\dot C^{1+\eta}}^{\frac1{1+\eta}}\lesssim M.\]

For $II$, note that by the transport equation for $\rho$, the Cauchy–Schwarz inequality, and \eqref{engasp}, we obtain
\begin{equation*}
\begin{aligned}
|II(t)| &= \left| 2t^2 \int_{\R^3} \rho(t,x) (u \otimes \partial_t u)(t,x) : \partial_t \nabla u(t,x) \, dx \right| \\
&\leq C_2t^{\frac12}\|u(t)\|_{L^\infty}\|t^{\frac12}\p_tu(t)\|_{L^2}\|t\na\p_t u(t)\|_{L^2}\\
&\leq \frac{1}{8} t^2 \|\partial_t \nabla u(t)\|_{L^2}^2 + C M^2 t \|\partial_t u(t)\|_{L^2}^2,
\end{aligned}
\end{equation*}
which together with Lemma \ref{eng2} yields
\begin{equation}\label{eng34}
\begin{aligned}
\int_0^T |II(t)| \, dt \leq \frac{1}{8} \| t \partial_t \nabla u(t) \|_{L_T^2 L^2}^2 + C (1+M)^2M^{2} \|u_0\|_{L^2}^2.
\end{aligned}
\end{equation}

For $III$, using the transport equation for $\rho$ in \eqref{eqinns}, we have
\begin{equation*}
\begin{aligned}
III &= t^2 \int_{\R^3} \operatorname{div}(\rho u) (u \otimes \partial_t u) : \nabla u \, dx \\
&= -t^2 \int_{\R^3} \rho (u \cdot \nabla u \otimes \partial_t u) : \nabla u \, dx - t^2 \int_{\R^3} \rho (u \otimes u \cdot \nabla \partial_t u) : \nabla u \, dx \\
&\quad - t^2 \int_{\R^3} \rho (u \otimes \partial_t u) : (u \cdot \nabla (\nabla u)) \, dx \\
&:= III_1 + III_2 + III_3.
\end{aligned}
\end{equation*}
By \eqref{engasp}, we deduce
\begin{equation*}
\begin{aligned}
|III_1(t)| &\lesssim \|\sqrt{t} u(t)\|_{L^\infty} \|t \nabla u(t)\|_{L^\infty} \|\sqrt{t} \partial_t u(t)\|_{L^2} \|\nabla u(t)\|_{L^2} \\
&\lesssim M^2 \|\sqrt{t} \partial_t u\|_{L^2} \|\nabla u\|_{L^2}.
\end{aligned}
\end{equation*}
Then Lemma \ref{eng1} and Lemma \ref{eng2} lead to
\begin{equation}\label{III1}
\int_0^T |III_1(t)| \, dt \lesssim (1+M)^2M^{2}  \|u_0\|_{L^2}^2.
\end{equation}
By the Cauchy–Schwarz inequality, \eqref{engasp}, and Lemma \ref{eng1}, we obtain
\begin{equation}\label{III2}
\int_0^T |III_2(t)| \, dt \leq C M^2 \int_0^T \|\nabla u\|_{L^2} \|t \nabla \partial_t u\|_{L^2} \, dt \leq \frac{1}{16} \| t \nabla \partial_t u \|_{L_T^2 L^2}^2 + C M^{4} \|u_0\|_{L^2}^2.
\end{equation}
For $III_3$, we first need an estimate for $\|\nabla^2 u(t)\|_{L^2}$. From \eqref{eqinns}, we have
\begin{equation*}
-\Delta u = -\mathbb{P} (\rho \partial_t u + \rho u \cdot \nabla u).
\end{equation*}
Thus, by singular integral theory, together with \eqref{engasp}, Lemma \ref{eng1}, and Lemma \ref{eng2}, we obtain
\begin{equation*}
\begin{aligned}
\|\sqrt{t} \nabla^2 u\|_{L_T^2 L^2} &\lesssim \|\sqrt{t} \partial_t u\|_{L_T^2 L^2} + \|\sqrt{t} u\|_{L_T^\infty L^\infty} \|\nabla u\|_{L_T^2 L^2} \\
&\lesssim  (1+M) \|u_0\|_{L^2}^2,
\end{aligned}
\end{equation*}
which implies
\begin{equation}\label{III3}
\begin{aligned}
\int_0^T |III_3(t)| \, dt &\lesssim \|\sqrt{t} u\|_{L_T^\infty L^\infty}^2 \|\sqrt{t} \partial_t u\|_{L_T^2 L^2} \|\sqrt{t} \nabla^2 u\|_{L_T^2 L^2} \\
&\lesssim (1+M)^2M^{2} \|u_0\|_{L^2}^2.
\end{aligned}
\end{equation}
It follows from \eqref{III1}, \eqref{III2}, and \eqref{III3} that
\begin{equation}\label{eng35}
\begin{aligned}
\int_0^T |III(t)| \, dt \leq \frac{1}{16} \| t \nabla \partial_t u \|_{L_T^2 L^2}^2 + C (1+M)^2M^{2} \|u_0\|_{L^2}^2.
\end{aligned}
\end{equation}
Substitutining \eqref{eng33}, \eqref{eng34}, and \eqref{eng35}  into \eqref{ener} yields the desired result.
\end{proof}   

\subsection{Proof of Lemma  \ref{lemtta}}\label{sub7.2}
In this appendix we give a self-contained proof of Proposition \ref{lemtta}. The reader is referred to \cite{BCD} for a more detailed exposition. Before proceeding with the proof, we need some preparations.

We denote the linear functional $Lf = (L^m f)_{m=1,2,3}$ as the solution to the following system
\begin{equation}\label{defLm}
\begin{aligned}
&\partial_t L^m f - \Delta L^m f + \nabla P = \partial_m f,\\
&\nabla \cdot L^m f = 0,\\
&L^m f|_{t=0} = 0.
\end{aligned}
\end{equation}

We can prove the following version of \cite[Proposition 5.37]{BCD}.

\begin{lem}\label{lem5.37} 
Let $\Gamma^i(t,x), i=1,2,$ be given by \eqref{defnewker}. There exists $C > 0$ such that for any $R < b$ and $\alpha > 0$,
\begin{equation}\label{5.37h}
\|\Gamma_R^1 \ast_{t,x} f\|_{L^\infty([0,R^2] \times \mathbb{R}^3)} \leq \frac{C}{R^\alpha} \|f\|_{Z_T^{2,b}}.
\end{equation}
For $R < b$ and $0 < \alpha < 2$, there holds
\begin{equation*}\label{5.37l}
\|\Gamma_R^2 \ast_{t,x} f\|_{L^\infty([R^2, \infty] \times \mathbb{R}^3)} \leq \frac{C}{R^\alpha} \|f\|_{Z_T^{2,b}}.
\end{equation*}
\end{lem}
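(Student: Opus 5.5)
We follow the proof of \cite[Proposition 5.37]{BCD}: cover the region where the kernel lives by parabolic cylinders of size $R$ and bound the contribution of each cylinder through the $Z_T^{2,b}$ norm. By translation invariance we evaluate at a point $(t,x)$ with $x=0$. We write
\[
\Gamma_R^1\ast_{t,x} f(t,0)=\int_0^t\int_{|y|>R}\frac{f(\tau,y)}{|y|^{3+\alpha}}\,dy\,d\tau ,
\]
and split $\{|y|>R\}$ into the dyadic shells $A_k=\{2^kR<|y|\le 2^{k+1}R\}$, $k\ge 0$.

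\textbf{First estimate.} Here $t\le R^2$, so $\tau\le R^2$. Cover each shell $A_k$ by $N_k\lesssim 2^{3k}$ balls $B(z_j,R)$. On each cylinder $[0,\min\{T,R^2\}]\times B(z_j,R)$ the definition of $Z_T^{2,b}$ gives $\iint|f|\le R^3\|f\|_{Z_T^{2,b}}$. This is admissible since $R<b$; if $T<t$ we truncate, which only shrinks the domain. Hence
\[
|\Gamma_R^1\ast f(t,0)|\lesssim \sum_{k\ge0}(2^kR)^{-3-\alpha}\,2^{3k}R^3\,\|f\|_{Z_T^{2,b}}\lesssim R^{-\alpha}\sum_k 2^{-\alpha k}\|f\|_{Z_T^{2,b}},
\]
which converges because $\alpha>0$. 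No restriction on $\alpha$ beyond positivity is needed.

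\textbf{Second estimate.} Now $t\ge R^2$ and the spatial integration is over $|y|<R$, with weight $(\sqrt{t-\tau}+|y|)^{-3-\alpha}$.
\begin{itemize}
\item For $\tau\in[t-R^2,t]$ we cover this time slab by $O(1)$ cylinders of size $R$ (shifted in time). Here we need the time-translated version of the cylinder bound, which follows from the definition of $Z_T^{2,b}$. Since $\tau\ge t-R^2$ could be large, we use the $L^\infty$ part instead whenever $\tau\ge R^2$: $|f(\tau)|\le\tau^{-1}\|f\|$.
\item For $t-\tau\in[4^jR^2,4^{j+1}R^2]$ with $j\ge0$ the kernel is $\lesssim(2^jR)^{-3-\alpha}$.
\end{itemize}
We estimate each piece using $\int_{B_R}|f(\tau)|\le R^3\tau^{-1}\|f\|$ for $\tau\ge R^2$, together with the cylinder bound on $[0,R^2]$.

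The cleanest route is to split differently:
\begin{itemize}
\item For $\tau\le t/2$ the kernel is $\le t^{-(3+\alpha)/2}$, and $\int_0^{t/2}\int_{B_R}|f|$ is controlled by the cylinder part on $[0,R^2]$ plus $\int_{R^2}^{t/2}R^3\tau^{-1}d\tau$.
\item For $\tau\in[t/2,t]$ we use $|f|\le 2t^{-1}\|f\|$ and $\int_{t/2}^t\int_{|y|<R}(\sqrt{t-\tau}+|y|)^{-3-\alpha}\,dy\,d\tau\lesssim\int_{|y|<R}|y|^{-1-\alpha}dy\lesssim R^{2-\alpha}$. This is exactly where $\alpha<2$ enters.
\end{itemize}
This gives the bound $t^{-1}R^{2-\alpha}\le R^{-\alpha}$.

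\textbf{Main obstacle.} The delicate piece is $\tau\le t/2$. It produces $t^{-(3+\alpha)/2}R^3(1+\log(t/R^2))$, and this is $\lesssim R^{-\alpha}$ since $t\ge R^2$ and $(3+\alpha)/2>\alpha/2$ absorbs the logarithm. The one point to check carefully is that the restriction $\tau\le T$ in the norms is harmless, which holds because we only integrate up to $t\le T$.
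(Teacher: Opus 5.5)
Your proof is correct and follows essentially the paper's argument. The first bound uses dyadic shells covered by $\lesssim 2^{3k}$ balls of radius $R$; the second uses a time split in which early times use the uniform kernel bound $t^{-(3+\alpha)/2}$ together with the cylinder part of $\|f\|_{Z_T^{2,b}}$, and late times use $\|f(\tau)\|_{L^\infty}\le\tau^{-1}\|f\|_{Z_T^{2,b}}$. The differences are cosmetic: you split at $t/2$ rather than $\min\{t/2,R^2\}$, so the range $[R^2,t/2]$ gives a logarithm absorbed by $(R^2/t)^{3/2}$, and you integrate the kernel in time first, so $\alpha<2$ enters via $\int_{|y|<R}|y|^{-1-\alpha}\,dy\lesssim R^{2-\alpha}$ instead of via $\int_{t/2}^t(t-\tau)^{-\alpha/2}\,d\tau$.
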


\begin{proof}
First, for any $R < b$, we split the convolution integral into annuli $C(0, 2^p R, 2^{p+1} R)$, which yields
\begin{equation*}
\begin{aligned}
|\Gamma_R^1 \ast f(t,x)| &\leq \sum_{p=0}^\infty \int_0^t \int_{C(0, 2^p R, 2^{p+1} R)} \frac{1}{|y|^{3+\alpha}} |f(\tau, x-y)| \, dy \, d\tau.
\end{aligned}
\end{equation*}
Note that $C(0, 2^p R, 2^{p+1} R)$ can be covered by balls $B(x', R)$ with center $x'$ and radius $R$, and the number of such $x'$ is approximately $2^{3p}$ for any $t \leq R^2$. Consequently,
\begin{equation*}
|\Gamma_R^1 \ast f(t,x)| \leq \sum_{p=0}^\infty (2^p R)^{-3-\alpha} 2^{3p} \sup_{x'} \int_0^t \int_{B(x', R)} |f(\tau, x-y)| \, dy \lesssim \frac{C}{R^\alpha} \|f\|_{Z_T^{2,b}}.
\end{equation*}

Second, we write
\begin{equation*}
\begin{aligned}
|\Gamma_R^2 \ast f(t,x)| &\lesssim \left( \int_{\min\{t/2, R^2\}}^t + \int_0^{\min\{R^2, t/2\}} \right) \int_{B_0(R)} \frac{1}{(\sqrt{t-\tau} + |x-y|)^{3+\alpha}} |f(\tau, y)| \, dy \, d\tau \\
&:= \Gamma_R^{21}(f) + \Gamma_R^{22}(f).
\end{aligned}
\end{equation*}
For $\Gamma_R^{22}(f)$, since in this regime $\frac{1}{(\sqrt{t-\tau} + |x-y|)^{3+\alpha}} \lesssim \frac{1}{t^{\frac{3+\alpha}{2}}}$, we obtain
\begin{equation*}
|\Gamma_R^{22}(f)| \lesssim \frac{R^3}{t^{\frac{3+\alpha}{2}}} \left( \frac{1}{R^3} \int_0^{R^2} \int_{B_0(R)} |f(\tau, y)| \, dy \, d\tau \right) \lesssim \frac{R^3}{t^{\frac{3+\alpha}{2}}} \|f\|_{Z_T^{2,b}} \lesssim \frac{1}{R^\alpha} \|f\|_{Z_T^{2,b}}, \quad \forall t \geq R^2.
\end{equation*}
For the remaining part, note that
\begin{equation*}
\int_{B_0(R)} \frac{1}{(a + |y|)^{3+\alpha}} \, dy \lesssim \frac{1}{a^\alpha} \int_{\mathbb{R}^3} \frac{1}{(1 + |y|)^{3+\alpha}} \, dy.
\end{equation*}
Hence,
\begin{equation*}
\begin{aligned}
|\Gamma_R^{21}(f)| &\lesssim \int_{\min\{t/2, R^2\}}^t \int_{B_0(R)} \frac{1}{(\sqrt{t-\tau} + |x-y|)^{3+\alpha}} \|f(\tau)\|_{L^\infty} \, dy \, d\tau \\
&\lesssim \|f\|_{Z_T^{2,b}} \left( \int_{R^2}^t \frac{1}{(t-\tau)^{\frac{\alpha}{2}}} \frac{d\tau}{\tau} + \int_{t/2}^t \frac{R^3}{t^{\frac{3+\alpha}{2}}} \frac{d\tau}{\tau} \right) \\
&\lesssim \frac{1}{R^\alpha} \|f\|_{Z_T^{2,b}}, \quad \forall t \geq R^2,\; 0 < \alpha < 2.
\end{aligned}
\end{equation*}
This completes the proof.
\end{proof}         

     \begin{lem}\label{conlem}
Assume that $\theta \in \mathcal{S}$. Define $f_\theta(t,\cdot) := t^{-\frac{3}{2}} \theta\!\left(\frac{\cdot}{\sqrt{t}}\right) \ast f(t,\cdot)$. Then
\begin{equation*}
\|f_\theta\|_{Z_T^{2,b}} \lesssim \|f\|_{Z_T^{2,b}}.
\end{equation*}
\end{lem}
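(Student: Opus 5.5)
We must bound both pieces of the $Z_T^{2,b}$ norm of $f_\theta$: the weighted sup $\sup_{t\le T} t\|f_\theta(t)\|_{L^\infty}$ and the Carleson-type average $R^{-3}\int_0^{\min\{T,R^2\}}\int_{B(x_0,R)}|f_\theta|\,dy\,d\tau$. Write $\theta_t:=t^{-3/2}\theta(\cdot/\sqrt t)$.

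The $L^\infty$ part is immediate from Young's inequality, since $\|\theta_t\|_{L^1}=\|\theta\|_{L^1}$. This gives $t\|f_\theta(t)\|_{L^\infty}\le \|\theta\|_{L^1}\,t\|f(t)\|_{L^\infty}\le \|\theta\|_{L^1}\|f\|_{Z_T^{2,b}}$.

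For the local integral, fix $x_0$ and $R\le b$, and set $\mathcal{T}=\min\{T,R^2\}$. We use the pointwise bound $|\theta(z)|\lesssim(1+|z|)^{-4}$ and decompose the $y$-integration by dyadic annuli relative to $R$:
\[
|f_\theta(\tau,x)|\le \int_{|y|\le R}|\theta_\tau(y)||f(\tau,x-y)|\,dy+\sum_{p\ge0}\int_{2^pR\le|y|<2^{p+1}R}|\theta_\tau(y)||f(\tau,x-y)|\,dy .
\]
On the $p$-th annulus we have $\tau\le R^2$, so $|\theta_\tau(y)|\lesssim \tau^{-3/2}(\sqrt\tau/(2^pR))^{4}=\tau^{1/2}(2^pR)^{-4}\le R\,(2^pR)^{-4}$. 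Integrating over $x\in B(x_0,R)$ and $\tau\le\mathcal{T}$, Fubini shows that the annulus term contributes at most
\[
R\,(2^pR)^{-4}\int_{B(x_0,R)}\int_0^{\mathcal{T}}\int_{B(x,2^{p+1}R)}|f|\,dz\,d\tau\,dx\lesssim R\,(2^pR)^{-4}R^3\int_0^{\mathcal{T}}\int_{B(x_0,2^{p+2}R)}|f|\,dz\,d\tau .
\]
We cover $B(x_0,2^{p+2}R)$ by $\sim2^{3p}$ balls of radius $R$. Each ball contributes at most $R^3\|f\|_{Z_T^{2,b}}$ by the definition of the norm, so the integral is at most $2^{3p}R^3\|f\|_{Z_T^{2,b}}$. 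The annulus term is therefore $\lesssim 2^{-p}R^3\|f\|_{Z_T^{2,b}}$, which is summable in $p$. The inner part $|y|\le R$ is handled in the same way: $\int_{B(x_0,R)}|\theta_\tau(x-z)|\,dx\le\|\theta\|_{L^1}$ for every $z$, and $z$ ranges over $B(x_0,2R)$, which is covered by $O(1)$ balls of radius $R$. Dividing by $R^3$ and taking the supremum over $x_0$ and $R$ gives the claim.

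The main obstacle is a technical one. When covering by balls of radius $R$ we need $R\le b$ to invoke the Carleson part of the norm, and this holds by assumption. However, the balls $B(x_0,2^{p+2}R)$ at large $p$ have radius exceeding $b$. This is why we cover them by many radius-$R$ balls rather than using a single larger ball. The decay $(1+|z|)^{-4}$, which is one power better than $|z|^{-3}$, exactly pays for the $2^{3p}$ covering count.
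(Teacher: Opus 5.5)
Your proof is correct, but it handles the tail differently from the paper.

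The paper also splits $f=\mathbf{1}_{B(x_0,2R)}f+\mathbf{1}_{B(x_0,2R)^c}f$. It treats the near piece as you treat your inner piece: Young in $x$, then the Carleson bound on $B(x_0,2R)$.

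For the far piece the paper argues pointwise instead. For $x\in B(x_0,R)$ and $|x-y|\ge R$, Schwartz decay gives $|\theta((x-y)/\sqrt t)|\lesssim (1+|x-y|/\sqrt t)^{-4}\,t/R^2$. Hence the far contribution is at most $R^{-2}\,t\|f(t)\|_{L^\infty}\le R^{-2}\|f\|_{Z_T^{2,b}}$, and integrating over $[0,R^2]\times B(x_0,R)$ gives $R^3\|f\|_{Z_T^{2,b}}$ in one line.

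So the paper spends the time-weighted $L^\infty$ component of the norm on the tail and needs no covering there. Your dyadic-annulus covering argument, the same device the paper uses for $\Gamma_R^1$ in Lemma \ref{lem5.37}, uses only the Carleson component.

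Your route is a little longer, but it proves a slightly stronger fact: the local $L^1$ part of $\|f_\theta\|_{Z_T^{2,b}}$ is controlled by the local $L^1$ part of $\|f\|_{Z_T^{2,b}}$ alone. The paper's route is shorter but mixes the two components of the norm.
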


\begin{proof}
Without loss of generality, we may assume that $x \in B_0(R)$. For any $R < b$, we have
\begin{equation}\label{conbdd}
\begin{aligned}
|f_\theta(t,x)| &\lesssim t^{-\frac{3}{2}} \int_{\R^3} \theta\!\left(\frac{x-y}{\sqrt{t}}\right) |\mathbf{1}_{B_0(2R)} f(t,y)| \, dy \\
&\quad + t^{-\frac{3}{2}} \int_{\R^3} \frac{1}{(1 + |x-y|/\sqrt{t})^4} \frac{t}{R^2} |f(t,y)| \, dy \\
&\lesssim t^{-\frac{3}{2}} \left| \theta\!\left(\frac{\cdot}{\sqrt{t}}\right) \ast \mathbf{1}_{B_0(2R)} f(t,\cdot) \right| + \frac{1}{R^2} \sup_{t \leq b^2} t \|f(t)\|_{L^\infty}.
\end{aligned}
\end{equation}
Integrating over $[0,R^2] \times B_0(R)$, we obtain
\begin{equation*}
\frac{1}{R^3} \|f_\theta\|_{L^1([0,R^2] \times B_0(R))} \lesssim \|f\|_{Z_T^{2,b}}.
\end{equation*}
The $L^\infty$ estimate also follows from \eqref{conbdd} for any $t < b^2$.
\end{proof}

Finally, to estimate the local $L^1$ norm, we need the following technical lemma.

\begin{lem}\label{lem5.38}
For any $R < b$ and any function $f : [0,R^2] \times B_0(2R) \to \mathbb{R}^3$, there holds
\begin{equation*}
\sup_{R < b} R^{-\frac{3}{2}} \|L(f)(t)\|_{L^2([0,R^2] \times \mathbb{R}^3)} \lesssim \|f\|_{Z_T^{2,b}}.
\end{equation*}
\end{lem}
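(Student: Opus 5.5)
This is the Koch–Tataru energy lemma (cf. \cite[Lemma 5.38]{BCD}), and I would prove it with an energy estimate on the linear Stokes system \eqref{defLm}. Here $f$ is the (matrix-valued) forcing, e.g.\ $f=\rho u\otimes u$, and $L(f)=\sum_m L^m f^m$. By translation and scaling it suffices to treat $R=1$ with $\|f\|_{Z_T^{2,b}}\le1$. This means $t|f(t,x)|\le1$ and $\int_0^{1}\int_{B(x',1)}|f|\le1$ for every ball. Dilating by $R$ leaves the $Z^{2,b}$ norm invariant and produces the factor $R^{-3/2}$ on the left.

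\textbf{Step 1: splitting.} Split $f=f\mathbf 1_{B_0(2)}+f\mathbf 1_{B_0(2)^c}=:f_1+f_2$, and estimate $L(f_1)$ and $L(f_2)$ separately on $[0,1]\times B_0(1)$. (The statement is written with $f$ already supported in $B_0(2R)$, but the far part must be handled for the application anyway.)

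\textbf{Step 2: the localized part $f_1$.} Test \eqref{defLm} against $L(f_1)$. The pressure drops out by the divergence-free condition, which gives
\[
\tfrac12\|L f_1(t)\|_{L^2}^2+\|\nabla L f_1\|_{L^2_tL^2}^2\le \int_0^t\!\!\int |f_1|\,|\nabla L f_1|.
\]
The obstacle is that $f_1$ is only in $L^1_{t,x}$, not in $L^2$. I would interpolate using the pointwise bound: $|f_1|^2\le |f_1|\,\|f_1(\tau)\|_{L^\infty}\le \tau^{-1}|f_1|$. This bound alone is not square-integrable in time because of the singularity at $\tau=0$.

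The standard remedy, and the step I expect to be the main difficulty, is to avoid $L^2$ of $f$ altogether. Write $L f_1=\int_0^t e^{(t-\tau)\Delta}\mathbb P\nabla\cdot f_1\,d\tau$ and use the duality/$TT^*$ form of the energy identity,
\[
\|L f_1\|^2_{L^2([0,1]\times\R^3)}=\int\!\!\int f_1(\tau,y)\,\cdot\,\mathcal K(\tau,\sigma,y,z)\,f_1(\sigma,z)\,dy\,dz\,d\tau\,d\sigma,
\]
where $|\mathcal K|\lesssim(|\tau-\sigma|^{1/2}+|y-z|)^{-3}$ by Proposition \ref{S2prop3}. The problem then reduces to a bilinear estimate $\int\!\!\int |f_1||\mathcal K||f_1|\lesssim 1$. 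For fixed $(\sigma,z)$, I would bound $\int |\mathcal K||f_1|(\tau,y)$ by a dyadic decomposition in $|\tau-\sigma|^{1/2}+|y-z|\sim2^{-k}$. Each shell gives at most $2^{3k}\cdot 2^{-3k}\min\{1,\ldots\}$, using the local $L^1$ bound on balls of radius $2^{-k}$. That local bound follows from the pointwise control $t|f|\le1$ at scales below $\sqrt\sigma$, together with the $Z^{2}$ averages at larger scales; Lemma \ref{lem5.37} supplies exactly these shell estimates. Altogether the inner integral is bounded uniformly, and integrating against $|f_1(\sigma,z)|$ then costs only the total mass $\int_0^1\int_{B_0(2)}|f_1|\lesssim1$.

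\textbf{Step 3: the far part $f_2$.} On $[0,1]\times B_0(1)$ the kernel $\nabla e^{(t-\tau)\Delta}\mathbb P$ is evaluated at $|x-y|\ge1$, where it is bounded by $|x-y|^{-4}$. Then Lemma \ref{lem5.37} with $\Gamma^1_1$ and $\alpha=1$ gives $\|L f_2\|_{L^\infty([0,1]\times B_0(1))}\lesssim1$, so its $L^2$ norm over the unit parabolic cylinder is also $\lesssim1$. Rescaling back yields the factor $R^{-3/2}$ and completes the argument.
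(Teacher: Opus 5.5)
The gap is in Step 2, at the claim that $\iint |f_1|\,|\mathcal K|\,|f_1|\lesssim 1$.

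\textbf{Why the shell count does not close.} Use your majorant $|\mathcal K|\lesssim(|\tau-\sigma|^{1/2}+|y-z|)^{-3}$ and fix $(\sigma,z)$. Every shell of radius $r=2^{-k}\in[\sqrt\sigma,1]$ reaches down to $\tau=0$. So the $Z_T^{2,b}$ bound gives it mass $\lesssim r^3$ and nothing better, and its contribution is $r^{-3}\cdot r^3=O(1)$. There are $\sim\log(1/\sigma)$ such shells. Only the shells of radius below $\sqrt\sigma$, where $|f_1|\lesssim\sigma^{-1}$, sum geometrically. Lemma \ref{lem5.37} cannot supply the missing decay: it needs a kernel $|x|^{-3-\alpha}$ with $\alpha>0$, and yours is the borderline case $\alpha=0$.

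\textbf{The logarithm is genuine.} Take $f(\tau,y)=\tau_0^{-1}\mathbf 1_{[\tau_0,2\tau_0]}(\tau)\mathbf 1_{B_0(1)}(y)$, so that $\|f\|_{Z_T^{2,b}}\lesssim1$. For $\tau,\sigma\in[\tau_0,2\tau_0]$ and $z\in B_0(1/2)$ one has $\int_{B_0(1)}(|\tau-\sigma|^{1/2}+|y-z|)^{-3}\,dy\gtrsim\log(1/\tau_0)$. Hence your absolute-value form is $\gtrsim\log(1/\tau_0)\to\infty$, even though the lemma itself holds for this $f$. This is not an artifact of the majorant either. For $|x|\gg\sqrt s$ the kernel of $\mathbb P e^{s\Delta}$ is a Calder\'on--Zygmund tail of true size $|x|^{-3}$. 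Putting absolute values on $\mathcal K$ therefore discards exactly the cancellation that makes $\mathbb P$ bounded on $L^2$.

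\textbf{How the paper avoids this.} It never takes absolute values of such a critical kernel; instead it splits $f=f^l+f^h$ at frequency $|\xi|\sim t^{-1/2}$.
For the high part, $\|f^h(t)\|_{\dot H^{-1}}^2\lesssim t\|f(t)\|_{L^2}^2\le\|f(t)\|_{L^1}\,t\|f(t)\|_{L^\infty}$, which is integrable in time. This is precisely the cure for the $\tau^{-1}$ obstruction you found in the naive energy estimate, and the Stokes energy estimate then bounds $Lf^h$.
For the low part, it writes $e^{(t-\tau)\Delta}f^l(\tau)=e^{t\Delta}\widetilde{f^l}(\tau)$ with $\widetilde{f^l}=e^{-\tau\Delta}f^l$, which is harmless on low frequencies. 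It then runs the $TT^*$ computation \eqref{4.2}, which only needs an $L^\infty$ bound on $e^{2s\Delta}\int_0^s\widetilde{f^l}$ at the single scale $\sqrt s$.

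\textbf{How to repair your route.} The norm is over all of $\mathbb R^3$, and $\mathbb P$ is an $L^2$-bounded multiplier commuting with $\nabla$ and $e^{t\Delta}$. So bound $\|Lf_1(t)\|_{L^2}\lesssim\|\nabla v(t)\|_{L^2}$, where $v=\int_0^te^{(t-\tau)\Delta}f_1\,d\tau$ is the componentwise heat flow. Integrating over $t\in(0,\infty)$ gives $\|\nabla v\|_{L^2(\mathbb R_+\times\mathbb R^3)}^2=\frac12\iint\langle e^{|\tau-\sigma|\Delta}f_1(\tau),f_1(\sigma)\rangle\,d\tau\,d\sigma$. This is a positive Gaussian kernel living at the single scale $\sqrt{|\tau-\sigma|}$. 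Restrict to $\tau<\sigma$ by symmetry; then the inner integral is bounded uniformly in $(\sigma,z)$:
for $\tau<\sigma/2$, by $Z_T^{2,b}$ averages at scale $\sqrt\sigma$ with Gaussian tails;
for $\tau\in[\sigma/2,\sigma]$, by $|f_1|\le2/\sigma$ together with the unit mass of the heat kernel.
The outer integral then costs $\int_0^1\int_{B_0(2)}|f_1|\lesssim1$.

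Steps 1 and 3 are fine as written; they reproduce the paper's proof of Lemma \ref{lemtta}.
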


\begin{proof}
We decompose $f$ into low and high frequencies. Precisely, let $\theta$ be a function whose Fourier transform $\widehat{\theta} \in C_c^\infty(\mathbb{R}^3)$ satisfies $\widehat{\theta}|_{B_0(1)} \equiv 1$. Write
\begin{equation*}
f = f^l + f^h, \qquad f^l = \mathcal{F}^{-1}\bigl( \widehat{\theta}(t^{1/2} \cdot) \widehat{f}(t,\cdot) \bigr)(x).
\end{equation*}
Then
\begin{equation*}
\begin{aligned}
\|f^h\|_{L^2([0,R^2]; \dot{H}^{-1})}^2 &\lesssim \int_0^{R^2}\int_{ \mathbb{R}^3} \frac{|1 - \widehat{\theta}(\sqrt{t} \xi)|^2}{t |\xi|^2} \, t |\widehat{f}(t,\xi)|^2  \, d\xi\, dt \\
&\lesssim \int_0^{R^2} t \|f(t)\|_{L^2}^2 \, dt \lesssim \|f\|_{L^1([0,R^2] \times \mathbb{R}^3)} \sup_{t < R^2} t \|f(t)\|_{L^\infty},
\end{aligned}
\end{equation*}
and then the classical energy estimate for the Stokes equations yields
\begin{equation}\label{4.1}
\|L f^h\|_{L^2([0,R^2] \times \mathbb{R}^3)}^2 \lesssim R^3 \|f\|_{Z_T^{2,b}}^2.
\end{equation}
The hardest part is $L f^l$. Observe that if we define
\[
\widetilde{f^l}(t,x) = \mathcal{F}^{-1}\bigl( e^{t|\cdot|^2} \widehat{\theta}(\sqrt{t} |\cdot|) \widehat{f}(t,\cdot) \bigr)(x),
\]
then
\[
L f^l = \nabla \int_0^t e^{(t-\tau)\Delta} \Bbb{P}f^l(\tau) \, d\tau = \nabla e^{t\Delta} \int_0^t \Bbb{P}\widetilde{f^l}(\tau) \, d\tau,
\]
where $\Bbb{P}$ denotes the Leray projection operator to the divergence free vector field space.
Since $\widehat{\theta}$ is compactly supported, there exists a Schwartz function $\widetilde{\theta}$ such that
\begin{equation}\label{formulalow}
\widetilde{f^l} = t^{-\frac{3}{2}} \widetilde{\theta}\!\left(\frac{\cdot}{\sqrt{t}}\right) \ast f(t,\cdot).
\end{equation}
Direct calculation gives
\begin{equation}\label{4.2}
\begin{aligned}
\mathcal{L}f &:= \|L f^l\|_{L^2([0,R^2] \times \mathbb{R}^3)}^2 \\
&= 2 \int_0^{R^2} \int_0^t \int_0^s \bigl\langle \nabla e^{t\Delta}\Bbb{P}\widetilde{f^l}(s), \nabla e^{t\Delta} \Bbb{P}\widetilde{f^l}(\tau) \bigr\rangle \, d\tau \, ds \, dt \\
&= -2 \int_0^{R^2} \int_0^t \int_0^s \bigl\langle \Delta e^{2t\Delta} \Bbb{P} \widetilde{f^l}(\tau),\widetilde{f^l}(s) \bigr\rangle \, d\tau \, ds \, dt \\
&= -\int_0^{R^2} \int_0^s \left\langle \left( \int_s^{R^2} \frac{d}{dt} e^{2t\Delta} \, dt \right) \Bbb{P} \widetilde{f^l}(\tau), \widetilde{f^l}(s) \right\rangle \, d\tau \, ds \\
&= \int_0^{R^2} \left\langle (e^{2s\Delta} - e^{2R^2\Delta}) \int_0^s  \Bbb{P}\widetilde{f^l}(\tau) \, d\tau, \widetilde{f^l}(s) \right\rangle \, ds \\
&\lesssim \| \widetilde{f^l} \|_{L_T^1([0,R^2] \times \mathbb{R}^3)} \sup_{s < R^2} \left\| (e^{2s\Delta} - e^{2R^2\Delta}) \Bbb{P} \int_0^s \widetilde{f^l}(\tau) \, d\tau \right\|_{L^\infty}.
\end{aligned}
\end{equation}
A simple consequence of \eqref{formulalow} are that
\begin{align*}
 \| \widetilde{f^l} \|_{L_T^1([0,R^2] \times \mathbb{R}^3)} \lesssim  \| {f^l} \|_{L_T^1([0,R^2] \times \mathbb{R}^3)} \lesssim  \|f\|_{Z_T^{2,b}},
 \end{align*}
 and
 \begin{align*}\label{4.3}
\left\| e^{2R^2\Delta} \int_0^s \widetilde{f^l}(\tau) \, d\tau \right\|_{L^\infty} \lesssim &R^{-3}
\left\| e^{2R^2\Delta} \int_0^s \widetilde{f^l}(\tau) \, d\tau \right\|_{L^1}\\
\lesssim &
R^{-3} \|f\|_{L^1([0,R^2] \times \mathbb{R}^3)} \lesssim \|f\|_{Z_T^{2,b}}.
\end{align*}
For the remaining term, note that
\begin{equation*}\label{4.4}
\left| e^{2s\Delta} \int_0^s \widetilde{f^l}(\tau, x) \, d\tau \right|
\leq s^{-\frac{3}{2}} \sum_{{\bf n} \in \mathbb{Z}^3} \int_{B({\bf n}\sqrt{s},\sqrt{s})} \int_0^s e^{-\frac{|x-y|^2}{2s}} \mathbf{1}(y) \left| \widetilde{f^l}(\tau, y) \right| \, d\tau \, dy,
\end{equation*}
where $B({\bf n}\sqrt{s}, \sqrt{s})$ denotes the ball with center ${\bf n}\sqrt{s}$ and radius $\sqrt{s}$. Using translation invariance, we may assume without loss of generality that $x = 0$. Then
\begin{equation}\label{4.5}
\begin{aligned}
\left| e^{2s\Delta} \int_0^s \widetilde{f^l}(\tau) \, d\tau (0)\right|
\lesssim &\sum_{|{\bf n}| > 2} e^{-\frac{|{\bf n}|^2}{4}} s^{-\frac{3}{2}} \int_0^s \int_{B({\bf n}\sqrt{s},\sqrt{s})} |\widetilde{f^l}(\tau, y)| \, dy \, d\tau \\
&+ \sum_{|{\bf n}| \leq 2} s^{-\frac{3}{2}} \int_0^s \int_{B({\bf n}\sqrt{s}, \sqrt{s})} |\widetilde{f^l}(\tau, y)| \, dy \, d\tau \\
&\lesssim \Bigl(1 + \sum_{{\bf n} \in \mathbb{Z}^3} e^{-\frac{|n|^2}{4}}\Bigr) \| \widetilde{f^l} \|_{Z_T^{2,b}} \lesssim \|f\|_{Z_T^{2,b}},
\end{aligned}
\end{equation}
where the last inequality follows from Lemma \ref{conlem}. Here we also used $\sqrt{s} < R < b$, and the fact that $\int_0^s \int_{B{(\bf n}\sqrt{s},\sqrt{s})} |\widetilde{f^l}(\tau, y)| \, dy \, d\tau \lesssim s^{\frac{3}{2}} \| \widetilde{f^l} \|_{Z_T^{2,b}}$ for any $n$. Combining \eqref{4.2}–\eqref{4.5}, we obtain
\begin{equation*}
\|L f^l\|_{L^2([0,R^2] \times \mathbb{R}^3)}^2 \lesssim R^3 \|f\|_{Z_T^{2,b}}^2,
\end{equation*}
which together with \eqref{4.1} completes the proof of Lemma \ref{lem5.38}.
\end{proof}   

  \begin{proof}[Proof of Lemma \ref{lemtta}]
Obviously, it suffices to prove that $L$ (defined in \eqref{defLm}) maps $Z_T^{2,b}$ continuously into $Z_T^{1,b}$. By the Duhamel principle,  $Lf$ admits the explicit representation (see \cite{BCD} for instance)
\begin{equation*}
(L^m f)^k(t,x) = \sum_{\ell=1}^3 \int_0^t \int \Gamma_{m,\ell}^k(t-\tau, x-y) f^\ell(\tau, y) \, dy \, d\tau,
\end{equation*}
where $\Gamma_{j,\ell}^k$ is a kernel satisfying
\begin{equation*}
|\Gamma_{j,\ell}^k(t,x)| \lesssim \Gamma_R^1(t,x) + \Gamma_R^2(t,x), \quad \forall R > 0,
\end{equation*}
with $\Gamma_R^1$ and $\Gamma_R^2$ defined in \eqref{defnewker}. By Lemma \ref{lem5.37} with $\alpha = 1$ and $R = \sqrt{t}$, we obtain
\begin{equation*}
\|Lf(t)\|_{L^\infty} \lesssim t^{-\frac{1}{2}} \|f\|_{Z_T^{2,b}}, \quad \forall t < b^2.
\end{equation*}
Furthermore, we need to estimate the local $L^2$ norm. Without loss of generality, we may restrict our attention to a neighborhood of the origin. Note that
\begin{equation*}
Lf = L(\mathbf{1}_{B(0,2R)} f) + L(\mathbf{1}_{B^c(0,2R)} f).
\end{equation*}
For the exterior part, note that when $|x| < R$ and $|y| > 2R$, we have $\frac{1}{(\sqrt{t} + |x-y|)^4} \lesssim \frac{1}{(\sqrt{t} + |y|)^4}$. The first estimate in Lemma \ref{lem5.37} then yields
\begin{equation*}
R^{-\frac{3}{2}} \| L(\mathbf{1}_{B^c(0,2R)} f) \|_{L^2([0,R^2] \times B(0,R))} \lesssim R \| L(\mathbf{1}_{B^c(0,2R)} f) \|_{L^\infty([0,R^2] \times B(0,R))} \lesssim \|f\|_{Z_T^{2,b}}, \quad \forall R < b.
\end{equation*}
The corresponding estimate for $L(\mathbf{1}_{B_0(2R)} f)$ follows from Lemma \ref{lem5.38}. We thus complete the proof of  Lemma \ref{lemtta}.
\end{proof}

\medskip

\noindent \textbf{Acknowledgement.} Quoc-Hung Nguyen is partially supported by the CAS Project for Young Scientists in Basic Research (Grant No. YSBR-03), the Academy of Mathematics and Systems Science, Chinese Academy of Sciences startup fund, the National Natural Science Foundation of China (Grant Nos. 12288201 and 12494542), and the National Key R\&D Program of China (Grant No. 2021YFA1000800). Ping Zhang is partially supported by the National Key R\&D Program of China (Grant No. 2021YFA1000800) and the National Natural Science Foundation of China (Grant Nos. 12421001, 12494542, and 12288201).

\section*{Declarations}

\subsection*{Conflict of interest} The authors declare that there are no conflicts of interest.

\subsection*{Data availability}
This article has no associated data.

\end{document}